\renewcommand{\thepart}{\Alph{part}}
\def\@part[#1]#2{%
    \ifnum \c@secnumdepth >\m@ne
      \refstepcounter{part}%
      \addcontentsline{toc}{part}{\thepart\hspace{1em}#1}%
    \else
      \addcontentsline{toc}{part}{#1}%
    \fi
    {\parindent \z@ \raggedright
     \interlinepenalty \@M
     \normalfont
     \ifnum \c@secnumdepth >\m@ne%
       \LARGE\bfseries \partname\nobreakspace\thepart : \hspace{.35em}
     \fi
     \LARGE \bfseries #2%
     \markboth{}{}\par}%
    \nobreak
    \vskip 3ex
    \@afterheading}
\numberwithin{equation}{section}
\theoremstyle{plain}
\newtheorem{thm}{Theorem}
\newtheorem{cor}{Corollary}[section]
\newtheorem{lemma}{Lemma}[section]
\newtheorem{prop}{Proposition}[section]
\theoremstyle{definition}
\newtheorem{definition}{Definition}[section]
\newtheorem{remark}{Remark}[section]
\newcommand{\card}{\operatorname{card}}
\newcommand{\End}{\operatorname{End}}
\newcommand{\Bili}{\operatorname{Bil}_\bA}
\newcommand{\Ker}{\operatorname{Ker}}
\newcommand{\ad}{\operatorname{ad}}
\newcommand{\tRZ}{\widetilde{\boldsymbol{R}}_\Z}
\newcommand{\hRZ}{\widehat{\boldsymbol{R}}_\Z}
\newcommand{\RsimpZ}{\operatorname{RES^{\mathrm{simp}}_\Z}}
\newcommand{\tRsimpZ}{\raisebox{0ex}[1.9ex]{$\wt{\mathrm{RES}}$}^{\mathrm{simp}}_\Z}
\newcommand{\tRsimp}{\raisebox{0ex}[1.9ex]{$\wt{\mathrm{RES}}$}^{\mathrm{simp}}}
\newcommand{\hRsimpZ}{\raisebox{0ex}[1.9ex]{$\wh{\mathrm{RES}}$}^{\mathrm{simp}}_\Z}
\newcommand{\hRsimp}{\raisebox{0ex}[1.9ex]{$\wh{\mathrm{RES}}$}^{\mathrm{simp}}}
\newcommand{\bgRN}{{\raisebox{0ex}[2.6ex]{$\overset{\raisebox{-.8ex}{$\scriptscriptstyle\bul$}}{\gR}_{\rho,N}$}}}
\newcommand{\gRN}{{{\gR}_{\rho,N}}}
\newcommand{\wH}{\wh H}
\newcommand{\wHR}[1]{\wH\big(\gR(#1)\big)}
\newcounter{parag}[section]
\renewcommand{\theparag}{{\bf\thesection.\arabic{parag}}}
\newcommand{\parag}{\medskip \addtocounter{parag}{1} \noindent{\theparag\ } }
\newcommand{\dst}{\displaystyle}
\newcommand{\tst}{\textstyle}
\newcommand{\ens}{\enspace}
\newcommand{\al}{\alpha}
\newcommand{\alb}{\text{\boldmath{$\alpha$}}}
\newcommand{\be}{\beta}
\newcommand{\beb}{\text{\boldmath{$\beta$}}}
\newcommand{\ga}{\gamma}
\newcommand{\gab}{\text{\boldmath{$\gamma$}}}
\newcommand{\Ga}{{\Gamma}}
\newcommand{\de}{\delta}
\newcommand{\deb}{\text{\boldmath{$\delta$}}}
\newcommand{\De}{\Delta}
\newcommand{\bDe}{\text{\boldmath{$\De$}}}
\newcommand{\eps}{{\varepsilon}}
\newcommand{\etab}{\text{\boldmath{$\eta$}}}
\newcommand{\la}{\lambda}
\newcommand{\La}{\Lambda}
\newcommand{\om}{\omega}
\newcommand{\omb}{\text{\boldmath{$\om$}}}
\newcommand{\mb}{\text{\boldmath{$m$}}}
\newcommand{\nb}{\text{\boldmath{$n$}}}
\newcommand{\Om}{\Omega}
\newcommand{\vpi}{\varpi}
\newcommand{\vpib}{\text{\boldmath{$\vpi$}}}
\newcommand{\sig}{{\sigma}}
\newcommand{\Sig}{{\Sigma}}
\renewcommand{\th}{{\theta}}
\newcommand{\Th}{\Theta}
\newcommand{\ph}{\varphi}
\newcommand{\ze}{{\zeta}}
\newcommand{\demi}{\frac{1}{2}}
\newcommand{\lan}{\langle}
\newcommand{\ran}{\rangle}
\newcommand{\ao}{\{\,}          
\newcommand{\af}{\,\}}          
\newcommand{\Dist}{\operatorname{dist}}
\newcommand{\dist}{\operatorname{d}_{\val}}
\newcommand{\ID}{\operatorname{Id}}
\newcommand{\Sing}{\operatorname{sing}}
\newcommand{\cont}{\operatorname{cont}}
\newcommand{\conc}{\raisebox{.15ex}{$\centerdot$}}
\newcommand{\concsm}{\raisebox{.15ex}{$\centerdot\!\centerdot\!\centerdot$}\,}
\newcommand{\bul}{\bullet}
\newcommand{\bbul}{{\bul\bul}}
\newcommand{\cbul}{{\bul,\bul}}
\newcommand{\ccbul}{{\bul,\bul,\bul}}
\newcommand{\pa}{\partial}
\newcommand{\na}{\nabla}
\newcommand{\ii}{^{-1}}
\newcommand{\iim}{^{\times(-1)}}
\newcommand{\iic}{^{\circ(-1)}}
\newcommand{\ti}{\tilde}
\newcommand{\est}{\emptyset}
\newcommand{\RE}{\mathop{\Re e}\nolimits}
\newcommand{\tphan}{\wt\ph^{\text{ana}}}
\newcommand{\phan}{\ph^{\text{ana}}}
\newcommand{\psian}{\psi^{\text{ana}}}
\newcommand{\Yan}{\wt Y^{\text{ana}}}
\newcommand{\ceil}[1]{\lceil #1 \rceil}
\newcommand{\flo}[1]{\lfloor #1 \rfloor}
\newcommand{\sh}[3]{\operatorname{sh}\begin{pmatrix}#1,\, #2\\#3\end{pmatrix}}
\newcommand{\tsh}[3]{\operatorname{sh}{\big( \begin{smallmatrix}#1,\,
#2\\#3\end{smallmatrix} \big)}}
\newcommand{\tssh}{\operatorname{sh}{\big( \begin{smallmatrix}\alb,\,
\beb,\,\gab\\ \omb\end{smallmatrix} \big)}}
\newcommand{\tcsh}[3]{\operatorname{ctsh}{\big( \begin{smallmatrix}#1,\,
#2\\#3\end{smallmatrix} \big)}}
\newcommand{\val}{\operatorname{val}}
\newcommand{\vl}[1]{\val\left(#1\right)}
\newcommand{\valn}{\operatorname{val}_\nu}
\newcommand{\vln}[1]{\valn\left(#1\right)}
\newcommand{\ord}{\operatorname{ord}}
\newcommand{\od}[1]{\ord\left(#1\right)}
\newcommand{\taul}{\tau_{\ell}}
\newcommand{\taur}{\tau_{r}}
\newcommand{\ie}{{i.e.}}
\newcommand{\cf}{{cf.}\ }
\newcommand{\eg}{{e.g.}}
\newcommand{\wrt}{{with respect to}}
\newcommand{\lhs}{{left-hand side}}
\newcommand{\rhs}{{right-hand side}}
\newcommand{\bA}{\mathbf{A}}
\newcommand{\bB}{\mathbf{B}}
\newcommand{\C}{\mathbb{C}}
\newcommand{\bC}{\mathbf{C}}
\newcommand{\bJ}{\mathbf{J}}
\newcommand{\bM}{\mathbf{M}}
\newcommand{\N}{\mathbb{N}}
\newcommand{\bP}{\mathbf{P}}
\newcommand{\Q}{\mathbb{Q}}
\newcommand{\bQ}{\mathbf{Q}}
\newcommand{\R}{\mathbb{R}}
\newcommand{\bU}{\mathbf{U}}
\newcommand{\Z}{\mathbb{Z}}
\newcommand{\cA}{\mathcal{A}}
\newcommand{\cB}{\mathcal{B}}
\newcommand{\cD}{\mathcal{D}}
\newcommand{\cL}{\mathcal{L}}
\newcommand{\cN}{\mathcal{N}}
\newcommand{\cR}{\mathcal{R}}
\newcommand{\cS}{\mathcal{S}}
\newcommand{\cU}{\mathcal{U}}
\newcommand{\cV}{\mathcal{V}}
\newcommand{\cW}{\mathcal{W}}
\newcommand{\thl}{\wt\th^{{low}}}
\newcommand{\thu}{\wt\th^{{up}}}
\newcommand{\pL}[1]{L_{a,+}^{#1}}
\newcommand{\mL}[1]{L_{a,-}^{#1}}
\newcommand{\pmL}[1]{L_{a,\pm}^{#1}}
\newcommand{\pV}[1]{V_{a,+}^{#1}}
\newcommand{\mV}[1]{V_{a,-}^{#1}}
\newcommand{\pmV}[1]{V_{a,\pm}^{#1}}
\newcommand{\ombp}{\text{$\omb$\hspace{-.115em}'}}
\newcommand{\tS}{{S\hspace{-.65em}\raisebox{.35ex}{--}\hspace{0.07em}}}
\newcommand{\tcS}{{\cS\hspace{-.65em}\raisebox{.35ex}{--}\hspace{0.07em}}}
\newcommand{\cVt}{{\cV\hspace{-.45em}\raisebox{.35ex}{--}}}
\newcommand{\Vt}{{V\hspace{-.5em}\raisebox{.35ex}{-}_{\hspace{-.1em}a}\hspace{-0.22em}}}
\newcommand{\tVa}{{\wt V\hspace{-.45em}\raisebox{.35ex}{-}_{\hspace{-.1em}a}^{\hspace{.1em}\bul}(m)}}
\newcommand{\tcVtb}{{\wt\cV\hspace{-.45em}\raisebox{.35ex}{--}^\bul}}
\newcommand{\tcVto}{{\wt\cV\hspace{-.45em}\raisebox{.35ex}{--}^\omb}}
\newcommand{\tcVod}{{\wt\cV\hspace{-.45em}\raisebox{.35ex}{--}^{\omb^2}}}
\newcommand{\hcVto}{{\wh\cV\hspace{-.45em}\raisebox{.35ex}{--}^\omb}}
\newcommand{\tcVab}{{\wt\cV\hspace{-.45em}\raisebox{.35ex}{--}_a^\bul}}
\newcommand{\tcVae}{{\wt\cV\hspace{-.45em}\raisebox{.35ex}{--}_a^\est}}
\newcommand{\tcVao}{{\wt\cV\hspace{-.45em}\raisebox{.35ex}{--}_a^\omb}}
\newcommand{\tcVaop}{{\wt\cV\hspace{-.45em}\raisebox{.35ex}{--}_a^{\ombp}}}
\newcommand{\tcVaor}{{\wt\cV\hspace{-.45em}\raisebox{.35ex}{--}_a^{\om_1,\dotsc,\om_r}}}
\newcommand{\hcVab}{{\wh\cV\hspace{-.45em}\raisebox{.35ex}{--}_a^\bul}}
\newcommand{\hcVae}{{\wh\cV\hspace{-.45em}\raisebox{.35ex}{--}_a^\est}}
\newcommand{\hcVao}{{\wh\cV\hspace{-.45em}\raisebox{.35ex}{--}_a^\omb}}
\newcommand{\hcVaop}{{\wh\cV\hspace{-.45em}\raisebox{.35ex}{--}_a^{\ombp}}}
\newcommand{\hcVaun}{{\wh\cV\hspace{-.45em}\raisebox{.35ex}{--}_a^{\om_1}}}
\newcommand{\hcVai}{{\wh\cV\hspace{-.45em}\raisebox{.35ex}{--}_a^{\om_{i+1},\dotsc,\om_r}}}
\newcommand{\zDu}{\cD^{up}(R,\eps)}
\newcommand{\zDl}{\cD^{low}(R,\eps)}
\newcommand{\zDm}{\cD_-(R,\eps)}
\newcommand{\zDp}{\cD_+(R,\eps)}
\newcommand{\zDpm}{\cD_\pm(R,\eps)}
\newcommand{\dDem}{
  \raisebox{.23ex}{
  {$\stackrel{\raisebox{-.23ex}{$\scriptscriptstyle\bullet$}}\Delta_{
  \raisebox{-.23ex}{$\scriptstyle m$}}$}}
}
\newcommand{\dDep}{
  \raisebox{.23ex}{
  {$\stackrel{\raisebox{-.23ex}{$\scriptscriptstyle\bullet$}}\Delta_{
  \raisebox{-.23ex}{$\scriptstyle m$}}^{
  \raisebox{-.85ex}{$\scriptstyle +$}}$}}
}
\newcommand{\I}{{\mathrm i}}
\newcommand{\dd}{{\mathrm d}}
\newcommand{\ee}{{\mathrm e}}
\newcommand{\wh}{\widehat}
\newcommand{\wt}{\widetilde}
\newcommand{\htb}[1]{\raisebox{-.08ex}{${\stackrel{
            \raisebox{-.18ex}{$\scriptscriptstyle\wedge$}
          }{#1}
     }$}}
\newcommand{\shtb}[1]{
        \raisebox{0ex}[1.6ex][0ex]{${\stackrel{
            \raisebox{-.23ex}{$\scriptscriptstyle\wedge$}
          }{\scriptstyle#1}
     }$}}
\newcommand{\wc}[1]{\raisebox{-.08ex}{${\stackrel{
            \raisebox{-.23ex}{$\scriptscriptstyle\vee$}
          }{#1}
     }$}}
\newcommand{\swc}[1]{
        \raisebox{0ex}[1.6ex][0ex]{${\stackrel{
            \raisebox{-.23ex}{$\scriptscriptstyle\vee$}
          }{\scriptstyle#1}
     }$}}
\newcommand{\norm}[1]{\Vert#1\Vert}
\newcommand{\gA}{\mathscr A}
\newcommand{\gB}{\mathscr B}
\newcommand{\gC}{\mathscr C}
\newcommand{\gD}{\mathscr D}
\newcommand{\gE}{\mathscr E}
\newcommand{\gF}{\mathscr F}
\newcommand{\gH}{\mathscr H}
\newcommand{\gK}{\mathscr K}
\newcommand{\gL}{\mathfrak L}
\newcommand{\gM}{\mathfrak M}
\newcommand{\hM}{\mathscr M}
\newcommand{\gO}{\mathscr O}
\newcommand{\gP}{\mathscr P}
\newcommand{\gR}{\mathscr R}
\newcommand{\gT}{\mathscr T}
\newcommand{\rmT}{\mathrm{T}}
\newcommand{\gX}{\mathscr X}
\newcommand{\gY}{\mathscr Y}
\newcommand{\hMst}{\hM^\bul(\Om^*,\bA)}
\newcommand{\hMstC}{\hM^\bul(\Om^*,\C)}
\newcommand{\hMstR}{\hM^\bul(\Om^*,\tRsimpZ)}
\newcommand{\hMlic}{\hM_{\textit{lic}}^\bul(\Om^*,\bA)}
\newcommand{\Xl}{X^{\text{lin}}}
\newcommand{\fl}{f^{\text{lin}}}
\newcommand{\Fl}{F^{\text{lin}}}
\newcommand{\ZE}{\operatorname{Ze}}
\begin{document}


\address{
Institut de m\'ecanique c\'eleste, CNRS\\ 
77 av.\ Denfert-Rochereau, 75014 Paris, France\\
email:\,\tt{sauzin@imcce.fr}
}


\title{Mould expansions for the saddle-node and resurgence monomials}

\author{David Sauzin}


\maketitle

\vspace{-.75cm}

\begin{abstract}
This article is an introduction to some aspects of \'Ecalle's mould calculus, a
powerful combinatorial tool which yields surprisingly explicit formulas for the
normalising series attached to an analytic germ of singular vector field or of
map.
This is illustrated on the case of the saddle-node, a two-dimensional vector
field which is formally conjugate to Euler's vector field $x^2\frac{\pa}{\pa
x}+(x+y)\frac{\pa}{\pa y}$,
and for which the formal normalisation is shown to be resurgent in~$1/x$.
Resurgence monomials adapted to alien calculus are also described as another
application of mould calculus.
\end{abstract}

\vspace{-.75cm}

\tableofcontents

\vfill\eject


\section{Introduction}


Mould calculus was developed by J.~\'Ecalle in relation with his Resurgence
theory almost thirty years ago (\cite{Eca81}, \cite{dulac}, \cite{Eca93}). 
%
%
The primary goal of this text is to give an introduction to mould calculus,
together with an exposition of the way it can be applied to a specific geometric
problem pertaining to the theory of dynamical systems: the analytic
classification of saddle-node singularities.

The treatment of this example was indicated in \cite{Eca84} in concise manner
(see also \cite{CNP}), but I found it useful to provide a self-contained presentation of
mould calculus and detailed explanations for the saddle-node problem,
in the same spirit as Resurgence theory and alien calculus were presented in
\cite{kokyu} together with the example of the analytic classification of
tangent-to-identity transformations in complex dimension~$1$.

Basic facts from Resurgence theory are also recalled in the course of
the exposition, with the hope that this text will serve to a broad readership.
I also included a section on the relation between the resurgent approach to the
saddle-node problem and Martinet-Ramis's work \cite{MR}.

The text consists of three parts.
\begin{enumerate}[A.]
\item
Section~\ref{secSN} describes the problem of the normalisation of the
saddle-node and Section~\ref{secMCexpSN} outlines its treatment by the method of
mould-comould expansions.
\item
The second part has an ``algebraic'' flavour: it is devoted to a systematic
exposition of some features of mould algebras (Sections~\ref{secAlgMoulds}
and~\ref{secAltSym})
and mould-comould expansions (Sections~\ref{secGenMcM}
and~\ref{secContrAltSym}).
\item
The third part is mainly concerned by the applications
to Resurgence theory of the previous results
(Sections~\ref{secResurSN}--\ref{secMR} show the consequences for the problem
of the saddle-node and have an ``analytic'' flavour,
Section~\ref{secResurMonom} describes the construction of resurgence monomials
which allow one to check the freeness of the algebra of alien derivations);
other applications are also briefly alluded to in Section~\ref{secOtherAppli}
(with a few words about arborification and multizetas).
\end{enumerate}

All the ideas come from J.~\'Ecalle's articles and lectures. 
An effort has been made to provide full details, which occasionally may have
resulted in original definitions, but they must be considered as auxiliary \wrt\
the overall theory.
The details of the resurgence proofs which are given in
Sections~\ref{secResurSN} and~\ref{secBE} are original, at least I did not see
them in the literature previously.

\vfill

\pagebreak


\part{The saddle-node problem}


\section{The saddle-node and its formal normalisation}	\label{secSN}


\parag
Let us consider a germ of complex analytic $2$-dimensional vector field
\begin{equation}	\label{eqdefX}
X = x^2 \frac{\pa\,}{\pa x} + A(x,y) \frac{\pa\,}{\pa y},
\qquad A \in \C\{x,y\},
\end{equation}
for which we assume 
\begin{equation}	\label{eqassA}
A(0,y) = y, \qquad \frac{\pa^2 A}{\pa x\pa y}(0,0) = 0.
\end{equation}
Assumption~\eqref{eqassA} ensures that $X$ is formally conjugate to the normal
form
\begin{equation}
X_0 = x^2 \frac{\pa\,}{\pa x} + y \frac{\pa\,}{\pa y}.
\end{equation}
We shall be interested in the formal tranformations which conjugate~$X$ and~$X_0$.


\parag
This is the simplest case from the point of view of {\em formal} classification of
saddle-node singularities of analytic differential equations.\footnote{
A singular differential equation is essentially the same thing as a differential
$1$-form which vanish at the origin. It defines a singular foliation, the leaves
of which can also be obtained by integrating a singular vector field, but
classifying singular foliations (or singular differential equations) is
equivalent to classifying singular vector fields \emph{up to time-change}.
See \eg\ \cite{Moussu}.} 
Indeed, when a differential equation $B(x,y)\,\dd y - A(x,y)\,\dd x = 0$ is singular
at the origin ($A(0,0)=B(0,0)=0$) and its $1$-jet has eigenvalues $0$ and~$1$,
it is always formally conjugate to one of the normal forms
$x^{p+1}\,\dd y - (1+\la x)y\,\dd x = 0$ ($p\in\N^*$, $\la\in\C$)
or $y\,\dd x=0$.
%
%
What we call saddle-node singularity corresponds to the first case, and the
normal form~$X_0$ corresponds to $p=1$ and $\la=0$.

Moreover, a saddle-node singularity can always be analytically reduced to the
form $x^{p+1}\,\dd y - A(x,y)\,\dd x=0$ with $A(0,y)=y$ 
(this result goes back to Dulac---see \cite{MR}, \cite{Moussu}), 
it is thus legitimate to
consider vector fields of the form~\eqref{eqdefX}, which generate the same
foliations
(we restrict ourselves to $(p,\la)=(1,0)$ for the sake of simplicity).

The problem of the {\em analytic} classification of saddle-node singularities was
solved in~\cite{MR}. The resurgent approach to this problem is indicated
in~\cite{Eca84} and~\cite[Vol.~3]{Eca81} (see also~\cite{CNP}).
The resurgent approach consists in analysing the divergence of the normalising
transformation through alien calculus.


\parag
Normalising transformation means a formal diffeomorphism $\th$ solution of the
conjugacy equation 
\begin{equation}	\label{eqconjugeq}
X = \th^* X_0.
\end{equation}
Due to the shape of~$X$, one can find a
unique formal solution of the form
\begin{equation}	\label{eqdefthph}
\th(x,y) = \big(x, \ph(x,y) \big), \qquad 
\ph(x,y) = y + \sum_{n\ge0} \ph_n(x) y^n, \qquad 
\ph_n(x) \in x\C[[x]].
\end{equation}
The first step in the resurgent approach consists in proving that the formal
series~$\ph_n$ are resurgent \wrt\ the variable $z=-1/x$. 
We shall prove this fact by using \'Ecalle's mould calculus (see
Theorem~\ref{thmResur} in Section~\ref{secResur} below).

The Euler series $\ph_0(x) = - \sum_{n\ge1} (n-1)! x^n$ appears in the case
$A(x,y)=x+y$, for which the solution of the conjugacy equation is simply 
$\th(x,y) = \big(x, y + \ph_0(x)\big)$.


\parag
Observe that $\th(x,y) = \left(x, y + \sum_{n\ge0} \ph_n(x) y^n\right)$ is solution of the conjugacy
equation if and only if 
\begin{equation}	\label{eqdefYzu}
\wt Y(z,u) = u \,  \ee^z + \sum_{n\ge0} u^n \ee^{nz} \wt\ph_n(z), \qquad 
\wt\ph_n(z) = \ph_n(-1/z) \in z\ii\C[[z\ii]],
\end{equation}
is solution of the differential equation 
\begin{equation}	\label{eqdiffeqX}
\pa_z \wt Y = A(-1/z,\wt Y)
\end{equation}
associated with the vector field~$X$.
(Indeed, the first component of the flow of~$X$ is trivial and the
second component is determined by solving~\eqref{eqdiffeqX}; on the other hand,
the flow of~$X_0$ is trivial and, by plugging it into~$\th$, one obtains the
flow of~$X$.)

The formal expansion~$\wt Y(z,u)$ is called {\em formal integral}
of the differential equation~\eqref{eqdiffeqX}.
One can obtain its components~$\wt\ph_n(z)$ 
(and, consequently, the formal series~$\ph_n(x)$ themselves)
as solutions of ordinary differential
equations, by expanding~\eqref{eqdiffeqX} in powers of~$u$:
\begin{gather}
\label{eqdiffeqzero}
\frac{\dd \wt\ph_0}{\dd z\,} = A\big(-1/z,\wt\ph_0(z)\big),\\
\label{eqdiffeqn}
\frac{\dd \wt\ph_n}{\dd z\,} + n\wt\ph_n(z) = 
\pa_y A\big(-1/z,\wt\ph_0(z)\big) \wt\ph_n(z) + \wt\chi_n(z),
\end{gather}
with $\wt\chi_n$ inductively determined by $\wt\ph_0,\dotsc,\wt\ph_{n-1}$.
Only the first equation is non-linear. One can prove the resurgence of the
$\wt\ph_n$'s by exploiting their property of being the unique solutions in
$z\ii\C[[z\ii]]$ of these equations and devising a perturbative scheme to solve
the first one,\footnote{
See Section~2.1 of~\cite{kokyu} for an illustration of this method on a
non-linear difference equation.
}
but mould calculus is quite a different approach.


\section{Mould-comould expansions for the saddle-node}

\label{secMCexpSN}


\parag
The analytic vector fields~$X$ and~$X_0$ can be viewed as derivations of the
algebra $\C\{x,y\}$,
but since we are interested in formal conjugacy, we now consider them as
derivations of $\C[[x,y]]$. We shall first rephrase our problem as a
problem about operators of this algebra.\footnote{
Our algebras will be, unless otherwise specified, associative unital algebras
over~$\C$ (possibly non-commutative).
In this article, operator means endomorphism of the underlying vector space;
thus an operator of $\gA=\C[[x,y]]$ is an element of $\End_\C(\gA)$.
The space $\End_\C(\gA)$ has natural structures of ring and of $\gA$-module,
which are compatible in the sense that $f\in\gA \mapsto f \ID\in \End_\C(\gA)$
is a ring homomorphism.
}

The commutative algebra $\gA=\C[[x,y]]$ is also a local ring; as such, it is
endowed with a metrizable topology, in which the powers of the maximal ideal
$\gM=\ao f\in\C[[x,y]]\mid f(0,0)=0\af$ form a system of neighbourhoods of~$0$,
which we call Krull topology or topology of the formal convergence and which is
complete (as a uniform structure).\footnote{
This is also called the $\gM$-adic topology, or the $(x,y)$-adic topology. 
Beware that $\C[[x,y]]$ is a topological algebra only if we put the discrete
topology on~$\C$.
}

\begin{lemma}
The set of all continuous algebra homomorphisms of $\C[[x,y]]$ coincides with the
set of all substitution operators, \ie\ operators of the form $f \mapsto
f\circ\th$ with $\th\in\gM\times\gM$.
\end{lemma}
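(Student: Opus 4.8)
The plan is to prove the two inclusions separately; the direction ``substitution operator $\Rightarrow$ continuous algebra homomorphism'' is essentially formal, so I would begin by checking that a substitution operator is well defined at all. Given $\th=(\th_1,\th_2)\in\gM\times\gM$, the recipe $g=\sum_{i,j\ge0}c_{ij}x^iy^j\mapsto g\circ\th:=\sum_{i,j\ge0}c_{ij}\,\th_1^i\th_2^j$ makes sense because $\th_1,\th_2\in\gM$ forces $\th_1^i\th_2^j\in\gM^{i+j}$, so modulo any power $\gM^{d+1}$ only the finitely many terms with $i+j\le d$ survive; the family is thus summable in the complete ring $\gA$, and $g\mapsto g\circ\th$ defines a $\C$-linear operator $C_\th\in\End_\C(\gA)$. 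That $C_\th$ is unital and multiplicative I would verify on monomials and extend by bilinearity and continuity; continuity itself is immediate from the inclusion $C_\th(\gM^k)\subseteq\gM^k$, which one reads off the series directly.

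For the converse, let $T$ be a continuous algebra homomorphism of $\gA$ and set $\th_1:=T(x)$ and $\th_2:=T(y)$. The first step is to see that $\th:=(\th_1,\th_2)\in\gM\times\gM$: composing $T$ with the quotient map $q\colon\gA\to\gA/\gM=\C$ yields a $\C$-algebra homomorphism $q\circ T\colon\gA\to\C$, whose kernel is a maximal ideal of $\gA$ (since $\gA/\ker(q\circ T)\cong\C$ is a field) and hence equals $\gM$ because $\gA$ is local; thus $T(\gM)\subseteq\gM$, and in particular $\th_1,\th_2\in\gM$. Now $T$ and $C_\th$ are both continuous algebra homomorphisms that agree on $1$, on $x$ and on $y$, hence on the subalgebra $\C[x,y]\subseteq\gA$ generated by $x$ and $y$. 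Since every $f\in\gA$ is the limit of its polynomial truncations $f_{\le N}\in\C[x,y]$ (one has $f-f_{\le N}\in\gM^{N+1}$), the subalgebra $\C[x,y]$ is dense in $\gA$ for the Krull topology, and two continuous maps into the metrizable, hence Hausdorff, space $\gA$ that coincide on a dense subset must coincide everywhere. Therefore $T=C_\th$.

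The formal-power-series bookkeeping in the first step and the identification $\ker(q\circ T)=\gM$ I would only indicate rather than belabour. The single point worth care, and the place where the hypotheses are genuinely used, is the final passage from $\C[x,y]$ to all of $\gA$: continuity of $T$ is exactly what allows the reduction to the dense polynomial subalgebra --- on $\C[x,y]$ alone a homomorphism carries no information about its values on a general series --- while $\th\in\gM\times\gM$ is precisely what makes $g\circ\th$ meaningful in the first place.
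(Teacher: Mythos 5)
Your proof is correct, and at the top level it follows the same strategy as the paper: verify the forward direction directly, set $\th = (Tx, Ty)$ for the converse, show $\th \in \gM\times\gM$, and then identify $T$ with $C_\th$ by extending from polynomials to all of $\gA$ via continuity. The one place where you genuinely deviate is the step establishing $Tx, Ty \in \gM$. The paper's argument is topological: by continuity, $(Tx)^n = T(x^n) \to 0$ in the Krull topology, and this forces $Tx \in \gM$ because a power series with nonzero constant term has powers that do not tend to zero. Your argument is instead purely algebraic: the composite $q\circ T \colon \gA \to \gA/\gM \cong \C$ is a surjective $\C$-algebra map, so its kernel is a maximal ideal, which must be $\gM$ since $\gA$ is local; hence $T(\gM)\subseteq\gM$. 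Your route shows slightly more than the paper does, namely that \emph{any} $\C$-algebra endomorphism of $\gA$ (continuous or not) is automatically local, so that continuity of $T$ is needed only once, in the final step of passing from the dense subalgebra $\C[x,y]$ to all of $\gA$. This is a clean separation of concerns; the paper's approach is more economical in exploiting the continuity hypothesis where it is already to hand, without invoking uniqueness of the maximal ideal. Either way the lemma is proved, and the remaining steps of your argument (well-definedness and continuity of $C_\th$, density of $\C[x,y]$, uniqueness of continuous extensions to a Hausdorff target) are all correct and are exactly the content the paper compresses into its last sentence.
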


\begin{proof}

Any substitution operator is clearly a continuous algebra homomorphism of
$\C[[x,y]]$.
Conversely, let $\Th$ be a continuous algebra homomorphism.
The idea is that $\Th$ will be determined by its action on the two
generators of the maximal ideal, and setting $\th = (\Th x,\Th y)$ we can
identify~$\Th$ with the substitution operator $f \mapsto f\circ\th$.
We just need to check that $\Th x$ and $\Th y$ both belong to the maximal ideal,
which is the case because, by continuity, $(\Th x)^n = \Th(x^n)$ and $(\Th y)^n
= \Th(y^n)$ must tend to~$0$ as $n\to\infty$;
one can then write any~$f$ as a convergent---for the Krull topology---series of
monomials $\sum f_{m,n} x^m y^n$ and its image as the formally convergent series
$\Th f = \sum  \Th(f_{m,n} x^m y^n) = \sum f_{m,n} (\Th x)^m (\Th y)^n$.
\end{proof}

A formal invertible transformation thus amounts to a continuous automorphism of
$\C[[x,y]]$.
Since the conjugacy equation~\eqref{eqconjugeq} can be written 
$$
X f = \big[ X_0(f\circ\th) \big] \circ \th\ii, 
\qquad f\in\C[[x,y]],
$$
if we work at the level of the substitution operator,
we are left with the problem of finding a continuous automorphism~$\Th$ of
$\C[[x,y]]$ such that $\Th (Xf) = X_0(\Th f)$ for all $f$, \ie
\begin{equation}	\label{eqconjugOp}
\Th X = X_0 \Th.
\end{equation}


\parag
The idea is to construct a solution to~\eqref{eqconjugOp} from the
``building blocks'' of~$X$.
Let us use the Taylor expansion
\begin{equation}	\label{eqdefiancN}
A(x,y) = y + \sum_{n\in\cN} a_n(x) y^{n+1}, \qquad
\cN = \ao n\in\Z \mid n \ge -1 \af
\end{equation}
to write
\begin{gather}	
	\label{eqdefBn}
X = X_0 + \sum_{n\in\cN} a_n(x) B_n, \qquad
B_n = y^{n+1} \frac{\pa\,}{\pa y}, \\
	\label{eqassan}
a_n(x) \in x\C\{x\}, \qquad a_0(x) \in x^2\C\{x\}
\end{gather}
(thus incorporating the information from~\eqref{eqassA}).
The series in~\eqref{eqdefBn} must be interpreted as a simply convergent
series of operators of~$\C[[x,y]]$ (the series $\sum a_n B_n f$ is formally
convergent for any $f\in\C[[x,y]]$).

Let us introduce the differential operators
\begin{equation}	\label{eqdefbBn}
\bB_\est = \ID, \qquad
\bB_{\om_1,\dotsc,\om_r} = B_{\om_r}  \dotsm B_{\om_1}
\end{equation}
for $\om_1,\dotsc,\om_r\in\cN$.
We shall look for an automorphism~$\Th$ solution of~\eqref{eqconjugOp} in the
form 
\begin{equation}	\label{eqexpmouldcomould}
\Th = \sum_{r\ge0} \, \sum_{\om_1,\dotsc,\om_r\in\cN} 
\cV^{\om_1,\dotsc,\om_r}(x) \bB_{\om_1,\dotsc,\om_r},
\end{equation}
with the convention that the only term with $r=0$ is
$\cV^\est\bB_\est$, with $\cV^\est = 1$,
and with coefficients
$\cV^{\om_1,\dotsc,\om_r}(x) \in x\C[[x]]$ 
to be determined from the data $\{a_n,\, n\in\cN\}$
in such a way that 

\begin{enumerate}[(i)]
\item the expression~\eqref{eqexpmouldcomould} is a formally convergent series of
operators of~$\C[[x,y]]$ and defines an operator~$\Th$ which is continuous for
the Krull topology,
\item the operator~$\Th$ is an algebra automorphism,
\item the operator~$\Th$ satisfies the conjugacy equation~\eqref{eqconjugOp}.
\end{enumerate}

In \'Ecalle's terminology, 
the collection of operators $\{ \bB_{\om_1,\dotsc,\om_r} \}$ is a typical
example of {\em comould};
any collection of coefficients $\{ \cV^{\om_1,\dotsc,\om_r}  \}$ is a {\em
mould} (here with values in~$\C[[x]]$, but other algebras may be used);
a formally convergent series of the form~\eqref{eqexpmouldcomould} is a {\em
mould-comould expansion}, often abbreviated as
$$
\Th = \sum \cV^\bul \bB_\bul
$$
(we shall clarify later what ``formally convergent'' means for such multiply-indexed series of
operators).


\parag
Let us indicate right now the formulas for the problem of the saddle-node~\eqref{eqdefX}:

\begin{lemma}	\label{lemdefcV}
The equations
\begin{align}
&\cV^\est = 1 \nonumber \\
\label{eqdefcV}
(x^2\frac{\dd\,}{\dd x} + \om_1 + \dotsb + \om_r) &\cV^{\om_1,\dotsc,\om_r}
= a_{\om_1} \cV^{\om_2,\dotsc,\om_r}, \qquad
\om_1,\dotsc,\om_r \in \cN
\end{align}
inductively determine a unique collection of formal series
$\cV^{\om_1,\dotsc,\om_r} \in x\C[[x]]$ for $r\ge1$.
Moreover,
\begin{equation}	\label{eqvalcV}
\cV^{\om_1,\dotsc,\om_r} \in x^{\ceil{r/2}}\C[[x]],
\end{equation}
where $\ceil{s}$ denotes, for any $s\in\R$, the least integer not smaller than~$s$.
\end{lemma}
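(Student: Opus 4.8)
The plan is to solve the recursion~\eqref{eqdefcV} explicitly for each word and read off the two claims. First I would observe that the operator $x^2\frac{\dd}{\dd x}+s$ acting on $x\C[[x]]$ is, for each fixed $s\in\C$, a bijection of $x\C[[x]]$ onto itself: writing $g=\sum_{k\ge1}g_kx^k$ and $f=\sum_{k\ge1}f_kx^k$, the equation $(x^2\frac{\dd}{\dd x}+s)g=f$ becomes $s g_1=f_1$ and $s g_k + (k-1)g_{k-1}=f_k$ for $k\ge2$, which determines $g$ uniquely from $f$ when $s\neq0$, and when $s=0$ it reads $(k-1)g_{k-1}=f_k$, i.e.\ $g_k=f_{k+1}/k$, again a bijection of $x\C[[x]]$ (here the hypothesis $f\in x\C[[x]]$, i.e.\ $f_0=0$, is exactly the solvability condition). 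Since $\om_1+\dotsb+\om_r$ is an integer, this already shows that~\eqref{eqdefcV} determines $\cV^{\om_1,\dotsc,\om_r}$ uniquely from $\cV^{\om_2,\dotsc,\om_r}$ in $x\C[[x]]$, proving the first assertion by induction on $r$ (the base case $r=0$ being $\cV^\est=1$, and the right-hand side $a_{\om_1}\cV^{\om_2,\dotsc,\om_r}$ lying in $x\C\{x\}\cdot x\C[[x]]\subset x\C[[x]]$ by~\eqref{eqassan}).

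For the valuation estimate~\eqref{eqvalcV} I would argue by induction on $r$, tracking how much the valuation increases at each step. The key quantitative input is: if $f\in x^m\C[[x]]$ then $(x^2\frac{\dd}{\dd x}+s)\ii f\in x^{m}\C[[x]]$ when $s\neq0$, whereas when $s=0$ one gains one degree, $(x^2\frac{\dd}{\dd x})\ii f\in x^{m-1}\C[[x]]$ but the solvability requires $f_0=0$ so in fact $f\in x^{m}\C[[x]]$ with $m\ge1$ gives the solution in $x^{m-1}\C[[x]]$. So solving~\eqref{eqdefcV} either preserves the valuation (if $\om_1+\dotsb+\om_r\neq0$) or loses exactly one (if $\om_1+\dotsb+\om_r=0$); but in the latter case we regain it and more from the factor $a_{\om_1}$, since $a_{\om_1}\in x\C\{x\}$, and if moreover $\om_1=0$ then $a_0\in x^2\C\{x\}$. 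The strategy is thus to show, for a word $\om_1,\dotsc,\om_r$, that the number of "bad" indices $i$ with $\om_i+\om_{i+1}+\dotsb+\om_r=0$ is at most $\lfloor r/2\rfloor$, and that each such bad index is compensated. Concretely: multiplying by $a_{\om_1}$ always contributes at least $x^1$; since $\om_1\in\cN$ means $\om_1\ge-1$, a partial sum $\om_1+\dotsb+\om_r$ can vanish only if some $\om_i=-1$, and I would show that the positions where the partial sum from the right, $\sigma_i:=\om_i+\dotsb+\om_r$, equals $0$ can be injectively paired with positions where $\om_j=-1$, so that at least half the steps are "good" or over-compensated; summing the per-step contributions then yields total valuation $\ge\lceil r/2\rceil$.

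The main obstacle is the bookkeeping in this last induction: one must set it up so that the compensation from the $a_{\om_i}$ factors is allocated correctly against the degree losses at the vanishing partial sums, uniformly over all words. The cleanest formulation I expect to use is a direct induction on $r$ proving simultaneously that $\cV^{\om_1,\dotsc,\om_r}\in x^{p}\C[[x]]$ with $p=\lceil r/2\rceil$, splitting into the two cases according to whether $\sigma_1=\om_1+\dotsb+\om_r$ vanishes. If $\sigma_1\neq0$, solving preserves valuation and multiplication by $a_{\om_1}$ adds $\ge1$, so from $\lceil(r-1)/2\rceil$ we get $\ge\lceil(r-1)/2\rceil+1\ge\lceil r/2\rceil$. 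If $\sigma_1=0$, then $\om_1=-1$ is impossible to have $\om_1=0$, so $a_{\om_1}$ still only guarantees $\ge1$; solving loses one degree, so we'd get only $\lceil(r-1)/2\rceil+1-1=\lceil(r-1)/2\rceil$, which is short by one when $r$ is even. To close this gap one uses that $\sigma_1=0$ forces, further down the word, an index $i\ge2$ with $\om_i=0$ or with $\sigma_i=0$ again — i.e.\ one needs a slightly stronger induction hypothesis recording the valuation as a function of the data $(\#\{i:\sigma_i=0\},\ \#\{i:\om_i=0\})$ along the word. Making that strengthened statement precise and checking it propagates through~\eqref{eqdefcV} is the one genuinely delicate point; everything else is the routine linear-algebra computation with $x^2\frac{\dd}{\dd x}+s$ sketched above.
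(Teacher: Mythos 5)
Your plan — uniqueness by inverting $x^2\frac{\dd\,}{\dd x}+s$, and the valuation bound via per-step bookkeeping of ``good'' and ``bad'' indices — is the paper's strategy, but there is a genuine gap in the valuation estimate (which you yourself flag as undone) and a smaller slip in the uniqueness part.

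The slip: $x^2\frac{\dd\,}{\dd x}+s$ is a bijection of $x\C[[x]]$ onto itself only for $s\neq0$; for $s=0$ its image is $x^2\C[[x]]$ and the solvability condition is $f_1=0$, not $f_0=0$ as you wrote. The recursion is still well posed because for $r\ge2$ the right-hand side $a_{\om_1}\cV^{\om_2,\dotsc,\om_r}$ lies in $x^2\C[[x]]$ as a product of two series in $x\C[[x]]$, and for $r=1$ with $\om_1=0$ the hypothesis $a_0\in x^2\C\{x\}$ from~\eqref{eqassan} supplies the missing power — but you do not draw this out, and the assertion that $x^2\frac{\dd\,}{\dd x}$ is a bijection of $x\C[[x]]$ is false as stated.

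The genuine gap is the combinatorial core of~\eqref{eqvalcV}. Your per-step analysis is correct in spirit: solving~\eqref{eqdefcV} gains at least one power of $x$ unless $\om_1+\dotsb+\om_r=0$ and $\om_1\neq0$, so iterating gives $\nu(\cV^\omb)\ge\card\cR^\omb$ with $\cR^\omb=\ao i\in[1,r]\mid\om_i+\dotsb+\om_r\neq0\ \text{or}\ \om_i=0\af$, and what remains is the purely combinatorial inequality $\card\cR^\omb\ge\ceil{r/2}$. On this you only sketch a pairing of bad indices against positions with $\om_j=-1$ and then concede that making the strengthened induction precise is the delicate point — so the step is not done. (You also have the parity backwards: the naive bound $\ceil{(r-1)/2}$ falls short of $\ceil{r/2}$ when $r$ is odd, not even.) The paper closes this with one short observation you did not find: if $i\notin\cR^\omb$ and $i\ge2$, then $\om_{i-1}+\dotsb+\om_r=\om_{i-1}$, so either $\om_{i-1}\neq0$ (nonzero suffix sum) or $\om_{i-1}=0$ — in both cases $i-1\in\cR^\omb$; since also $r\in\cR^\omb$ always, bad indices are isolated and cannot include $r$, which forces $\card\cR^\omb\ge\ceil{r/2}$ by a trivial count. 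That is exactly the missing lemma.
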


\begin{proof}
Let $\nu$ denote the valuation in $\C[[x]]$: $\nu(\sum c_m x^m) = \min\ao m \mid
c_m\neq0 \af \in\N$ for a non-zero formal series and $\nu(0)=\infty$.

Since $\pa = x^2\frac{\dd\,}{\dd x}$ increases valuation by at least one unit,
$\pa + \mu$ is invertible for any $\mu\in\C^*$ and the inverse operator
\begin{equation}	\label{eqdefinvpamu}
(\pa+\mu)\ii = \sum_{r\ge0} \mu^{-r-1}(-\pa)^r
\end{equation} 
(formally convergent series of operators) leaves $x\C[[x]]$ invariant.
On the other hand, we {\em define} $\pa\ii \colon x^2\C[[x]] \to x\C[[x]]$ by the
formula
$\pa\ii \ph(x) = \int_0^x \big( t^{-2}\ph(t) \big) \,\dd t$,
so that $\psi=\pa\ii\ph$ is the unique solution in $x\C[[x]]$ of the equation
$\pa\psi=\ph$ whenever $\ph\in x^2\C[[x]]$.

For $r=1$, equation~\eqref{eqdefcV} has a unique solution~$\cV^{\om_1}$ in
$x\C[[x]]$, because the \rhs\ is~$a_{\om_1}$, element of $x\C[[x]]$, and even of
$x^2\C[[x]]$ when $\om_1=0$.
By induction, for $r\ge2$, we get a \rhs\ in $x^2\C[[x]]$ and a unique solution
$\cV^\omb$ in $x\C[[x]]$ for $\omb=(\om_1,\dotsc,\om_r)\in\cN^r$.
Moreover, with the notation $`\omb = (\om_2,\dotsc,\om_r)$, we have
$$
\nu(\cV^\omb) \ge \al^\omb + \nu(\cV^{`\omb}), \qquad 
\text{with}\;
\al^\omb = \left| \begin{aligned} 
0 \quad &\text{if $\om_1+\dotsb+\om_r=0$ and $\om_1\neq0$,} \\
1 \quad &\text{if $\om_1+\dotsb+\om_r\neq0$ or $\om_1=0$.} 
\end{aligned}  \right.
$$
Thus $\nu(\cV^\omb) \ge \card \cR^\omb$, with 
$\cR^\omb = \ao i\in[1,r] \mid \om_i+\dotsb+\om_r\neq0 \;\text{or}\; \om_i=0
\af$
for $r\ge1$.

Let us check that $\card\cR^\omb \ge \ceil{r/2}$.
This stems from the fact that if $i\not\in\cR^\omb$, $i\ge2$, then $i-1\in\cR^\omb$
(indeed, in that case $\om_{i-1}+\dotsb+\om_r = \om_{i-1}$),
and that $\cR^\omb$ has at least one element, namely~$r$.
The inequality is thus true for $r=1$ or~$2$; by induction, if $r\ge3$, then
$\cR^\omb \cap [3,r] = \cR^{``\omb}$ with $``\omb = (\om_3,\dotsc,\om_r)$ and
either $2\in \cR^\omb$, or $2\not\in\cR^\omb$ and $1\in\cR^\omb$, 
thus $\card\cR^\omb \ge 1 + \card\cR^{``\omb}$.
\end{proof}


\parag
To give a definition of formally summable families of operators adapted to our
needs, we shall consider our operators as elements of a topological ring of a
certain kind and make use of the Cauchy criterium for summable families.

\begin{definition}	\label{defpseudoval}
Given a ring~$\gE$ (possibly non-commutative), we call pseudovaluation any map
$\val \colon \gE \to
\Z\cup\{\infty\}$ satisfying, for any $\Th,\Th_1,\Th_2\in\gE$,
\begin{itemize}
\item $\vl{\Th}=\infty$ iff $\Th=0$,
\item $\vl{\Th_1-\Th_2}  \ge \min\big\{ \!\vl{\Th_1}, \vl{\Th_2} \big\}$,
\item $\vl{\Th_1\Th_2} \ge \vl{\Th_1} + \vl{\Th_2}$, 
\end{itemize}
The formula $\dist(\Th_1,\Th_2) = 2^{-\vl{\Th_2-\Th_1}}$ then defines a
distance, for which $\gE$ is a topological ring.
%
%
We call $(\gE,\val)$ a complete pseudovaluation ring if the distance~$\dist$ is complete.
\end{definition}

We use the word pseudovaluation rather than valuation because
$\gE$ is not assumed to be an integral domain, and we dot impose equality in the
third property.
The distance~$\dist$ is ultrametric, translation-invariant, and it satisfies 
$\dist(0,\Th_1\Th_2)  \le \dist(0,\Th_1) \dist(0,\Th_2)$.

Let us denote by~$1$ the unit of~$\gE$.
Giving a pseudovaluation on~$\gE$ such that $\vl{1}=0$ is equivalent to giving a filtration
$(\gE_\de)_{\de\in\Z}$ that is compatible with its ring structure (\ie\ a sequence if
additive subgroups such that $1\in\gE_0$, $\gE_{\de+1}\subset\gE_\de$ and
$\gE_\de\gE_{\de'}\subset\gE_{\de+\de'}$ for all $\de,\de'\in\Z$),
exhaustive ($\bigcup\gE_\de=\gE$) and separated ($\bigcap\gE_\de=\{0\}$).
Indeed, the order function~$\val$ associated with the filtration, defined by $\vl{\Th} =
\sup\ao\de\in\Z \mid \Th\in\gE_\de \af$, is then a
pseudovaluation; conversely, one can set $\gE_\de = \ao \Th\in\gE \mid
\vl{\Th}\ge\de \af$.

\begin{definition}	\label{defformsumfam}
Let $(\gE,\val)$ be a complete pseudovaluation ring.
Given a set $I$, a family $(\Th_i)_{i\in I}$ in~$\gE$ is said to be formally
summable if, for any $\de\in\Z$, 
the set $\ao i\in I \mid \vl{\Th_i} \le \de \af$ is finite
(the support of the family is thus countable, if not~$I$ itself).
\end{definition}

One can then check that, for any exhaustion $(I_k)_{k\in\N}$ by finite sets of
the support of the family, the sequence $\sum_{i\in I_k} \Th_i$ is a Cauchy
sequence for $\dist$, and that the limit does not depend on the chosen
exhaustion; the common limit is then denoted $\sum_{i\in I}\Th_i$.
Observe that there must exist $\de_*\in\Z$ such that $\vl{\Th_i}\ge\de_*$ for
all $i\in I$.


\parag
We apply this to operators of $\gA = \C[[x,y]]$ as follows.
The Krull topology of~$\gA$ can be defined with the help of the monomial valuation
$$
\nu_4(f) = \min\ao 4m+n \mid f_{m,n}\neq0  \af
\quad \text{for} \; f = \sum f_{m,n} x^m y^n \neq 0,
\qquad \nu_4(0)=\infty.
$$
Indeed, for any sequence $(f_k)_{k\in\N}$ of~$\gA$,
$$
f_k \xrightarrow[k\to\infty]{} 0
\quad\Longleftrightarrow\quad
\sum_{k\in\N} f_k \;\text{formally convergent}
\quad\Longleftrightarrow\quad
\nu_4(f_k)\xrightarrow[k\to\infty]{}\infty.
$$
In particular, $(\C[[x,y]],\nu_4)$ is a complete pseudovaluation ring.

Suppose more generally that $(\gA,\nu)$ is any complete pseudovaluation ring
such that~$\gA$ is also an algebra.
Corresponding to the filtration $\gA_p = \ao f\in\gA \mid \nu(f)\ge p\af$,
$p\in\Z$, there is a filtration of $\End_\C(\gA)$:
$$
\gE_\de = \ao \Th \in \End_\C(\gA) \mid \Th(\gA_p) \subset \gA_{p+\de}
\;\text{for each $p$} \af, \qquad \de\in\Z.
$$

\begin{definition}	\label{defopval}
Let $\de\in\Z$. An element~$\Th$ of~$\gE_\de$ is said to be 
an ``operator of valuation~$\ge\de$''.
We then define $\vln\Th \in \Z\cup\{\infty\}$, the ``valuation of~$\Th$'',
as the largest $\de_0$ such that $\Th$ has
valuation~$\ge\de_0$; this number is infinite only for $\Th=0$. 
\end{definition}

Denote by $\gE$ the union $\bigcup\gE_\de$ over all $\de\in\Z$: 
these are the operators of~$\gA$ ``having a valuation'' (\wrt~$\nu$), \ie
$$
\gE = \ao \Th\in\End_\C(\gA) \mid 
\vln{\Th} = \inf_{f\in\gA} \{ \nu(\Th f) - \nu(f) \} > -\infty \af.
$$
They clearly are continuous for the topology induced by~$\nu$ on~$\gA$;
they form a subalgebra of the algebra of all continuous operators\footnote{
Not all continuous operators of~$\gA$ belong to~$\gE$: think of the operator
of~$\C[[y]]$ which maps $y^m$ to~$y^{m/2}$ if $m$ is even and to~$0$ if $m$
is odd.
}
and $(\gE,\valn)$ is a complete pseudovaluation ring.
For any formally summable family $(\Th_i)_{i\in I}$ of sum~$\Th$ in~$\gE$ and
$f\in\gA$, the family $(\Th_i f)_{i\in I}$ is summable in the topological
ring $\gA$, with sum~$\Th f$.


\begin{lemma}	\label{lemCVformal}
With the notation of formula~\eqref{eqdefbBn} and Lemma~\ref{lemdefcV}, the
family $(\cV^{\om_1,\dotsc,\om_r} \bB_{\om_1,\dotsc,\om_r})_{r\ge1,\,
\om_1,\dotsc,\om_r\in\cN}$ is formally summable in the algebra of
operators of $\C[[x,y]]$ having a valuation \wrt~$\nu_4$.
In particular the resulting operator~$\Th$ is continuous for the Krull
topology.
Similarly, the formula
\begin{equation}	\label{eqdefcVt}
\cVt^{\om_1,\dotsc,\om_r} = (-1)^r \cV^{\om_r,\dotsc,\om_1}
\end{equation}
gives rise to a formally summable family
$(\cVt^{\om_1,\dotsc,\om_r} \bB_{\om_1,\dotsc,\om_r})_{r\ge1,\,
\om_1,\dotsc,\om_r\in\cN}$. 
\end{lemma}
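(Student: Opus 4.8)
The plan is to reduce the claim to a valuation estimate on $\bB_{\om_1,\dotsc,\om_r}$ and combine it with the valuation estimate~\eqref{eqvalcV} on the mould coefficients. First I would compute the valuation, \wrt\ $\nu_4$, of the comould operator $\bB_{\om_1,\dotsc,\om_r} = B_{\om_r}\dotsm B_{\om_1}$. Each $B_n = y^{n+1}\frac{\pa\,}{\pa y}$ maps a monomial $x^my^k$ to $k\,x^my^{k+n}$, so it changes $\nu_4$ by $n$ (and $n\ge-1$ for $n\in\cN$). Hence $\bB_{\om_1,\dotsc,\om_r}$ changes $\nu_4$ by $\om_1+\dotsb+\om_r\ge-r$, i.e.\ $\valn$ of $\bB_{\om_1,\dotsc,\om_r}$ is exactly $\om_1+\dotsb+\om_r\ge-r$ \wrt\ $\nu_4$. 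On the other hand, multiplication by the formal series $\cV^{\om_1,\dotsc,\om_r}\in x^{\ceil{r/2}}\C[[x]]$ is an operator of $\nu_4$-valuation $\ge 4\ceil{r/2}\ge 2r$. Therefore
\begin{equation*}
\vln\big(\cV^{\om_1,\dotsc,\om_r}\bB_{\om_1,\dotsc,\om_r}\big) \ge 4\ceil{r/2} + (\om_1+\dotsb+\om_r) \ge 2r - r = r.
\end{equation*}

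Next I would invoke Definition~\ref{defformsumfam}: to show formal summability of the family indexed by pairs $(r\ge1,\,(\om_1,\dotsc,\om_r)\in\cN^r)$, it suffices to check that for every $\de\in\Z$ the set of indices with valuation $\le\de$ is finite. By the estimate above, $\vln\le\de$ forces $r\le\de$, and for each such fixed $r$ there are only finitely many tuples in $\cN^r$ with $\om_1+\dotsb+\om_r \le \de - 4\ceil{r/2}$ (since each $\om_i\ge-1$, the partial constraint bounds each $\om_i$ from above), so the set is finite. This proves the first family is formally summable; since $(\gE,\valn)$ is a complete pseudovaluation ring, the sum $\Th=\sum\cV^\bul\bB_\bul$ exists in $\gE$, and as noted in the excerpt elements of $\gE$ are continuous for the Krull topology, giving the ``in particular'' clause.

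For the second family, I would observe that $\cVt^{\om_1,\dotsc,\om_r} = (-1)^r\cV^{\om_r,\dotsc,\om_1}$ has exactly the same valuation in $\C[[x]]$ as $\cV^{\om_r,\dotsc,\om_1}$, and the reversed tuple $(\om_r,\dotsc,\om_1)$ runs over $\cN^r$ as $(\om_1,\dotsc,\om_r)$ does, with $\om_r+\dotsb+\om_1$ unchanged; so \eqref{eqvalcV} applies verbatim to $\cVt$ and $\vln(\cVt^{\om_1,\dotsc,\om_r}\bB_{\om_1,\dotsc,\om_r})\ge r$ by the identical computation. Hence the same finiteness argument yields formal summability of the second family. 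The only slightly delicate point — and the step I would be most careful about — is the bookkeeping that, for fixed $r$ and fixed upper bound on $\om_1+\dotsb+\om_r$, only finitely many tuples in $\cN^r = \{n\ge-1\}^r$ occur: this uses crucially that $\cN$ is bounded below, so that a bound on the sum together with the lower bounds $\om_i\ge-1$ gives a bound $\om_i\le (\de-4\ceil{r/2}) + (r-1)$ on each coordinate; everything else is the routine matching of the two valuation estimates.
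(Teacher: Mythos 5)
Your proof is correct and takes essentially the same route as the paper's: bound $\nu_4(\bB_\omb f)-\nu_4(f)$ below by $\om_1+\dotsb+\om_r$, combine with $\cV^\omb\in x^{\ceil{r/2}}\C[[x]]$ to get $\vln(\cV^\omb\bB_\omb)\ge\om_1+\dotsb+\om_r+2r\ge r$, and deduce finiteness from $\om_i\ge-1$. (One small imprecision: $\vln(\bB_\omb)$ is only $\ge\om_1+\dotsb+\om_r$, not necessarily equal, since $\bB_\omb$ can annihilate low powers of $y$ --- but only the lower bound is used, so nothing breaks.)
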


\begin{proof}

Clearly $\nu_4(B_n f)  \ge \nu_4(f) + n$ and, by induction,
$$
\nu_4(\bB_{\om_1,\dotsc,\om_r} f)  \ge \nu_4(f) + \om_1+\dotsb+\om_r.
$$
As a consequence of~\eqref{eqvalcV}, 
$$
\nu_4(\cV^{\om_1,\dotsc,\om_r} \bB_{\om_1,\dotsc,\om_r} f)  
\ge \nu_4(f) + \om_1+\dotsb+\om_r + 2r, \qquad
\om_1,\dotsc,\om_r\in\cN.
$$
Hence, with the above notations, each $\cV^{\om_1,\dotsc,\om_r}
\bB_{\om_1,\dotsc,\om_r}$ is an element $\gE$ with valuation $\ge
\om_1+\dotsb+\om_r + 2r$,
and the same thing holds for each $\cVt^{\om_1,\dotsc,\om_r}
\bB_{\om_1,\dotsc,\om_r}$.

The $\om_i$'s may be negative but they are always $\ge-1$, thus
$\om_1+\dotsb+\om_r + r\ge0$.
Therefore, for any $\de>0$, the condition $\om_1+\dotsb+\om_r + 2r \le \de$
implies $r\le \de$ and 
$\sum (\om_i+1) = \om_1+\dotsb+\om_r + r \le \de$.
Since this condition is fulfilled only a finite number of times, the conclusion
follows.
\end{proof}


\parag
Here is the key statement, the proof of which will be spread over 
Sections~\ref{secAlgMoulds}--\ref{secPfThm}:

\begin{thm}	\label{thmSNformal}
The continuous operator $\Th = \sum \cV^\bul \bB_\bul$ defined by
Lemmas~\ref{lemdefcV} and~\ref{lemCVformal} is an algebra automorphism of
$\C[[x,y]]$ which satisfies the conjugacy equation~\eqref{eqconjugOp}.
The inverse operator is $\sum \cVt^\bul \bB_\bul$.
\end{thm}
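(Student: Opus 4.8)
The plan is to verify the three requirements (i)--(iii) listed after~\eqref{eqexpmouldcomould} for the mould $\cV^\bul$ of Lemma~\ref{lemdefcV}, and then to identify the inverse. Requirement~(i) is already taken care of by Lemma~\ref{lemCVformal}, so $\Th = \sum\cV^\bul\bB_\bul$ is a well-defined continuous operator of $\C[[x,y]]$; the same lemma makes $\Th' := \sum\cVt^\bul\bB_\bul$ a well-defined continuous operator as well. The bulk of the work is requirements~(ii) and~(iii), and this is where the mould-algebraic machinery announced for Sections~\ref{secAlgMoulds}--\ref{secPfThm} enters.

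For requirement~(iii), I would expand the equation $\Th X = X_0\Th$ in the comould. Write $X = X_0 + \sum_{n\in\cN} a_n B_n$ and note the commutation/derivation identities satisfied by the comould, namely $[X_0,\bB_{\om_1,\dotsc,\om_r}] = (\om_1+\dotsb+\om_r)\bB_{\om_1,\dotsc,\om_r}$ (which follows by induction from $[X_0,B_n] = nB_n$, using $X_0 = x^2\pa_x + y\pa_y$ and $B_n = y^{n+1}\pa_y$) and $\bB_{\om_2,\dotsc,\om_r}\,B_{\om_1} = \bB_{\om_1,\om_2,\dotsc,\om_r}$. Substituting the mould-comould expansion into $\Th X - X_0\Th$ and collecting the coefficient of each $\bB_{\om_1,\dotsc,\om_r}$, one finds that the vanishing of $\Th X - X_0\Th$ is equivalent, term by term, to precisely the recursion~\eqref{eqdefcV}: the term $\cV^{\om_1,\dotsc,\om_r}[X_0,\bB_\bul]$ produces $(\om_1+\dotsb+\om_r)\cV^{\om_1,\dotsc,\om_r}$, the term $\cV^{\om_1,\dotsc,\om_r}\bB_\bul X_0$ versus $X_0 \cV^{\om_1,\dotsc,\om_r}\bB_\bul$ contributes the action of $x^2\tfrac{\dd}{\dd x}$ on the coefficient (since $\bB_\bul$ acts trivially on the $x$-variable and $X_0$ restricted to $\C[[x]]$ is $x^2\tfrac{\dd}{\dd x}$), and the terms coming from $\Th\cdot\sum a_n B_n$ supply the right-hand side $a_{\om_1}\cV^{\om_2,\dotsc,\om_r}$. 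Thus~(iii) holds by the very definition of $\cV^\bul$. The one point requiring care is that all these rearrangements of infinite sums are legitimate: this is where formal summability in the complete pseudovaluation ring $(\gE,\nu_4)$ is used, so that Cauchy-criterion arguments justify grouping and reindexing.

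Requirement~(ii), that $\Th$ be an \emph{algebra} automorphism, is the step I expect to be the main obstacle, since it is the one genuinely using the combinatorial structure of the mould rather than just a linear recursion. The natural route is via the \emph{symmetrality} of $\cV^\bul$: one shows $\cV^\bul$ is a symmetral mould, meaning $\cV^\est = 1$ and $\cV^{\omb}\cV^{\omb'} = \sum \cV^{\omb''}$ where $\omb''$ ranges over all shufflings of $\omb$ and $\omb'$. This is proved by induction on the total length using the recursion~\eqref{eqdefcV} together with the fact that $B_n$ is a derivation of $\C[[x,y]]$ (equivalently, that the generating operator $X - X_0$ is a sum of derivations), which is exactly the content promised for Sections~\ref{secAlgMoulds}--\ref{secContrAltSym}: a mould-comould expansion $\sum M^\bul\bB_\bul$ with $M^\bul$ symmetral and $\{\bB_\bul\}$ the comould of derivations $\{B_n\}$ is automatically an algebra homomorphism, because the symmetrality relation matches the Leibniz expansion of $\bB_\bul(fg)$. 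Granting that general principle, $\Th$ is an algebra endomorphism; it is continuous and its linear part (the $r=0$ term) is the identity, so it is invertible in $\gE$. Finally, to identify the inverse: the mould $\cVt^\bul$ defined by~\eqref{eqdefcVt} is, by the general theory, the \emph{antipode} (mould-inverse for composition of mould-comould expansions) of $\cV^\bul$ — one checks $\sum_{\omb = \omb^1\omb^2}\cV^{\omb^1}\cVt^{\omb^2} = \delta_{\omb,\est}$ by an induction using~\eqref{eqdefcV} and the definition~\eqref{eqdefcVt}, or equivalently one observes that $\Th'$ satisfies $X\Th' = \Th' X_0$, i.e. $\Th'$ conjugates $X_0$ back to $X$, hence $\Th\Th' = \Th'\Th = \ID$ by uniqueness of the linear-part-identity solution. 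Either way $\sum\cVt^\bul\bB_\bul = \Th\ii$, completing the proof.
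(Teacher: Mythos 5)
Your proposal follows essentially the paper's route: requirement~(iii) is reduced to the recursion~\eqref{eqdefcV} (packaged in the paper as the mould equation~\eqref{eqmouldeq} via Propositions~\ref{propmultiplimould} and~\ref{propcommutXla} and resolved in Corollary~\ref{corEC}); requirement~(ii) uses symmetrality of $\cV^\bul$ (Proposition~\ref{propcVsym}) contracted into the cosymmetral comould (Proposition~\ref{propaltsym}); and the inverse is identified through the involution~$S$ on symmetral moulds (Proposition~\ref{propinvsym}) together with the multiplication correspondence of Proposition~\ref{propmultiplimould}, which is the first of the two routes you offer. The argument is correct and matches the paper's in all essentials, differing only in that you expand the conjugacy equation directly by comparing coefficients of the $\bB_\omb$'s rather than passing through the abstract mould-equation formalism.
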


Observe that $\Th x = x$, thus $\Th$ is must be the substitution operator for a
formal transformation of the form $\th(x,y) = \big( x,\ph(x,y)\big)$,
with $\ph = \Th y$, in accordance with~\eqref{eqdefthph}.
An easy induction yields
\begin{equation}	\label{eqdefbeomb}
\bB_\omb y = \be_\omb y^{\om_1+\dotsb+\om_r+1}, \qquad \omb\in\cN^r,\, r\ge 1,
\end{equation}
with $\be_\omb = 1$ if $r=1$,
$\be_\omb = (\om_1+1)(\om_1+\om_2+1)\dotsm(\om_1+\dotsb+\om_{r-1}+1)$ if $r\ge2$.
We have $\be_\omb=0$ whenever $\om_1+\dotsb+\om_r \le -2$ (since \eqref{eqdefbeomb}
holds a priori in the fraction field $\C(\!(y)\!)$ but $\bB_\omb y$ belongs to~$\C[[y]]$), hence
\begin{multline}	\label{eqseriesgivphn}
\th(x,y) = \big( x,\ph(x,y)\big), \\
\ph(x,y) = y + \sum_{n\ge0} \ph_n(x) y^n, \qquad 
\ph_n = \sum_{\substack{r\ge1, \, \omb\in\cN^r  \\ \om_1+\dotsb+\om_r+1=n}} 
\be_\omb \cV^\omb
\end{multline}
(in the series giving $\ph_n$, there are only finitely many terms for each~$r$,
\eqref{eqvalcV} thus yields its formal convergence in $x\C[[x]]$).

Similarly, $\Th\ii = \sum \cVt^\bul \bB_\bul$ is the substitution operator of
a formal transformation $(x,y) \mapsto \big(x,\psi(x,y)\big)$, which is nothing
but~$\th\ii$, and 
\begin{equation}	\label{eqpsiThii}
\psi(x,y) = \Th\ii y = y + \sum_{n\ge0} \psi_n(x) y^n,
\end{equation} 
where each coefficient can be represented as a formally convergent series
$\dst
\psi_n = \sum_{\om_1+\dotsb+\om_r+1=n}
%
%
\be_\omb \cVt^\omb$.

\smallskip

See Lemma~\ref{lemLagrpsinphn} on p.~\pageref{lemLagrpsinphn} for formulas
relating directly the~$\ph_n$'s and the~$\psi_n$'s.

\begin{remark}	\label{remheuri}
The $\cV^\omb$'s are generically divergent with at most Gevrey-$1$ growth of the
coefficients, as can be expected from formula~\eqref{eqdefinvpamu}; for
instance, for $\om_1\neq0$, we get 
$\cV^{\om_1}(x) = \sum \om_1^{-r-1} \left(-x^2\frac{\dd\,}{\dd
x}\right)^r a_{\om_1}$ which is generically divergent because the repeated
differentiations are not compensated by the division by any factorial-like
expression. 
This divergence is easily studied through formal Borel transform \wrt\ $z=-1/x$,
which is the starting point of the resurgent analysis of the saddle-node---see
Section~\ref{secResur}.
We shall see in Section~\ref{secBESN} why the $\cV^\omb$'s can be called
``resurgence monomials''.
\end{remark}

The proof of Theorem~\ref{thmSNformal} will follow easily from the general
notions introduced in the next sections.


\part{The formalism of moulds}


\section{The algebra of moulds}	\label{secAlgMoulds}


\parag
In this section and the next three ones, we assume that we are given a non-empty
set~$\Om$ and a commutative $\C$-algebra~$\bA$, the unit of which is
denoted~$1$.
In the previous section, the roles of~$\Om$ and~$\bA$ were played by~$\cN$ and
$\C[[x]]$.

It is sometimes convenient to have a commutative semigroup structure on~$\Om$;
then we would rather take $\Om=\Z$ in the previous section and consider that the
mould $\{ \cV^{\om_1,\dotsc,\om_r}  \}$ was defined on~$\Z$ but supported
on~$\cN$ (\ie\ we extend it by~$0$ whenever one of the $\om_i$'s is $\le-2$).

We consider~$\Om$ as an alphabet and denote by $\Om^\bul$ the free monoid of
{\em words}: a word is any finite sequence of letters, 
$\omb = (\om_1,\dotsc,\om_r)$ with $\om_1, \dotsc,\om_r\in\Om$;
its {\em length} $r = r(\omb)$ can be any non-negative integer.
The only word of zero length is the empty word, denoted~$\est$, which is
the unit of {\em concatenation}, the monoid law $(\omb,\etab)\mapsto\omb\conc\etab$
defined by
$$
(\om_1,\ldots,\om_r) \conc (\eta_1,\ldots,\eta_s) = 
(\om_1,\ldots,\om_r,\eta_1,\ldots,\eta_s)
$$
for non-empty words.

As previously alluded to, a {\em mould on~$\Om$ with values in~$\bA$} is nothing but
a map $\Om^\bul \to \bA$.
It is customary to denote the value of the mould on a word~$\omb$ by
affixing~$\omb$ as an upper index to the symbol representing the mould,
and to refer to the mould itself by using~$\bul$ as upper index.
Hence $\cV^\bul$ is the mould, the value of which at~$\omb$ is denoted $\cV^\omb$.

A mould with values in~$\C$ is called a {\em scalar mould}.


\parag
Being the set of all maps from a set to the ring~$\bA$, the set of moulds
$\hM^\bul(\Om,\bA)$ has a natural structure of $\bA$-module:
addition and ring multiplication are defined component-wise
(for instance, if $\mu\in\bA$ and $M^\bul\in\hM^\bul(\Om,\bA)$,
the mould $N^\bul = \mu  M^\bul$ is defined by $N^\omb = \mu M^\omb$
for all $\omb\in\Om^\bul$).

The ring structure of~$\bA$ together with the monoid structure
of~$\Om^\bul$ also give rise to a {\em multiplication of moulds}, thus defined:
\begin{equation}	\label{eqdefmultiplimould}
P^\bul = M^\bul \times N^\bul \colon \;
\omb \mapsto P^\omb = \sum_{\omb= \omb^1\!\conc\omb^2} M^{\omb^1} N^{\omb^2},
\end{equation}
with summation over the $r(\omb)+1$ decompositions of~$\omb$ into two words (including $\omb^1$
or $\omb^2 = \est$).
Mould multiplication is associative but not commutative (except if $\Om$ has
only one element).
We get a ring structure on $\hM^\bul(\Om,\bA)$, with unit
$$
1^\bul \colon\; \omb \mapsto 1^\omb = 
\left| \begin{aligned} 
1 \quad &\text{if $\omb = \est$}\\
0 \quad &\text{if $\omb \neq \est$.}
\end{aligned}  \right.
$$
One can check that a mould~$M^\bul$ is invertible if and only if $M^\est$
is invertible in~$\bA$ (see below).

One must in fact regard $\hM^\bul(\Om,\bA)$ as an $\bA$-algebra, \ie\ its module
structure and ring structure are compatible: 
$\mu\in\bA \mapsto \mu\, 1^\bul\in\hM^\bul(\Om,\bA)$ is indeed a ring
homomorphism, the image of which lies in the center of the ring of moulds.
The reader familiar with Bourbaki's {\em Elements of mathematics} will have
recognized in $\hM^\bul(\Om,\bA)$ the large algebra (over~$\bA$) of the
monoid~$\Om^\bul$ ({\em Alg.}, chap.~III, \S2, n$^{\text{o}}$10).
Other authors use the notation $\bA \lan\!\lan \Om \ran\!\ran$ or $\bA[[ \rmT^\Om ]]$
to denote this $\bA$-algebra, viewing it as the completion of the
free $\bA$-algebra over~$\Om$ for the pseuvoluation~$\ord$ defined below.
The originality of moulds lies in the way they are used:
\begin{enumerate}[--]
\item the shuffling operation available in the free monoid~$\Om^\bul$ will lead us in
Section~\ref{secAltSym} to single out specific classes of moulds, enjoying
certain symmetry or antisymmetry properties of fundamental importance (and this
is only a small amount of all the structures used by \'Ecalle in wide-ranging contexts);
\item we shall see in Sections~\ref{secGenMcM} and~\ref{secContrAltSym} how to contract moulds into
``comoulds'' (and this yields non-trivial results in the local study of analytic
dynamical systems);
\item the extra structure of commutative semigroup on~$\Om$ will allow us to
define another operation, the ``composition'' of moulds (see below).
\end{enumerate}

There is a pseudovaluation $\ord \colon \hM^\bul(\Om,\bA) \to
\N\cup\{\infty\}$, which we call ``order'': 
we say that a mould~$M^\bul$ has order $\ge s$ if $M^\omb=0$ whenever
$r(\omb)<s$,
and $\od{M^\bul}$ is the largest such~$s$.
This way, we get a complete pseudovaluation ring
$\big(\hM^\bul(\Om,\bA),\ord\big)$.
In fact, if $\bA$ is an integral domain (as is the case of~$\C[[x]]$), then
$\hM^\bul(\Om,\bA)$ is an integral domain and $\ord$ is a valuation.


\parag
It is easy to construct ``mould derivations'', \ie\ $\C$-linear operators~$D$ of
$\hM^\bul(\Om,\bA)$ such that $D(M^\bul\times N^\bul) = (D M^\bul)\times N^\bul
+ M^\bul \times D  N^\bul$.

For instance, for any function $\ph \colon \Om \to \bA$, the formula
$$
D_\ph M^\omb = \left| \begin{aligned}
0 \qquad\qquad\qquad \quad & \text{if $\omb = \est$}\\
\big( \ph(\om_1) + \dotsb + \ph(\om_r) \big) M^\omb
\quad & \text{if $\omb = (\om_1,\ldots,\om_r)$}
\end{aligned}  \right.
$$
defines a mould derivation $D_\ph$.
With $\ph\equiv1$, we get $D M^\omb = r(\omb)M^\omb$.

When $\Om$ is a commutative semigroup (the operation of which is denoted
additively), we define the {\em sum of a non-empty word} as
$$ 
\norm{\omb} = \om_1 + \dotsb + \om_r \in \Om,
\qquad \omb = (\om_1, \dotsc, \om_r) \in \Om^\bul.
$$
Then, for any mould $U^\bul$ such that $U^\est=0$, the formula
\begin{equation}	\label{eqdefnaUbul}
\na_{U^\bul} M^\omb = \sum_{\omb=\alb\conc\beb\conc\gab,\,\beb\neq\est}
U^\beb \, M^{\alb\conc \norm{\beb} \conc\gab} 
\end{equation}
defines a mould derivation $\na_{U^\bul}$.
The derivation~$D_\ph$ is nothing but $\na_{U^\bul}$ with $U^\omb =
\ph(\om_1)$ for $\omb=(\om_1)$ and $U^\omb = 0$ for $r(\omb)\neq1$.

When $\Om\subset\bA$, an important example is 
\begin{equation}	\label{eqdefna}
\na M^\omb = \norm{\omb} M^\omb,
\end{equation}
obtained with $\ph(\eta)\equiv\eta$.
On the other hand, every derivation $d\colon\bA\to\bA$ obviously induces a mould
derivation $D$, the action of which on any mould~$M^\bul$ is defined by

\begin{equation}	\label{eqdefDd}
D M^\omb = d(M^\omb), \qquad \omb\in\Om^\bul.
\end{equation}

\begin{remark}	\label{remmouldVsol}
With $\Om=\cN$ defined by~\eqref{eqdefiancN} and $\bA=\C[[x]]$, the
mould~$\cV^\bul$ determined in Lemma~\ref{lemdefcV} is the unique solution of
the mould equation
\begin{equation}	\label{eqmouldeq}
(D+\na)\cV^\bul = J_a^\bul \times \cV^\bul,
\end{equation}
such that $\cV^\est=1$ and $\cV^\omb\in x\C[[x]]$ for $\omb\neq\est$,
with $D$ induced by $d=x^2\frac{\dd\,}{\dd x}$ and
\begin{equation}	\label{eqdefJa}
J_a^\omb = \left| \begin{aligned}
a_{\om_1} \quad & \text{if $\omb = (\om_1)$}\\
0 \ens\; \quad & \text{if $r(\omb)\neq1$}.
\end{aligned}  \right.
\end{equation}
\end{remark}


\parag
When $\Om$ is a commutative semigroup, the {\em composition of moulds} is defined as follows:
\label{seccomposmoulds}
\begin{multline*}
C^\bul = M^\bul \circ U^\bul \colon \qquad
\est \ens\mapsto\ens C^\est = M^\est, \\
\omb\neq\est \mapsto C^\omb = \sum_{ \substack{s\ge1,\,
\omb^1,\dotsc,\omb^s\neq\est  \\
\omb = \omb^1 \concsm \omb^s}  }
M^{(\norm{\omb^1},\dotsc,\norm{\omb^s})} 
U^{\omb^1}  \dotsm U^{\omb^s}, \qquad \,
\end{multline*}
with summation over all possible decompositions of $\omb$ into non-empty words
(thus $1\le s\le r(\omb)$ and the sum is finite).
The map $M^\bul \mapsto M^\bul \circ U^\bul$ is clearly $\bA$-linear; it is in fact an
$\bA$-algebra homomorphism: 
$$ 
(M^\bul \circ U^\bul) \times (N^\bul \circ U^\bul) = 
(M^\bul\times N^\bul) \circ U^\bul
$$ 
(the verification of this distributivity property is left as an exercise).

Obviously,
$ 1^\bul \circ U^\bul  = 1^\bul $ for any mould~$U^\bul$.
The {\em identity mould}
$$
I^\bul \colon \omb \mapsto I^\omb = 
\left| \begin{aligned} 
1 \quad &\text{if $r(\omb) = 1$}\\
0 \quad &\text{if $r(\omb) \neq 1$}
\end{aligned}  \right.
$$
satisfies $ M^\bul \circ I^\bul = M^\bul $ for any mould~$M^\bul$.
But $ I^\bul \circ U^\bul = U^\bul $ only if $U^\est = 0$ (a requirement that we
could have imposed when defining mould composition, since the value of~$U^\est$
is ignored when computing $M^\bul\circ U^\bul$); in general, 
$ I^\bul \circ U^\bul = U^\bul - U^\est\,1^\bul$.

Mould composition is associative\footnote{%
Hint: The computation of $M^\bul\circ(U^\bul\circ V^\bul)$ at~$\omb$ involves all the
decompositions $\omb = \omb^1 \concsm \omb^s$ into non-empty words and then
all the decompositions of each factor $\omb^i$ as
$\omb^1 = \alb^1 \concsm \alb^{i_1},
\omb^2 = \alb^{i_1+1} \concsm \alb^{i_2}, \dotsc,
\omb^s = \alb^{i_{s-1}+1} \concsm \alb^{i_s}$
(where $1\le i_1 < i_2 < \dotsb < i_s=t$, with each $\alb^j$ non-empty);
it is equivalent to sum first over all the decompositions
$\omb = \alb^1 \concsm \alb^t$ and then to consider all manners of regrouping
adjacent factors $(\alb^1 \concsm \alb^{i_1}) \conc 
(\alb^{i_1+1} \concsm \alb^{i_2}) \concsm 
(\alb^{i_{s-1}+1} \concsm \alb^{i_s})$,
which yields the value of $(M^\bul\circ U^\bul)\circ V^\bul$ at~$\omb$.
}
and not commutative. One can check that a
mould~$U^\bul$ admits an inverse for composition (a mould~$V^\bul$ such that
$V^\bul \circ U^\bul = U^\bul \circ V^\bul = I^\bul$) if and only if
$U^\omb$ is invertible in~$\bA$ whenever $r(\omb)=1$
and $U^\est=0$.
These moulds thus form a group under composition.

In the following, we do not always assume~$\Om$ to be a commutative semigroup
and mould composition is thus not always defined.
However, observe that, in the absence of semigroup structure, the definition of
$M^\bul \circ U^\bul$ makes sense for any mould~$M^\bul$ such that $M^\omb$ only
depends on~$r(\omb)$ and that most of the above properties can be adapted to
this particular situation.


\parag
As an elementary illustration, one can express the multiplicative inverse of a
mould $M^\bul$ with  $\mu=M^\est$ invertible as
$$
(M^\bul)\iim = G^\bul \circ M^\bul, \qquad
\text{with}\ens
G^\omb = (-1)^{r(\omb)} \mu^{-r(\omb)-1}.
$$
Indeed, $G^\bul$ is nothing but the multiplicative inverse of
$\mu\,1^\bul+I^\bul$ and
$$
M^\bul = \mu\,1^\bul + I^\bul\circ M^\bul = (\mu\,1^\bul+I^\bul)\circ M^\bul,
$$
whence the result follows immediately.

The above computation does not require any semigroup structure on~$\Om$.
Besides, one can also write
$ (M^\bul)\iim = \sum_{s\ge0}  (-1)^s \mu^{-s-1} (M^\bul-\mu \, 1^\bul)^{\times s}$
(convergent series for the topology of $\hM^\bul(\Om,\bA)$ induced by~$\ord$).


\parag
%
%
We define elementary scalar moulds $\exp_t^\bul$, $t\in\C$, and $\log^\bul$ by the formulas
$\exp_t^\omb = \frac{t^{r(\omb)}}{r(\omb)!}$ and
$$
\log^\omb = 0 \quad \text{if $\omb = \est$}, \qquad 
\log^\omb = \tfrac{ (-1)^{r(\omb)-1} }{ r(\omb) } \quad \text{if $\omb \neq \est$.}
$$
One can check that
\begin{gather*}
\exp_0^\bul = 1^\bul, \qquad
\exp_{t_1}^\bul \times \exp_{t_2}^\bul = \exp_{t_1+t_2}^\bul, 
\qquad t_1,t_2\in\C,\\
(\exp_t^\bul - 1^\bul) \circ \frac{1}{t}\log^\bul
= \frac{1}{t}\log^\bul \circ \, (\exp_t^\bul - 1^\bul)
= I^\bul, \qquad t\in\C^*
\end{gather*}
(use for instance $\exp_t^\bul = \sum_{s\ge0} \frac{t^s}{s!}(I^\bul)^{\times
s}$ and $\log^\bul = \sum_{s\ge1} \frac{(-1)^{s-1}}{s} (I^\bul)^{\times s}$;
mould composition is well-defined here even if~$\Om$ is not a semigroup).

Now, consider on the one hand the Lie algebra 
\begin{multline}	\label{eqdefgLbul}
\gL^\bul(\Om,\bA) = \ao U^\bul  \in \hM^\bul(\Om,\bA) \mid U^\est = 0 \af,\\
\quad \text{with bracketting 
$[U^\bul,V^\bul] = U^\bul\times V^\bul - V^\bul\times U^\bul$,}
\end{multline}
and on the other hand the subgroup
\begin{equation}	\label{eqdefGbul}
G^\bul(\Om,\bA) = \ao M^\bul  \in \hM^\bul(\Om,\bA) \mid M^\est = 1 \af
\end{equation}
of the multiplicative group of invertible moulds.

Then, for each $U^\bul \in \gL^\bul(\Om,\bA)$, $( \exp_t^\bul\circ \, U^\bul)_{t\in\C}$
is a one-parameter group inside $G^\bul(\Om,\bA)$.
Moreover, the map
$$
E_t \colon 
U^\bul  \in \gL^\bul(\Om,\bA) \mapsto
M^\bul = \exp_t^\bul \circ\, U^\bul \in G^\bul(\Om,\bA)
$$
is a bijection for each $t\in\C^*$ (with reciprocal $M^\bul\mapsto
\frac{1}{t}\log^\bul\circ\, U^\bul$),
which allows us to consider $\gL^\bul(\Om,\bA)$ as the Lie algebra of
$G^\bul(\Om,\bA)$ in the sense that
$$
[U^\bul,V^\bul] = \frac{\dd\,}{\dd t} \Big(
E_t(U^\bul) \times V^\bul \times E_t(U^\bul)\iim
\Big)_{\tst | t=0}.
$$
Observe that mould composition is not necessary to define the map~$E_t$ and its
reciprocal: one can use the series
\begin{equation}	\label{eqdefEt}
E_t(U^\bul) = \sum_{s\ge0} \frac{t^s}{s!} (U^\bul)^{\times s}, \quad
E_t\ii(M^\bul) = \frac{1}{t} \sum_{s\ge1}  \tfrac{(-1)^{s-1}}{s} (M^\bul-1^\bul)^{\times s}
\end{equation}
(they are formally convergent because $\od{U^\bul}$ and $\od{M^\bul-1^\bul}\ge1$).


\section{Alternality and symmetrality}	\label{secAltSym}


\parag
Even if $\Om$ is not a semigroup,
another operation available in~$\Om^\bul$ is {\em shuffling}: if two non-empty words 
$\omb^1 = (\om_1,\dotsc,\om_\ell)$
and $\omb^2 = (\om_{\ell+1},\dotsc,\om_r)$
are given, one says that a word $\omb$ belongs to their shuffling
if it can be written $(\om_{\sig(1)},\dotsc,\om_{\sig(r)})$ with a
permutation~$\sig$ such that 
$\sig(1)<\dotsm<\sig(\ell)$ and $\sig(\ell+1)<\dotsm<\sig(r)$
(in other words, $\omb$ can be obtained by interdigitating the letters
of~$\omb^1$ and those of~$\omb^2$ while preserving their internal order
in~$\omb^1$ or~$\omb^2$).
We denote by $\sh{\omb^1}{\omb^2}{\omb}$ the number of such permutations~$\sig$,
and we set $\sh{\omb^1}{\omb^2}{\omb}=0$ if $\omb$ does not belong to the
shuffling of~$\omb^1$ and~$\omb^2$.

\begin{definition}	\label{defialtsym}
A mould~$M^\bul$ is said to be alternal if $M^\est=0$ and, for any two non-empty words $\omb^1$,
$\omb^2$,
\begin{equation}	\label{eqdefaltal}
\sum_{\omb\in\Om^\bul}  \sh{\omb^1}{\omb^2}{\omb} M^\omb = 0.
\end{equation}
It is said to be symmetral if $M^\est=1$ and, for any two non-empty words $\omb^1$,
$\omb^2$,
\begin{equation}	\label{eqdefsymal}
\sum_{\omb\in\Om^\bul}  \sh{\omb^1}{\omb^2}{\omb} M^\omb = M^{\omb^1} M^{\omb^2}.
\end{equation}
\end{definition}

Of course the above sums always have finite support.
For instance, if $\omb^1=(\om_1)$ and $\omb^2=(\om_2,\om_3)$, the \lhs\ in both
previous formulas is
$M^{\om_1,\om_2,\om_3} + M^{\om_2,\om_1,\om_3} + M^{\om_2,\om_3,\om_1}$.

The motivation for this definition lies in formula~\eqref{eqmotivsh} below.
We shall see in Section~\ref{secmotivsh} the interpretation of alternality or
symmetrality in terms of the operators obtained by mould-comould expansions:
alternal moulds will be related to the Lie algebra of derivations, symmetral
moulds to the group of automorphisms.

Alternal (resp.\ symmetral) moulds have to do with primitive (resp.\ group-like)
elements of a certain graded cocommutative Hopf algebra, at least when $\bA$ is a
field---see the remark on Lemma~\ref{lemxitenstau} below.

An obvious example of alternal mould is $I^\bul$, or any moud~$J^\bul$ such that
$J^\omb=0$ for $r(\omb)\neq1$ (as is the case of~$J_a^\bul$ defined by~\eqref{eqdefJa}). 
An elementary example of symmetral mould is $\exp_t^\bul$ for any $t\in\C$;
a non-trivial example is the mould~$\cV^\bul$ determined by
Lemma~\ref{lemdefcV}, the symmetrality of which is the object of
Proposition~\ref{propcVsym} below.
The mould $\log^\bul$ is not alternal (nor symmetral), but ``alternel'';
alternelity and symmetrelity are two other types of symmetry introduced by
\'Ecalle, parallel to alternality and symmetrality, but we shall not be
concerned with them in this text (see however the end of Section~\ref{secAlludel}).

The next paragraphs contain the proof of the following properties:

\begin{prop}	\label{propstructaltsym}
Alternal moulds form a Lie subalgebra $\gL^\bul_{\text{alt}}(\Om,\bA)$ of the
Lie algebra $\gL^\bul(\Om,\bA)$ defined by~\eqref{eqdefgLbul}.
Symmetral moulds form a subgroup $G^\bul_{\text{sym}}(\Om,\bA)$ of the
multiplicative group $G^\bul(\Om,\bA)$ defined by~\eqref{eqdefGbul}.
The map~$E_t$ defined by~\eqref{eqdefEt} induces a bijection from
$\gL^\bul_{\text{alt}}(\Om,\bA)$ to $G^\bul_{\text{sym}}(\Om,\bA)$ for each $t\in\C^*$.
\end{prop}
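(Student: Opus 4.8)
The plan is to prove the three assertions in turn, using the shuffle-Hopf-algebra structure on $\hM^\bul(\Om,\bA)$ together with the exponential/logarithm correspondence already established for $\gL^\bul$ and $G^\bul$.

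First I would reformulate alternality and symmetrality in a single conceptual framework. For each pair of non-empty words $(\omb^1,\omb^2)$ consider the $\C$-linear functional $\de_{\omb^1,\omb^2}$ on $\hM^\bul(\Om,\bA)$ given by
\[
\de_{\omb^1,\omb^2}(M^\bul) = \sum_{\omb}\sh{\omb^1}{\omb^2}{\omb}M^\omb .
\]
The key algebraic fact is that shuffling is dual to mould multiplication: one checks by a direct combinatorial count (splitting a shuffle of $\omb^1$ and $\omb^2$ at the last position, i.e. tracking which of the two words contributes the final letter of $\omb$) that
\[
\de_{\omb^1,\omb^2}(M^\bul\times N^\bul) = \sum_{\omb^1=\ab^1\conc\ab^2}\ \sum_{\omb^2=\bb^1\conc\bb^2}
\de_{\ab^1,\bb^1}(M^\bul)\,\de_{\ab^2,\bb^2}(N^\bul),
\]
with the convention $\de_{\est,\est}(M^\bul)=M^\est$, $\de_{\est,\etab}(M^\bul)=M^\etab$, $\de_{\etab,\est}(M^\bul)=M^\etab$. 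In other words the coproduct dual to $\times$ is the deconcatenation coproduct, and the shuffle functionals are exactly its matrix coefficients. Granting this, alternal (resp. symmetral) means precisely that $M^\bul-M^\est 1^\bul$ is primitive (resp. $M^\bul$ is group-like) for this coproduct, but I would not strictly need the Hopf-algebra language: the displayed formula alone drives everything.

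Second, for the Lie-subalgebra statement: if $U^\bul,V^\bul$ are alternal then $U^\est=V^\est=0$, so in the formula above only the ``interior'' decompositions survive and one gets
\[
\de_{\omb^1,\omb^2}(U^\bul\times V^\bul)=\de_{\omb^1,\omb^2}(U^\bul)\,V^\est+U^\est\,\de_{\omb^1,\omb^2}(V^\bul)+(\text{cross terms}),
\]
where each cross term is a product of one $\de$-value on $U^\bul$ with one on $V^\bul$, at least one of them being of the form $\de_{\est,-}$ or $\de_{-,\est}$, hence a bare mould value $U^\omb$ or $V^\omb$ times a proper shuffle functional which vanishes by alternality; the leftover genuinely symmetric piece is $\de_{\omb^1,\omb^2}(U^\bul)V^\est+\dots=0$. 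Antisymmetrising in $U^\bul\leftrightarrow V^\bul$ then shows $\de_{\omb^1,\omb^2}([U^\bul,V^\bul])=0$, and $[U^\bul,V^\bul]^\est=0$ is trivial; hence $\gL^\bul_{\text{alt}}$ is closed under the bracket, and it is obviously a $\C$-subspace, so it is a Lie subalgebra.

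Third, for the group statement and the bijection: if $M^\bul,N^\bul$ are symmetral, the same identity gives $\de_{\omb^1,\omb^2}(M^\bul\times N^\bul)=\sum M^{\ab^1}N^{\ab^2}\cdot\dots$ which, after using $\de_{\ab,\bb}(M)=M^\ab M^\bb$ for the symmetral $M^\bul$ on the relevant pieces and regrouping the deconcatenation sums, telescopes to $M^{\omb^1}N^{\omb^1_?}\cdot\dots = (M\times N)^{\omb^1}(M\times N)^{\omb^2}$; combined with $(M\times N)^\est=1$ this shows $G^\bul_{\text{sym}}$ is closed under $\times$. For inverses I would use the formula $(M^\bul)\iim=G^\bul\circ M^\bul=\sum_{s\ge0}(-1)^s(M^\bul-1^\bul)^{\times s}$ from the excerpt, together with the closure under $\times$ just proved and the fact that symmetrality passes to such convergent series (each $\de_{\omb^1,\omb^2}$ is continuous for $\ord$, being determined by finitely many components on words of bounded length); alternatively, and more cleanly, I would deduce inverse-closure from the bijection $E_t$ once it is established. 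For the bijection, recall $E_t(U^\bul)=\sum_{s\ge0}\frac{t^s}{s!}(U^\bul)^{\times s}$ and its inverse $E_t\ii(M^\bul)=\frac1t\sum_{s\ge1}\frac{(-1)^{s-1}}{s}(M^\bul-1^\bul)^{\times s}$, already known to be mutually inverse bijections $\gL^\bul\leftrightarrow G^\bul$. It remains to see that $E_t$ maps alternal to symmetral and $E_t\ii$ maps symmetral to alternal. This is the one place where the Hopf-algebraic phrasing is cleanest: applying $\de_{\omb^1,\omb^2}$ to $E_t(U^\bul)$ and using the product formula for $\de$ repeatedly, one finds that ``$U^\bul$ primitive'' forces $E_t(U^\bul)$ to satisfy the group-like identity, by the classical exponential-of-primitive-is-group-like computation carried out componentwise on words up to a fixed length (where all sums are finite); the converse direction is the logarithm-of-group-like-is-primitive computation, done the same way. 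I expect this last step — checking that $E_t$ intertwines the primitive and group-like conditions — to be the main technical obstacle, since it requires organising the iterated shuffle identity carefully; the tool that makes it tractable is the length filtration, which reduces every identity to a finite computation on words of length $\le r(\omb^1)+r(\omb^2)$.

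Once the bijection respects the symmetry classes, it restricts to a bijection $\gL^\bul_{\text{alt}}\to G^\bul_{\text{sym}}$ for each $t\in\C^*$, completing the proof of Proposition~\ref{propstructaltsym}.
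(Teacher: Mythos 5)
Your functionals $\de_{\omb^1,\omb^2}$ are exactly the components $\tau(M^\bul)^{\omb^1,\omb^2}$ of the dimould that the paper constructs from $M^\bul$, and your ``key algebraic fact'' (that they intertwine mould multiplication with deconcatenation) is the statement that $\tau$ is an algebra homomorphism, i.e.\ Lemma~\ref{lemhomomtau} together with the identity~\eqref{eqidentiteshdeux}; so your proof follows the paper's architecture, merely phrased in terms of the individual functionals rather than packaging them into a single dimould $\bM^\cbul$. The one place I would push back is your alternality step. You assert that, because $U^\est=V^\est=0$, all the ``cross terms'' in $\de_{\omb^1,\omb^2}(U^\bul\times V^\bul)$ vanish by alternality --- but that would make $U^\bul\times V^\bul$ itself alternal, which is false. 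In fact exactly two decompositions $(\alb^1,\beb^1,\alb^2,\beb^2)$ of $(\omb^1,\omb^2)$ survive the alternality constraints on both factors, namely $(\est,\omb^2,\omb^1,\est)$ and $(\omb^1,\est,\est,\omb^2)$, which give
\[
\de_{\omb^1,\omb^2}(U^\bul\times V^\bul)=U^{\omb^2}V^{\omb^1}+U^{\omb^1}V^{\omb^2},
\]
nonzero in general but manifestly symmetric under the swap $U^\bul\leftrightarrow V^\bul$; the vanishing of $\de_{\omb^1,\omb^2}([U^\bul,V^\bul])$ then follows from the commutativity of~$\bA$, exactly as your final ``antisymmetrise'' remark suggests but your intermediate analysis contradicts. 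Your symmetral computation (the double deconcatenation sum factorising into $(M^\bul\times N^\bul)^{\omb^1}\,(M^\bul\times N^\bul)^{\omb^2}$) and your plan for the $E_t$ step (exponential of a primitive is group-like, computed componentwise in the length filtration where all sums are finite) are both sound and match the paper's dimould argument, and deriving inverse-closure of $G^\bul_{\text{sym}}(\Om,\bA)$ from the bijection $E_t$ is a legitimate reordering relative to the paper, which instead establishes it directly from uniqueness of inverses in the dimould algebra.
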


\begin{prop}	\label{propinvsym}
Given a mould~$M^\bul$, we define a mould $S M^\bul = \wt M^\bul$ by the formulas 
\begin{equation}	\label{eqdefinvol}
\wt M^\est = M^\est, \qquad
\wt M^{\om_1,\dotsc,\om_r} = (-1)^r M^{\om_r,\dots,\om_1}, \qquad
r\ge1, \; \om_1, \dotsc, \om_r \in \Om.
\end{equation}
Then $S$ is an involution and an antihomomorphism of the $\bA$-algebra
$\hM^\bul(\Om,\bA)$, and
\begin{align*}
M^\bul \ens\text{alternal} &\quad\Rightarrow\quad
S M^\bul = - M^\bul, \\
M^\bul \ens\text{symmetral} &\quad\Rightarrow\quad
S M^\bul = (M^\bul)\iim \ens \text{(multiplicative inverse)}.
\end{align*}
\end{prop}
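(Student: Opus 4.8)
The plan is to establish the three assertions about $S$ in the order stated. First I would verify that $S$ is an involution: applying~\eqref{eqdefinvol} twice, $\wt{\wt M}^{\om_1,\dotsc,\om_r} = (-1)^r \wt M^{\om_r,\dotsc,\om_1} = (-1)^r(-1)^r M^{\om_1,\dotsc,\om_r} = M^{\om_1,\dotsc,\om_r}$, and the empty-word case is trivial; $\bA$-linearity of $S$ is immediate from the component-wise definitions. That $S$ is an antihomomorphism for mould multiplication is the first genuine computation: given $P^\bul = M^\bul\times N^\bul$, one writes $\wt P^\omb = (-1)^r P^{\ov\omb}$ where $\ov\omb = (\om_r,\dotsc,\om_1)$ is the reversed word, then expands using~\eqref{eqdefmultiplimould} over decompositions $\ov\omb = \omb^1\conc\omb^2$. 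The key observation is that reversal gives a bijection between decompositions $\omb = \etab^2\conc\etab^1$ and decompositions $\ov\omb = \omb^1\conc\omb^2$, via $\omb^1 = \ov{\etab^1}$, $\omb^2 = \ov{\etab^2}$; tracking the sign $(-1)^r = (-1)^{r(\etab^1)}(-1)^{r(\etab^2)}$ then yields $\wt P^\omb = \sum_{\omb=\etab^2\conc\etab^1} \wt M^{\etab^1}\wt N^{\etab^2} = (\wt N^\bul \times \wt M^\bul)^\omb$, i.e.\ $S(M^\bul\times N^\bul) = SN^\bul \times SM^\bul$. (One also checks $S(1^\bul) = 1^\bul$.)

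For the symmetral case, since $S$ is an antihomomorphism and $M^\est = 1$, it suffices to show $S M^\bul \times M^\bul = 1^\bul$, i.e.\ $\sum_{\omb = \alb\conc\beb} \wt M^{\alb} M^{\beb} = 1^\omb$ for every word $\omb$. Writing $\alb = (\om_k,\dotsc,\om_1)$ reversed and $\beb = (\om_{k+1},\dotsc,\om_r)$, the claim becomes $\sum_{k=0}^r (-1)^k M^{\om_k,\dotsc,\om_1} M^{\om_{k+1},\dotsc,\om_r} = 0$ for $r\ge1$. I would derive this from the symmetrality relation~\eqref{eqdefsymal}: there is a classical identity expressing the shuffle of a word with the empty-or-partial-reversals so that the alternating sum telescopes. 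Concretely, one proves $\sum_{k} (-1)^k M^{\om_k,\dotsc,\om_1}M^{\om_{k+1},\dotsc,\om_r}=0$ by induction on $r$, using at each step that $M^{\om_k,\dotsc,\om_1}M^{\om_{k+1},\dotsc,\om_r}$ can be rewritten, via~\eqref{eqdefsymal} applied to $\omb^1=(\om_k,\dotsc,\om_1)$ and a one-letter extension, as a sum of longer $M^\bullet$-values, and the resulting terms cancel in pairs across consecutive $k$. For the alternal case one argues entirely analogously but now $M^\est = 0$ and the right-hand side of~\eqref{eqdefsymal} is replaced by $0$ in~\eqref{eqdefaltal}, so the same alternating-sum manipulation gives $\sum_k (-1)^k M^{\om_k,\dotsc,\om_1}M^{\om_{k+1},\dotsc,\om_r} = M^{\om_1,\dotsc,\om_r} + (-1)^r M^{\om_r,\dotsc,\om_1}$ on one side and $0$ on the other, whence $S M^\bul = -M^\bul$. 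Alternatively — and perhaps more cleanly — one can obtain alternality from symmetrality by a limiting/infinitesimal argument using Proposition~\ref{propstructaltsym}: if $U^\bul$ is alternal then $E_t(U^\bul)$ is symmetral, so $S E_t(U^\bul) = E_t(U^\bul)\iim$; differentiating at $t=0$ and using that $S$ is $\C$-linear gives $S U^\bul = -U^\bul$.

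The main obstacle I anticipate is the combinatorial identity at the heart of the symmetral case — showing that $\sum_{\alb\conc\beb=\omb}\wt M^\alb M^\beb$ vanishes for nonempty $\omb$. The cleanest route is to recognize this sum as the pairing of $M^\bul$ (viewed as a group-like element of the shuffle Hopf algebra, when $\bA$ is a field) with the antipode applied to a word, but to keep the argument self-contained and valid over an arbitrary commutative $\bA$, I would instead do the explicit induction on $r(\omb)$ sketched above, carefully bookkeeping which shuffle permutations contribute. The antihomomorphism property and the involution property are routine; once the multiplicative-inverse statement for symmetral moulds is in hand, the alternal statement follows either by the same bookkeeping with right-hand side $0$ or, more slickly, by differentiating the one-parameter subgroup identity from Proposition~\ref{propstructaltsym}.
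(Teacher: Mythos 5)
Your plan is essentially the paper's: the involution and antihomomorphism assertions are verified directly, and the symmetral case is reduced to showing that $\sum_{k=0}^{r}(-1)^{k}M^{\om_{k},\dotsc,\om_{1}}M^{\om_{k+1},\dotsc,\om_{r}}=0$ for $r\ge 1$, which you establish by using symmetrality to rewrite each product $M^{\om_{k},\dotsc,\om_{1}}M^{\om_{k+1},\dotsc,\om_{r}}$ as a shuffle sum, splitting that sum by first letter into two pieces $Q_{k}+R_{k}$, and observing the telescoping $Q_{k+1}=R_{k}$, $Q_{0}=R_{r}=0$. This is exactly the content of the paper's Lemma~\ref{lemxitenstau}; the paper merely packages it via the dimould map $\tau$ and an auxiliary map $\xi$ so that one identity, $\xi\circ\tau(M^{\bul})=M^{\est}\,1^{\bul}$ for \emph{any} mould, handles the alternal and symmetral cases at once.

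The one place where your sketch would not survive being written out is the direct argument for the alternal case. You assert that ``the same alternating-sum manipulation'' yields $\sum_{k}(-1)^{k}M^{\om_{k},\dotsc,\om_{1}}M^{\om_{k+1},\dotsc,\om_{r}}=M^{\om_{1},\dotsc,\om_{r}}+(-1)^{r}M^{\om_{r},\dotsc,\om_{1}}$ and also $=0$. Neither equality holds: for alternal $M^{\bul}$ one has $M^{\est}=0$, so the $k=0$ and $k=r$ terms of that alternating sum vanish, and what remains is $\sum_{k=1}^{r-1}(-1)^{k}M^{\om_{k},\dotsc,\om_{1}}M^{\om_{k+1},\dotsc,\om_{r}}$, which is not what you want. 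The telescoping step requires replacing the product $M^{\alb}M^{\beb}$ by the shuffle sum $\sum_{\gab}\tsh{\alb}{\beb}{\gab}M^{\gab}$, and this replacement is licensed precisely by symmetrality — alternality gives you something else (that shuffle sum is \emph{zero}, not the product). What does work is to apply the telescoping to $\sum_{k}(-1)^{k}\bM^{\wt\alb^{k},\beb^{k}}$ where $\bM^{\cbul}=\tau(M^{\bul})$: this always equals $M^{\est}\,1^{\bul}$, and then for alternal $M^{\bul}$ one computes $\bM^{\alb,\beb}=M^{\alb}1_{\{\beb=\est\}}+1_{\{\alb=\est\}}M^{\beb}$, so the alternating sum collapses to $M^{\omb}+(-1)^{r}M^{\wt\omb}=(M^{\bul}+SM^{\bul})^{\omb}$, and the conclusion follows. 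Your second, infinitesimal route via $E_{t}$ and Proposition~\ref{propstructaltsym} is correct and closes the gap — $SE_{t}(U^{\bul})=E_{t}(SU^{\bul})$ because $S$ is an antihomomorphism, while $E_{t}(U^{\bul})^{\times(-1)}=E_{-t}(U^{\bul})$; comparing $t$-coefficients gives $SU^{\bul}=-U^{\bul}$ — so the overall proposal is sound, but you should be aware that your first, ``analogous,'' argument for the alternal case is not actually analogous as written.
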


\begin{prop}	\label{propcomposalt}
If $\Om$ is a commutative semigroup and $U^\bul$ is alternal, then 
\begin{align}
M^\bul \ens\text{alternal} &\quad\Rightarrow\quad
M^\bul\circ U^\bul \ens\text{alternal,}\\
M^\bul \ens\text{symmetral} &\quad\Rightarrow\quad
M^\bul\circ U^\bul \ens\text{symmetral.}
\end{align}
If moreover $U^\bul$ admits an inverse for composition (\ie\ if $U^\omb$ has a
multiplicative inverse in~$\bA$ whenever $r(\omb)=1$), then this inverse is
alternal itself; thus alternal invertible moulds form a subgroup of the group
(for composition) of invertible moulds.
\end{prop}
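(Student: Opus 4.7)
The plan is to treat (1) and (2) together by a direct combinatorial computation, and then reuse that same computation for the inductive proof of (3). Fix non-empty words $\eta^1, \eta^2$ and consider
\[
\Sigma := \sum_\omb \sh{\eta^1}{\eta^2}{\omb} (M^\bul\circ U^\bul)^\omb.
\]
Expand via the definition of~$\circ$ and reorganize using the following bijection: to give a shuffle realization of~$\omb$ as an interleaving of $\eta^1, \eta^2$ together with a block decomposition $\omb = \omb^1 \concsm \omb^s$ into non-empty factors is the same as to give decompositions $\eta^1 = \alb^1 \concsm \alb^s$ and $\eta^2 = \beb^1 \concsm \beb^s$ into possibly empty blocks subject to $(\alb^i,\beb^i) \neq (\est,\est)$, together with a shuffle realization of each $\omb^i$ as an interleaving of $\alb^i$ and $\beb^i$. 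Since $M^{(\norm{\omb^1},\dotsc,\norm{\omb^s})}$ depends only on the norm sequence $\norm{\omb^i} = \norm{\alb^i}+\norm{\beb^i}$, and $\bA$ is commutative, this rearrangement yields
\[
\Sigma = \sum_{s,\,\{\alb^i\},\,\{\beb^i\}} M^{(\norm{\alb^1}+\norm{\beb^1},\,\dotsc)} \prod_i \Bigl(\sum_{\omb^i} \sh{\alb^i}{\beb^i}{\omb^i} U^{\omb^i}\Bigr).
\]

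By alternality of $U^\bul$, each per-block factor vanishes whenever both $\alb^i$ and $\beb^i$ are non-empty, and reduces to $U^{\alb^i}$ or $U^{\beb^i}$ when exactly one of them is empty. So only ``pure'' block patterns survive, in which each $\omb^i$ is drawn entirely from $\eta^1$ or entirely from $\eta^2$. Summing over the interleaving patterns of pure blocks rewrites the $M$-factor as a shuffle sum on the norm sequences $\zeta^1 = (\norm{\alb^i})_i$ and $\zeta^2 = (\norm{\beb^j})_j$, yielding
\[
\Sigma = \sum_{\{\alb^i\},\,\{\beb^j\}\text{ non-empty}} \Bigl(\sum_\mu \sh{\zeta^1}{\zeta^2}{\mu} M^\mu\Bigr) \prod_i U^{\alb^i} \prod_j U^{\beb^j}.
\]
For (1), alternality of $M^\bul$ forces the inner $\mu$-sum to vanish (since $\zeta^1,\zeta^2$ are both non-empty), so $\Sigma=0$. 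For (2), symmetrality of $M^\bul$ replaces the inner sum by $M^{\zeta^1} M^{\zeta^2}$, after which the outer double sum factorizes as $(M^\bul\circ U^\bul)^{\eta^1}(M^\bul\circ U^\bul)^{\eta^2}$.

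For (3), let $V^\bul$ be the composition inverse of an alternal composition-invertible $U^\bul$, and proceed by induction on $r := r(\eta^1)+r(\eta^2)$ to prove $\sum_\omb \sh{\eta^1}{\eta^2}{\omb} V^\omb = 0$. The base case $r=2$ is immediate: the relation $V^\bul\circ U^\bul = I^\bul$ at length-two words gives $V^{(\om_1,\om_2)} = -V^{(\om_1+\om_2)} U^{(\om_1,\om_2)}/\bigl(U^{(\om_1)} U^{(\om_2)}\bigr)$, and symmetrizing with $V^{(\om_2,\om_1)}$ produces the factor $U^{(\om_1,\om_2)}+U^{(\om_2,\om_1)}=0$ by alternality of $U^\bul$. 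For the inductive step, apply the computation above to the identity $V^\bul\circ U^\bul = I^\bul$ (whose shuffle sum on non-empty $\eta^1,\eta^2$ vanishes since $I^\bul$ is alternal): the resulting sum splits into the ``top'' contribution in which all blocks are singletons, i.e.\ $\zeta^1=\eta^1$ and $\zeta^2=\eta^2$, producing $\bigl(\sum_\omb \sh{\eta^1}{\eta^2}{\omb} V^\omb\bigr)\prod_i U^{(\om_i)}\prod_j U^{(\om'_j)}$ with invertible prefactor, plus all other contributions, which have $r(\zeta^1)+r(\zeta^2)<r$ and whose $\mu$-sums vanish by the inductive hypothesis. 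The main obstacle is setting up the block-shuffle bijection carefully in the first step (tracking empty blocks and cut-points); once this is in place, alternality of $U^\bul$ combined with the appropriate hypothesis on $M^\bul$ (or with alternality of $I^\bul$) drives all three parts in a uniform manner.
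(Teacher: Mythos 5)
Your argument is correct. For the first two implications it is, at bottom, the same computation as the paper's: your ``block-shuffle bijection'' is exactly the identity~\eqref{eqidentiteshs}, and the observation that only pure blocks survive (because $\sum_{\omb^i}\tsh{\alb^i}{\beb^i}{\omb^i}\,U^{\omb^i}$ vanishes by alternality of~$U^\bul$ whenever $\alb^i$ and $\beb^i$ are both non-empty) is precisely how the paper proves its key lemma $\tau(M^\bul\circ U^\bul)=\tau(M^\bul)\circ U^\bul$; the paper merely packages the bookkeeping in the dimould formalism ($\tau$, $\otimes$, and a composition of dimoulds with moulds), after which both implications drop out of Lemma~\ref{lemtaualtsym}. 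Where you genuinely diverge is the composition inverse. The paper obtains it by a purely algebraic manipulation, $\tau(V^\bul)=\tau(V^\bul)\circ(U^\bul\circ V^\bul)=\tau(V^\bul\circ U^\bul)\circ V^\bul=(I^\bul\otimes 1^\bul+1^\bul\otimes I^\bul)\circ V^\bul=V^\bul\otimes 1^\bul+1^\bul\otimes V^\bul$, with no induction and no use of the invertibility of the $U^{(\om)}$'s beyond the existence of~$V^\bul$. You instead induct on the total length, isolate the all-singleton block pattern in the expansion of the vanishing shuffle sum of $I^\bul=V^\bul\circ U^\bul$, and divide by the invertible prefactor $\prod U^{(\om_i)}$. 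Both are valid; the paper's version is slicker and recycles the machinery already built for the first two parts, while yours is more elementary and makes visible exactly where the invertibility hypothesis enters. Two small points you glossed over, both harmless: $V^\est=0$ is part of the definition of composition-invertibility, so alternality of~$V^\bul$ reduces to the shuffle identities you prove; and your inductive step actually subsumes the base case $r=2$, since there the all-singleton pattern is the only surviving one.
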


\begin{prop}	\label{propDerivSym}
If $D$ is a mould derivation induced by a derivation of~$\bA$, or of the form
$D_\ph$ with $\ph\colon\Om\to\bA$, or of the form $\na_{J^\bul}$ with $J^\bul$
alternal (with the assumption that $\Om$ is a
commutative semigroup in this last case), and if $M^\bul$ is symmetral, then 
$(D M^\bul)\times (M^\bul)\iim$ and $(M^\bul)\iim \times(D M^\bul)$
are alternal.
\end{prop}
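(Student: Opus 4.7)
My plan is to exploit the Hopf-algebraic viewpoint implicit in Section~\ref{secAltSym}. Given a mould $M^\bul$ and any pair of words $\omb^1,\omb^2$, introduce the shuffle comultiplication
\[
\De M^{\omb^1,\omb^2} \,:=\, \sum_{\omb\in\Om^\bul} \sh{\omb^1}{\omb^2}{\omb}\, M^\omb,
\]
so that Definition~\ref{defialtsym} reads: $M^\bul$ is symmetral iff $M^\est=1$ and $\De M^{\omb^1,\omb^2}=M^{\omb^1}M^{\omb^2}$ for all non-empty $\omb^1,\omb^2$, while $U^\bul$ is alternal iff $U^\est=0$ and $\De U^{\omb^1,\omb^2}=0$ for all non-empty $\omb^1,\omb^2$. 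A direct combinatorial identity, obtained by splitting each $\omb^i$ as $\omb^{ia}\conc\omb^{ib}$ and decomposing any shuffle of $\omb^1$ with $\omb^2$ into its portions left and right of the cut, yields the multiplicativity of $\De$:
\[
\De(P^\bul \times Q^\bul)^{\omb^1,\omb^2} \,=\, \sum_{\omb^i=\omb^{ia}\conc\omb^{ib}} \De P^{\omb^{1a},\omb^{2a}}\, \De Q^{\omb^{1b},\omb^{2b}}.
\]

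Next I would check that each of the three types of derivation satisfies the \emph{coderivation identity}
\[
\De(DM)^{\omb^1,\omb^2} \,=\, (DM)^{\omb^1} M^{\omb^2} + M^{\omb^1} (DM)^{\omb^2}
\]
on symmetral $M^\bul$. For $D$ induced by a derivation $d$ of $\bA$, one has $\De(DM)=d(\De M)=d(M^{\omb^1}M^{\omb^2})$, and Leibniz for $d$ finishes the job. For $D=D_\ph$, the coefficient $\ph(\om_1)+\dotsb+\ph(\om_r)$ depends only on the multiset of letters, so for each shuffle of $\omb^1$ with $\omb^2$ it splits as the $\ph$-sum over $\omb^1$ plus the $\ph$-sum over $\omb^2$, and factoring out gives the identity. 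For $D=\na_{J^\bul}$ with alternal $J^\bul$, I decompose the shuffle of $\omb^1$ with $\omb^2$ into three contiguous blocks $(\alb,\beb,\gab)$ by splitting each $\omb^i$ as $\omb^{ia}\conc\omb^{ib}\conc\omb^{ic}$; the inner sum $\sum_\beb \sh{\omb^{1b}}{\omb^{2b}}{\beb}\, J^\beb$ vanishes by alternality of $J^\bul$ whenever both $\omb^{1b}$ and $\omb^{2b}$ are non-empty, and the surviving terms (with $\omb^{1b}=\est$ or $\omb^{2b}=\est$) reassemble, via symmetrality of $M^\bul$ applied to the outer shuffles, into precisely the two Leibniz contributions.

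With these two ingredients in hand, set $U^\bul:=DM^\bul\times N^\bul$, where $N^\bul:=(M^\bul)^{\times(-1)}$. By Proposition~\ref{propinvsym}, $N^\bul=SM^\bul$; applying multiplicativity of $\De$ to $M^\bul\times N^\bul=1^\bul$, together with symmetrality of $M^\bul$ and the identity $\De(1^\bul)^{\omb^1,\omb^2}=0$ on non-empty words, one obtains $\De N^{\omb^1,\omb^2}=N^{\omb^1}N^{\omb^2}$ on non-empty words, so $N^\bul$ is itself symmetral. Combining multiplicativity of $\De$ with the coderivation identity then yields
\[
\De U^{\omb^1,\omb^2} \,=\, U^{\omb^1}\,(M^\bul\times N^\bul)^{\omb^2} + (M^\bul\times N^\bul)^{\omb^1}\,U^{\omb^2} \,=\, U^{\omb^1}\,1^{\omb^2} + 1^{\omb^1}\,U^{\omb^2},
\]
which vanishes for non-empty $\omb^1,\omb^2$. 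Since moreover $U^\est=(DM)^\est=0$ in each of the three cases, $U^\bul$ is alternal; the companion mould $N^\bul\times DM^\bul$ is handled by the strictly parallel argument.

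The main technical obstacle lies in the coderivation identity for $\na_{J^\bul}$: this derivation reindexes $M^\bul$ by collapsing a contiguous block to its sum letter $\norm{\beb}$, so the shuffle structure does not interact with it as transparently as with $d$ or $D_\ph$. The cancellations needed to kill the ``mixed'' terms, in which $\beb$ would absorb letters from both $\omb^1$ and $\omb^2$, are precisely the alternality relations for $J^\bul$; once they are unpacked carefully, the three cases fit into the same Hopf-theoretic skeleton.
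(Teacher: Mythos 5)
Your proof is correct and follows essentially the same route as the paper: your $\De$ is the homomorphism~$\tau$ of Lemma~\ref{lemhomomtau}, your ``coderivation identity'' is formula~\eqref{eqtauDM} (established case by case, with the three-block splitting and the alternality of~$J^\bul$ for the hard case), and the final multiplication by the symmetral inverse $(M^\bul)\iim$ is exactly the paper's concluding step. The only cosmetic difference is that you treat $D_\ph$ directly, whereas the paper absorbs it as a special case of $\na_{J^\bul}$.
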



\parag 
The following definition will facilitate the proof of most of these properties
and enlighten the connection with derivations and algebra automorphisms to be
discussed in Section~\ref{secaltsym}.

\begin{definition}
We call dimould\footnote{
Not to be confused with the {\em bimoulds} introduced by \'Ecalle in
connection with Multizeta values, which correspond to the case where the
set~$\Om$ itself is the cartesian product of two sets---see the end of
Section~\ref{secOtherAppli}. 
}
any map~$\bM^\cbul$ from $\Om^\bul\times\Om^\bul$ to~$\bA$;
its value on $(\omb,\etab)$ is denoted $\bM^{\omb,\etab}$.
The set of dimoulds is denoted $\hM^\bbul(\Om,\bA)$; when viewed as the large
algebra of the monoid $\Om^\bul\times\Om^\bul$, it is a non-commutative $\bA$-algebra.
\end{definition}

Observe that, the monoid law on $\Om^\bul\times\Om^\bul$ being 
$$
\vpib^1=(\omb^1,\etab^1),\; \vpib^2=(\omb^2,\etab^2)
\ens\Rightarrow\ens
\vpib^1\conc\vpib^2 = (\omb^1\conc\etab^1,\omb^2\conc\etab^2),
$$
the finitess of the number of decompositions of any
$\vpib\in\Om^\bul\times\Om^\bul$ as $\vpib = \vpib^1\conc\vpib^2$ allows us to
consider this large algebra, in which the multiplication is defined by a formula
similar to~\eqref{eqdefmultiplimould}.
The unit of dimould multiplication is $1^\cbul \colon (\omb,\etab)\mapsto 1$ if
$\omb=\etab=\est$ and $0$ otherwise.

\begin{lemma}	\label{lemhomomtau}
The map $\tau \colon M^\bul\in\hM^\bul(\Om,\bA) \mapsto \bM^\cbul\in\hM^\bbul(\Om,\bA)$ defined by
$$
\bM^{\alb,\beb} = \sum_{\omb\in\Om^\bul} \sh{\alb}{\beb}{\omb} M^\omb,
\qquad \alb,\beb \in \Om^\bul
$$
is an $\bA$-algebra homomorphism.
\end{lemma}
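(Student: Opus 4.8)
The map $\tau$ sends a mould to a dimould by recording, for each pair $(\alb,\beb)$, the $\beb$-weighted shuffle sum. I want to show $\tau$ respects the two algebra structures, i.e.\ $\tau(1^\bul)=1^\cbul$ and $\tau(M^\bul\times N^\bul) = \tau(M^\bul)\times\tau(N^\bul)$, where the right-hand multiplication is the large-algebra product on $\hM^\bbul(\Om,\bA)$, namely
$$
\big(\bM^\cbul\times\bN^\cbul\big)^{\alb,\beb} = \sum_{\alb=\alb^1\conc\alb^2,\;\beb=\beb^1\conc\beb^2} \bM^{\alb^1,\beb^1}\,\bN^{\alb^2,\beb^2}.
$$
($\bA$-linearity is immediate from the defining formula, so I only need the ring-homomorphism part.)

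First I would dispose of the unit: $\tau(1^\bul)^{\alb,\beb}=\sum_\omb \sh{\alb}{\beb}{\omb}\,1^\omb = \sh{\alb}{\beb}{\est}$, which is $1$ exactly when $\alb=\beb=\est$ and $0$ otherwise, so $\tau(1^\bul)=1^\cbul$. Then, for the multiplicativity, I would unwind both sides. On one side,
$$
\tau(M^\bul\times N^\bul)^{\alb,\beb} = \sum_{\omb} \sh{\alb}{\beb}{\omb} \sum_{\omb=\omb^1\conc\omb^2} M^{\omb^1} N^{\omb^2};
$$
on the other,
$$
\big(\tau M^\bul\times\tau N^\bul\big)^{\alb,\beb} = \sum_{\alb=\alb^1\conc\alb^2,\;\beb=\beb^1\conc\beb^2}\;\sum_{\omb',\omb''} \sh{\alb^1}{\beb^1}{\omb'}\,\sh{\alb^2}{\beb^2}{\omb''}\, M^{\omb'} N^{\omb''}.
$$
Matching the coefficient of a fixed product $M^{\omb'}N^{\omb''}$ on both sides, the identity to prove reduces to the purely combinatorial statement
$$
\sum_{\omb=\omb'\conc\omb''} \sh{\alb}{\beb}{\omb} \;=\; \sum_{\alb=\alb^1\conc\alb^2,\;\beb=\beb^1\conc\beb^2} \sh{\alb^1}{\beb^1}{\omb'}\,\sh{\alb^2}{\beb^2}{\omb''},
$$
for all words $\alb,\beb,\omb',\omb''$. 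This is the well-known ``splitting'' property of shuffles, and I would prove it bijectively: a shuffle of $\alb$ and $\beb$ equal to $\omb'\conc\omb''$ (a permutation $\sig$ preserving the internal orders of $\alb$ and $\beb$, landing in the prescribed word) is the same datum as a choice of how the positions $1,\dots,r(\omb')$ are split between letters coming from $\alb$ and letters coming from $\beb$ — this determines prefixes $\alb^1\preceq\alb$, $\beb^1\preceq\beb$ with $r(\alb^1)+r(\beb^1)=r(\omb')$ — together with a shuffle of $\alb^1,\beb^1$ giving $\omb'$ and a shuffle of the complementary suffixes $\alb^2,\beb^2$ giving $\omb''$; the order-preservation of $\sig$ on $\alb$ (resp.\ $\beb$) is exactly equivalent to order-preservation on $\alb^1$ and on $\alb^2$ separately (resp.\ $\beb^1,\beb^2$), since no letter of $\alb^1$ can appear after a letter of $\alb^2$. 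Counting both sides gives the identity.

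The main (and only) obstacle is making that bijective argument precise without drowning in index bookkeeping; the cleanest route is to think of a shuffle as a pair of increasing injections $\{1,\dots,\ell\}\hookrightarrow\{1,\dots,r\}$ and $\{1,\dots,m\}\hookrightarrow\{1,\dots,r\}$ with disjoint complementary images (here $\ell=r(\alb)$, $m=r(\beb)$, $r=\ell+m$), subject to the constraint that the resulting interleaved word is the target. Cutting $\{1,\dots,r\}$ at position $r(\omb')$ splits each injection into an increasing injection into $\{1,\dots,r(\omb')\}$ and one into $\{r(\omb')+1,\dots,r\}$, and this cutting is manifestly a bijection onto the pairs of data indexing the right-hand side. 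I would also note — or leave as the reader's exercise, in keeping with the paper's style — that once $\tau$ is known to be an algebra homomorphism, it is in fact injective (a mould is recovered from its image since $\bM^{\alb,\est}=M^{\alb}$), a remark which will be convenient later when deducing mould identities from dimould identities.
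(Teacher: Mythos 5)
Your argument is precisely the paper's: you reduce the homomorphism property to the shuffle-splitting identity~\eqref{eqidentiteshdeux}, which the paper states and leaves as an exercise, and which your bijective ``cut the interleaving at position $r(\gab^1)$'' argument correctly supplies. Two small remarks: the left side of your displayed combinatorial identity is a one-term sum and could simply read $\tsh{\alb}{\beb}{\gab^1\conc\gab^2}$; and your closing observation that $\tau$ is injective, via $\bM^{\alb,\est}=M^\alb$, is correct and worth keeping even though it is not in the paper's proof.
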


\begin{proof}
The map~$\tau$ is clearly $\bA$-linear and $\tau(1^\bul) = 1^\cbul$.
Let $P^\bul=M^\bul \times N^\bul$ and $\bP^\cbul=\tau(P^\bul)$; since
$$
\bP^{\alb,\beb} = \sum_{\gab^1,\gab^2\in\Om^\bul} 
\sh{\alb}{\beb}{\gab^1\conc\gab^2} M^{\gab^1} N^{\gab^2},
\qquad \alb,\beb\in\Om^\bul,
$$
the property $\bP^\cbul = \tau(M^\bul) \times \tau(N^\bul)$
follows from the identity
\begin{equation}	\label{eqidentiteshdeux}
\sh{\alb}{\beb}{\gab^1\conc\gab^2} = 
\sum_{\alb=\alb^1\conc\alb^2,\, \beb=\beb^1\conc\beb^2}
\sh{\alb^1}{\beb^1}{\gab^1} \sh{\alb^2}{\beb^2}{\gab^2}
\end{equation}
(the verification of which is left to the reader).
\end{proof}

As in the case of moulds, we can define the ``order'' of a dimould and get a
pseudovaluation $\ord \colon \hM^\bbul(\Om,\bA) \to \N\cup\{\infty\}$: 
by definition $\od{\bM^\cbul}  \ge s$ if $\bM^{\omb,\etab}=0$ whenever
$r(\omb)+r(\etab)<s$.
We then get a complete pseudovaluation ring $\big(\hM^\bbul(\Om,\bA),\ord\big)$
and the homomorphism $\tau$ is continuous since
$\od{\tau(M^\bul)} \ge \od{M^\bul}$.

\begin{definition}	\label{defidec}
We call decomposable a dimould $\bP^\cbul$ of the form
$\bP^{\omb,\etab} = M^\omb N^\etab$ (for all $\omb,\etab\in\Om^\bul$), where
$M^\bul$ and $N^\bul$ are two moulds.
We then use the notation
$\bP^\cbul = M^\bul \otimes N^\bul$.
\end{definition}

One can check that the relation
\begin{equation}	\label{eqrhohomomAalg}
(M_1^\bul \otimes N_1^\bul) \times (M_2^\bul \otimes N_2^\bul) =
(M_1^\bul \times M_2^\bul) \otimes (M_2^\bul \times N_2^\bul) 
\end{equation}
holds in $\hM^\bbul(\Om,\bA)$, for any four moulds $M_1^\bul,N_1^\bul,M_2^\bul,N_2^\bul$.

With this notation for decomposable dimoulds, we can now rephrase
Definition~\ref{defialtsym} with the help of the homomorphism~$\tau$ of
Lemma~\ref{lemhomomtau}:
\begin{lemma}	\label{lemtaualtsym}
A mould~$M^\bul$ is alternal iff $\tau(M^\bul)=M^\bul\otimes 1^\bul + 1^\bul\otimes M^\bul$.
A mould~$M^\bul$ is symmetral iff $M^\est=1$ and $\tau(M^\bul)=M^\bul\otimes M^\bul$.
\end{lemma}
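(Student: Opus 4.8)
The statement (Lemma~\ref{lemtaualtsym}) is essentially a reformulation of Definition~\ref{defialtsym} using the homomorphism $\tau$ of Lemma~\ref{lemhomomtau} and the notation for decomposable dimoulds. The plan is to compare the two sides of the claimed identities component by component on an arbitrary pair of words $(\alb,\beb)$.

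First I would recall that, by definition of $\tau$, one has
$$
\tau(M^\bul)^{\alb,\beb} = \sum_{\omb\in\Om^\bul} \sh{\alb}{\beb}{\omb} M^\omb.
$$
For the alternal case: I would observe that when $\beb=\est$, the only word in the shuffling of $\alb$ and $\est$ is $\alb$ itself, with $\sh{\alb}{\est}{\alb}=1$, so $\tau(M^\bul)^{\alb,\est}=M^\alb$; symmetrically $\tau(M^\bul)^{\est,\beb}=M^\beb$. On the other hand $(M^\bul\otimes 1^\bul + 1^\bul\otimes M^\bul)^{\alb,\beb} = M^\alb 1^\beb + 1^\alb M^\beb$. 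So the two dimoulds agree whenever $\alb=\est$ or $\beb=\est$ (noting $M^\est=0$ is forced by matching the $(\est,\est)$ component, which gives $0=0$ only if $M^\est=0$), and for $\alb,\beb$ both non-empty the right-hand dimould vanishes while the left-hand one equals $\sum_\omb \sh{\alb}{\beb}{\omb} M^\omb$. Hence $\tau(M^\bul)=M^\bul\otimes 1^\bul+1^\bul\otimes M^\bul$ is equivalent to the conjunction of $M^\est=0$ and the vanishing of that sum for all non-empty $\alb,\beb$, which is exactly alternality.

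For the symmetral case I would argue in the same way: the $(\est,\est)$ component of $\tau(M^\bul)$ is $M^\est$ and that of $M^\bul\otimes M^\bul$ is $(M^\est)^2$, so under the hypothesis $M^\est=1$ these agree; the components with exactly one of $\alb,\beb$ empty agree automatically as above (both give $M^\alb$ resp.\ $M^\beb$ since $M^\est=1$); and for $\alb,\beb$ both non-empty, equality of the two dimoulds at $(\alb,\beb)$ reads $\sum_\omb \sh{\alb}{\beb}{\omb} M^\omb = M^\alb M^\beb$, which is precisely~\eqref{eqdefsymal}. Conversely, if $\tau(M^\bul)=M^\bul\otimes M^\bul$ then evaluating at $(\est,\est)$ gives $M^\est=(M^\est)^2$, so $M^\est\in\{0,1\}$, and the extra stipulation $M^\est=1$ in the lemma rules out the trivial case; then the non-empty components give back~\eqref{eqdefsymal}.

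This argument is entirely routine; there is no real obstacle, only bookkeeping. The one point worth stating carefully is the behaviour of the shuffling coefficients when one of the words is empty (so that the reader sees why the $(\alb,\est)$ and $(\est,\beb)$ components are handled by the empty word being the unique shuffle), and the remark that the condition $M^\est=0$ (resp.\ $M^\est=1$) in Definition~\ref{defialtsym} corresponds exactly to matching the $(\est,\est)$ component of the decomposable dimoulds on the right-hand sides.
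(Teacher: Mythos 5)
Your proposal is correct and proceeds by the only natural route — componentwise comparison of the two dimoulds — which is also why the paper omits a proof entirely: the lemma is a direct translation of Definition~\ref{defialtsym} into the $\tau$-notation. One small slip in your write-up of the alternal case: the $(\est,\est)$ component of $\tau(M^\bul)$ is $M^\est$, while that of $M^\bul\otimes1^\bul+1^\bul\otimes M^\bul$ is $2M^\est$, so the equality at $(\est,\est)$ reads $M^\est=2M^\est$ (forcing $M^\est=0$), not ``$0=0$''; the conclusion you draw is nonetheless the right one.
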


Notice that the image of~$\tau$ is contained in the set of {\em symmetric dimoulds},
\ie\ those $\bM^\cbul$ such that $\bM^{\alb,\beb} = \bM^{\beb,\alb}$, because of
the obvious relation
\begin{equation}	\label{eqcommsh}
\sh{\alb}{\beb}{\omb} = \sh{\beb}{\alb}{\omb},
\qquad \alb,\beb,\omb\in\Om^\bul.
\end{equation}


\parag 
\emph{Remark on Definition~\ref{defidec}.}
The tensor product is used here as a mere notation, which is related to the
tensor product of $\bA$-algebras as follows:
there is a unique $\bA$-linear map
$\rho \colon \hM^\bul(\Om,\bA) \otimes_\bA \hM^\bul(\Om,\bA) \to \hM^\bbul(\Om,\bA)$
such that $\rho(M^\bul \otimes N^\bul)$ is the above dimould~$\bP^\cbul$.
The map~$\rho$ is an $\bA$-algebra homomorphism, according to~\eqref{eqrhohomomAalg}, however its
injectivity is not obvious when~$\bA$ is not a field, and denoting $\rho(M^\bul
\otimes N^\bul)$ simply as $M^\bul \otimes N^\bul$, as in
Definition~\ref{defidec}, is thus an abuse of notation.  

In fact, if $\bA$ is an integral domain, then the $\bA$-module
$\hM^\bul(\Om,\bA)$ is torsion-free ($\mu M^\bul=0$ implies $\mu=0$ or
$M^\bul=0$) and $\Ker\rho$ coincides with the set $\gT$ of all torsion elements of
$\hM^\bul(\Om,\bA) \otimes_\bA \hM^\bul(\Om,\bA)$.
Indeed, for any $\xi\in\gT$, there is a non-zero $\mu\in\bA$ such that
$\mu\xi=0$, thus $\mu\rho(\xi)=0$ in $\hM^\bbul(\Om,\bA)$, whence $\rho(\xi)=0$.
Conversely, suppose $\xi = \sum_{i=1}^n M_i^\bul  \otimes N_i^\bul \in \Ker\rho$,
where the moulds $M_i^\bul$ are not all zero; without loss of generality we can
suppose $M_n^\bul\neq0$ and choose $\omb^1\in\Om^\bul$ such that $\mu_n =
M_n^{\omb^1} \neq0$. Setting $\mu_i = M_i^{\omb^1}$ for the other $i$'s, we get
$\sum_{i=0}^n \mu_i N_i^\bul = 0$, whence
$\mu_n \xi = \sum_{i=0}^{n-1} (\mu_n M_i^\bul - \mu_i M_n^\bul)\otimes N_i^\bul$,
still with $\mu_n\xi\in\Ker\rho$. By induction on~$n$, one gets a non-zero
$\mu\in\bA$ such that $\mu\xi=0$.

Therefore, $\rho$ is injective when~$\bA$ is a principal integral domain,
as is the case of $\C[[x]]$,
because any torsion-free $\bA$-module is then flat (Bourbaki, {\em Alg.\ comm.},
chap.~I, \S2, n$^{\text{o}}$4, Prop.~3), hence its tensor product with itself is also
torsion-free (by flatness, the injectivity of $\phi \colon M^\bul \mapsto \mu
M^\bul$, for $\mu\neq0$, implies the injectivity of $\phi\otimes\ID \colon \xi
\mapsto \mu\xi$).

This is a fortiori the case when $\bA$ is a field; this is used in the
remark on Lemma~\ref{lemxitenstau} below. 


\parag 
\emph{Proof of Proposition~\ref{propstructaltsym}.}
The set $\gL^\bul_{\text{alt}}(\Om,\bA)$ of alternal moulds is clearly an
$\bA$-submodule of $\gL^\bul(\Om,\bA)$.
Given $U^\bul$ and $V^\bul$ in this set, the alternality of $[U^\bul,V^\bul]$ is
easily checked with the help of Lemma~\ref{lemhomomtau},
formula~\eqref{eqrhohomomAalg} and Lemma~\ref{lemtaualtsym}.

Let $M^\bul$ and $N^\bul$ be symmetral. The symmetrality of $M^\bul\times
N^\bul$ follows from Lemma~\ref{lemhomomtau}, formula~\eqref{eqrhohomomAalg} and
Lemma~\ref{lemtaualtsym}.
Similarly, the multiplicative inverse $\wt M^\bul$ of~$M^\bul$ satisfies
$\tau(\wt M^\bul)\times\tau(M^\bul) = \tau(M^\bul)\times\tau(\wt
M^\bul) = \tau(1^\bul) = 1^\cbul$, by uniqueness of the multiplicative inverse in
$\hM^\bbul(\Om,\bA)$ it follows that $\tau(\wt M^\bul) = \wt
M^\bul\otimes \wt M^\bul$ and $\wt M^\bul$ is symmetral.

Now let $t\in\C^*$. Suppose first $U^\bul\in\gL^\bul_{\text{alt}}(\Om,\bA)$.
We check that $M^\bul = E_t(U^\bul)$ is symmetral by using the continuity of~$\tau$
and formula~\eqref{eqdefEt}:
$\tau(M^\bul) = \exp(a+b)$ with $a=t U^\bul \otimes 1^\bul$ and $b=1^\bul \otimes
t U^\bul$, where the exponential series is well-defined in $\hM^\bbul(\Om,\bA)$
because $\od{a},\od{b}>0$; since $a\times b=b\times a$, the standard properties
of the exponential series yield 
$$
\tau(M^\bul) = \exp(a) \times \exp(b) =  
\big( \exp(t U^\bul)\otimes 1^\bul \big) \times \big( 1^\bul \otimes \exp(t U^\bul) \big)
= M^\bul \otimes M^\bul.
$$
Conversely, supposing $M^\bul = 1^\bul+N^\bul \in G_{\text{sym}}(\Om,\bA)$, we check that
$U^\bul = E_t\ii(M^\bul)$ is alternal:
by continuity, we can apply $\tau$ termwise to the logarithm series
in~\eqref{eqdefEt} and write $\tau(M^\bul-1^\bul) = N^\bul\otimes 1^\bul +
1^\bul\otimes N^\bul + N^\bul\otimes N^\bul = a + b + a\times b$, with
$a = N^\bul\otimes 1^\bul$ and $b = 1^\bul\otimes N^\bul$ commuting in
$\hM^\bbul(\Om,\bA)$, the conclusion then follows from the identity 
$$
\sum_{s\ge1}\tfrac{(-1)^{s-1}}{s} (a + b + a\times b)^{\times s} =
\sum_{s\ge1}\tfrac{(-1)^{s-1}}{s} a^{\times s} +
\sum_{s\ge1}\tfrac{(-1)^{s-1}}{s} b^{\times s}
$$
(which follows from the observation that, given $c\in\hM^\bbul(\Om,\bA)$ with
$\od{c}>0$, $\sum\tfrac{(-1)^{s-1}}{s} c^{\times s}$ is the only dimould~$\ell$
of positive order such that $\exp(\ell)=1^\cbul+c$).


\parag 
\emph{Proof of Proposition~\ref{propinvsym}.}
It is obvious that $S$ is an involution and the identity
$$
S(M^\bul\times N^\bul) = S N^\bul \times S M^\bul,
\qquad M^\bul, N^\bul \in \hM^\bul(\Om,\bA)
$$
clearly follows from the Definition~\eqref{eqdefmultiplimould} of mould multiplication.
Let us define an $\bA$-linear map 
$$
\xi \,\colon\; \bM^\cbul \in \hM^\bbul(\Om,\bA) \mapsto 
P^\bul = \xi(\bM^\cbul) \in \hM^\bul(\Om,\bA)
$$ 
by the formula
\begin{equation}	\label{eqdefxiPbM}
P^\omb = \sum_{\omb=\alb\conc\beb} (-1)^{r(\alb)} \bM^{\wt\alb,\beb},
\qquad \omb \in \Om^\bul,
\end{equation}
where $\wt\alb = (\om_i,\dotsc,\om_1)$ for $\alb = (\om_1,\dotsc,\om_i)$ with $i\ge1$
and $\wt\est = \est$.
Thus 
\begin{multline*}
P^\est = \bM^{\est,\est}, \quad
P^{(\om_1)} = \bM^{\est,(\om_1)} - \bM^{(\om_1),\est}, \\
P^{(\om_1,\om_2)} = \bM^{\est,(\om_1,\om_2)} - \bM^{(\om_1),(\om_2)} + \bM^{(\om_2,\om_1),\est},
\end{multline*}
and so on. The rest of Proposition~\ref{propinvsym} follows from
\begin{lemma}	\label{lemxitenstau}
For any two moulds $M^\bul,N^\bul$, one has
\begin{gather}	
\label{eqxitens}
\xi(M^\bul\otimes N^\bul) = (S M^\bul) \times N^\bul, \\
%
%
\label{eqxitau}
\xi\circ\tau(M^\bul) = M^\est \, 1^\bul,
\end{gather}
with the homomorphism~$\tau$ of Lemma~\ref{lemhomomtau}.
\end{lemma}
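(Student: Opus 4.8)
The plan is to prove the two formulas \eqref{eqxitens} and \eqref{eqxitau} by direct computation from the definitions, and then derive the remaining assertions of Proposition~\ref{propinvsym} as easy consequences.

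For \eqref{eqxitens}, I would simply unwind the definition~\eqref{eqdefxiPbM} of $\xi$ applied to the decomposable dimould $\bM^\cbul = M^\bul\otimes N^\bul$, \ie\ $\bM^{\omb,\etab}=M^\omb N^\etab$. This gives, for a word $\omb\in\Om^\bul$,
\[
\xi(M^\bul\otimes N^\bul)^\omb = \sum_{\omb=\alb\conc\beb} (-1)^{r(\alb)} M^{\wt\alb} N^\beb = \sum_{\omb=\alb\conc\beb} (-1)^{r(\alb)} M^{\wt\alb} N^\beb,
\]
and recognising $(-1)^{r(\alb)} M^{\wt\alb} = (SM)^\alb$ by the very definition~\eqref{eqdefinvol} of $S$, the right-hand side is exactly $\bigl((SM^\bul)\times N^\bul\bigr)^\omb$ by~\eqref{eqdefmultiplimould}. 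This step is essentially a change of notation once one observes that the ``reversal-with-sign'' built into $\xi$ is the same as the one defining $S$.

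For \eqref{eqxitau}, I would combine Lemma~\ref{lemhomomtau} and Definition~\ref{defidec}: writing out $\xi\circ\tau(M^\bul)$ at a word $\omb$, we get a double sum over decompositions $\omb=\alb\conc\beb$ and over words $\gab$ appearing in the shuffling of $\wt\alb$ and $\beb$, weighted by $(-1)^{r(\alb)}\sh{\wt\alb}{\beb}{\gab}\,M^\gab$. The key is a sign-reversing involution on the set of pairs $(\alb\conc\beb,\ \gab)$ that preserves $M^\gab$ and cancels everything except the empty-word contribution, leaving $M^\est\,1^\bul$. I expect this telescoping/involution to be the main obstacle. The natural candidate is to look at the first letter of $\omb$ (for $\omb\neq\est$): in any shuffle $\gab$ of $\wt\alb$ and $\beb$ realising a given permutation, that first letter of $\omb$, which is $\om_1$, sits at the end of $\wt\alb$ (if $\alb\neq\est$) or at the start of $\beb$ (if $\alb=\est$ we must argue differently), and moving it between the two factors sets up a bijection between the term indexed by $(\alb,\beb)$ with $\alb=(\om_1)\conc\alb'$ and the term indexed by $((\est),\ \beb)$... — more carefully, one pairs the decomposition $\alb=(\om_1,\dotsc,\om_i)$, $\beb=(\om_{i+1},\dotsc)$ with $\alb=(\om_1,\dotsc,\om_{i-1})$, $\beb=(\om_i,\dotsc)$, i.e.\ one shifts the ``cut point'' by one letter, which flips $(-1)^{r(\alb)}$ while the identity~\eqref{eqidentiteshdeux} (or a direct shuffle count) shows the shuffle coefficients match up; the sole unpaired term is the one with $\alb=\est$, $\beb=\omb$ contributing $\bM^{\est,\omb}$, and reading off the definition of $\tau$ at $(\est,\omb)$ gives $M^\omb$, which... no: for $\omb\neq\est$ everything must cancel, so the unpaired terms at the two ends ($\alb=\est$ and $\alb=\omb$) must cancel each other, and only for $\omb=\est$ does one get the surviving $M^\est$. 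I would carry out this bookkeeping via the first-letter argument and identity~\eqref{eqidentiteshdeux}.

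Finally I would deduce Proposition~\ref{propinvsym}: that $S$ is an involution is immediate from~\eqref{eqdefinvol}, and that $S$ is an antihomomorphism follows directly from~\eqref{eqdefmultiplimould} by reversing words. For the alternal case, if $M^\bul$ is alternal then by Lemma~\ref{lemtaualtsym} $\tau(M^\bul)=M^\bul\otimes 1^\bul+1^\bul\otimes M^\bul$, so applying $\xi$ and using~\eqref{eqxitens} together with $\xi\circ\tau(M^\bul)=M^\est\,1^\bul=0$ gives $SM^\bul\times 1^\bul + S(1^\bul)\times M^\bul=0$, i.e.\ $SM^\bul+M^\bul=0$ since $S(1^\bul)=1^\bul$. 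For the symmetral case, $\tau(M^\bul)=M^\bul\otimes M^\bul$ and $M^\est=1$, so~\eqref{eqxitens} and~\eqref{eqxitau} give $(SM^\bul)\times M^\bul=1^\bul$, identifying $SM^\bul$ with the multiplicative inverse $(M^\bul)\iim$.
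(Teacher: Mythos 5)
Your argument for \eqref{eqxitens} is correct and is exactly the paper's (which dismisses it as ``obvious''): unwinding \eqref{eqdefxiPbM} on a decomposable dimould and recognising $(-1)^{r(\alb)}M^{\wt\alb}=(SM^\bul)^\alb$ gives the mould product on the nose. Your derivation of Proposition~\ref{propinvsym} from the lemma also matches the paper's.

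For \eqref{eqxitau} you have the right abstract idea (a sign-reversing involution, or equivalently a telescoping, on cut-point/shuffle data) and the right germ of a mechanism (``shifting the cut point''), but you never pin it down and in fact your first instinct is a dead end: looking at the first letter of $\omb$, i.e.\ $\om_1$, places you at the \emph{end} of $\wt\alb^i$, which does not interact with the splitting at all. The letter to track is the \emph{first letter of the mute shuffle word} $\gab$, and the direction of the cut-point shift must \emph{depend} on where that letter came from. Concretely: with $\alb^i=(\om_1,\dots,\om_i)$, $\beb^i=(\om_{i+1},\dots,\om_r)$, any shuffle $\gab$ of $\wt\alb^i$ and $\beb^i$ begins either with $\om_i$ (if the first picked letter is from $\wt\alb^i$) or with $\om_{i+1}$ (if from $\beb^i$). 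Splitting accordingly,
\[
\bM^{\wt\alb^i,\beb^i}=Q_i+R_i,\qquad
Q_i=\sum_{\gab}\tsh{\wt\alb^{i-1}}{\beb^i}{\gab}\,M^{(\om_i)\conc\gab},\quad
R_i=\sum_{\gab}\tsh{\wt\alb^i}{\beb^{i+1}}{\gab}\,M^{(\om_{i+1})\conc\gab},
\]
with $Q_0=0$, $R_r=0$. The whole point is the exact identity $R_i=Q_{i+1}$, so that $\sum_{i=0}^r(-1)^i(Q_i+R_i)$ telescopes to $0$ when $r\ge1$. Your version pairs the cut point $i$ with $i-1$ unconditionally and then appeals to \eqref{eqidentiteshdeux} to ``match up the shuffle coefficients''; but the full shuffle sums at cut points $i$ and $i-1$ are not equal, so that pairing alone does not cancel, which is exactly the confusion that surfaces at the end of your sketch (``the unpaired terms at the two ends must cancel each other'' — they do not cancel as stated; each is absorbed into the telescoping with its neighbour). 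The missing step is the two-piece splitting of each term. Once you have $Q_i,R_i$ and $R_i=Q_{i+1}$, there are no boundary issues: $i=0$ contributes only $R_0$ (since $\wt\alb^0=\est$) and $i=r$ contributes only $Q_r$ (since $\beb^r=\est$), both of which are consumed by the telescoping.
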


Indeed, if $M^\bul$ is alternal, then 
$$
S M^\bul + M^\bul = (S M^\bul)\times 1^\bul + 1^\bul\times M^\bul 
= \xi(M^\bul \otimes 1^\bul + 1^\bul \otimes M^\bul) = \xi\circ\tau(M^\bul) = 0,
$$
and if $M^\bul$ is symmetral, then 
$$
(S M^\bul) \times M^\bul = \xi(M^\bul \otimes M^\bul) =
\xi\circ\tau(M^\bul) = 1^\bul
$$
and similarly $M^\bul \times S M^\bul = 1^\bul$ because $S M^\bul$ is
clearly symmetral too.

\medskip

\noindent
\emph{Proof of Lemma~\ref{lemxitenstau}.}
Formula~\eqref{eqxitens} is obvious.
Let $\bM^\cbul = \tau(M^\bul)$ and $P^\bul = \xi(\bM^\cbul)$.
Clearly $P^\est = \bM^{\est,\est} = M^\est$. 
Let $\omb = (\om_1,\ldots,\om_r)$ with $r\ge 1$: we must show that $P^\omb=0$.

Using the notations $\alb^i = (\om_1,\dotsc,\om_i)$
and $\beb^i = (\om_{i+1},\dotsc,\om_r)$ for $0 \le i \le r$
(with $\alb^0 = \beb^r = \est$),
we can write
$P^\omb = \sum_{i=0}^r (-1)^i \bM^{\wt\alb^i,\beb^i}$;
we then split the sum
$$
\bM^{\wt\alb^i,\beb^i} = \sum_\gab \tsh{\wt\alb^i}{\beb^i}{\gab} M^\gab
$$
according to the first letter of the mute variable:
$\bM^{\wt\alb^i,\beb^i} = Q_i + R_i$ with
\begin{alignat*}{3}
Q_i &= \sum_{\gab}
\tsh{\wt\alb^{i-1}}{\beb^i}{\gab} M^{(\om_i)\conc\gab}
&\ens &\text{if $1\le i\le r$},&\qquad Q_0&=0,\\
R_i &= \sum_{\gab}
\tsh{\wt\alb^i}{\beb^{i+1}}{\gab} M^{(\om_{i+1})\conc\gab}
&\ens &\text{if $1\le i\le r-1$},&\qquad R_r&=0.
\end{alignat*}
But, if $0 \le i \le r-1$, $Q_{i+1} = R_i$,
whence $\dst P^\omb = \sum_{i=1}^r (-1)^i Q_i + \sum_{i=0}^{r-1} (-1)^i Q_{i+1} = 0$.


\parag 
\emph{Remark on Lemma~\ref{lemxitenstau}.}
Although this will not be used in the rest of the article, it is worth noting
here that the structure we have on~$\hM^\bul(\Om,\bA)$ is very reminiscent of
that of a {\em cocommutative Hopf algebra}:
the algebra structure is given by mould
multiplication~\eqref{eqdefmultiplimould}, with its unit~$1^\bul$;
as for the cocommutative cogebra structure, we may think of the map $\eps \colon
M^\bul\mapsto M^\est$ as of a {\em counit} and of the homomorphism~$\tau$ as of a kind
of {\em coproduct} (although its range is not exactly
$\hM^\bul(\Om,\bA)\otimes_\bA\hM^\bul(\Om,\bA)$);
we now may consider that the involution $S \colon M^\bul \mapsto \wt M^\bul$ behaves as
an {\em antipode}.

Indeed, the identity\footnote{%
derived from the obvious relation 
$\tsh{\est}{\alb}{\omb} =\tsh{\alb}{\est}{\omb} = 1_{\{\omb=\alb\}}$.
}
$\tau(M^\bul)^{\est,\alb} = \tau(M^\bul)^{\alb,\est} = M^\alb$
can be interpreted as a counit-like property for~$\eps$
and the fact that any dimould in the image of~$\rho$ is symmetric (consequence
of~\eqref{eqcommsh}) as a cocommutativity-like property, in the sense that
$\tau(M^\bul) = \sum P_i^\bul\otimes Q_i^\bul$ implies
$\sum \eps(P_i^\bul) Q_i^\bul = \sum \eps(Q_i^\bul) P_i^\bul = M^\bul$
and $\sum P_i^\bul\otimes Q_i^\bul = \sum Q_i^\bul\otimes P_i^\bul$.
The analogue of coassociativity for~$\tau$ is obtained by considering the
maps~$\taul$ and~$\taur$ which associate with any dimould~$\bM^\cbul$ the ``trimoulds''
$\bP^\ccbul = \taul(\bM^\cbul)$ and $\bQ^\ccbul = \taur(\bM^\cbul)$
defined by
$$
\bP^{\alb,\beb,\gab} = \sum_{\etab\in\Om^\bul} 
\tsh{\alb}{\beb}{\etab} \bM^{\etab,\gab},
\quad
\bQ^{\alb,\beb,\gab} = \sum_{\etab\in\Om^\bul} 
\tsh{\beb}{\gab}{\etab} \bM^{\alb,\etab}
$$
and by observing\footnote{%
Proof: for a mould~$M^\bul$, we have
$\taul\circ\tau(M^\bul)^{\alb,\beb,\gab} = \sum_\omb \tssh M^\omb$
with $\tssh = \sum_{\etab} \tsh{\alb}{\beb}{\etab} \tsh{\etab}{\gab}{\omb}$
coinciding with $\sum_{\etab} \tsh{\alb}{\etab}{\omb} \tsh{\beb}{\gab}{\etab}$,
hence $\taur\circ\tau(M^\bul)^{\alb,\beb,\gab} = \sum_\omb \tssh M^\omb$ as well.}
that $\taul\circ\tau = \taur\circ\tau$: when
$\tau(M^\bul) = \sum_i P_i^\bul\otimes Q_i^\bul$ 
with $\tau(P_i^\bul) = \sum_j A_{i,j}^\bul\otimes B_{i,j}^\bul$ 
and $\tau(Q_i^\bul) = \sum_k C_{i,k}^\bul\otimes D_{i,k}^\bul$,
this yields
$$
\sum_{i,j} A_{i,j}^\bul\otimes B_{i,j}^\bul\otimes Q_i^\bul =
\sum_{i,k} P_i^\bul \otimes C_{i,k}^\bul\otimes D_{i,k}^\bul.
$$
Finally, the compatibility of $\eps$, $\tau$ and~$S$ is expressed through formulas
\eqref{eqxitens}--\eqref{eqxitau}
(complemented by relations $\xi'(M^\bul\otimes N^\bul) = M^\bul  \times 
S N^\bul$ and $\xi'\circ \tau(M^\bul) = M^\est \, 1^\bul$
involving a map~$\xi'$ defined by replacing $(-1)^{r(\alb)} \bM^{\wt\alb,\beb}$
with $(-1)^{r(\beb)} \bM^{\alb,\wt\beb}$ in~\eqref{eqdefxiPbM});
therefore
$$
\tau(M^\bul) = \sum_i P_i^\bul\otimes Q_i^\bul
\ens\Rightarrow\ens
\sum S P_i^\bul \times Q_i^\bul = M^\est\,1^\bul
= \sum P_i^\bul \times S Q_i^\bul.
$$

When $\bA$ is a field, we get a true cocommutative Hopf algebra (graded by~$\ord$) by
considering
$\gH^\bul(\Om,\bA) = \tau\ii(\gB)$ with $\gB = \hM^\bul(\Om,\bA)\otimes_\bA\hM^\bul(\Om,\bA)$
(we can view $\gB$ as a subalgebra of $\hM^\bbul(\Om,\bA)$ according to the
remark on Definition~\ref{defidec}).
Indeed, in view of the above, it suffices essentially to check that
$M^\bul\in\gH=\gH^\bul(\Om,\bA)$ implies $\tau(M^\bul) \in \gH\otimes_\bA\gH$ (and not only
$\tau(M^\bul) \in \gB$), so that the restriction of the homomorphism~$\tau$
to~$\gH$ is a {\em bona fide} coproduct
$$
\De \colon \gH \to \gH \otimes_\bA \gH.
$$
This can be done by choosing a minimal~$N$ such that $\tau(M^\bul)$ can be written
as a sum of $N$ decomposable dimoulds:
$\tau(M^\bul) = \sum_{i=1}^N P_i^\bul \otimes  Q_i^\bul$ then implies that the
$Q_i^\bul$'s are linearly independent over~$\bA$ and the coassociativity property
allows one to show that each~$P_i^\bul$ lies in~$\gH$
(choose a basis of~$\hM^\bul(\Om,\bA)$, the first $N$ vectors of which are
$Q_1^\bul,\dotsc,Q_N^\bul$, and call $\xi_1,\dotsc,\xi_N$ the first $N$ covectors
of the dual basis: the coassociativity identity can be written
$\sum_i \tau(P_i^\bul)^{\alb,\beb}  Q_i^\gab = 
\sum_j P_i^\alb \tau(Q_j^\bul)^{\beb,\gab}$,
thus $\tau(P_i^\bul) = \sum_j P_j^\bul  \otimes N_{i,j}^\bul$ with
$N_{i,j}^\beb = \xi_i\big(\tau(Q_j^\bul)^{\beb,\bul}\big)$,
hence $P_i^\bul\in\gH$); similarly each~$Q_i^\bul$ lies in~$\gH$.

By definition, all the alternal and symmetral moulds belong to this Hopf
algebra~$\gH$, in which they appear respectively as {\em primitive} and {\em group-like}
elements. 

Finally, when $\bA$ is only supposed to be an integral domain,
$\hM^\bul(\Om,\bA)$ can be viewed as a subalgebra of~$\hM^\bul(\Om,K)$, where $K$
denotes the fraction field of~$\bA$; the $\bA$-valued alternal and symmetral moulds
belong to the corresponding Hopf algebra $\gH^\bul(\Om,K)$.


\parag 
\emph{Proof of Proposition~\ref{propcomposalt}.}
The structure of commutative semigroup on~$\Om$ allows us to define a
composition involving a dimould and a mould as follows:
$\bC^\cbul = \bM^\cbul \circ U^\bul$ if, for all $\alb,\beb\in\Om^\bul$,
$$
\bC^{\alb,\beb} = \sum
\bM^{(\norm{\alb^1},\dotsc,\norm{\alb^s}),(\norm{\beb^1},\dotsc,\norm{\beb^t})} 
U^{\alb^1}  \dotsm U^{\alb^s} U^{\beb^1}  \dotsm U^{\beb^t},
$$
with summation over all possible decompositions of $\alb$ and~$\beb$ into
non-empty words; when $\alb$ is the empty word, the convention is to
replace $(\norm{\alb^1},\dotsc,\norm{\alb^s})$ by~$\est$ and 
$U^{\alb^1}  \dotsm U^{\alb^s}$ by~$1$, and similarly when $\beb$ is
the empty word.

One can check that $\bM^\cbul \circ I^\bul = \bM^\cbul$ and
$\bM^\cbul \circ (U^\bul  \circ V^\bul) =
(\bM^\cbul  \circ U^\bul) \circ V^\bul$
for any dimould~$\bM^\cbul$ and any two moulds $U^\bul,V^\bul$ (by the same
argument as for the associativity of mould composition).


Proposition~\ref{propcomposalt} will follow fom
\begin{lemma}
For any three moulds $M^\bul,N^\bul,U^\bul$,
\begin{equation}	\label{eqcomposotimes}
(M^\bul\otimes N^\bul)\circ U^\bul = (M^\bul\circ U^\bul) \otimes (N^\bul\circ U^\bul).
\end{equation}
For any two moulds $M^\bul,U^\bul$,
\begin{equation}	\label{eqcompostaualt}
U^\bul \ens\text{alternal} \quad\Rightarrow\quad
\tau(M^\bul\circ U^\bul) = \tau(M^\bul) \circ U^\bul.
\end{equation}
\end{lemma}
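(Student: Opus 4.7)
The plan is to treat (\ref{eqcomposotimes}) by direct unfolding of definitions and to treat (\ref{eqcompostaualt}) by interchanging two summations so as to bring the alternality of $U^\bul$ into play through Lemma~\ref{lemtaualtsym}.

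For (\ref{eqcomposotimes}), the point is that when $\bM^\cbul = M^\bul\otimes N^\bul$, the dimould coefficient appearing in the definition of $\bM^\cbul\circ U^\bul$ factors as
$$\bM^{(\norm{\alb^1},\dotsc,\norm{\alb^s}),(\norm{\beb^1},\dotsc,\norm{\beb^t})} = M^{(\norm{\alb^1},\dotsc,\norm{\alb^s})} N^{(\norm{\beb^1},\dotsc,\norm{\beb^t})},$$
and the two independent decompositions of $\alb$ and $\beb$ split the defining sum into a product of two sums, which are exactly $(M^\bul\circ U^\bul)^\alb$ and $(N^\bul\circ U^\bul)^\beb$. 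This gives (\ref{eqcomposotimes}) immediately.

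For (\ref{eqcompostaualt}), I expand the left-hand side as
$$\tau(M^\bul\circ U^\bul)^{\alb,\beb} = \sum_{\omb}\tsh{\alb}{\beb}{\omb}\sum_{\omb=\omb^1\concsm\omb^s} M^{(\norm{\omb^1},\dotsc,\norm{\omb^s})}\,U^{\omb^1}\dotsm U^{\omb^s}.$$
The key combinatorial move is to reorder the two summations: instead of first picking the shuffle $\omb$ and then cutting it into blocks, I first pick two decompositions $\alb=\alb^1\concsm\alb^s$ and $\beb=\beb^1\concsm\beb^s$ into possibly empty subwords with $(\alb^i,\beb^i)\neq(\est,\est)$ for each $i$, and then shuffle $\alb^i$ with $\beb^i$ inside each block to reconstruct the $\omb^i$. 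An iterated version of identity~\eqref{eqidentiteshdeux} guarantees that each letter-level shuffle appears exactly once this way, and since $\norm{\omb^i}=\norm{\alb^i}+\norm{\beb^i}$ is constant on each inner shuffle, the summation inside each block collapses into a factor $\tau(U^\bul)^{\alb^i,\beb^i}$.

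At this point the alternality of $U^\bul$ enters through Lemma~\ref{lemtaualtsym}, which gives $\tau(U^\bul)^{\alb^i,\beb^i} = U^{\alb^i}$ if $\beb^i=\est$, $\tau(U^\bul)^{\alb^i,\beb^i}=U^{\beb^i}$ if $\alb^i=\est$, and $\tau(U^\bul)^{\alb^i,\beb^i}=0$ when both $\alb^i$ and $\beb^i$ are non-empty. Hence only the terms survive in which every block is a pure $\alb$-block or a pure $\beb$-block, and the remaining data (lengths, positions of $\alb$-blocks relative to $\beb$-blocks) is exactly the data of a shuffle of $(\norm{\alb^1},\dotsc,\norm{\alb^{s_1}})$ with $(\norm{\beb^1},\dotsc,\norm{\beb^{s_2}})$. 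The resulting sum equals
$$\sum \Bigl(\sum_{\etab}\tsh{(\norm{\alb^1},\dotsc)}{(\norm{\beb^1},\dotsc)}{\etab}M^{\etab}\Bigr) U^{\alb^1}\dotsm U^{\alb^{s_1}}\,U^{\beb^1}\dotsm U^{\beb^{s_2}} = (\tau(M^\bul)\circ U^\bul)^{\alb,\beb},$$
which is what was to be shown. The main obstacle is bookkeeping: writing down cleanly the refined shuffle identity that equates a letter-level shuffle cut into $s$ blocks with a product of $s$ independent block-internal shuffles. It is a routine extension of~\eqref{eqidentiteshdeux}, provable either by direct counting of interdigitations or by induction on $r(\alb)+r(\beb)$; once it is in hand, the rest is formal manipulation and an application of Lemma~\ref{lemtaualtsym}.
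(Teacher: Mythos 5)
Your proof is correct and takes essentially the same route as the paper's: the factorisation of the defining sum gives \eqref{eqcomposotimes}, and for \eqref{eqcompostaualt} you use the iterated shuffle identity (the paper's \eqref{eqidentiteshs}), collapse the block-internal shuffles into $\tau(U^\bul)^{\alb^i,\beb^i}$, and invoke alternality to discard mixed blocks before recognising the surviving sum as $\big(\tau(M^\bul)\circ U^\bul\big)^{\alb,\beb}$. This matches the paper's argument step for step.
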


\begin{proof}
The identity~\eqref{eqcomposotimes} is an easy consequence of the definition of
mould composition in Section~\ref{seccomposmoulds}.
As for~\eqref{eqcompostaualt}, let us suppose $U^\bul$ alternal and let 
$\bM^\cbul = \tau(M^\bul)$, $\bU^\cbul=\tau(U^\bul)$, $\bC^\cbul = \tau(M^\bul\circ U^\bul)$.
We have $\bC^{\est,\est} = M^\est = \bM^{\est,\est}$, as desired. Suppose now $\alb$ or
$\beb \neq \est$, then
$$
\bC^{\alb,\beb} = \sum_{s\ge1, \, \gab^1,\dotsc,\gab^s\neq\est}
\sh{\alb}{\beb}{\gab^1\concsm\gab^s} 
M^{(\norm{\gab^1},\dotsc,\norm{\gab^s})}
U^{\gab^1}  \dotsm U^{\gab^s}.
$$
Using the identity
(which is an easy generalisation of~\eqref{eqidentiteshdeux})
\begin{equation}	\label{eqidentiteshs}
\sh{\alb}{\beb}{\gab^1\concsm\gab^s} = 
\sum_{\alb=\alb^1\concsm\alb^s,\, \beb=\beb^1\concsm\beb^s}
\sh{\alb^1}{\beb^1}{\gab^1} \dotsm \sh{\alb^s}{\beb^s}{\gab^s},
\end{equation}
with possibly empty factors $\alb^i, \beb^i$,
we get
\begin{equation}	\label{eqsumbC}
\bC^{\alb,\beb} = \sum_{s\ge1, \, \alb=\alb^1\concsm\alb^s,\, \beb=\beb^1\concsm\beb^s}
M^{(\norm{\alb^1}+\norm{\beb^1},\dotsc,\norm{\alb^s}+\norm{\beb^s})}
\bU^{\alb^1,\beb^1}  \dotsm \bU^{\alb^s,\beb^s},
\end{equation}
with the convention $\norm{\est}=0$. Observe that this last summation involves
only finitely many nonzero terms because $\alb^i=\beb^i=\est$ implies $\bU^{\alb^i,\beb^i}=0$.

If $\alb$ or~$\beb$ is the empty word, since
$\bU^{\omb,\est} = \bU^{\est,\omb} = U^\omb$ we obtain that the values of $\bC^\cbul$
and $\bM^\cbul\circ U^\bul$ at $(\alb,\beb)$ coincide.
If neither $\alb$ nor~$\beb$ is empty, then we have moreover
$\bU^{\alb^i,\beb^i}\neq0 \;\Rightarrow\; \alb^i \ \text{or}\ \beb^i=\est$, thus
\eqref{eqsumbC} can be rewritten (retaining only non-empty factors)
$$
\bC^{\alb,\beb} = \sum \sum_{\omb\in\Om^\bul}
\tsh{(\norm{\alb^1},\dotsc,\norm{\alb^s})}{(\norm{\beb^1},\dotsc,\norm{\beb^t})}{\omb}
M^{\omb}
U^{\alb^1}  \dotsm U^{\alb^s} U^{\beb^1}  \dotsm U^{\beb^t},
$$
with the first summation over all possible decompositions of $\alb$ and~$\beb$ into
non-empty words. 
We thus get the desired result.
\end{proof}

%
\noindent
\emph{End of the proof of Proposition~\ref{propcomposalt}.}
We now suppose that $U^\bul$ is an alternal mould.
If $M^\bul$ is an alternal mould, then 
$$
\tau(M^\bul\circ U^\bul) = (M^\bul\otimes 1^\bul + 1^\bul\otimes M^\bul)\circ
U^\bul
= (M^\bul\circ U^\bul)\otimes 1^\bul + 1^\bul \otimes (M^\bul\circ U^\bul)
$$
by \eqref{eqcomposotimes}--\eqref{eqcompostaualt}, while, for $M^\bul$ symmetral,
$$
\tau(M^\bul\circ U^\bul) = (M^\bul\otimes M^\bul)\circ U^\bul
= (M^\bul\circ U^\bul)\otimes(M^\bul\circ U^\bul).
$$
Finally, if moreover $U^\bul$ is invertible for composition and $V^\bul =
(U^\bul)\iic$, then
$\tau(V^\bul) = \tau(V^\bul) \circ (U^\bul\circ V^\bul)
= \big( \tau(V^\bul) \circ U^\bul \big)\circ V^\bul
= \tau(V^\bul\circ U^\bul) \circ V^\bul
= (I^\bul\otimes 1^\bul + 1^\bul\otimes I^\bul) \circ V^\bul
= V^\bul \otimes 1^\bul + 1^\bul \otimes V^\bul$.


\parag 
\emph{Proof of Proposition~\ref{propDerivSym}.}
Let $M^\bul$ be a symmetral mould.
If $D$ is induced by a derivation $d\colon \bA\to\bA$, then we can apply~$d$ to
both sides of equation~\eqref{eqdefsymal} and we get
\begin{equation}	\label{eqtauDM}
\tau(D M^\bul) = D M^\bul \otimes M^\bul + M^\bul \otimes D M^\bul.
\end{equation}
Let us show that the same relation holds when $D=\na_{J^\bul}$ with $J^\bul$
alternal (this includes the case $D=D_\ph$).
We set $C^\bul = \na_{J^\bul} M^\bul$ and denote respectively by $\bJ^\cbul, \bM^\cbul,
\bC^\cbul$ the images of $J^\bul, M^\bul, C^\bul$ by the homomorphism~$\tau$.
We first observe that  $C^\est = 0$ and $\bC^{\est,\est} = 0$.
Let $\omb^1,\omb^2\in\Om^\bul$ with at least one of them non-empty.
From the definition of~$C^\bul$, we have
\begin{multline*}
\bC^{\omb^1,\omb^2} = \sum_{\alb,\beb,\gab,\, \beb\neq\est} 
\tsh{\omb^1}{\omb^2}{\alb\conc\beb\conc\gab}
M^{\alb\conc\norm{\beb}\conc\gab} J^\beb \\[1ex]
= \sum_{ \substack{ \omb^1 = \alb^1 \conc \beb^1 \conc \gab^1 \\
\omb^2 = \alb^2 \conc \beb^2 \conc \gab^2 } } \ 
\sum_{ \substack{ \alb,\gab \\ \beb\neq\est } } \ 
\tsh{\alb^1}{\alb^2}{\alb} \tsh{\gab^1}{\gab^2}{\gab} 
M^{\alb \conc ( \norm{\beb^1}+\norm{\beb^2} ) \conc\gab}
\tsh{\beb^1}{\beb^2}{\beb} J^\beb
\end{multline*}
by virtue of~\eqref{eqidentiteshs} with $s=3$.
The summation over~$\beb$ leads to the appearance of the factor
$\bJ^{\beb^1,\beb^2}$. By alternality of~$J^\bul$, this factor vanishes if
both~$\beb^1$ and~$\beb^2$ are non-empty, thus
\begin{multline*}
\bC^{\omb^1,\omb^2} = \sum_{
\omb^1 = \alb^1 \conc \beb^1 \conc \gab^1,\, \beb^1\neq\est}
\Phi_1(\alb^1,\norm{\beb^1},\gab^1;\omb^2) J^{\beb^1} \\
+ \sum_{
\omb^2 = \alb^2 \conc \beb^2 \conc \gab^2,\, \beb^2\neq\est}
\Phi_2(\omb^1;\alb^2,\norm{\beb^2},\gab^2) J^{\beb^2},
\end{multline*}
with $\dst \Phi_1(\alb^1,b,\gab^1;\omb^2) = 
\sum_{\alb,\gab,\ \omb^2=\alb^2\conc\gab^2} 
\tsh{\alb^1}{\alb^2}{\alb} \tsh{\gab^1}{\gab^2}{\gab} 
M^{\alb \conc b \conc\gab}$
and a symmetric definition for~$\Phi_2$.
A moment of thought shows that 
$$
\Phi_1(\alb^1,b,\gab^1;\omb^2) = 
\sum_\omb \sh{\alb^1\conc b\conc \gab^1}{\omb^2}{\omb} M^\omb
= \bM^{\alb^1\conc b\conc \gab^1,\omb^2},
$$
with a symmetric formula for~$\Phi_2$, so that
$$
\bC^{\omb^1,\omb^2}  = \sum_{\omb^1 = \alb\conc\beb\conc\gab}
\bM^{\alb\conc\norm{\beb}\conc\gab,\omb^2} U^\beb
+ \sum_{\omb^2 = \alb\conc\beb\conc\gab}
M^{\omb^1,\alb\conc\norm{\beb}\conc\gab} U^\beb,
$$
whence formula~\eqref{eqtauDM} follows.

Since the multiplicative inverse $\wt M^\bul$ of~$M^\bul$ is known
to be symmetral by Proposition~\ref{propstructaltsym},
we can multiply both sides of~\eqref{eqtauDM} by $\tau(\wt M^\bul)$
and use Lemma~\ref{lemhomomtau} and formula~\eqref{eqrhohomomAalg}; 
this yields the symmetrality of $D M^\bul \times \wt M^\bul$ and $\wt M^\bul \times D M^\bul$.


\parag 
There is a kind of converse to Proposition~\ref{propDerivSym}, which is
essential in the application to the saddle-node; we state it in this context only:

\begin{prop}	\label{propcVsym}
Let $\Om = \cN$ as in~\eqref{eqdefiancN} and $\bA=\C[[x]]$.
Then the mould~$\cV^\bul$ defined by Lemma~\ref{lemdefcV} is symmetral.
\end{prop}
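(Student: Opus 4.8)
The plan is to show that $\cV^\bul$ is symmetral by exploiting its characterization in Remark~\ref{remmouldVsol} as the unique solution of the mould equation
\begin{equation*}
(D+\na)\cV^\bul = J_a^\bul \times \cV^\bul, \qquad \cV^\est = 1, \quad \cV^\omb \in x\C[[x]] \text{ for } \omb\neq\est,
\end{equation*}
where $D$ is induced by $d = x^2\frac{\dd\,}{\dd x}$ and $\na = \na$ is the derivation of~\eqref{eqdefna} associated with $\ph(\eta)\equiv\eta$ on $\Om=\Z$. Both $D$ and $\na$ are mould derivations of the type covered by Proposition~\ref{propDerivSym} (the latter being $\na_{J^\bul}$ with $J^\omb = \om_1$ for $r(\omb)=1$ and $0$ otherwise, which is alternal), and $J_a^\bul$ is alternal since it is supported on words of length~$1$. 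The idea is that a mould satisfying such a ``linear'' equation driven by alternal data, with the correct initial condition $\cV^\est=1$, is forced to be symmetral.

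First I would apply the homomorphism $\tau$ of Lemma~\ref{lemhomomtau} to the mould equation. Writing $\Delta = D+\na$, which is again a mould derivation of the admissible form, and using that $\tau$ is an $\bA$-algebra homomorphism, I get an equation for $\tau(\cV^\bul)$ in the dimould algebra. The key auxiliary fact, parallel to~\eqref{eqtauDM}, is that for any mould derivation $\Delta$ of the type considered in Proposition~\ref{propDerivSym} there is a corresponding derivation $\widehat\Delta$ of $\hM^\bbul(\Om,\bA)$ with $\tau\circ\Delta = \widehat\Delta\circ\tau$, and moreover $\widehat\Delta(M^\bul\otimes N^\bul) = \Delta M^\bul\otimes N^\bul + M^\bul\otimes\Delta N^\bul$; this is exactly the computation done in the proof of Proposition~\ref{propDerivSym}. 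Then both the dimould $\tau(\cV^\bul)$ and the decomposable dimould $\cV^\bul\otimes\cV^\bul$ satisfy the \emph{same} linear dimould equation
\begin{equation*}
\widehat\Delta(\bX^\cbul) = (J_a^\bul\otimes 1^\bul)\times \bX^\cbul,
\end{equation*}
with the same initial value at $(\est,\est)$, namely $1$. Here I use~\eqref{eqrhohomomAalg}: on the one hand $\tau(J_a^\bul\times\cV^\bul) = \tau(J_a^\bul)\times\tau(\cV^\bul)$ and $\tau(J_a^\bul) = J_a^\bul\otimes 1^\bul + 1^\bul\otimes J_a^\bul$ since $J_a^\bul$ is alternal; on the other hand, when one writes out the equation satisfied by $\cV^\bul\otimes\cV^\bul$ and uses the Leibniz rule for $\widehat\Delta$ together with the original mould equation for each factor, the two $1^\bul\otimes J_a^\bul$-type contributions are absorbed correctly — this is precisely the bookkeeping that makes the ``converse to Proposition~\ref{propDerivSym}'' work and is best organized by checking that $\widehat\Delta - \big((J_a^\bul\otimes 1^\bul)\times(\,\cdot\,)\big)$ kills the difference $\tau(\cV^\bul) - \cV^\bul\otimes\cV^\bul$ degree by degree.

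Finally I would invoke a uniqueness statement: a dimould equation of the form $\widehat\Delta(\bX^\cbul) = \bP^\cbul \times \bX^\cbul$ with $\bP^\cbul$ of positive order and $\widehat\Delta$ strictly increasing the order on the complement of the constants has at most one solution with a prescribed value $\bX^{\est,\est}$ and with $\bX^{\omb,\etab}\in x\C[[x]]$ for $(\omb,\etab)\neq(\est,\est)$ — this is proved by induction on $r(\omb)+r(\etab)$, exactly as the uniqueness in Lemma~\ref{lemdefcV}, since $D$ raises the valuation in $x$ and $\na$ together with $\widehat\Delta$'s combinatorial part only couples to strictly shorter words. Since $\tau(\cV^\bul)$ and $\cV^\bul\otimes\cV^\bul$ agree at $(\est,\est)$, belong to the right space, and solve the same equation, they coincide; by Lemma~\ref{lemtaualtsym} (and because $\cV^\est=1$) this says exactly that $\cV^\bul$ is symmetral. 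The main obstacle is the middle step: verifying cleanly that $\cV^\bul\otimes\cV^\bul$ really does satisfy the \emph{same} linear equation, i.e. checking the compatibility $\widehat\Delta(\cV^\bul\otimes\cV^\bul) = (J_a^\bul\otimes 1^\bul)\times(\cV^\bul\otimes\cV^\bul)$; this requires the Leibniz identity for $\widehat\Delta$ on decomposables and the relation $(J_a^\bul\otimes 1^\bul + 1^\bul\otimes J_a^\bul)\times(\cV^\bul\otimes\cV^\bul) = (J_a^\bul\times\cV^\bul)\otimes\cV^\bul + \cV^\bul\otimes(J_a^\bul\times\cV^\bul)$ from~\eqref{eqrhohomomAalg}, combined with a short argument that the extra term involving $1^\bul\otimes J_a^\bul$ is consumed by the derivation side — the same mechanism already displayed in the proof of Proposition~\ref{propDerivSym}. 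Everything else is routine induction.
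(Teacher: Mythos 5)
Your plan is, in essence, the paper's own computation dressed up in the dimould language of Sections~\ref{secAltSym} and~\ref{secContrAltSym}: the paper's direct induction on $r(\alb)+r(\beb)$ applies $(d+\norm{\alb}+\norm{\beb})$ to both sides of the symmetrality identity, splits the shuffle sum by first letter, and invokes the uniqueness mechanism of Lemma~\ref{lemdefcV}; applying $\tau$ to the mould equation~\eqref{eqmouldeq} and comparing $\tau(\cV^\bul)$ with $\cV^\bul\otimes\cV^\bul$ is the same argument, with the first-letter splitting encoded once and for all in the homomorphism property of~$\tau$ (identity~\eqref{eqidentiteshdeux}). The abstraction is a reasonable trade, and the idea that uniqueness at the dimould level forces $\tau(\cV^\bul)=\cV^\bul\otimes\cV^\bul$ is exactly right.

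There is, however, a concrete slip in the central step. The linear dimould equation that both $\tau(\cV^\bul)$ and $\cV^\bul\otimes\cV^\bul$ satisfy is
\[
\widehat\Delta\,\bX^\cbul \;=\; \bigl(J_a^\bul\otimes 1^\bul + 1^\bul\otimes J_a^\bul\bigr)\times\bX^\cbul ,
\]
not $\widehat\Delta\,\bX^\cbul = (J_a^\bul\otimes 1^\bul)\times\bX^\cbul$ as you wrote. Indeed $\tau$ being an $\bA$-algebra homomorphism and $J_a^\bul$ alternal give $\tau(J_a^\bul\times\cV^\bul)=(J_a^\bul\otimes 1^\bul+1^\bul\otimes J_a^\bul)\times\tau(\cV^\bul)$; and the Leibniz rule for $\widehat\Delta$ on decomposables, together with~\eqref{eqrhohomomAalg}, gives $\widehat\Delta(\cV^\bul\otimes\cV^\bul)=(J_a^\bul\otimes 1^\bul+1^\bul\otimes J_a^\bul)\times(\cV^\bul\otimes\cV^\bul)$. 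The second summand $1^\bul\otimes J_a^\bul$ is \emph{not} ``consumed by the derivation side'' and it is \emph{not} true that the operator $\widehat\Delta-(J_a^\bul\otimes 1^\bul)\times(\cdot)$ annihilates the difference $\tau(\cV^\bul)-\cV^\bul\otimes\cV^\bul$: applying that operator to the difference leaves exactly $(1^\bul\otimes J_a^\bul)\times\bigl(\tau(\cV^\bul)-\cV^\bul\otimes\cV^\bul\bigr)$, which is what you are trying to show is zero, so the argument as written is circular. The fix is trivial — keep both terms — and then the induction on $r(\alb)+r(\beb)$ goes through exactly as in Lemma~\ref{lemdefcV}, since $(J_a^\bul\otimes 1^\bul+1^\bul\otimes J_a^\bul)^{\alb^1,\beb^1}$ is supported on $r(\alb^1)+r(\beb^1)=1$, so the right-hand side at $(\alb,\beb)$ only involves strictly shorter index pairs, and $d+\norm{\alb}+\norm{\beb}$ with the constraint of lying in $x\C[[x]]$ pins down the value uniquely. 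With that one correction your proof is complete and is, up to notation, the paper's.
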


\begin{proof}
We must show
\begin{equation}	\label{eqVsymal}
\cV^\alb \cV^\beb = \sum_{\gab\in\Om^\bul} \sh{\alb}{\beb}{\gab} \cV^\gab,
\qquad \alb,\beb\in\Om^\bul.
\end{equation}
Since $\cV^\est = 1$, this is obviously true for $\alb$ or $\beb = \est$.
We now argue by induction on $r = r(\alb) + r(\beb)$.
We thus suppose $r\ge1$ and, without loss of generality, both of $\alb$ and
$\beb$ non-empty.
With the notations $d = x^2\frac{\dd\,}{\dd x}$,
$\norm{\alb}=\al_1+\dotsb+\al_{r(\alb)}$
and $\norm{\beb}=\be_1+\dotsb+\be_{r(\beb)}$,
we compute
\begin{multline*}
A := (d+\norm{\alb}+\norm{\beb}) \sum_\gab \tsh{\alb}{\beb}{\gab} \cV^\gab \\
= \sum_{\gab\neq\est} \tsh{\alb}{\beb}{\gab} (d+\norm{\gab}) \cV^\gab 
= \sum_{\gab\neq\est} \tsh{\alb}{\beb}{\gab} a_{\ga_1} \cV^{`\gab},
\end{multline*}
using the notation $`\omb = (\om_2,\dotsc,\om_s)$ for any non-empty
$\omb = (\om_1,\dotsc,\om_s)$ and the defining equation of~$\cV^\bul$.
Splitting the last summation according to the value of~$\ga_1$, we get
$$
A = \sum_\deb \tsh{`\alb}{\beb}{\deb} a_{\al_1}  \cV^\deb
+ \sum_\deb \tsh{\alb}{`\beb}{\deb} a_{\be_1}  \cV^\deb
= a_{\al_1} \cV^{`\alb}  \cdot \cV^\beb 
+ \cV^{\alb}  \cdot a_{\be_1} \cV^{`\beb}
$$
(using the induction hypothesis), hence
$$
A = (d+\norm{\alb})\cV^\alb \cdot \cV^\beb 
+ \cV^\alb \cdot (d+\norm{\beb})\cV^\beb
= (d+\norm{\alb}+\norm{\beb}) (\cV^\alb \cV^\beb).
$$
We conclude that both sides of~\eqref{eqVsymal} must coincide, because
$d+\norm{\alb}+\norm{\beb}$ is invertible if $\norm{\alb}+\norm{\beb}\neq0$
and both of them belong to $x\C[[x]]$, thus even if $\norm{\alb}+\norm{\beb}=0$
the desired conclusion holds.
\end{proof}


\section{General mould-comould expansions}	\label{secGenMcM}


\parag 
We still assume that we are given a set~$\Om$ and a commutative $\C$-algebra~$\bA$.
When $\Om$ is the trivial one-element semigroup $\{0\}$, the algebra of
$\bA$-valued moulds on~$\Om$ is nothing but the algebra of formal series
$\bA[[\rmT]]$, with its usual multiplication and composition laws:
the monoid of words is then isomorphic to~$\N$ via the map~$r$, and one can identify a
mould~$M^\bul$ with the generating series $\sum_{\om\in\Om^\bul} M^\omb
\rmT^{r(\omb)}$; it is then easy to check that the above definitions of
multiplication and composition boil down to the usual ones.

In the case of a general set~$\Om$, the analogue of this is to identify a
mould~$M^\bul$ with the element 
$\sum M^{\om_1,\dotsc,\om_r} \rmT_{\om_1}\dotsm\rmT_{\om_r}$
of the completion
of the free associative (non-commutative) algebra generated by the symbols
$\rmT_\eta$, $\eta\in\Om$.
When replacing the $\rmT_\eta$'s by elements~$B_\eta$ of an $\bA$-algebra, one gets
what is called a mould-comould expansion; we now define these objects in a
context inspired by Section~\ref{secMCexpSN}.


\parag 
Suppose that $(\gF,\val)$ is a complete pseudovaluation ring, possibly
non-commutative, with unit denoted by $\ID$, such that~$\gF$ is also an
$\bA$-algebra.
We thus have a ring homomorphism $\mu\in\bA \mapsto \mu\ID\in\gF$, the image of
which lies in the center of~$\gF$.

\begin{definition}	\label{defcomouldmult}
A comould on~$\Om$ with values in~$\gF$ is any map 
$\bB_\bul \colon \omb\in\Om^\bul \mapsto \bB_\omb \in \gF$ such that 
$\bB_\est = \ID$ and
\begin{equation}	\label{eqcaraccomould}
\bB_{\omb^1\conc\omb^2} = \bB_{\omb^2}  \bB_{\omb^1},
\qquad \omb^1,\omb^2 \in \Om^\bul.
\end{equation}
\end{definition}

Such an object could even be called {\em multiplicative comould} to emphasize
that the map $\bB_\bul \colon \Om^\bul \to \gF$ is required to be a monoid
homomorphism from~$\Om^\bul$ to the multiplicative monoid underlying the
opposite ring of~$\gF$.

Observe that there is a one-to-one correspondence between comoulds and families 
$(B_\eta)_{\eta\in\Om}$ of~$\gF$ indexed by one-letter words:
the formulas $\bB_\est=\ID$ and $\bB_\omb = B_{\om_r}\dotsm B_{\om_1}$ for
$\omb = (\om_1,\dotsc,\om_r) \in\Om^\bul$ with $r\ge1$ define a comould,
which we call the {\em comould generated by $(B_\eta)_{\eta\in\Om}$}, and all
comoulds are obtained this way.

Suppose a comould $\bB_\bul$ is given.
For any $\bA$-valued mould~$M^\bul$ on~$\Om$ such that the family
$(M^\omb \bB_\omb)_{\omb\in\Om^\bul}$ is formally summable in~$\gF$ (in
particular this family has countable support---\cf
Definition~\ref{defformsumfam}), we can consider the mould-comould expansion,
also called {\em contraction of~$M^\bul$ into~$\bB_\bul$},
$$
\sum M^\bul \bB_\bul = \sum_{\omb\in\Om^\bul} M^\omb \bB_\omb \,\in\, \gF.
$$


\parag 
The example to keep in mind is related to Definition~\ref{defopval}. 
Suppose that $(\gA,\nu)$ is any complete pseudovaluation ring such that~$\gA$ is
a commutative $\bA$-algebra, the unit of which is denoted by~$1$; thus $\bA$ is
identified to a subalgebra of~$\gA$
(for instance $(\gA,\nu)=(\C[[x,y]],\nu_4)$ and $\bA=\C[[x]]$).
Denote by $\gE$ the subalgebra of $\End_\C(\gA)$ consisting of operators having
a valuation \wrt~$\nu$, so that $(\gE,\valn)$ is a complete pseudovaluation
ring.
Let
\begin{equation}	\label{eqdefgFexemp}
\gF_{\gA,\bA} = 
\ao \Th\in\gE \mid \text{$\Th$ and $\mu\ID$ commute for all $\mu\in\bA$\af}
= \gE \cap \End_\bA(\gA).
\end{equation}
We get an $\bA$-algebra, which is a closed subset of~$\gE$ for the topology
induced by~$\valn$, thus $(\gF_{\gA,\bA},\valn)$ is also a complete pseudovaluation ring;
these are the $\bA$-linear operators of~$\gA$ having a valuation \wrt~$\nu$.

In practice, the $B_\eta$'s which generate a comould are related
to the homogeneous components of an operator of~$\gA$ that one wishes to analyse.
In Section~\ref{secMCexpSN} for instance, the derivation $X-X_0$ of $\gA=\C[[x,y]]$
was decomposed into a sum of multiples of $B_n$ according to~\eqref{eqdefBn},
where each term $a_n(x)B_n$ is homogeneous of degree~$n$ in the sense that it sends
$y^{n_0}\C[[x]]$ in $y^{n_0+n}\C[[x]]$ for every~$n_0$.
Observe that the commutation of the $B_\eta$'s with the image of $\bA=\C[[x]]$ in~$\gE$
reflects the fact that the vector field $X-X_0$ is ``fibred'' over the
variable~$x$; 
similarly, one can look for a solution~$\Th$ of equation~\eqref{eqconjugOp}
in $\gF_{\gA,\bA}$ because the corresponding formal transformation $(x,y)\mapsto\th(x,y)$
is expected to be fibred likewise---\cf \eqref{eqdefthph}.


\parag 
Returning to the general situation, we now show how, via mould-comould
expansions, mould multiplication corresponds to multiplication in~$\gF$:
\begin{prop}	\label{propmultiplimould}
Suppose that $\bB_\bul$ is an $\gF$-valued comould on~$\Om$ and that $M^\bul$
and~$N^\bul$ are $\bA$-valued moulds on~$\Om$ such that the families
$(M^\omb \bB_\omb)_{\omb\in\Om^\bul}$ and $(N^\omb \bB_\omb)_{\omb\in\Om^\bul}$
are formally summable.
Then the mould $P^\bul = M^\bul  \times N^\bul$ gives rise to a
formally summable family $(P^\omb \bB_\omb)_{\omb\in\Om^\bul}$ and
$$
\sum \left(M^\bul  \times N^\bul \right) \bB_\bul = 
\left( \sum N^\bul \bB_\bul \right)\left( \sum M^\bul \bB_\bul \right).
$$
\end{prop}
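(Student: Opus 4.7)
The plan is to view both sides as the evaluations, under two different groupings, of a single formally summable doubly-indexed family on $\Om^\bul\times\Om^\bul$.

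First I unpack the left-hand side using the anti-multiplicativity~\eqref{eqcaraccomould} of the comould together with the centrality of $\bA\ID$ in $\gF$: for any concatenation $\omb = \omb^1\conc\omb^2$ we have $\bB_\omb = \bB_{\omb^2}\bB_{\omb^1}$ and the scalars $M^{\omb^1},N^{\omb^2}\in\bA$ commute with every element of~$\gF$, hence
\[
M^{\omb^1}N^{\omb^2}\bB_\omb = (N^{\omb^2}\bB_{\omb^2})(M^{\omb^1}\bB_{\omb^1}).
\]
Setting $T_{\omb^1,\omb^2} := (N^{\omb^2}\bB_{\omb^2})(M^{\omb^1}\bB_{\omb^1})$ and expanding the definition~\eqref{eqdefmultiplimould} of $P^\bul = M^\bul\times N^\bul$, this yields the finite identity
\[
P^\omb\bB_\omb \;=\; \sum_{\omb=\omb^1\conc\omb^2} T_{\omb^1,\omb^2}, \qquad \omb\in\Om^\bul.
\]

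Next I show that the family $\bigl(T_{\omb^1,\omb^2}\bigr)_{(\omb^1,\omb^2)\in\Om^\bul\times\Om^\bul}$ is formally summable in $\gF$. Formal summability of the two given families provides integers $\de_M,\de_N$ such that $\vl{M^{\omb^1}\bB_{\omb^1}}\ge\de_M$ and $\vl{N^{\omb^2}\bB_{\omb^2}}\ge\de_N$ for all words (here I use the observation at the end of Definition~\ref{defformsumfam}). The third axiom of a pseudovaluation then gives
\[
\vl{T_{\omb^1,\omb^2}} \;\ge\; \vl{N^{\omb^2}\bB_{\omb^2}} + \vl{M^{\omb^1}\bB_{\omb^1}},
\]
so that $\vl{T_{\omb^1,\omb^2}}\le\de$ forces $\vl{M^{\omb^1}\bB_{\omb^1}}\le\de-\de_N$ and $\vl{N^{\omb^2}\bB_{\omb^2}}\le\de-\de_M$; each of these two conditions is met by only finitely many words, hence by only finitely many pairs.

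Finally, I invoke twice the basic fact that in a complete pseudovaluation ring the sum of a formally summable family is unaltered by regrouping along any partition of the index set (a direct consequence of the Cauchy characterisation recalled after Definition~\ref{defformsumfam}, together with the ultrametric inequality). Grouping $(T_{\omb^1,\omb^2})$ by the value of $\omb^1\conc\omb^2$ produces the family $(P^\omb\bB_\omb)_{\omb\in\Om^\bul}$, which is therefore formally summable, with
\[
\sum_{(\omb^1,\omb^2)} T_{\omb^1,\omb^2} \;=\; \sum_{\omb\in\Om^\bul} P^\omb\bB_\omb.
\]
On the other hand, regrouping along the product structure and using the (bi)continuity of the multiplication of $\gF$ on formally summable families yields
\[
\sum_{(\omb^1,\omb^2)} T_{\omb^1,\omb^2} \;=\; \Bigl(\sum_{\omb^2} N^{\omb^2}\bB_{\omb^2}\Bigr)\Bigl(\sum_{\omb^1} M^{\omb^1}\bB_{\omb^1}\Bigr).
\]
Comparing the two evaluations gives the proposition.

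The main obstacle is not conceptual but bookkeeping: one must carefully track the reversal of order forced by the anti-multiplicativity of $\bB_\bul$ (which is precisely why the two factors on the right-hand side of the claimed identity appear in the order $N$ then $M$), and one must feel comfortable invoking the partition- and product-invariance of formal sums in the ultrametric setting of Definitions~\ref{defpseudoval}--\ref{defformsumfam} without the commutativity of~$\gF$.
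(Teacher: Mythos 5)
Your proof is correct and follows essentially the same route as the paper's: the same key decomposition $P^\omb\bB_\omb=\sum_{\omb=\omb^1\conc\omb^2}(N^{\omb^2}\bB_{\omb^2})(M^{\omb^1}\bB_{\omb^1})$ (using commutation of the $\bA$-scalars with the comould) and the same pseudovaluation estimates to control which terms can have small valuation. The only difference is presentational: you package the final step as general regrouping and product lemmas for formally summable families, whereas the paper carries out the same argument explicitly via the exhaustion $\Om^{K,R}$ of $\Om^\bul$ and a direct estimate of the error term.
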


\begin{proof}
Let $\de_*\in\Z$ such that 
$v_1(\omb) = \vl{M^\omb \bB_\omb} \ge \de_*$
and $v_2(\omb) = \vl{N^\omb \bB_\omb} \ge \de_*$
for all $\omb\in\Om^\bul$.
Then
$$
P^\omb \bB_\omb = \sum_{\omb = \omb^1\!\conc\omb^2} 
N^{\omb^2} \bB_{\omb^2}
M^{\omb^1} \bB_{\omb^1}
$$
(since $\bA$ is a commutative algebra and its image in~$\gF$ commutes with
the~$\bB_{\omb^2}$'s), thus
$\vl{P^\omb \bB_\omb} \ge  \min\ao v_1(\omb^1) + v_2(\omb^2) \mid 
\omb = \omb^1\!\conc\omb^2 \af  \ge 2\de_*$ 
and, for any $\de\in\Z$, the condition $\vl{P^\omb \bB_\omb} \le \de$ implies
that $\omb$ can be written as $\omb^1\!\conc\omb^2$ with 
$v_1(\omb^1)  \le \de-\de_*$ and $v_2(\omb^2)  \le \de-\de_*$, 
hence they are only finitely many such $\omb$'s.

To compute $\sum  P^\bul \bB_\bul$, we can suppose $\Om$ countable (replacing it,
if necessary, by the set of all letters appearing in the union of the supports
of $(M^\omb \bB_\omb)$ and $(N^\omb \bB_\omb)$, which is countable), 
choose an exhaustion of~$\Om$ by finite sets $\Om_K$, $K\ge0$, and use
$ \Om^{K,R} = \ao \omb  \in  \Om^\bul \mid r = r(\omb) \le R,\;
\om_1,\dotsc,\om_r \in \Om_K \af$, $K,R\ge0$, as an exhaustion of $\Om^\bul$.
The conclusion follows from the identity
\begin{multline*}
\bigg( \sum_{\omb\in\Om^{K,R}}  N^\omb \bB_\omb \bigg)
\bigg( \sum_{\omb\in\Om^{K,R}}  M^\omb \bB_\omb \bigg)
- \sum_{\omb\in\Om^{K,R}} P^\omb \bB_\omb \\ =
\sum_{ \substack{ \omb^1,\omb^2 \in\Om^{K,R} \\
r(\omb^1) + r(\omb^2) > R }} 
N^{\omb^2} \bB_{\omb^2}
M^{\omb^1} \bB_{\omb^1},
\end{multline*}
where the \rhs\ tends to~$0$ as $K,R\to\infty$, since its valuation is at least
$\min\ao v_1(\omb^1) + v_2(\omb^2) \mid 
\omb^1, \omb^2 \in \Om^{K,R},\, r(\omb^1)+r(\omb^2)>R \af
\ge \nu_*(K,R) + \de_*$,
with 
$$
\nu_*(K,R) = \min\ao \min(v_1(\omb), v_2(\omb)) \mid
\omb \in \Om^{K,R},\, r(\omb)>R/2 \af
\xrightarrow[R\to\infty]{}  \infty
$$
for any $K$ (because, for any finite subset~$F$ of~$\Om^\bul$, $\omb\notin F$ as
soon as $r(\omb)$ is large enough).
\end{proof}


\parag 
Suppose $\Om$ is a commutative semigroup.
A motivation for the definition of mould composition in Section~\ref{secAlgMoulds} is
\begin{prop}	\label{propcomposexp}
Suppose that $U^\bul$ and $M^\bul$ are moulds such that the families
$(U^\omb \bB_\omb)_{\omb\in\Om^\bul}$ and 
$$
\Th_{\omb^1,\dotsc,\omb^s} = 
M^{\norm{\omb^1},\dotsc,\norm{\omb^s}} 
U^{\omb^1}  \dotsm U^{\omb^s}
\bB_{\omb^1\concsm\omb^s},
\qquad s\ge1,\; \omb^1,\dotsc,\omb^s \in \Om^\bul
$$ 
are formally summable.\footnote{
Notice that the formal summability of the second family follows from the formal
sumability of the first one when the valuation $\val$ on~$\gF$ only takes non-negative values.
}
Suppose moreover $U^\est=0$, let
\begin{equation}	\label{eqdefBprime}
B'_\eta = \sum_{\omb\in\Om^\bul \,\text{s.t.}\, \norm{\omb}=\eta}
U^\omb \bB_\omb, \qquad \eta\in\Om,
\end{equation}
and the consider the comould $\bB'_\bul$ generated by $\{B'_{\eta}, \, \eta\in\Om\}$. 
Then the mould $C^\bul = M^\bul \circ U^\bul$ gives rise to a
formally summable family $(C^\omb \bB_\omb)_{\omb\in\Om^\bul}$ and
$$
\sum \left(M^\bul  \circ U^\bul \right) \bB_\bul = 
\sum M^\bul \bB'_\bul.
$$
\end{prop}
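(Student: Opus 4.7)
The plan is to expand $\sum M^\bul \bB'_\bul$ by plugging the definition~\eqref{eqdefBprime} of $B'_\eta$ into the comould product, and then re-index the resulting sum to recognise $\sum(M^\bul \circ U^\bul)\bB_\bul$. The justification for every re-grouping is that the family $(\Th_{\omb^1,\dotsc,\omb^s})$ is formally summable by hypothesis: its total sum in the complete pseudovaluation ring~$\gF$ is insensitive to the way the indexing set is partitioned, and both $\sum M^\bul \bB'_\bul$ and $\sum(M^\bul\circ U^\bul)\bB_\bul$ will be exhibited as partitions of this single family.

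Concretely, for a non-empty word $\etab=(\eta_1,\dotsc,\eta_s)$, iterating the comould relation~\eqref{eqcaraccomould} and exploiting the centrality of the $\bA$-valued coefficients $U^{\omb^i}$ in~$\gF$ yields
$$
\bB'_\etab = \sum_{\omb^1,\dotsc,\omb^s:\,\norm{\omb^i}=\eta_i}
U^{\omb^1}\dotsm U^{\omb^s}\,\bB_{\omb^1\concsm\omb^s},
$$
where only non-empty $\omb^i$ contribute since $U^\est=0$. Multiplying by $M^\etab$ and summing over $s\ge0$ and $\etab\in\Om^s$, then re-grouping so that the index $\etab$ is absorbed into the constraint $\eta_i=\norm{\omb^i}$, produces
$$
\sum M^\bul\bB'_\bul = M^\est\,\ID + \sum_{s\ge1}\ \sum_{\omb^1,\dotsc,\omb^s\neq\est}
M^{(\norm{\omb^1},\dotsc,\norm{\omb^s})}\,U^{\omb^1}\dotsm U^{\omb^s}\,\bB_{\omb^1\concsm\omb^s},
$$
which is precisely the sum of the $\Th$-family. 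A second re-grouping, now gathering tuples $(\omb^1,\dotsc,\omb^s)$ according to their concatenation $\omb=\omb^1\concsm\omb^s$, converts the right-hand side into $\sum_\omb C^\omb\bB_\omb$ with $C^\omb$ exactly matching the defining formula for $M^\bul\circ U^\bul$ from Section~\ref{seccomposmoulds} (the $s=0$ and $\omb=\est$ contributions matching since $C^\est=M^\est$).

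Formal summability of the $C^\bul\bB_\bul$ family is a by-product of the same analysis: each $C^\omb\bB_\omb$ is a finite sum of terms $\Th_{\omb^1,\dotsc,\omb^s}$ ranging over decompositions of~$\omb$, so any bound $\vl{C^\omb\bB_\omb}\le\de$ forces at least one decomposition to satisfy the same bound on~$\Th$; the set of such tuples being finite by formal summability of the $\Th$-family, its image under concatenation is finite as well. The main obstacle is therefore not analytical but organisational: one must state the partition-invariance for formally summable families in $(\gF,\val)$ cleanly enough that the two successive re-groupings (over $\etab$ and over the concatenation $\omb$) are rigorous rearrangements of one and the same summable family, and one must be careful with the empty-word conventions and with the order-reversing rule $\bB_{\omb^1\conc\omb^2}=\bB_{\omb^2}\bB_{\omb^1}$ when expanding the products of the $B'_\eta$'s.
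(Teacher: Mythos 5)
Your argument is correct and is essentially the paper's own proof: both identify $M^\etab\bB'_\etab$ (summing over tuples with $\norm{\omb^i}=\eta_i$) and $C^\omb\bB_\omb$ (summing over tuples with concatenation $\omb$) as partial sums of the single formally summable family $(\Th_{\omb^1,\dotsc,\omb^s})$ along two different partitions of its index set, and conclude by partition-invariance of the total sum. Your remark on how to extract the formal summability of $(C^\omb\bB_\omb)$ from that of the $\Th$-family is a correct filling-in of a step the paper leaves implicit.
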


\begin{proof}
We have $C^\est \bB_\est = M^\est \bB'_\est$, since $C^\est = M^\est$.
If $\omb$ and $\etab$ are non-empty words in~$\Om^\bul$, with
$\etab = (\eta_1,\dotsc,\eta_\sig)$,
$$
C^\omb \bB_\omb = 
\sum_{\substack{s\ge1,\,\omb^1,\dotsc,\omb^s \neq\est  \\
\omb = \omb^1 \concsm \omb^s} } 
\Th_{\omb^1,\dotsc,\omb^s}, \qquad
M^\etab \bB'_\etab = 
\sum_{ \substack{\omb^1,\dotsc,\omb^\sig \neq\est  \\
\norm{\omb^1}=\eta_1, \dotsc, \norm{\omb^\sig}=\eta_\sig} }
\Th_{\omb^1,\dotsc,\omb^\sig}.
$$
The conclusion follows easily.
\end{proof}

The idea is that, when indexation by $\eta\in\Om$ corresponds to a decomposition
of an element of~$\gF$ into homogeneous components, we use the mould~$U^\bul$ to
go from $X = \sum_{\eta\in\Om} B_\eta$ to 
$Y = \sum_{\eta\in\Om} B'_\eta$
by contracting it into the comould~$\bB_\bul$ associated with~$X$; then we
use~$M^\bul$ to go from~$Y$ to the contraction~$Z$ of~$M^\bul$ into the
comould~$\bB'_\bul$ associated with~$Y$.
Mould composition thus reflects the composition of these operations on elements
of~$\gF$, $X\mapsto Y$ and $Y\mapsto Z$.


\parag 
For example, suppose that $(\bB_\omb)_{\omb\in\Om^\bul}$ is formally
summable. Then, in particular, $(B_\eta)_{\eta\in\Om}$ is formally summable, and
$X=\sum B_\eta$ is ``exponentiable'': for any $t\in\C$, the series $\exp(tX) =
\sum_{s\ge0}\frac{t^s}{s!}X^s$ is convergent; moreover, $\exp(tX) =
\sum \exp_t^\bul \bB_\bul$. 
On the other hand, $\ID+X$ has an ``infinitesimal generator'': the series $Y =
\sum_{s\ge1} \frac{(-1)^{s-1}}{s}X^s$ is convergent and $\exp(Y)=\ID+X$; one has
$Y = \sum \log^\bul\bB_\bul$.

Now, if $U^\bul \in \gL^\bul(\Om,\bA)$ is such that $(U^{\omb^1}\dotsm
U^{\omb^s} \bB_{\omb^1\concsm\omb^s})_{s\ge1,\,\omb^1,\dotsc,\omb^s\in\Om^\bul}$
is formally summable, then in particular $(U^\omb \bB_\omb)_{\omb\in\Om^\bul}$ is formally
summable and $X' = \sum U^\bul\bB_\bul$ is exponentiable, with
$\exp(tX') = \sum (\exp_t^\bul \circ\, U^\bul)\bB_\bul$
for any $t\in\C$.

Similarly, if $M^\bul= 1^\bul+V^\bul \in G^\bul(\Om,\bA)$ with $(V^{\omb^1}\dotsm
V^{\omb^s} \bB_{\omb^1\concsm\omb^s})$ formally
summable, then $\sum M^\bul\bB_\bul$ has infinitesimal generator 
$\sum (\log^\bul \circ\, V^\bul)\bB_\bul$.


\parag
For the interpretation of the mould derivations $\na_{U^\bul}$
defined by~\eqref{eqdefnaUbul},
consider a situation similar to that of Proposition~\ref{propcomposexp}, with a
comould $\bB_\bul \colon \Om^\bul \to \gF$, a mould $U^\bul \in \gL^\bul(\Om,\bA)$ such that $(U^\omb
\bB_\omb)_{\omb\in\Om^\bul}$ is formally summable and, for each $\eta\in\Om$,
$B'_\eta \in \gF$ still defined by~\eqref{eqdefBprime}.
But instead of considering the comould~$\bB'_\bul$ generated by $(B'_\eta)$, 
\ie\ 
$\bB'_\omb = B'_{\om_r} \dotsm B'_{\om_1}$ for $\omb = (\om_1,\dotsc,\om_r)$,
set
$$
\bB'_\omb = \sum_{\omb = \alb\beb\gab,\, r(\beb)=1}
\bB_\gab B'_\beb \bB_\alb,
\qquad \omb \in \Om^\bul
$$
\ie\ $\bB'_\est = 0$ and
$\bB'_\omb = \sum_{i=1}^r B_{\om_r}\dotsm B_{\om_{i+1}} B'_{\om_i} 
B_{\om_{i-1}} \dotsm B_{\om_1}$ for $r\ge1$
(beware that $\bB'_\bul \colon \Om^\bul \to \gF$ is not a comould, since
multiplicativity fails).

Then one can check the formal summability 
of $\big( (\na_{U^\bul} M^\omb) \bB_\omb \big)_{\omb\in\Om^\bul}$
for any mould $M^\bul$ such that the families 
$(M^\omb \bB_\omb)_{\omb\in\Om^\bul}$ and
$(M^{\alb,\norm{\beb},\gab} U^\beb \bB_{\alb\conc\beb\conc\gab})_{\alb,\beb,\gab\in\Om^\bul}$ 
are formally summable, with
$$
\sum \left( \na_{U^\bul} M^\bul  \right) \bB_\bul = \sum M^\bul \bB'_\bul.
$$

If, moreover, there is an $\bA$-linear derivation $\gD \colon \gF \to \gF$ such
that $\gD B_\eta = B'_\eta$ for each $\eta\in\Om$, then $\bB'_\omb$ is nothing
but $\gD \bB_\omb$ and the previous identity takes the form
$$
\sum \left( \na_{U^\bul} M^\bul  \right) \bB_\bul
= \gD \left( \sum M^\bul \bB_\bul \right).
$$


\parag
For a given commutative algebra~$\bA$, we now consider the case where
$$
\Om \subset \Z^n, \quad \gA = \bA[[y_1,\ldots,y_n]],
$$
for a fixed $n\in\N^*$.

\begin{definition}	\label{defhomog}
Given $\eta\in\Z^n$ and $\Th\in\End_\bA(\bA[[y_1,\dotsc,y_n]])$, we say that $\Th$ is homogeneous of
degree~$\eta$ if $\Th y^m \in \bA y^{m+\eta}$ for every $m\in\N^n$ 
(with the usual notation $y^m=y_1^{m_1}\dotsm y_n^{m_n}$ for monomials).
\end{definition}

For example, any $\la\in\bA^n$ gives rise to an operator
\begin{equation}	\label{eqdefgXla}
\gX_\la = \la_1 y_1 \frac{\pa\;}{\pa y_1} +
\dotsb + \la_n y_n \frac{\pa\;}{\pa y_n}
\end{equation}
which is homogeneous of degree~$0$, since
$\gX_\la y^m = \lan m, \la\ran y^m$.

Suppose moreover that we are given a pseudovaluation $\val \colon \gA \to
\Z\cup\{\infty\}$ such that $(\gA,\val)$ is complete and 
$\frac{\pa\;}{\pa y_1},\dotsc,\frac{\pa\;}{\pa y_n}$ are continuous,
and consider $\gF = \gF_{\gA,\bA}$ as defined by~\eqref{eqdefgFexemp}.
We suppose $\Om\subset\Z^n$ because we are interested in $\gF$-valued {\em
homogeneous} comoulds, \ie\ $\gF$-valued comoulds~$\bB_\bul$ such that
$\bB_\omb$ is homogeneous of degree
$\norm{\omb} = \om_1 + \dotsb + \om_r \in \Z^n$
for every non-empty $\omb\in\Om^\bul$,
and homogeneous of degree~$0$ for $\omb=\est$; 
in fact, the multiplicativity property~\eqref{eqcaraccomould} will not be used
for what follows, the following proposition holds for any map $\bB_\bul\colon
\Om^\bul \to \gF$ provided it is homogeneous as just defined.

In the case of a comould satisfying the multiplicativity property as required in
Definition~\ref{defcomouldmult}, homogeneity is equivalent to the fact that each
$B_\eta = \bB_{(\eta)}$, $\eta\in\Om$, is homogeneous of degree~$\eta$.

\begin{prop}	\label{propcommutXla}
Let $\la\in\bA^n$ and $\bB_\bul$ be an $\gF$-valued homogeneous comould.
Then, for every $\bA$-valued mould~$M^\bul$ such that $(M^\omb
\bB_\omb)_{\omb\in\Om^\bul}$ is formally summable,
\begin{equation}	\label{eqXlaDph}
\big[ \gX_\la, \sum M^\bul \bB_\bul \big] = \sum (D_\ph M^\bul) \bB_\bul, 
\qquad \text{with} \ens 
\ph = \lan \, \cdot \,,\la \ran \colon \Om \to \bA.
\end{equation}
\end{prop}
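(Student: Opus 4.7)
The plan is to reduce the identity to a termwise eigenvalue computation and then justify the exchange of commutator with the formally summable sum.

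First, I would compute $[\gX_\la,\bB_\omb]$ individually for each $\omb\in\Om^\bul$, using only the homogeneity hypothesis. Since $\gX_\la$ acts as multiplication by $\langle m,\la\rangle$ on the monomial $y^m$ and $\bB_\omb(y^m)\in\bA y^{m+\norm{\omb}}$ (and commutes with multiplication by elements of $\bA$ because $\bB_\omb\in\gF=\gF_{\gA,\bA}$), a direct computation on a generic monomial yields
\[
\gX_\la\bB_\omb y^m=\langle m+\norm{\omb},\la\rangle\bB_\omb y^m,\qquad \bB_\omb\gX_\la y^m=\langle m,\la\rangle\bB_\omb y^m,
\]
so that $[\gX_\la,\bB_\omb]=\langle\norm{\omb},\la\rangle\,\bB_\omb$ as operators. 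Rewriting $\langle\norm{\omb},\la\rangle=\ph(\om_1)+\dotsb+\ph(\om_r)$ for $\omb=(\om_1,\dotsc,\om_r)$ non-empty (and $0$ for $\omb=\est$, which matches $[\gX_\la,\ID]=0$), we recognise exactly the scalar multiplier appearing in the definition of $D_\ph$:
\[
M^\omb[\gX_\la,\bB_\omb]=(D_\ph M^\omb)\bB_\omb.
\]

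Second, I would distribute the commutator $[\gX_\la,\cdot\,]$ through the formally summable sum $\sum M^\bul\bB_\bul$. Here $\gX_\la$ is an element of $\gF$ of valuation $\ge0$ (it is homogeneous of degree~$0$), hence both left- and right-multiplication by $\gX_\la$ are continuous for the pseudovaluation on $\gE$. Since the family $(M^\omb\bB_\omb)_{\omb\in\Om^\bul}$ is formally summable, so is $(M^\omb[\gX_\la,\bB_\omb])_{\omb\in\Om^\bul}$ (the valuation of each term is at least $\vln{M^\omb\bB_\omb}$), and continuity of the commutator gives
\[
\bigl[\gX_\la,\sum_{\omb}M^\omb\bB_\omb\bigr]=\sum_{\omb}M^\omb[\gX_\la,\bB_\omb]=\sum_{\omb}(D_\ph M^\omb)\bB_\omb,
\]
which is~\eqref{eqXlaDph}.

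There is no serious obstacle: the homogeneity of $\bB_\bul$ turns each bracket into a scalar multiple, and the only subtlety is the continuity argument that lets one pull the commutator inside the sum. One should nevertheless make explicit that $\gX_\la$ belongs to $\gF_{\gA,\bA}$ (which requires $\gX_\la$ to have a valuation with respect to $\val$, following from the assumed continuity of the partial derivatives $\pa/\pa y_i$) and that $(D_\ph M^\omb\,\bB_\omb)_{\omb\in\Om^\bul}$ is itself formally summable, so that the right-hand side of~\eqref{eqXlaDph} is well defined as a sum in $\gF$.
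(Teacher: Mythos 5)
Your first step is exactly the paper's: one checks on a monomial $y^m$ that any $\bA$-linear operator homogeneous of degree $\eta$ satisfies $[\gX_\la,\Th]=\lan\eta,\la\ran\Th$, hence $[\gX_\la,\bB_\omb]=\big(\ph(\om_1)+\dotsb+\ph(\om_r)\big)\bB_\omb$, which is precisely the multiplier defining $D_\ph$. That part is fine and coincides with the paper's computation.

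The weak point is your justification for exchanging the commutator with the sum. You appeal to the continuity of left- and right-multiplication by $\gX_\la$ for the pseudovaluation metric on $\gE$, which requires $\gX_\la\in\gE$, \ie\ that $\gX_\la$ \emph{has a valuation}. This is not among the hypotheses, and your parenthetical claim that it ``follows from the assumed continuity of the $\pa/\pa y_i$'' does not hold in general: as the footnote after Definition~\ref{defopval} emphasizes, a continuous operator of $\gA$ need not have a valuation (continuity is the non-uniform version of the estimate, having a valuation the uniform one). The paper avoids the issue by arguing pointwise rather than in the metric of $\gE$: for finite partial sums $\Th_k=\sum_{\omb\in I_k}M^\omb\bB_\omb$ the identity $[\gX_\la,\Th_k]=\sum_{\omb\in I_k}(D_\ph M^\omb)\bB_\omb$ is exact by step one; then, for each fixed $f\in\gA$, one has $\Th_k f\to\Th f$ and $\Th_k(\gX_\la f)\to\Th(\gX_\la f)$ by formal summability, and $\gX_\la(\Th_k f)\to\gX_\la(\Th f)$ by mere topological continuity of $\gX_\la$ on $\gA$ --- which \emph{does} follow from the hypotheses, since multiplication by $y_i$ and by $\la_i$ have valuations $\ge\nu(y_i)$ and $\ge\nu(\la_i)$ respectively and the $\pa/\pa y_i$ are assumed continuous. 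Replacing your metric argument in $\gE$ by this pointwise argument in $\gA$ (or else adding the hypothesis $\gX_\la\in\gF$, which holds in the saddle-node application) closes the gap; the rest of your proof is correct.
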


Thus, this mould derivation~$D_\ph$ reflects the action of the derivation $\ad_{\gX_\la}$ of $\End_\bA(\gA)$.

\begin{proof}
We first check that, if $\Th\in\End_\bA(\bA[[y_1,\dotsc,y_n]])$ is homogeneous of
degree $\eta\in\Z^n$, then
$$
[ \gX_\la, \Th ] = \lan \eta,\la \ran \Th.
$$
By $\bA$-linearity and continuity, it is sufficient to check that both operators
act the same way on a monomial~$y^m$.
We have $\Th y^m = \be_{m} y^{m+\eta}$ with a $\be_{m} \in \bA$,
thus $\gX_\la \Th y^m = \lan m+\eta,\la \ran \be_{m}  y^{m+\eta} 
= \lan m+\eta,\la \ran \Th y^m$ 
while $\Th \gX_\la y^m = \lan m,\la \ran \Th y^m$, hence
$[ \gX_\la, \Th ] y^m = \lan\eta,\la\ran \Th y^m$ as required.

It follows that
$$
[ \gX_\la, \bB_\omb ] = \big(\ph(\om_1)+\dotsb+\ph(\om_r)\big) \bB_\omb, \qquad
\omb = (\om_1,\dotsc,\om_r) \in \Om^\bul.
$$
Let $N^\bul = D_\ph M^\bul$.
For any exhaustion of~$\Om^\bul$ by finite sets $I_k$, letting 
$\Th_k = \sum_{\omb\in I_k} M^\omb \bB_\omb$ and 
$\Th'_k = \sum_{\omb\in I_k} N^\omb \bB_\omb$, we get
$[\gX_\la,\Th_k] = \Th'_k$.
For every $f\in\gA$, we have $\Th'_k f \xrightarrow[k\to\infty]{}\sum N^\bul 
\bB_\bul f$ on the one hand,
while, by continuity of~$\gX_\la$, 
$[\gX_\la,\Th_k]f \xrightarrow[k\to\infty]{} \big[\gX_\la,\sum M^\bul \bB_\bul\big]f$
on the other hand.
\end{proof}


\parag
Notice that, in the above situation, 
any $\C$-linear derivation $d  \colon \bA \to \bA$ induces a derivation 
$\ti d \colon \gA \to \gA$ (defined by $\ti d \sum a_m y^m = \sum (d a_m) y^m$) and a mould
derivation~$D$ (defined just before Remark~\ref{remmouldVsol}). 
If $\ti d$ commutes with the $B_\eta$, $\eta\in\Om$, one easily gets
\begin{equation}	\label{eqcommuttid}
\big[ \ti d, \sum M^\bul \bB_\bul \big] = \sum (D M^\bul) \bB_\bul.
\end{equation}
On the other hand, $D_\ph M^\omb = \lan \norm{\omb},\la \ran M^\omb$ if $\ph = \lan \, \cdot,\la \ran$.
Thus $D_\ph = \na$ when $n=1$ and $\la=1$.
This is the relevant situation for the saddle-node:
\begin{cor}	\label{corEC}
Choose $\Om = \cN$ as in~\eqref{eqdefiancN}, $\bA = \C[[x]]$ and 
$(\gA,\val) = (\bA[[y]],\nu_4)$, $\gF = \gF_{\gA,\bA}$.
Let $\bB_\bul$ denote the $\gF$-valued comould generated by $B_\eta =
y^{\eta+1}\frac{\pa\,}{\pa y}$.
Let $(a_\eta)_{\eta\in\Om}$ be as in~\eqref{eqassan} and 
$$
X_0 = x^2 \frac{\pa\,}{\pa x} + y \frac{\pa\,}{\pa y},
\qquad
X = X_0 + \sum_{\eta\in\Om} a_\eta B_\eta.
$$
Then the mould-comould contraction $\Th=\sum \cV^\bul \bB_\bul \in \gF$,
where~$\cV^\bul$ is determined by Lemma~\ref{lemdefcV}, is solution of the
conjugacy equation~\eqref{eqconjugOp} $\Th X = X_0  \Th$ in $\End_\C(\gA)$.
\end{cor}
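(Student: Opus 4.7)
The plan is to evaluate $\Th X - X_0 \Th$ using mould-comould expansions and show that it reduces to the defining recursion of $\cV^\bul$ from Lemma~\ref{lemdefcV}. I would split $X_0 = \ti d + \gX_1$, where $\ti d = x^2\frac{\pa\,}{\pa x}$ is the extension to $\gA$ of the derivation $d = x^2\frac{\dd\,}{\dd x}$ of $\bA$, and $\gX_1 = y\frac{\pa\,}{\pa y} = \gX_\la$ with $n=1$ and $\la=1$. Since each $B_\eta = y^{\eta+1}\frac{\pa\,}{\pa y}$ involves only the $y$-variable, $\ti d$ commutes with every $B_\eta$; hence formula~\eqref{eqcommuttid} yields
\[
[\ti d, \Th] = \sum (D\cV^\bul)\bB_\bul,
\]
where $D$ is the mould derivation induced by $d$. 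On the other hand, Proposition~\ref{propcommutXla} applied with $\ph(\eta)=\lan\eta,1\ran=\eta$ gives $[\gX_1, \Th] = \sum(D_\ph \cV^\bul)\bB_\bul = \sum(\na \cV^\bul)\bB_\bul$. Adding these, I obtain $[X_0, \Th] = \sum\bigl((D+\na)\cV^\bul\bigr)\bB_\bul$.

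Next, I would compute $\Th(X-X_0) = \sum_{\eta\in\cN} a_\eta\, \Th B_\eta$ directly from the expansion of~$\Th$. The multiplicativity of the comould gives $\bB_\omb B_\eta = \bB_{(\eta)\conc\omb}$, so $\Th B_\eta = \sum_\omb \cV^\omb \bB_{(\eta)\conc\omb}$, and re-indexing by $\omb' = (\eta)\conc\omb$ yields
\[
\Th(X - X_0) \;=\; \sum_{\omb'\neq\est} a_{\om'_1}\, \cV^{`\omb'}\, \bB_{\omb'},
\]
with the notation $`\omb' = (\om'_2,\dotsc,\om'_{r(\omb')})$. By the defining recursion in Lemma~\ref{lemdefcV}, the coefficient $a_{\om'_1}\cV^{`\omb'}$ is precisely $(d + \norm{\omb'})\cV^{\omb'} = (D+\na)\cV^{\omb'}$ for $\omb'\neq\est$, and $(D+\na)\cV^\est = 0$ since $\cV^\est = 1$. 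Hence $\Th(X-X_0) = \sum\bigl((D+\na)\cV^\bul\bigr)\bB_\bul = [X_0,\Th]$, which rearranges to $\Th X = X_0\Th$.

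The only genuine obstacle is justifying the various manipulations of formally summable families, and this is where I would be most careful: Lemma~\ref{lemCVformal} provides summability of $(\cV^\omb\bB_\omb)$ in $(\gF,\valn)$ with respect to~$\nu_4$, and the valuation estimate $\valn(\cV^\omb\bB_\omb)\ge \om_1+\dotsb+\om_r+2r$ carries over verbatim to the re-indexed families $(a_\eta \cV^\omb \bB_{(\eta)\conc\omb})_{\eta,\omb}$ and $\bigl((D+\na)\cV^\omb \bB_\omb\bigr)_{\omb}$ (using that $a_\eta\in x\C\{x\}$ and $a_0\in x^2\C\{x\}$, which preserves the valuation bounds). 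This ensures that the applications of formulas~\eqref{eqcommuttid} and~\eqref{eqXlaDph}, as well as the re-indexing step, are all legitimate in the complete pseudovaluation ring~$\gF$.
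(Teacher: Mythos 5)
Your proof is correct and takes essentially the same route as the paper: the decomposition $X_0 = \ti d + \gX_1$, the commutator formulas~\eqref{eqcommuttid} and~\eqref{eqXlaDph}, and the recursion from Lemma~\ref{lemdefcV} (equivalently, the mould equation~\eqref{eqmouldeq}) are exactly the ingredients the paper uses. The only (cosmetic) difference is that you compute $\Th(X-X_0)$ by hand via the re-indexing $\bB_\omb B_\eta = \bB_{(\eta)\conc\omb}$, whereas the paper writes $X - X_0 = \sum J_a^\bul\bB_\bul$ and cites Proposition~\ref{propmultiplimould} to get $\sum(J_a^\bul\times\cV^\bul)\bB_\bul$; unpacking the mould product $J_a^\bul\times\cV^\bul$ gives precisely your $a_{\om'_1}\cV^{`\omb'}$, so the two are the same calculation.
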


\begin{proof}
It was already observed that each $B_\eta$ is homogeneous of degree~$\eta$
and the formal summability of $(\cV^\omb \bB_\omb)_{\omb\in\Om^\bul}$ 
was checked in Lemma~\ref{lemCVformal}.
Let $d = x^2\frac{\dd\,}{\dd x} \colon \bA \to \bA$. 
The corresponding derivation of~$\gA$ is $\ti d = x^2\frac{\pa\,}{\pa x}$, which
commutes with the $B_\eta$'s.
On the other hand, with the notation of Proposition~\ref{propcommutXla},
$X_0 = \ti d + \gX_1$.
Since $X-X_0 = \sum J_a^\bul \bB_\bul$ with the notation of Remark~\ref{remmouldVsol},
equation~\eqref{eqconjugOp} is equivalent to 
$\big[\ti d + \gX_1,\Th\big] = \Th  \sum J_a^\bul \bB_\bul$;
plugging any formally convergent mould-comould expansion $\Th = \sum M^\bul
\bB_\bul$ into it, we find $\sum (D M^\bul+\na M^\bul)  \bB_\bul$ for the \lhs\
by~\eqref{eqXlaDph} and~\eqref{eqcommuttid} while, according to
Proposition~\ref{propmultiplimould}, the \rhs\ can be written $\sum
(J_a^\bul\times M^\bul) \bB_\bul$, hence the conclusion follows
from~\eqref{eqmouldeq}.
\end{proof}

\begin{remark}	\label{remcommentpf}
The symmetrality of the mould~$\cV^\bul$ obtained in Proposition~\ref{propcVsym}
shows us that $\Th$ is invertible, with inverse
$\Th\ii = \sum \cVt^\bul \bB_\bul$.
The proof of Theorem~\ref{thmSNformal} will thus be complete when we have
checked that $\Th$ is an algebra automorphism; this will follow from the results
of next section on the contraction of symmetral moulds into a comould generated
by derivations.
\end{remark}


\section{Contraction into a cosymmetral comould}	\label{secContrAltSym}


\parag 
For the interpretation of alternality and symmetrality of moulds in terms
of the corresponding mould-comould expansions, we focus on the case where the
comould~$\bB_\bul$ is generated by a family of $\bA$-linear derivations
$(B_\eta)_{\eta\in\Om}$ of a commutative algebra~$\gA$.

The main result of this section is Proposition~\ref{propaltsym} below, according to
which, in this case, 
\emph{the contraction of an alternal mould into~$\bB_\bul$ gives rise to a
derivation}
and \emph{the contraction of a symmetral mould gives rise to an algebra
automorphism}.


\parag 
We thus assume that $(\gA,\nu)$ is a complete pseudovaluation ring such that~$\gA$ is
a commutative $\bA$-algebra, and we define $\gF = \gF_{\gA,\bA}$ by~\eqref{eqdefgFexemp}.
Since we shall be interested in the way the elements of~$\gF$ act on products of
elements of~$\gA$, we consider the left $\gF$-module $\Bili$ of $\bA$-bilinear maps
from $\gA\times\gA$ to~$\gA$ 
(with ring multiplication $(\Th,\Phi)\in\gF\times\Bili \mapsto \Th\circ\Phi\in\Bili$)
and its filtration
$$
\gB_\de = \ao \Phi \in  \Bili \mid \nu\big( \Phi(f,g) \big) \ge \nu(f)+\nu(g)+\de
\;\text{for all $f,g\in\gA$} \af, \qquad \de\in\Z.
$$
By defining $\gB = \bigcup_{\de\in\Z} \gB_\de$ we get a left $\gF$-submodule of
$\Bili$, for which the filtration $(\gB_\de)_{\de\in\Z}$ is exhaustive,
separated and compatible with the filtration of~$\gF$ induced by~$\valn$: the
subgroups $\gF_\de = \ao \Th\in\gF \mid \vln{\Th}\ge\de \af$ satisfy 
$\gF_\de \gB_{\de'} \subset \gB_{\de+\de'}$ for all $\de,\de'\in\Z$.
The corresponding distance on~$\gB$ is complete, by completeness of
$(\gA,\nu)$. 

We now define a map $\sig \colon \gF \to \gB$ by
$\Th\in\gF \mapsto \sig(\Th) = \Phi$ such that
$$
\Phi \colon
(f,g) \in \gA\times\gA \mapsto \Phi(f,g) = \Th(fg) \in \gA.
$$
This map is to be understood as a kind of coproduct.
Observe that $\sig$ is $\gF$-linear, \ie\ $\sig(\Th \Th')=\Th\sig(\Th')$
(thus it boilds down to $\sig(\Th)=\Th\circ\sig(\ID)$, and $\sig(\ID)$ is just the
multiplication of~$\gA$)
and continuous because $\sig(\gF_\de) \subset \gB_\de$ for each $\de\in\Z$.

Viewing $\gF$ as an $\bA$-module, we also define an $\bA$-linear map
$$
\rho \colon \gF_2 = \gF  \otimes_\bA \gF \to \gB
$$ 
by its action on decomposable elements:
$$
\rho(\Th_1\otimes\Th_2)(f,g) = (\Th_1 f)(\Th_2 g), \qquad f,g\in\gA
$$
for any $\Th_1,\Th_2\in\gF$. 
(A remark parallel to the remark on Definition~\ref{defidec} applies: the kernel
of~$\rho$ is the torsion submodule of~$\gF_2$ when $\gA$ is an integral
domain; if moreover $\bA$ is principal, then $\rho$ is injective.)
Notice that, for $\Th_1\in\gF_{\de_1}$ and $\Th_2\in\gF_{\de_2}$, one has
$\rho(\Th_1\otimes\Th_2) \in \gB_{\de_1+\de_2}$, 
hence the map $\ti\rho \colon (\Th_1,\Th_2) \mapsto \rho(\Th_1\otimes\Th_2)$
from $\gF\times\gF$ to~$\gB$ is continuous.

Using the $\bA$-algebra structure of~$\gF_2$, we see that
\begin{equation}	\label{eqCorrespAlg}
\sig(\Th) = \rho(\xi),\; \sig(\Th') = \rho(\xi')
\quad \Rightarrow  \quad
\sig(\Th\Th') = \rho(\xi\xi')
\end{equation}
for any $\Th,\Th'  \in \gF$, $\xi,\xi'\in\gF_2$.


\parag
With the above notations, the set of all $\bA$-linear derivations of~$\gA$
having a valuation is
$$
\gL_\gF = \ao \Th\in\gF \mid \sig(\Th) = \rho(\Th\otimes\ID + \ID\otimes\Th) \af
$$
(it is a Lie algebra for the bracketting $[\Th_1,\Th_2] = \Th_1\Th_2 - \Th_2\Th_1$).
Letting $\gF^*$ denote the multiplicative group of invertible elements of~$\gF$,
we may also consider its subgroup
$$
G_\gF = \ao \Th\in\gF^* \mid \sig(\Th) = \rho(\Th\otimes\Th) \af,
$$
the elements of which are $\bA$-linear algebra automorphisms of~$\gA$.

\begin{lemma}	\label{lemcosym}
Assume that the generators $B_\eta$, $\eta\in\Om$, of an $\gF$-valued
comould~$\bB_\bul$ all belong to~$\gL_\gF$. 
Then
\begin{equation}	\label{eqmotivsh}
\sig(\bB_\omb) = \sum_{\omb^1,\omb^2\in\Om^\bul} \sh{\omb^1}{\omb^2}{\omb}
\rho( \bB_{\omb^1} \otimes \bB_{\omb^2} ), \qquad \omb\in\Om^\bul.
\end{equation}
\end{lemma}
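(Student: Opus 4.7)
The plan is to prove \eqref{eqmotivsh} by induction on the length $r = r(\omb)$, using the multiplicativity relation \eqref{eqCorrespAlg} for $\sig$ together with the hypothesis $B_\eta \in \gL_\gF$, which asserts $\sig(B_\eta) = \rho(B_\eta\otimes\ID + \ID\otimes B_\eta)$ for every $\eta\in\Om$. The base case $\omb=\est$ reduces to $\sig(\ID)=\rho(\ID\otimes\ID)$, since the only shuffle with target $\est$ is the one with $\omb^1=\omb^2=\est$, of coefficient one.

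For the inductive step, fix $\omb$ of length $r\ge 1$ and write $\omb = \omb'\conc(\eta)$ with $\omb'$ of length $r-1$, so that the comould axiom \eqref{eqcaraccomould} gives $\bB_\omb = B_\eta\,\bB_{\omb'}$. By the induction hypothesis, $\sig(\bB_{\omb'}) = \rho(\xi')$ with $\xi' = \sum_{\alb,\beb}\sh{\alb}{\beb}{\omb'}\,\bB_\alb\otimes\bB_\beb$. Combining this with the derivation property of $B_\eta$ via~\eqref{eqCorrespAlg}, one gets
$$
\sig(\bB_\omb) = \rho\Bigl((B_\eta\otimes\ID + \ID\otimes B_\eta)\,\xi'\Bigr).
$$
Since $B_\eta\,\bB_\alb = \bB_{\alb\conc(\eta)}$ (again by \eqref{eqcaraccomould}), the product $(B_\eta\otimes\ID + \ID\otimes B_\eta)(\bB_\alb\otimes\bB_\beb)$ equals $\bB_{\alb\conc(\eta)}\otimes\bB_\beb + \bB_\alb\otimes\bB_{\beb\conc(\eta)}$, which yields
$$
\sig(\bB_\omb) = \sum_{\alb,\beb}\sh{\alb}{\beb}{\omb'}\,\rho\bigl(\bB_{\alb\conc(\eta)}\otimes\bB_\beb + \bB_\alb\otimes\bB_{\beb\conc(\eta)}\bigr).
$$

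To identify this with the right-hand side of~\eqref{eqmotivsh}, I partition the sum $\sum_{\omb^1,\omb^2}\sh{\omb^1}{\omb^2}{\omb'\conc(\eta)}\,\rho(\bB_{\omb^1}\otimes\bB_{\omb^2})$ according to whether the final letter~$\eta$ of~$\omb$ is supplied by the last position of $\omb^1$ or of $\omb^2$. This rests on the elementary combinatorial identity
$$
\sh{\alb\conc(\eta)}{\omb^2}{\omb'\conc(\eta)} = \sh{\alb}{\omb^2}{\omb'},\qquad
\sh{\omb^1}{\beb\conc(\eta)}{\omb'\conc(\eta)} = \sh{\omb^1}{\beb}{\omb'},
$$
which follows from the definition of shuffle permutations by observing that, on the side ending in~$\eta$, the position mapped to~$r$ is forced to be~$r$ itself; removing it yields a bijection with shuffle permutations of the truncated words. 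Writing $\omb^1 = \alb\conc(\eta)$ in the first case and $\omb^2 = \beb\conc(\eta)$ in the second, the two cases are disjoint (the last position of $\omb$ is the image of exactly one position of $\omb^1\conc\omb^2$ under a fixed shuffle permutation) and together recover the displayed expression for $\sig(\bB_\omb)$.

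The main obstacle is not algebraic but combinatorial: producing a clean, non-overlapping partition of the shuffles according to which argument provides the terminal letter. Once that identity is in hand, the whole proof is just the multiplicativity \eqref{eqCorrespAlg} applied once at each step of the induction; this is the precise mechanism by which primitivity of the generators in $\gL_\gF$ propagates to the comould~$\bB_\bul$ in the guise of the shuffle-coproduct formula~\eqref{eqmotivsh}.
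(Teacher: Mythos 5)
Your proof follows essentially the same route as the paper's: induction on $r(\omb)$, with the multiplicativity relation \eqref{eqCorrespAlg} converting the hypothesis $\sig(B_\eta)=\rho(B_\eta\otimes\ID+\ID\otimes B_\eta)$ into the inductive step, followed by a recursion on the shuffle coefficients to re-index the resulting sum. The only structural difference is that you strip the \emph{last} letter of~$\omb$ (so that $B_\eta$ multiplies $\bB_{\omb'}$ on the left), whereas the paper strips the first letter (so that $B_{\om_1}$ multiplies $\bB_{`\omb}$ on the right); both are equally legitimate in view of~\eqref{eqcaraccomould}.

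There is, however, one genuine error, local and easily repaired: the two displayed ``elementary combinatorial identities'' are false as stated. Take $\alb=\est$, $\omb^2=(\eta)$, $\omb'=(\eta)$: then $\tsh{(\eta)}{(\eta)}{(\eta,\eta)}=2$ whereas $\tsh{\est}{(\eta)}{(\eta)}=1$. Your justification --- ``the position mapped to~$r$ is forced to be~$r$ itself'' --- breaks down exactly when the \emph{other} word also ends in the letter~$\eta$, in which case the terminal letter of~$\omb$ may be supplied by either factor. What your partition of the shuffle permutations actually yields, and what the induction needs, is the single identity (for $\omb^1=(\al_1,\dotsc,\al_p)$, $\omb^2=(\be_1,\dotsc,\be_q)$ non-empty, with the obvious convention that a term is dropped when the corresponding word is empty)
$$
\sh{\omb^1}{\omb^2}{\omb'\conc(\eta)} =
\sh{(\al_1,\dotsc,\al_{p-1})}{\omb^2}{\omb'}\,1_{\{\al_p=\eta\}}
+ \sh{\omb^1}{(\be_1,\dotsc,\be_{q-1})}{\omb'}\,1_{\{\be_q=\eta\}},
$$
which is the mirror image of the identity used in the paper and itself an instance of~\eqref{eqidentiteshdeux} with $\gab^1=\omb'$ and $\gab^2=(\eta)$. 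With the indicator functions restored, summing over all pairs $(\omb^1,\omb^2)$ and reparametrising by $\omb^1=\alb\conc(\eta)$ in the first term and $\omb^2=\beb\conc(\eta)$ in the second recovers exactly the expression you derived for $\sig(\bB_\omb)$, and the proof closes as you intend.
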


Such a comould is said to be {\em cosymmetral}.

\begin{proof}
Let $\omb=(\om_1,\ldots,\om_r)\in\Om^\bul$.
We proceed by induction on~$r$.
Equation~\eqref{eqmotivsh} holds if $r=0$, since $\sig(\ID)=\rho(\ID\otimes\ID)$,
or $r=1$ (by assumption); we thus suppose $r\ge2$.
By~\eqref{eqcaraccomould}, we can write $\bB_\omb = \bB_{`\omb} B_{\omb_1}$ with
$`\omb = (\om_2,\dotsc,\om_r)$.
Using the induction hypothesis and~\eqref{eqCorrespAlg}, we get
$\sig(\bB_\omb)=\rho(\xi)$ with
\begin{multline*}
\xi = \sum_{\alb,\beb\in\Om^\bul} \sh{\alb}{\beb}{`\omb} 
(\bB_\alb\otimes\bB_\beb)(B_{\om_1}\otimes\ID + \ID\otimes B_{\om_1}) \\
= \sum_{\alb,\beb\in\Om^\bul} \sh{\alb}{\beb}{`\omb} 
(\bB_{\om_1\conc\alb}\otimes\bB_\beb + \bB_\alb\otimes\bB_{\om_1\conc\beb}).
\end{multline*}
This coincides with 
$\dst\sum_{\alb,\beb\in\Om^\bul} \tsh{\alb}{\beb}{\omb} \bB_\alb\otimes\bB_\beb$,
since
$$
\sh{\alb}{\beb}{\omb} = \sh{`\alb}{\beb}{`\omb} 1_{\{\al_1=\om_1\}}
+ \sh{\alb}{`\beb}{`\omb} 1_{\{\be_1=\om_1\}}
$$
(particular case of~\eqref{eqidentiteshdeux} with $\gab^1=\om_1$ and $\gab^2=`\omb$).
\end{proof}


\parag 
We are now ready to study the effect of alternality or symmetrality in this context.
\label{secaltsym} \label{secmotivsh}
\begin{prop}	\label{propaltsym}
Suppose that $\bB_\bul$ is an $\gF$-valued cosymmetral comould
and let $M^\bul  \in \hM^\bul(\Om,\bA)$ be such that 
$(M^\omb \bB_\omb)_{\omb\in\Om^\bul}$ is formally summable.
Then:
\begin{enumerate}[--]
\item If $M^\bul$ is alternal, then $\sum M^\bul \bB_\bul  \in \gL_\gF$.
\item If $M^\bul$ is symmetral, then $\sum M^\bul \bB_\bul  \in G_\gF$.
\item More generally, denoting by $\bM^\cbul$ the image of~$M^\bul$ by the
homomorphism~$\tau$ of Lemma~\ref{lemhomomtau} and assuming that the family 
$\big( \rho(\bM^{\alb,\beb} \bB_{\alb}\otimes\bB_{\beb}) 
\big)_{(\alb,\beb)\in\Om^\bul\times\Om^\bul}$
is formally summable in~$\gB$,
\begin{equation}	\label{eqsigtau}
\sig\left( \sum M^\bul \bB_\bul \right)
= \sum_{(\alb,\beb)\in\Om^\bul\times\Om^\bul} 
\rho(\bM^{\alb,\beb} \bB_{\alb}\otimes\bB_{\beb}).
\end{equation}
\end{enumerate}
\end{prop}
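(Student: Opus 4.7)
The plan is to establish the general identity \eqref{eqsigtau} first, and then derive the alternal and symmetral cases as direct consequences via Lemma~\ref{lemtaualtsym}. Writing $\Th = \sum_\omb M^\omb \bB_\omb$ and using that $\sig$ is $\gF$-linear and continuous (since $\sig(\gF_\de)\subset\gB_\de$), one may apply $\sig$ term by term:
$$\sig(\Th) = \sum_{\omb\in\Om^\bul} M^\omb \,\sig(\bB_\omb).$$
The cosymmetrality of $\bB_\bul$ provided by Lemma~\ref{lemcosym} then gives
$$\sig(\Th) = \sum_{\omb\in\Om^\bul}\sum_{\alb,\beb\in\Om^\bul} M^\omb \sh{\alb}{\beb}{\omb} \rho(\bB_\alb\otimes\bB_\beb).$$
The crucial combinatorial observation is that for each fixed pair $(\alb,\beb)$, only finitely many $\omb$ contribute (the words in the shuffling set are bounded in length by $r(\alb)+r(\beb)$). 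This permits the exchange of summations, and after grouping the finite inner sum into $\bM^{\alb,\beb}=\tau(M^\bul)^{\alb,\beb}$ one arrives at \eqref{eqsigtau}; the exchange is legitimate precisely because of the formal-summability hypothesis on $\bigl(\rho(\bM^{\alb,\beb}\bB_\alb\otimes\bB_\beb)\bigr)$ in $\gB$, combined with continuity of $\ti\rho$.

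The alternal case then follows by substituting into \eqref{eqsigtau} the characterization $\tau(M^\bul) = M^\bul\otimes 1^\bul + 1^\bul\otimes M^\bul$ from Lemma~\ref{lemtaualtsym}: since $1^\est = 1$ and $1^\omb = 0$ otherwise, the double sum collapses to $\rho(\Th\otimes\ID) + \rho(\ID\otimes\Th)$, so that $\sig(\Th) = \rho(\Th\otimes\ID+\ID\otimes\Th)$, meaning $\Th\in\gL_\gF$. The symmetral case is entirely parallel: from $\tau(M^\bul) = M^\bul\otimes M^\bul$ we get $\sig(\Th) = \rho(\Th\otimes\Th)$, the defining equation of an $\bA$-algebra morphism, and moreover $\Th(1) = M^\est\bB_\est(1) = 1$ because each $B_\eta$ is a derivation and therefore kills~$1$. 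Invertibility of~$\Th$ inside~$\gF$ is obtained from Propositions~\ref{propinvsym} and~\ref{propstructaltsym}: the mould $\wt M^\bul = S M^\bul$ is symmetral and is the multiplicative inverse of~$M^\bul$, so, as soon as the family $(\wt M^\omb \bB_\omb)$ is also formally summable, Proposition~\ref{propmultiplimould} applied to $M^\bul\times \wt M^\bul = \wt M^\bul\times M^\bul = 1^\bul$ yields that $\sum \wt M^\bul\bB_\bul$ is a two-sided inverse of~$\Th$; hence $\Th\in G_\gF$.

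The main obstacle is the rigorous justification of the summability passages. Rewriting \eqref{eqsigtau} requires us to upgrade single-index summability of $(M^\omb\bB_\omb)$ to double-index summability of $(\rho(\bM^{\alb,\beb}\bB_\alb\otimes\bB_\beb))$ in the filtered module~$\gB$; this is why bullet~(iii) builds in that hypothesis. In the alternal case, the collapse to $\rho(\Th\otimes\ID+\ID\otimes\Th)$ makes the hypothesis automatic, and in the symmetral case the decomposable dimould $M^\bul\otimes M^\bul$ has its contraction factoring as $\rho\bigl((M^\alb\bB_\alb)\otimes(M^\beb\bB_\beb)\bigr)$, whose valuation is bounded below by $\vl{M^\alb\bB_\alb}+\vl{M^\beb\bB_\beb}$, so double-summability follows from single-summability. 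The remaining delicate point is the summability of $(\wt M^\omb\bB_\omb)$ needed for invertibility: since $\wt M^\omb = (-1)^{r(\omb)} M^{\ov\omb}$ involves reversing the word, this will typically be verified from whatever symmetric valuation estimates made $(M^\omb\bB_\omb)$ summable in the first place (as is done explicitly for $\cV^\bul$ and $\cVt^\bul$ in Lemma~\ref{lemCVformal}).
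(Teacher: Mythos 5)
Your proof is correct and follows essentially the same route as the paper's: reduce the alternal and symmetral cases to the general identity \eqref{eqsigtau} via Lemma~\ref{lemtaualtsym} (checking that the double-summability hypothesis is then automatic from the single-index one), and establish \eqref{eqsigtau} by applying the continuous map~$\sig$ termwise, invoking cosymmetrality (Lemma~\ref{lemcosym}) and rearranging the double sum --- which the paper carries out slightly more explicitly by estimating the tail $A_{K,R}$ over the exhaustions~$\Om^{K,R}$. Your added caveat that the formal summability of $(\wt M^\omb \bB_\omb)$ must be verified separately before invoking Proposition~\ref{propmultiplimould} for the invertibility of~$\Th$ is a legitimate point which the paper itself passes over in silence.
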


\begin{proof}
Let $\de_*\in\Z$ such that $v(\omb) = \vln{M^\omb\bB_\omb} \ge \de_*$ for all $\omb\in\Om^\bul$
and $\Th = \sum M^\bul \bB_\bul$.
We shall use the notation 
$\Phi_{\alb,\beb} = \bM^{\alb,\beb} \bB_{\alb}\otimes\bB_{\beb}$
for $(\alb,\beb)\in\Om^\bul\times\Om^\bul$.
Lemma~\ref{lemtaualtsym} yields
$$
\Phi_{\alb,\beb} = 1_{\{\beb=\est\}} M^\alb\bB_\alb \otimes \ID 
+ 1_{\{\alb=\est\}} \ID \otimes M^\beb\bB_\beb
$$ 
for $M^\bul$ alternal and
$\Phi_{\alb,\beb} = M^\alb \bB_\alb \otimes M^\beb \bB_\beb$ 
for $M^\bul$ symmetral.
In both cases, the set $\ao (\alb,\beb)\in\Om^\bul\times\Om^\bul \mid
\rho(\Phi_{\alb,\beb}) \notin \gB_\de \af$ is thus finite for any $\de\in\Z$, in
view of the formal summability hypothesis,
and the sum of the family $\big(\rho(\Phi_{\alb,\beb})\big)$ is respectively
$\rho(\Th\otimes\ID+\ID\otimes\Th)$ or $\rho(\Th\otimes\Th)$, by continuity of~$\ti\rho$.
Therefore it is sufficient to prove the third property
(the invertibility of~$\Th$ when $M^\bul$ is symmetral is a simple consequence
of Proposition~\ref{propmultiplimould} and of the invertibility of~$M^\bul$).

We thus assume $\big(\rho(\Phi_{\alb,\beb})\big)$ formally summable in~$\gB$.
As in the proof of Proposition~\ref{propmultiplimould}, we can suppose $\Om$
countable and choose an exhaustion of~$\Om^\bul$ by finite sets of the
form~$\Om^{K,R}$.
Then, by virtue of the definition of~$\tau$ and of Lemma~\ref{lemcosym},
\begin{multline*}
A_{K,R} :=
\sum_{(\alb,\beb)\in\Om^{K,R}\times\Om^{K,R}} \rho(\Phi_{\alb,\beb}) 
- \sig\Bigg( \sum_{\omb\in\Om^{K,R}} M^\omb \bB_\omb \Bigg) \\
= \Bigg( \sum_{ \alb,\beb\in\Om^{K,R}, \, \omb\in\Om^\bul }
- \sum_{ \alb,\beb\in\Om^\bul, \, \omb\in\Om^{K,R} } \Bigg)
\sh{\alb}{\beb}{\omb} M^\omb \rho(\bB_\alb\otimes\bB_\beb) \\
= \sum_{ \substack{\alb,\beb\in\Om^{K,R} \\ \text{s.t.}\, r(\alb)+r(\beb)>R} }
\rho(\Phi_{\alb,\beb})
\end{multline*}
(the last equality stems from the fact that, if $\omb\in\Om^{K,R}$, then
$\tsh{\alb}{\beb}{\omb}\neq0$ implies $\alb,\beb\in\Om^{K,R}$).
The formal summability of $\big(\rho(\Phi_{\alb,\beb})\big)$ yields
$A_{K,R}\to0$ as $K,R\to\infty$, which is the desired result since $\sig$ is
continuous. 
\end{proof}

\begin{remark}
The proof of Theorem~\ref{thmSNformal} is now complete:
in view of the symmetrality of~$\cV^\bul$ with $\Om=\cN$ and $\bA=\C[[x]]$ 
(Proposition~\ref{propcVsym})
and the cosymmetrality of~$\bB_\bul$ defined by~\eqref{eqdefbBn}
with $\gF=\gF_{\gA,\bA}$, $(\gA,\val) = (\C[[x,y]],\nu_4)$,
Proposition~\ref{propaltsym} shows that $\Th = \sum \cV^\bul \bB_\bul$ is an
automorphism of~$\gA$.
As noticed in Remark~\ref{remcommentpf}, this was the only thing which remained to be
checked.
\end{remark}
\label{secPfThm}


\parag 
Another way of checking that the contraction of an alternal mould into a
cosymmetral comould~$\bB_\bul$ is a derivation is to express it as a sum of iterated Lie
brackets of the derivations~$B_\eta$ which generate the comould.

For $\omb=(\om_1,\dotsc,\om_r)\in\Om^\bul$ with $r\ge2$, let
$$
\bB_{[\omb]} = [ B_{\om_r}, [ B_{\om_{r-1}}, [ \dotsb 
[B_{\om_2}, B_{\om_1}] \dotsb ]]].
$$
One can check that, for any alternal mould~$M^\bul$ and for any finite
subset~$\Om_{\!f}$ of~$\Om$,
$$
\sum_{\omb\in\Om_{\!f}^r} M^\omb \bB_\omb =
\frac{1}{r} \sum_{\omb\in\Om_{\!f}^r} M^\omb \bB_{[\omb]},
\qquad r\ge2
$$
(identifying $\Om_{\!f}^r$ with the sets of all words of length~$r$ the letters of
which belong to~$\Om_{\!f}$).
The proof is left to the reader.


\parag 
Let $\bB_\bul$ denote an $\gF$-valued comould.
Suppose that $(B_\eta)_{\eta\in\Om}$ is formally summable and consider
$Y=\sum_{\eta\in\Om} B_\eta \in \gF$.

We have seen that, by definition, the comould is cosymmetral iff each~$B_\eta$
is a derivation of~$\gA$; then $Y$ is itself a derivation.
This is the situation when there is an appropriate notion of homogeneity, as in
Definition~\ref{defhomog}, and we expand an $\bA$-linear derivation~$Y$ into a
sum of homogeneous components, each $B_\eta$ being homogeneous of degree~$\eta$.

\label{secAlludel}

Suppose now that the object to analyse is not a singular vector field, as in the
case of the saddle-node, but a local transformation; considering the associated
substitution operator, we are thus led to an automorphism of~$\gA$, typically of
the form $\phi=\ID+Y$. Then the homogeneous components~$B_\eta$ of~$Y$ are no longer
derivations; expanding $\sig(\phi) = \rho(\phi\otimes\phi)$, we rather get
\begin{equation}	\label{eqmodifLeib}
\sig(B_\eta) = \rho\Big( B_\eta \otimes \ID +
\sum_{\eta'+\eta''=\eta} B_{\eta'}\otimes B_{\eta''} 
+ \ID \otimes B_\eta \Big).
\end{equation}
The comould~$\bB_\bul$ they generate is then called {\em cosymmetrel}.
A cosymmetrel comould is characterized by identities similar
to~\eqref{eqmotivsh} but with the shuffling coefficients
$\tsh{\omb^1}{\omb^2}{\omb}$ replaced by new ones, denoted by
$\tcsh{\omb^1}{\omb^2}{\omb}$ and called ``contracting shuffling coefficients''.

Dually, using these new coefficients instead of the previous shuffling coefficients in
formulas~\eqref{eqdefaltal} and~\eqref{eqdefsymal}, one gets the definition of
{\em alternel} and {\em symmetrel} moulds, which were only briefly alluded to at
the beginning of Section~\ref{secAltSym}.

The contraction of alternel or symmetrel moulds into cosymmetrel comoulds
enjoy properties parallel to those that we just described in the cosymmetral
case.
This allows one to treat local vector fields and local discrete
dynamical systems with completely parallel formalisms.


\part{Resurgence, alien calculus and other applications}


\section{Resurgence of the normalising series}	\label{secResurSN}



\parag 
The purpose of this section is to use the mould-comould representation of the
formal normalisation of the saddle-node given by Theorem~\ref{thmSNformal} to
deduce ``resurgent properties''.
We begin by a few reminders about \'Ecalle's Resurgence theory. We
follow the notations of~\cite{kokyu}.


\medskip

\noindent -- 
The formal Borel transform is the $\C$-linear homomorphism
$$
\cB \,:\;
\wt\ph(z) = \sum_{n\ge0} c_n z^{-n-1} \in z\ii\C[[z\ii]]
\ens\mapsto\ens
\wh\ph(\ze) = \sum_{n\ge0} c_n \frac{\ze^n}{n!} \in \C[[\ze]].
$$
In the case of a convergent~$\wt\ph$, one gets a convergent series~$\wh\ph$
which defines an entire function of exponential type.
Namely, if $\ph(x) = \wt\ph(-1/x)\in x\C[[x]]$ has radius of convergence $>\rho$, then
there exists $K>0$ such that $|\ph(x)|\le K|x|$ for $|x|\le\rho$ and this
implies, by virtue of the Cauchy inequalities, that
$|c_n| \le K \rho^{-n}$, hence $\wh\ph$ is entire and
\begin{equation}	\label{ineqentire}
|\wh\ph(\ze)|  \le K\, \ee^{\rho\ii|\ze|}, \qquad \ze\in\C.
\end{equation}
We are particularly interested in the case where $\wh\ph(\ze)\in\C\{\ze\}$
without being necessarily entire; this is equivalent to Gevrey-$1$ growth
for the coefficients of~$\wt\ph$:
\begin{multline*}
\wt\ph(z) \in z\ii\C[[z\ii]]_1 
\quad \overset{\text{def}}{\Longleftrightarrow} \quad
\exists C,K>0 \; \text{s.t.}\; |c_n|\le K C^n n! \; \text{for all $n$} \\
\quad\Longleftrightarrow\quad
\cB\wt\ph(\ze) \in \C\{\ze\}.
\end{multline*}


\medskip

\noindent -- 
The counterpart in~$\C[[\ze]]$ of the multiplication (Cauchy product) of
$z\ii\C[[z\ii]]$ is called {\em convolution} and denoted by~$*$, thus
$\cB(\wt\ph\cdot\wt\psi) = \cB(\wt\ph) * \cB(\wt\psi)$.
Now, if $\wh\ph = \cB(\wt\ph)$ and $\wh\psi= \cB(\wt\psi)$ belong
to~$\C\{\ze\}$, then $\wh\ph * \wh\psi \in \C\{\ze\}$ and this germ of
holomorphic function is determined by
\begin{equation}	\label{eqdefconvol}
(\wh\ph * \wh\psi)(\ze) = \int_0^\ze \wh\ph(\ze_1) \wh\psi(\ze-\ze_1)
\qquad \text{for $|\ze|$ small enough.}
\end{equation}
We have an algebra $(\C\{\ze\},*)$ without unit, isomorphic via~$\cB$ to 
$(z\ii\C[[z\ii]]_1,\cdot)$.
By adjunction of unit, we get an algebra isomorphism
$$
\cB \colon \C[[z\ii]]_1 \overset{\sim}{\to} \C\, \de \oplus \C\{\ze\}
$$
(where $\de = \cB 1$ is a symbol for the unit of convolution).
We can even take into account the differential $\frac{\dd\,}{\dd z}$: its
counterpart via~$\cB$ is 
$\wh\pa \colon c\,\de + \wh\ph(\ze) \mapsto -\ze\wh\ph(\ze)$.


\medskip

\noindent -- 
Let us now consider all the rectifiable oriented paths of~$\C$ which start from
the origin and then avoid~$\Z$, \ie\ oriented paths represented by absolutely
continuous maps $\ga \colon [0,1] \to
\C\setminus\Z^*$ such that $\ga(0)=0$ and $\ga\ii(0)$ is connected.
We denote by~$\gR(\Z)$ the set of all homotopy classes~$[\ga]$ of
such paths~$\ga$ and by~$\pi$ the map $[\ga]\in\gR(\Z) \mapsto \ga(1)\in\C\setminus\Z^*$;
considering $\pi$ as a covering map, we get a Riemann surface structure
on~$\gR(\Z)$.

Observe that $\pi\ii(0)$ consists of a single point, the ``origin''
of~$\gR(\Z)$; this is the only difference between $\gR(\Z)$ and the universal
cover of~$\C\setminus\Z$.
The space $\wHR\Z$ of all holomorphic functions of~$\gR(\Z)$ can be identified
with the space of all $\wh\ph(\ze) \in \C\{\ze\}$ which admit an
analytic continuation along any representative of any element of~$\gR(\Z)$
(\cf \cite{kokyu}, Definition~3 and Lemma~2).

\begin{definition}	
We define the convolutive model of the algebra of resurgent functions over~$\Z$
as $\hRZ = \C\, \de  \oplus \wHR\Z$.
We define the formal model of the algebra of resurgent functions over~$\Z$
as $\tRZ = \cB\ii(\hRZ)$.
\end{definition}

It turns out that $\hRZ$ is a subalgebra of the convolution algebra
$\C\, \de  \oplus \C\{\ze\}$, \ie\ the aforementioned property of analytic
continuation is stable by convolution;
the proof of this fact relies on the notion of symmetrically contractile path
(see for instance {\em op.\ cit.}, \S1.3), which we shall not develop here.
Therefore $\tRZ$ is a subalgebra of $\C[[z\ii]]$ and $\cB$ induces an algebra
isomorphism $\tRZ \to \hRZ$.


\medskip

\noindent -- 
An obvious example of element of~$\wHR\Z$ is an entire function, or a
meromorphic function of~$\C$ without poles outside~$\Z^*$.
Indeed, for such a function~$\wh\ph$, we can define $\hat\phi\in\wHR\Z$ by
$\hat\phi(\ze) = \wh\ph\big(\pi(\ze)\big)$ for all $\ze\in\gR(\Z)$.
We usually identify~$\wh\ph$ and~$\hat\phi$.

For example, if $\om_1\neq0$, Remark~\ref{remheuri} shows that 
$\wt\cV^{\om_1}(z) = \cV^{\om_1}(-1/z) \in z\ii\C[[z\ii]]$
has formal Borel transform
$\wh\cV^{\om_1}(\ze) = \sum \om_1^{-r-1}(-\wh\pa\,)^r \wh a_{\om_1}
= \frac{\wh a_{\om_1}(\ze)}{\om_1-\ze}$, 
where $\wh a_{\om_1}$ denotes the formal Borel transform of $a_{\om_1}(-1/z)$,
which is an entire function (since $a_{\om_1}$ is convergent),
thus $\wh\cV^{\om_1}$ is meromorphic with at most one simple pole, located
at~$\om_1$.
On the other hand, $\wh\cV^0(\ze) = \frac{1}{\ze}\wh a_0(\ze)$ is entire.

We shall see that, for each non-empty word $\omb$, the formal Borel transform of
$\cV^\omb(-1/z)$ belongs to~$\wHR\Z$, but this function is usually not
meromorphic if $r(\omb)\ge2$.
For instance, for $\omb=(\om_1,\om_2)$, one gets
$\frac{1}{-\ze+\om_1+\om_2} \big(\wh a_{\om_1}*\wh\cV^{\om_2} \big)$
which is multivalued in general
(see formula~\eqref{eqcViter} below for the general case).


\medskip

\noindent -- 
A formal series~$\wt\ph(z)$ without constant term belongs to~$\tRZ$ iff its
formal Borel transform~$\wh\ph(\ze)$ converges to a germ of holomorphic function
which extends analytically to~$\gR(\Z)$. 
In particular, the principal branch\footnote{
The principal branch is defined as the analytic continuation of~$\wh\ph$ in the
maximal open subset of~$\C$ which is star-shaped \wrt~$0$; its domain is 
the cut plane obtained by removing the singular half-lines
$\left[1,+\infty\right[$ and $\left]-\infty,-1\right]$, unless $\wh\ph$ happens
to be regular at~$1$ or~$-1$.
}
of~$\wh\ph$ is holomorphic in sectors which extend up to infinity. If it has at
most exponential growth in a sector $\ao \ze\in\C \mid \th_1\le \arg\ze \le
\th_2 \af$
(as is the case of~$\wh\cV^{\om_1}(\ze)$ for instance),
then one can perform a Laplace transform and get a function 
$$
\tphan(z) = \int_0^{\ee^{\I\th}\infty} \wh\ph(\ze)\,\ee^{-z\ze}  \, \dd\ze,
\qquad \th \in [\th_1,\th_2],
$$
which is analytic for~$z$ belonging to a sectorial neighbourhood of
infinity. 
This is called {\em Borel-Laplace summation} (see
\eg~\cite{kokyu}, \S1.1). 

Since multiplication is turned into convolution by~$\cB$ and then turned again
into multiplication by the Laplace transform, and similarly with
$\frac{\dd\,}{\dd z}$ which is transformed into multiplication by $-\ze$
by~$\cB$, the Borel-Laplace process transforms the formal solution of a
differential equation like~\eqref{eqdiffeqzero}, \eqref{eqdiffeqn}
or~\eqref{eqdefcV} into an analytic solution of the same equation.


\parag 
The stability of~$\tRZ$ under multiplication together with the previous computation
explains to some extent why we can expect the solutions of a non-linear problem
like the formal classification of the saddle-node to be resurgent.
However, controlling products in~$\tRZ$ means controlling convolution products
in~$\hRZ$, and it is not so easy to extract from the stability statement the
quantitative information which would guarantee the convergence in~$\hRZ$ of a
method of majorant series for instance
(see the discussion at the end of the sketch of proof of Theorem~2 of~\cite{kokyu}).

Thanks to the mould-comould expansion given in Section~\ref{secMCexpSN}, we
shall be able to use much simpler arguments: the convolution product of an
element of~$\hRZ$ with an entire function belongs to~$\hRZ$ and efficient
bounds are available in this particular case of the stability statement (much
easier than the general one---see Lemma~~\ref{lemAnCont}
below).

\begin{thm}	\label{thmResur}
Consider the saddle-node problem, with hypotheses
\eqref{eqdefX}--\eqref{eqassA}.
Let $\th(x,y) = \left(x, y + \sum_{n\ge0} \ph_n(x) y^n\right)$ denote the formal
transformation, the substitution operator of which is $\Th = \sum \cV^\bul
\bB_\bul$, in accordance with Lemmas~\ref{lemdefcV} and~\ref{lemCVformal} and Theorem~\ref{thmSNformal}.
Let $\th\ii(x,y) = \left(x, y + \sum_{n\ge0} \psi_n(x) y^n\right)$ denote the inverse
transformation.

Then, for each $n\in\N$, the formal series $\wt\ph_n(z) = \ph_n(-1/z)$ and $\wt\psi_n(z) =
\psi_n(-1/z)$ belong to~$\tRZ$, and 
the analytic continuation of the formal Borel transforms
$\wh\ph_n(\ze),\wh\psi_n(\ze)\in\C\{\ze\}$ satisfy the following:
\begin{enumerate}[(i)]
\item All the branches of the analytic continuation of~$\wh\ph_n$ are regular at
the points of $n+\N = \{n, n+1, n+2, \dotsc\}$.
\item All the branches of the analytic continuation of~$\wh\psi_n$ are regular at
the points of $-\N^* = \{-1, -2, -3 \dotsc\}$, 
with the sole exception that the branches of~$\wh\psi_0$ may have simple poles
at~$-1$.
\item Given any $\rho\in\left]0,\demi\right[$ and $N\in\N^*$, there exist
positive constants $K,L,C$ which depend only on $\rho,N$ such that,
for any $(\rho,N,n-\N^*)$-adapted infinite path~$\ga$ issuing from the origin,
\begin{equation}	\label{ineqwhphn}
\big| \wh\ph_n\big( \ga(t) \big) \big| \le K L^n \,  \ee^{(n^2+1) C t} 
\ens\text{for all $t\ge0$ and $n\in\N$,}
\end{equation}
while, for any $(\rho,N,\N)$-adapted infinite path~$\ga$ issuing from the origin,
\begin{equation}	\label{ineqwhpsin}
\big| \wh\psi_n\big( \ga(t) \big) \big| \le K L^n \,  \ee^{C t} 
\ens\text{for $n\ge1$,}\quad
\big| \big(\ga(t)+1\big) \wh\psi_0\big( \ga(t) \big) \big| \le K \,  \ee^{C t}
\end{equation}
for all $t\ge0$.
\end{enumerate}
\end{thm}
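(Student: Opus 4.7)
The plan is to work at the level of the individual resurgence monomials $\wh\cV^\omb(\ze):=\cB\bigl[\cV^\omb(-1/z)\bigr]$ and then sum over words via formula~\eqref{eqseriesgivphn} (and its counterpart for $\psi_n$). Since $z=-1/x$ turns $x^2\frac{\dd}{\dd x}$ into $\frac{\dd}{\dd z}$, whose Borel counterpart is multiplication by $-\ze$, the recursion~\eqref{eqdefcV} becomes
\begin{equation*}
(\norm{\omb}-\ze)\,\wh\cV^\omb(\ze)=\bigl(\wh a_{\om_1}*\wh\cV^{\omb^{(2)}}\bigr)(\ze),
\qquad \omb^{(i)}:=(\om_i,\dotsc,\om_r),
\end{equation*}
the case $\norm{\omb}=0$ being handled by division by $-\ze$, licit because the convolution on the right-hand side automatically vanishes at $\ze=0$. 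Since each $a_\eta\in x\C\{x\}$, its Borel transform $\wh a_\eta$ is entire of exponential type as in~\eqref{ineqentire}.

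I would then prove, by induction on $r(\omb)$, that $\wh\cV^\omb\in\wHR\Z$ and that its singular support lies in $\{\norm{\omb^{(i)}}:1\le i\le r\}\subset\Z$. The inductive step relies on two easy stability properties: convolution with an entire function of exponential type preserves $\wHR\Z$ (a much simpler special case than the general stability of $\hRZ$ under $*$), and division by $\norm{\omb}-\ze$ preserves analytic continuation along paths avoiding~$\Z$. The theorem then reduces to showing that the series $\wh\ph_n=\sum\be_\omb\wh\cV^\omb$ and $\wh\psi_n=\sum\be_\omb\wh\cVt^\omb$, with the ranges prescribed by~\eqref{eqseriesgivphn} and~\eqref{eqpsiThii}, converge in $\wHR\Z$ with the claimed regularity and bounds.

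For parts (i) and (ii), the combinatorial input is that
\begin{equation*}
\be_\omb=(\om_1+1)(\om_1+\om_2+1)\dotsm(\om_1+\dotsb+\om_{r-1}+1)
\end{equation*}
vanishes as soon as some initial partial sum $S_j:=\om_1+\dotsb+\om_j$ with $1\le j\le r-1$ equals $-1$. If a term $\wh\cV^\omb$ contributing to $\wh\ph_n$ had a singularity at $\ze=n+k$ with $k\ge0$, then $\norm{\omb^{(i)}}=n+k$ for some $i\ge2$, so $S_{i-1}=-1-k\le-1$; since the letters lie in $\cN=\{-1,0,1,\dotsc\}$ the partial sums decrease by at most one per step, and the integer sequence $0=S_0,S_1,\dotsc,S_{i-1}$ must therefore hit $-1$, forcing $\be_\omb=0$. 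For $\psi_n$, the reversal $\cVt^\omb=(-1)^r\cV^{\wt\omb}$ places the singular support at $\{S_j:1\le j\le r\}$, and the symmetric argument yields~(ii); the admitted simple pole of $\wh\psi_0$ at $-1$ corresponds to the single exceptional case $S_r=-1$ with $r=r(\omb)$, where $-1$ is hit only by the terminal partial sum and not by any factor of $\be_\omb$.

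The main obstacle is the quantitative estimate~(iii). I would propagate bounds on $|\wh\cV^\omb(\ga(t))|$ along an adapted path~$\ga$ by induction on $r$: each convolution yields a $1/(r-1)!$ gain from iterated integration, while the division by $\norm{\omb^{(i)}}-\ze$ costs at most $1/\rho$ since $\ga$ avoids $\Z$ by distance $\ge\rho$, and the exponential type of the $\wh a_{\om_i}$ together with a Gronwall-style inequality for convolution along the path yields a bound of shape $C_1^r\,\ex^{Ct}/(r-1)!$ times a product depending on the letters. The delicate step is the resummation over admissible $\omb$ with $\norm{\omb}=n-1$: the number of such words of length $r$ grows polynomially in $n$ and exponentially in $r$, while $|\be_\omb|$ is polynomial in $n$ of degree $r-1$; the $1/(r-1)!$ factorial gain must be shown to dominate the $r$-dependence, and careful tracking of the $n$-dependence of these polynomial factors should produce the announced $KL^n\ex^{(n^2+1)Ct}$ growth. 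For $n=0$, the factor $\ga(t)+1$ in~\eqref{ineqwhpsin} merely cancels the isolated simple pole of $\wh\psi_0$ at~$-1$ before the same estimate applies.
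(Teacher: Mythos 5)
Your overall architecture (Borel-transforming the recursion for the $\wt\cV^\omb$, establishing resurgence of each monomial by elementary stability under convolution with entire functions, deducing (i)--(ii) from the vanishing of $\be_\omb$ when a partial sum hits $-1$, then resumming) is exactly the paper's, and your combinatorial argument for (i) and (ii) is correct and matches Lemma~\ref{lemexo}. The gap is in part~(iii), and it is a real one: your plan is to bound $\big|\wh\cV^\omb\big|$ with a factorial gain from the iterated integrals, bound $|\be_\omb|$ separately as ``polynomial in $n$ of degree $r-1$'', and multiply. But the factors $\wc\om_j+1$ of $\be_\omb$ are only bounded by $n+r$, not by a function of $n$ alone; worse, $|\be_\omb|$ can itself be of size $(r-1)!$ --- take $n=0$ and $\omb=(r-2,-1,\dotsc,-1)$, for which $\be_\omb=(r-1)!$. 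This consumes the entire factorial gain, so the separate bounding gives a series in $r$ that diverges (or converges only for small $t$), which is not enough for the global exponential estimate~\eqref{ineqwhphn} needed for Laplace summation. The device that closes the argument in the paper is to \emph{pair} each factor of $\be_\omb$, written as $\be_\omb=\prod_i(n-\htb\om_i)$, with the corresponding resolvent: the operator $\cS_{\shtb\om_i}\colon\wh\ph\mapsto-\frac{n-\htb\om_i}{\ze-\htb\om_i}\wh\ph$ is bounded by $\la_n=(n+N)\rho\ii$ \emph{uniformly in $i$} on a $(\rho,N,n-\N^*)$-adapted path (Thales' theorem handles very negative $\htb\om_i$, where numerator and denominator are both large). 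This gives $(\la_nK)^r$ per word and, after counting words, the $\ee^{(n^2+1)Ct}$ bound.

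A secondary inaccuracy: the full $1/(r-1)!$ gain you invoke is not available. Whenever $\htb\om_i=0$ with $\om_i\neq0$, the division by $\ze$ costs a power of $t$ near the origin of the path, so that step yields no net integration; only the steps with $\htb\om_i\neq0$ or $\om_i=0$ do, and there are at least $\ceil{r/2}$ of them (the same counting as in Lemma~\ref{lemdefcV}). The resulting gain $1/\ceil{r/2}!$ still dominates the exponential growth in $r$ of the number of words once the $\be_\omb$ factors have been absorbed as above, but your accounting should be corrected accordingly.
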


\label{secResur}

What we call $(\rho,N,\gP^\pm)$-adapted infinite path, with $\gP^+ = \N$ or $\gP^- = n-\N^*$, is defined
below in Definition~\ref{defiadaptedpath}; see Figure~\ref{figrhoNadapt}.
These are arc-length parametrised paths $\ga \colon \left[0,+\infty\right[
\to \C$ (\ie\ $\ga$ is absolutely continuous and $|\dot\ga(t)|=1$ for almost
every~$t$) which start as rectilinear segments of length~$\rho$ issuing from the
origin and which then do not approach~$\gP^\pm$ nor $\pm\Sig(\rho,N)$ at a distance
$<\rho$, where $\pm\Sig(\rho,N)$ denotes the sector of half-opening
$\arcsin(\rho/N)$ bissected by $\pm\left[N,+\infty\right[$.

In particular, inequalities \eqref{ineqwhphn}--\eqref{ineqwhpsin} yield an
exponential bound at infinity for the principal branch of each~$\wh\ph_n$
or~$\wh\psi_n$ along all the half-lines issuing from~$0$
except the singular half-lines $\pm\left[0,+\infty\right[$ 
(the half-line $\left[0,+\infty\right[$ is not singular for $\wh\ph_0$ and the
half-line $-\left[0,+\infty\right[$ is not singular for any $\wh\psi_n$).

We recall that $\Th$ establishes a conjugacy between the saddle-node vector
field~$X$ and its normal form~$X_0$, thus the formal series $\wt\ph_n(z)$ are
the components of a formal integral~$\wt Y(z,u)$, as described
in~\eqref{eqdefYzu}--\eqref{eqdiffeqX}.
The resurgence statement contained in Theorem~\ref{thmResur} thus means that the
formal solutions of the singular differential
equations~\eqref{eqdiffeqzero}--\eqref{eqdiffeqn} may be divergent but that this
divergence is of a very precise nature.
We shall briefly indicate in Section~\ref{secBE} how alien calculus allows one
to take advantage of this information to study the problem of analytic
classification.

\begin{remark}	\label{remgetrid}
Theorem~\ref{thmResur} also permits the obtention of analytic solutions
of~\eqref{eqdiffeqzero}--\eqref{eqdiffeqn} via Borel-Laplace summation.
It is thus worth mentioning that one can get rid of the dependence on~$n$ in the
exponential which appears in~\eqref{ineqwhphn}, provided one restricts oneself
to paths which start from the origin and then do not approach at a distance $<\rho$ the set
$\Z\cup\Sig(\rho,N)\cup\big(-\Sig(\rho,N)\big)$, and which cross the cuts (the
segments between consecutive points of~$\Z$) at most~$N'$ times.
For instance, with $N'=0$, one obtains 
\begin{equation}	\label{inequniformphn}
\big| \wh\ph_n( \ze ) \big| \le K L^n \,  \ee^{C |\ze|} 
\end{equation}
for the principal branch of~$\wh\ph_n$, possibly with larger constants~$K,L,C$
but still independent of~$n$.
For the other branches, which correspond to $N'\ge1$ and $N\ge2$, one has to resort to
symmetrically contractile paths and the implied constants~$K,L,C$ depend only
on~$\rho,N,N'$.

Therefore, when performing Laplace transform,
inequalities~\eqref{inequniformphn} allow one to get the same domain of
analyticity for all the functions~$\tphan_n(z)$ solutions
of~\eqref{eqdiffeqzero}--\eqref{eqdiffeqn} (a sectorial neighbourhood of
infinity which depends only on~$C$; see \eg~\cite{kokyu}, \S1.1),
with explicit bounds which make it possible to study the domain of analyticity
of a {\em sectorial formal integral} 
$\Yan(z,u) = u\,\ee^z + \sum u^n \, \ee^{nz}\tphan_n(z)$
or of analytic normalising transformations $\phan(x,y)$, $\psian(x,y)$.

This will be used in Section~\ref{secMR}.
\end{remark}

The rest of this section is devoted to the proof of Theorem~\ref{thmResur} and
to the derivation of inequalities~\eqref{inequniformphn}.


\parag 
Using $\Om=\cN = \ao \eta\in\Z \mid \eta\ge -1 \af$ as an alphabet, we know that
the $\C[[x]]$-valued moulds~$\cV^\bul$ and~$\cVt^\bul$ are symmetral and
mutually inverse for mould multiplication. We recall that
$$
\Th\ii = \sum \cVt^\bul \bB_\bul, \qquad 
\cVt^{\om_1,\dotsc,\om_r} = (-1)^r \cV^{\om_r,\dotsc,\om_1}.
$$
With the notation $\norm{\omb}=\om_1+\dotsb+\om_r$ for any non-empty word
$\omb\in\Om^\bul$, 
equation~\eqref{eqdefbeomb} can be written
$\bB_\omb y = \be_\omb y^{\norm{\omb}+1}$,
with the coefficients~$\be_\omb$ defined at the end of Section~\ref{secMCexpSN}.
As was already observed, since $\Th y = \sum\ph_n(x)y^n$ and $\Th\ii y =
\sum\psi_n(x)y^n$ , the formal series we are interested in can be written as
formally convergent series in $\C[[x]]$:
\begin{equation}	\label{eqformulphnpsin}
\ph_n = \sum_{\norm{\omb}=n-1} \be_\omb \cV^\omb, \qquad
\psi_n = \sum_{\norm{\omb}=n-1} \be_\omb \cVt^\omb, \qquad n\in\N,
\end{equation}
with summation over all words~$\omb$ of positive length subject to the condition
$\norm{\omb}=n-1$. In fact, not all of these words contribute in these series:


\begin{lemma}	\label{lemexo}
For any non-empty $\omb = (\om_1,\dotsc,\om_r)\in\Om^\bul$, 
using the notations
\begin{equation}	\label{eqnotahatcheck}
\wc\om_i = \om_1 + \dotsb + \om_i, \quad
\htb\om_i = \om_i + \dotsb + \om_r, \qquad
1 \le i \le r,
\end{equation}
we have
$$
\be_\omb \neq0 \ens\Rightarrow\ens
\norm{\omb} \ge -1, \ens
\wc\om_1,\dotsc,\wc\om_{r-1}\ge0 \ens\text{and}\ens 
\htb\om_1,\dotsc,\htb\om_r\le\norm{\omb}.
$$
\end{lemma}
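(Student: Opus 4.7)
The plan is to unpack the definition of $\be_\omb$ given just after equation~\eqref{eqdefbeomb}: for $r\ge 2$,
\[
\be_\omb = (\wc\om_1+1)(\wc\om_2+1)\dotsm(\wc\om_{r-1}+1),
\]
while $\be_\omb=1$ for $r=1$. Hence $\be_\omb\neq 0$ if and only if $\wc\om_i\neq -1$ for every $i\in\{1,\dotsc,r-1\}$ (and this condition is vacuous when $r=1$).

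The key observation is that the letters satisfy $\om_j\ge -1$ (by the definition $\Om=\cN$), so each partial sum can drop by at most one unit at each step: $\wc\om_i = \wc\om_{i-1}+\om_i\ge \wc\om_{i-1}-1$. I would then prove assertion~(2), namely $\wc\om_1,\dotsc,\wc\om_{r-1}\ge 0$, by a short induction on $i$. For $i=1$, one has $\wc\om_1=\om_1\ge -1$, and the non-vanishing hypothesis forces $\wc\om_1\neq-1$, so $\wc\om_1\ge 0$. For the inductive step, assuming $\wc\om_{i-1}\ge 0$ with $i\le r-1$, the inequality $\wc\om_i\ge \wc\om_{i-1}-1\ge -1$ combined with $\wc\om_i\neq -1$ yields $\wc\om_i\ge 0$.

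Assertion~(1) is then an immediate corollary: if $r=1$ then $\norm{\omb}=\om_1\ge -1$, while if $r\ge 2$ then $\norm{\omb} = \wc\om_{r-1}+\om_r\ge 0+(-1)=-1$. For assertion~(3), I would exploit the identity
\[
\htb\om_i = \norm{\omb} - (\om_1+\dotsb+\om_{i-1}) = \norm{\omb} - \wc\om_{i-1},
\]
with the convention $\wc\om_0 = 0$. Then $\htb\om_i\le \norm{\omb}$ is equivalent to $\wc\om_{i-1}\ge 0$, which holds for $i=1$ trivially and for $2\le i\le r$ by assertion~(2) (using that the relevant indices $i-1$ range over $\{1,\dotsc,r-1\}$).

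There is no real obstacle here; the whole content is bookkeeping around the sign of partial sums, and the only subtle point is noticing that the hypothesis $\be_\omb\neq 0$ constrains only $\wc\om_1,\dotsc,\wc\om_{r-1}$ (not $\wc\om_r=\norm{\omb}$), which is precisely why $\norm{\omb}$ is merely bounded below by $-1$ in~(1) and why the index $i$ in~(3) is allowed to go up to $r$ (via the shift to $\wc\om_{i-1}$).
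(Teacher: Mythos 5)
Your proof is correct and follows essentially the same route as the paper's: both arguments combine $\wc\om_i\neq-1$ (read off from the product formula for $\be_\omb$) with $\wc\om_i\ge-1$ to conclude $\wc\om_i\ge0$, and then use the identity $\htb\om_{i+1}=\norm{\omb}-\wc\om_i$. The only cosmetic difference is that you establish $\wc\om_i\ge-1$ by an explicit induction from $\om_j\ge-1$, whereas the paper invokes the previously recorded fact that $\be_{\om_1,\dotsc,\om_i}\neq0$ forces $\om_1+\dotsb+\om_i\ge-1$; this is the same observation.
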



\begin{proof}
We have 
\begin{equation}	\label{eqdefibeomb}
\be_\omb = 1 \ens\text{if $r=1$},\qquad
\be_\omb = (\wc\om_1+1)(\wc\om_2+1)\dotsm(\wc\om_{r-1}+1) \ens\text{if $r\ge2$}.
\end{equation}
The property $\be_\omb\neq0 \;\Rightarrow\; \norm{\omb}\ge-1$ was already observed
at the end of Section~\ref{secMCexpSN}, as a consequence of $\bB_\omb y
\in\C[[y]]$
(one can also argue directly from formula~\eqref{eqdefibeomb}).

Now suppose $\be_\omb\neq0$ and $1\le i \le r-1$. 
The identity 
$$
\be_\omb = \be_{\om_1,\dotsc,\om_i}
(\wc\om_i+1)\dotsm(\wc\om_{r-1}+1)
$$
implies $\wc\om_i\neq-1$ and $\be_{\om_1,\dotsc,\om_i}\neq0$, hence
$\om_1+\dotsb+\om_i\ge-1$.
Therefore $\wc\om_i\ge0$ and $\htb\om_{i+1} = \norm{\omb}  - \wc\om_i \le
\norm{\omb}$, while $\htb\om_1=\norm{\omb}$.
\end{proof}


\parag 
We recall that the convergent series $a_\eta(x)$ were defined
in~\eqref{eqdefiancN} as Taylor coefficients \wrt~$y$ of the saddle-node vector
field~\eqref{eqdefX}. 
We define $\wt\ph_n(z)$, $\wt\psi_n(z)$, $\wt a_\eta(z)$, $\wt\cV^\omb(z)$, $\tcVto(z)$
from $\ph_n(x)$, $\psi_n(x)$, $a_\eta(x)$, $\cV^\omb(x)$, $\cVt^\omb(x)$ 
by the change of variable $z=-1/x$ (for any $n\in\N$, $\eta\in\Om$, $\omb\in\Om^\bul$),
and we denote by $\wh\ph_n(\ze)$, $\wh\psi_n(\ze)$, etc.\ the formal Borel transforms of these
formal series.

In view of Lemma~\ref{lemdefcV}, the formal series~$\wt\cV^\omb$ are uniquely
determined by the equations $\wt\cV^\est = 1$ and
\begin{equation*}
\big(\frac{\dd\,}{\dd z} + \norm{\omb}\big) \wt\cV^\omb
= \wt a_{\om_1} \wt\cV^{`\omb}, \qquad
\wt\cV^\omb \in z\ii\C[[z\ii]
\end{equation*}
for $\omb$ non-empty, with $`\omb$ denoting $\omb$ deprived from its first letter.
Since $\cB$ transforms $\frac{\dd\,}{\dd z}$ into multiplication by $-\ze$ and
multiplication into convolution, we get
$\wh\cV^\est = \de$ 
and
$$
\wh\cV^\omb(\ze) = -\frac{1}{\ze-\norm{\omb}} \big( \wh a_{\om_1}  * \wh\cV^{`\omb} \big), 
\qquad \omb\neq\est,
$$
where the \rhs\ belongs to $\C[[\ze]]$ even if $\norm{\omb}=0$, by the same
argument as in the proof of Lemma~\ref{lemdefcV}.
It belongs in fact to $\C\{\ze\}$, by induction on $r(\omb)$, and
\begin{equation}	\label{eqcViter}
\wh\cV^\omb = (-1)^r
\frac{1}{\ze-\htb\om_1} \Big( \wh a_{\om_1}  * 
\Big( \frac{1}{\ze-\htb\om_2} \Big( \wh a_{\om_2}  * 
\Big(  \dotsb 
\Big( \frac{1}{\ze-\htb\om_r} \wh a_{\om_r}
\Big) \dotsm \Big)\Big)\Big)\Big)
\end{equation}
with the notation of~\eqref{eqnotahatcheck}.
In view of the stability properties of~$\wHR\Z$ (stability by convolution with
another element of~$\wHR\Z$, a fortiori with an entire function, or by
multiplication with a meromorphic function regular on $\C\setminus\Z^*$), this
implies that \emph{the functions $\wh\cV^\omb$ are resurgent}, as announced in
the introduction to this section.
We shall give more details on this later.


\parag 
Here is a first consequence for the functions $\wh\ph_n$ and~$\wh\psi_n$:

\begin{lemma}	\label{lemtcSm}
For each $n\in\N$,
\begin{equation}	\label{eqwhphnwhpsin}
\wh\ph_n = \sum_{\norm{\omb}=n-1} \be_\omb  \wh\cV^\omb,
\qquad
\wh\psi_n = \sum_{\norm{\omb}=n-1} \be_\omb  \hcVto,
\end{equation}
with formally convergent series in $\C[[\ze]]$, and
for each non-empty~$\omb$ such that $\norm{\omb}=n-1$,
\begin{equation}	\label{eqbewhcV}
\be_\omb \wh\cV^\omb = \cS_{\shtb\om_1}\cA_{\om_1}
\dotsm \cS_{\shtb\om_r}\cA_{\om_r} \de, \qquad
\be_\omb \hcVto = \tfrac{1}{\ze-(n-1)} \cA_{\om_r} 
\tcS_{\swc\om_{r-1}}\cA_{\om_{r-1}}
\dotsm \tcS_{\swc\om_1}\cA_{\om_1} \de,
\end{equation}
with convolution operators
$$
\cA_\eta \colon \wh\ph \mapsto \wh a_\eta * \wh\ph, \qquad
\eta \in \Om
$$
and multiplication operators
\begin{equation}	\label{eqdefiStSm}
\cS_m \colon \wh\ph \mapsto -\tfrac{n-m}{\ze-m}\, \wh\ph, \quad
\tcS_m \colon \wh\ph \mapsto \tfrac{m+1}{\ze-m}\, \wh\ph, \qquad
m \in \Z
\end{equation}
\end{lemma}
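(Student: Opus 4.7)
The plan is to prove the two parts of~\eqref{eqwhphnwhpsin} and the two identities of~\eqref{eqbewhcV} in three successive steps, all of which are essentially combinatorial manipulations built on the recursive definition~\eqref{eqdefcV} of~$\cV^\bul$.

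First, I would obtain~\eqref{eqwhphnwhpsin} by applying the formal Borel transform (in the variable $z=-1/x$) termwise to each equality in~\eqref{eqformulphnpsin}: since $\cB$ is $\C$-linear and the sums in~\eqref{eqformulphnpsin} are formally convergent in~$x\C[[x]]$, the only thing to check is the formal convergence in~$\C[[\ze]]$ of the resulting sums. This follows from Lemma~\ref{lemdefcV}: the bound~\eqref{eqvalcV} yields $\wh\cV^\omb(\ze)=O(\ze^{\ceil{r/2}-1})$ at~$\ze=0$, so at each order in~$\ze$ only finitely many words~$\omb$ with $\norm{\omb}=n-1$ contribute (words of length~$r(\omb)$ too large are killed by their vanishing order at the origin). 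The identity $\cVt^\omb=(-1)^r\cV^{\wt\omb}$, with $\wt\omb$ the reversal of~$\omb$, handles the $\wh\psi_n$ series in the same way.

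Second, I would derive the iterated closed form~\eqref{eqcViter}. Applying $\cB$ to the recursion~\eqref{eqdefcV} rewritten in the variable~$z$, using that $x^2\frac{\dd\,}{\dd x}$ becomes $\frac{\dd\,}{\dd z}$ and that $\cB$ turns this into $\wh\pa\colon\wh\ph\mapsto-\ze\wh\ph$ and products into convolutions, gives $(-\ze+\norm{\omb})\wh\cV^\omb=\wh a_{\om_1}*\wh\cV^{`\omb}$, whence $\wh\cV^\omb=-\frac{1}{\ze-\norm{\omb}}\bigl(\wh a_{\om_1}*\wh\cV^{`\omb}\bigr)$. Since $\norm{`\omb}=\htb\om_2$, $\norm{``\omb}=\htb\om_3$, and so on, an induction on~$r(\omb)$ produces~\eqref{eqcViter}. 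At each step the inner factor vanishes at~$\ze=0$ because of the convolution with~$\wh a_{\om_i}$, so the division by $\ze-\htb\om_i$ is legitimate in~$\C[[\ze]]$ even when $\htb\om_i=0$.

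Third, and this is the substance of the lemma, I would verify~\eqref{eqbewhcV} by unfolding the composed operators from the right. For the first formula, the key combinatorial identity is $\wc\om_j+\htb\om_{j+1}=\norm{\omb}=n-1$, which gives $\wc\om_j+1=n-\htb\om_{j+1}$ for $1\le j\le r-1$; together with $n-\htb\om_1=1$, this recasts the definition~\eqref{eqdefibeomb} of~$\be_\omb$ as $\be_\omb=\prod_{i=1}^{r}(n-\htb\om_i)$. Unfolding $\cS_{\shtb\om_1}\cA_{\om_1}\dotsm\cS_{\shtb\om_r}\cA_{\om_r}\de$ inside out, each $\cS_{\shtb\om_i}$ contributes a scalar factor $-(n-\htb\om_i)$ and a multiplication by $1/(\ze-\htb\om_i)$, while each $\cA_{\om_i}$ convolves with $\wh a_{\om_i}$, so the result is $(-1)^r\prod_i(n-\htb\om_i)$ times the nested convolution-multiplication expression of~\eqref{eqcViter}, i.e.\ $\be_\omb\wh\cV^\omb$. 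For the second formula, I use $\hcVto=(-1)^r\wh\cV^{\wt\omb}$ and apply~\eqref{eqcViter} to~$\wt\omb$: the indices $\htb{(\wt\omb)}_i$ equal $\wc\om_{r-i+1}$, and in particular $\htb{(\wt\omb)}_1=\wc\om_r=n-1$ accounts for the prefactor $\frac{1}{\ze-(n-1)}$; unfolding $\cA_{\om_r}\tcS_{\swc\om_{r-1}}\cA_{\om_{r-1}}\dotsm\tcS_{\swc\om_1}\cA_{\om_1}\de$ inside out, each $\tcS_{\swc\om_j}$ with $1\le j\le r-1$ supplies the scalar factor $\wc\om_j+1$, whose product is precisely $\be_\omb$.

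The main difficulty is purely combinatorial bookkeeping: aligning the outside-in order in which the operators~$\cS$, $\tcS$ and~$\cA$ act with the inside-out order of the nested convolutions in~\eqref{eqcViter}, and translating between the $\htb{}$-indices of a word and the $\wc{}$-indices of its reversal. No analytic estimates appear at this stage; the resurgence content of~\eqref{eqbewhcV}—that $\be_\omb\wh\cV^\omb$ and $\be_\omb\hcVto$ are built from convolutions with entire functions and multiplications by elementary rational factors located at integer points—will only be exploited by the analytic continuation arguments to come.
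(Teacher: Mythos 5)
Your proposal is correct and follows essentially the same route as the paper: formula~\eqref{eqwhphnwhpsin} is read off from~\eqref{eqformulphnpsin} by termwise Borel transform, the nested expression~\eqref{eqcViter} is obtained by Borel-transforming the recursion and iterating, and~\eqref{eqbewhcV} comes from multiplying~\eqref{eqcViter} (applied to~$\omb$, resp.\ to its reversal~$\wt\omb$) by~$\be_\omb$ written as $\prod_i(n-\htb\om_i)$, resp.\ as $(\wc\om_1+1)\dotsm(\wc\om_{r-1}+1)$ with $\wc\om_r=n-1$. Your operator-unfolding bookkeeping and the index translation $\htb{(\wt\omb)}_i=\wc\om_{r-i+1}$ are exactly the computations the paper performs.
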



\begin{proof}
Formula~\eqref{eqwhphnwhpsin} is a direct consequence
of~\eqref{eqformulphnpsin}.

In order to deal with $\hcVto$, we pass from $\omb=(\om_1,\dots,  \om_r)$ to 
$\wt\omb = (\om_r,\dots,  \om_1)$ and this exchanges $\htb\om_i$ and
$\wc\om_{r-i+1}$, thus \eqref{eqcViter} implies
$$
\hcVto = 
\frac{1}{\ze-\wc\om_r} \Big( \wh a_{\om_r}  * 
\Big( \frac{1}{\ze-\wc\om_{r-1}} \Big( \wh a_{\om_{r-1}}  * 
\Big(  \dotsb 
\Big( \frac{1}{\ze-\wc\om_1} \wh a_{\om_1}
\Big) \dotsm \Big)\Big)\Big)\Big).
$$
Since $\wc\om_r=n-1$, multiplying by $\be_\omb =
(\wc\om_{r-1}+1)\dotsm(\wc\om_1+1)$, we get the second part of~\eqref{eqbewhcV}.
The first part of this formula is obtained by multiplying~\eqref{eqcViter}
by~$\be_\omb$ written in the form 
$\be_\omb = (n-\htb\om_1)(n-\htb\om_2)\dotsm(n-\htb\om_r)$
(indeed, $n-\htb\om_1=1$ and $n-\htb\om_i = \wc\om_{i-1}+1$ for $2\le i\le r$).
\end{proof}


\parag 
The appearance of singularities in our problem is due to the multiplication
operators $\cS_{\shtb\om_i}$ or~$\tcS_{\swc\om_i}$. In view of
Lemma~\ref{lemexo} and formulas~\eqref{eqbewhcV}--\eqref{eqdefiStSm},
we are led to introduce subspaces of~$\wHR\Z$ formed of functions with smaller
sets of singularities.
We do this by considering Riemann surfaces $\gR(\gP)$ slightly more general than
$\gR(\Z)$.

Let $\gP$ denote a subset of~$\Z$. 
We define the Riemann surface $\gR(\gP)$ as the set of all homotopy classes of
rectifiable oriented paths which start from the origin and then avoid~$\gP$. 
%
%
The Riemann surface $\gR(\gP)$ and the universal cover of $\C\setminus\gP$
coincide if $0\not\in\gP$; there is a difference between them when $0\in\gP$:
there is no point which projects onto~$0$ in the second one, while the first one
still has an ``origin''.

The space $\wHR\gP$ of all holomorphic functions of $\gR(\gP)$ can
be identified with the space of all $\wh\ph(\ze)  \in  \C\{\ze\}$ which admit an
analytic continuation along any representative of any element of $\gR(\gP)$.
It can thus also be identified with the subspace of~$\wHR\Z$ consisting of
those functions holomorphic in~$\gR(\Z)$, the branches of which are regular at each
point of~$\Z\setminus\gP$.

We shall particularly be interested in two cases: $\gP^- = n-\N^*$ and $\gP^+ = \N$.
Indeed, our aim is to show that 
the functions $\wh\ph_n$ belong to $\wHR{n-\N^*}$ for any $n\in\N$
and that the functions $\wh\psi_n$ belong to $\wHR\N$ for any $n\ge1$,
while $(\ze+1)\wh\psi_1(\ze) \in \wHR\N$.

One could prove (with the help of symmetrically contractile paths) that the
spaces $\wHR\N$,
$\wHR{-\N^*}$ or $\wHR{-\N}$ are stable by convolution because the corresponding
sets~$\gP$ are stable by addition,
but beware that this is not the case of $\wHR{n-\N^*}$ if $n\ge2$.


\parag 
As previously mentioned, for each $\omb\neq\est$,
$\be_\omb\wh\cV^\omb$ and $\be_\omb\hcVto$ belong to~$\wHR\Z$ by virtue of
general stability properties.
But formula~\eqref{eqbewhcV} permits a more elementary argument and more precise
conclusions.

Indeed $\cA_{\om_r}\de = \wh a_{\om_r}$, resp.\ $\cA_{\om_1}\de = \wh a_{\om_1}$,
is an entire function, which vanishes at the origin if $\om_r=0$, resp.\
$\om_1=0$.
Thus
\begin{equation}	\label{eqinifcns}
\cS_{\shtb\om_r}\cA_{\om_r} \de = -\frac{n-\om_r}{\ze-\om_r} \, \wh a_{\om_r},
\qquad \text{resp.}\quad 
\tcS_{\swc\om_1}\cA_{\om_1} \de = \frac{\om_1+1}{\ze-\om_1} \, \wh a_{\om_1},
\end{equation}
is meromorphic on~$\C$ and regular at the origin if $\om_r\neq0$, resp.\ $\om_1\neq0$,
and entire if $\om_r=0$, resp.\ $\om_1=0$.
In fact, 
$$
\htb\om_r \le n-1 \ens\Rightarrow\ens \cS_{\shtb\om_r}\cA_{\om_r} \de \in \wHR{n-\N^*},
\qquad
\wc\om_1 \ge 0 \ens\Rightarrow\ens \cS_{\swc\om_1}\cA_{\om_1} \de \in \wHR\N.
$$
Therefore, one can apply $r-1$ times the following


\begin{lemma}	\label{lemAnCont}
Suppose that $\gP\subset\Z$, $\wh\ph \in \wHR\gP$ and $\wh b$ is entire.
Then $\wh b*\wh\ph \in \wHR\gP$.
If furthermore $\wh s$ is a meromorphic function, the poles of which all belong
to~$\gP$ and with at most a simple pole at the origin, 
then $\wh s(\wh b*\wh\ph) \in \wHR\gP$.

Consider a rectifiable oriented path with arc-length parametrisation $\ga \colon
[0,T] \to \C$, such that $\ga(0)=0$ and $\ga(t) \in \C\setminus\gP$ for $0<t\le
T$.
Denoting the analytic continuation of~$\wh\ph$ along~$\ga$ by the same symbol~$\wh\ph$, we
suppose moreover that
$$
\big| \wh\ph\big( \ga(t) \big) \big| \le P(t) \,\ee^{Ct}, \qquad
0  \le t \le T,
$$
with a continuous function~$P$ and a constant $C\ge0$, and that there is a
continuous monotonic non-decreasing function~$Q$ such that $|\wh b(\ze)| \le
Q\big(|\ze|\big)\,\ee^{C|\ze|}$ for all $\ze\in\C$.
Then, for all $t\in  [0,T]$, the analytic continuation of $\wh b*\wh\ph$ at
$\ze=\ga(t)$ satisfies
\begin{equation}	\label{eqbstarwhph}
\wh b*\wh\ph(\ze) = \int_{\ga_\ze}
\wh b(\ze-\ze') \wh\ph(\ze') \,\dd\ze',
\qquad
\big|\wh b*\wh\ph\big( \ga(t)  \big)\big| \le
P*Q(t) \, \ee^{Ct},
\end{equation}
with $\ga_\ze$ denoting the restriction $\ga_{| [0,t]}$ and
$P*Q(t) = \int_0^t P(t')Q(t-t')\,\dd t'$.
\end{lemma}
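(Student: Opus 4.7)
The plan is to prove formula~\eqref{eqbstarwhph} first, since the two structural statements about $\wh b*\wh\ph$ and $\wh s(\wh b*\wh\ph)$ will follow from it. For small $|\ze|$, the convolution is defined by~\eqref{eqdefconvol}, an integral along the straight segment $[0,\ze]$. When $\ze=\ga(t)$ with $t>0$ very small, the path $\ga_\ze=\ga|_{[0,t]}$ is close to this segment, and since the integrand $\wh b(\ze-\ze')\wh\ph(\ze')$ is a holomorphic function of $\ze'$ in a neighbourhood of the region bounded by the two paths---$\wh b$ being entire, and the singularities of $\wh\ph$ lying only on $\gP$, which for sufficiently small~$t$ neither contour approaches---Cauchy's theorem gives the equality of the two integrals. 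For larger~$t$, I would propagate this identity by splitting $[0,t]$ into sufficiently small subintervals and iterating.

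Next, because $F(\ze):=\int_{\ga_\ze}\wh b(\ze-\ze')\wh\ph(\ze')\,\dd\ze'$ depends holomorphically on~$\ze$ in the open set traced out by~$\ga$ (differentiation under the integral is legitimate, as one can locally modify the tail of the contour) and its value is determined by the homotopy class of $\ga_\ze$ in~$\gR(\gP)$, we get a well-defined holomorphic extension on $\gR(\gP)$, proving $\wh b*\wh\ph\in\wHR\gP$. For the multiplication by~$\wh s$: its poles being contained in~$\gP$, the product $\wh s\cdot(\wh b*\wh\ph)$ acquires no new singularities on $\gR(\gP)\setminus\pi\ii(\gP)$; at the origin the possible simple pole of~$\wh s$ is absorbed because $(\wh b*\wh\ph)(0)=0$ (contour of length zero), so a neighbourhood of the origin in $\gR(\gP)$ poses no problem.

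Finally, for the estimate: with arc-length parametrisation $\ze'=\ga(s)$ and $|\dot\ga(s)|=1$, one bounds
\[
\big|\wh b*\wh\ph(\ga(t))\big|
\le \int_0^t \big|\wh b(\ga(t)-\ga(s))\big|\cdot\big|\wh\ph(\ga(s))\big|\,\dd s.
\]
The crucial inequality $|\ga(t)-\ga(s)|\le t-s$ follows from $\ga(t)-\ga(s)=\int_s^t\dot\ga(\tau)\,\dd\tau$ and $|\dot\ga|\equiv1$; combined with the monotonicity of~$Q$, it yields $|\wh b(\ga(t)-\ga(s))|\le Q(t-s)\,\ee^{C(t-s)}$. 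Multiplying by the hypothesis on~$\wh\ph$ and factoring $\ee^{Ct}$ out of the integral gives exactly $\ee^{Ct}(P*Q)(t)$. The main obstacle lies in the first paragraph: justifying rigorously that the local contour deformation from the straight segment to $\ga_\ze$ can be iterated along the whole path without meeting~$\gP$. This requires a compactness argument on~$[0,t]$ to cover $\ga([0,t])$ by finitely many small disks avoiding $\gP\setminus\{0\}$, on each of which the elementary straight-line deformation is legitimate; one then concatenates the local identities to produce the global formula~\eqref{eqbstarwhph}.
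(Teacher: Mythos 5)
Your proof follows essentially the same route as the paper's: Cauchy's theorem to show the contour integral depends only on the homotopy class of $\ga_\ze$ (hence defines an element of $\wHR\gP$), the vanishing of $\wh b*\wh\ph$ at the origin to absorb the possible simple pole of $\wh s$, and the arc-length estimate $|\ga(t)-\ga(s)|\le t-s$ combined with the monotonicity of $Q$ and $C\ge0$ to obtain the bound $P*Q(t)\,\ee^{Ct}$. The only difference is that you spell out the compactness/iteration argument behind the contour deformation, which the paper compresses into a single ``one can check''; the substance is the same.
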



\begin{proof}
The first statement and the first part of~\eqref{eqbstarwhph} are obtained by
means of the Cauchy theorem:
if $\ga_1$ and~$\ga_2$ are two representatives of the same element of~$\gR(\gP)$ and
$\xi=\pi([\ga_1])=\pi([\ga_2])$, then $\int_{\ga_1}\wh b(\xi-\xi')
\wh\ph(\xi') \,\dd\xi'$ and $\int_{\ga_2}\wh b(\xi-\xi') \wh\ph(\xi')
\,\dd\xi'$ coincide; one can check that the function thus defined on~$\gR(\gP)$ is
holomorphic and this is clearly an extension of $\wh b*\wh\ph$.
Moreover $\wh b*\wh\ph$ vanishes at the origin, thus $\wh s(\wh
b*\wh\ph)\in\wHR{\gP}$ even if~$\wh s$ has a simple pole at~$0$.

We thus have
$$
\wh b*\wh\ph\big( \ga(t)  \big) = \int_0^t 
\wh b\big( \ga(t)-\ga(t') \big) \wh\ph\big( \ga(t') \big)
\dot\ga(t')\,\dd t'.
$$
For almost every $t'\in[0,t]$, $|\dot\ga(t')|=1$ and $|\ga(t)-\ga(t')|\le t-t'$, whence
$|\wh b\big( \ga(t)-\ga(t') \big)| \le Q(t-t')\,\ee^{C(t-t')}$ by monotonicity
of $\xi\mapsto Q(\xi)\,\ee^\xi$.
The conclusion follows.
\end{proof}

In view of Lemma~\ref{lemexo} and formula~\eqref{eqbewhcV}, the first part of
Lemma~\ref{lemAnCont} implies
\begin{cor}	\label{corbecVo}
Let $n\in\N$ and $\omb$ be a non-empty word such that $\norm{\omb}=n-1$.
Then the function $\be_\omb\wh\cV^\omb$ belongs to $\wHR{n-\N^*}$ and 
the function 
$$
\ze \mapsto \big(\ze-(n-1)\big) \be_\omb\hcVto(\ze)
$$
belongs to $\wHR\N$.
\end{cor}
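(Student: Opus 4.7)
The plan is to exploit the operator-chain factorization given by formula~\eqref{eqbewhcV} of Lemma~\ref{lemtcSm} and to verify both membership statements inductively, applying Lemma~\ref{lemAnCont} at each step. The claim is trivial when $\be_\omb=0$, so I assume $\be_\omb\neq0$ throughout; Lemma~\ref{lemexo} then yields the crucial bounds $\htb\om_i\le n-1$ (for all $i$) and $\wc\om_i\ge0$ (for $i<r$), which are exactly what is needed to place the poles of the multiplication operators $\cS_{\shtb\om_i}$ and $\tcS_{\swc\om_i}$ in the respective sets $n-\N^*$ and~$\N$.

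For the first assertion, I proceed by induction on $k\in\{1,\dotsc,r\}$, showing that the partial product $\wh\phi_k:=\cS_{\shtb\om_{r-k+1}}\cA_{\om_{r-k+1}}\dotsm\cS_{\shtb\om_r}\cA_{\om_r}\de$ lies in $\wHR{n-\N^*}$; the case $k=r$ is the desired statement. The base case $\wh\phi_1=-\bigl((n-\om_r)/(\ze-\om_r)\bigr)\wh a_{\om_r}$ is immediate from~\eqref{eqinifcns}: it is meromorphic on~$\C$ with at most one simple pole at $\om_r\le n-1$, hence in $n-\N^*$, and is actually entire when $\om_r=0$ thanks to $a_0\in x^2\C\{x\}$ from~\eqref{eqassan}. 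The inductive step applies first $\cA_{\om_{r-k}}$, which by the first half of Lemma~\ref{lemAnCont} preserves $\wHR{n-\N^*}$ and produces a function vanishing at the origin, and then $\cS_{\shtb\om_{r-k}}$, which is multiplication by a meromorphic function with a single simple pole at $\htb\om_{r-k}\le n-1$; this pole lies in $n-\N^*$, noting that if $\htb\om_{r-k}=0$ then $\norm{\omb}=n-1\ge 0$ forces $n\ge 1$, hence $0\in n-\N^*$. The second half of Lemma~\ref{lemAnCont} then concludes.

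The second assertion is parallel: multiplication by $\ze-(n-1)$ cancels the outer factor $1/(\ze-(n-1))$ in~\eqref{eqbewhcV}, reducing the problem to showing that $\cA_{\om_r}\tcS_{\swc\om_{r-1}}\cA_{\om_{r-1}}\dotsm\tcS_{\swc\om_1}\cA_{\om_1}\de$ belongs to $\wHR{\N}$. An analogous induction, running from $\cA_{\om_1}\de=\wh a_{\om_1}$ outward, handles the claim: each $\tcS_{\swc\om_i}$ has a single simple pole at $\wc\om_i\ge0$, hence in~$\N$, and when $\wc\om_i=0$ the application of $\cA_{\om_i}$ just before it supplies the vanishing at~$0$ required by the second half of Lemma~\ref{lemAnCont}; at the base $i=1$ one invokes once more $a_0\in x^2\C\{x\}$ when $\om_1=0$, which makes $\tcS_0\wh a_0$ entire, whereas if $\om_1\neq0$ the pole $\wc\om_1=\om_1\ge 1$ lies away from the origin. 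The only genuine bookkeeping throughout is this pole-placement check, which is entirely controlled by Lemma~\ref{lemexo}, and I do not anticipate any real obstacle beyond it.
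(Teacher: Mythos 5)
Your proof is correct and follows essentially the same route as the paper: the paper also reads the operator chains of formula~\eqref{eqbewhcV} from the inside out, settles the base cases via~\eqref{eqinifcns} (using $a_0\in x^2\C\{x\}$ when the innermost pole sits at the origin), and applies the first part of Lemma~\ref{lemAnCont} $r-1$ times, with Lemma~\ref{lemexo} guaranteeing that every pole of $\cS_{\shtb\om_i}$, resp.\ $\tcS_{\swc\om_i}$, lies in $n-\N^*$, resp.\ $\N$, or is a simple pole at the origin cancelled by the vanishing of the preceding convolution. You have merely written out as an explicit induction what the paper leaves as an immediate consequence.
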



\parag 
Our aim is now to exploit formula~\eqref{eqbewhcV} and the quantitative information
contained in Lemma~\ref{lemAnCont} to produce upper bounds for
$$
\big|\be_\omb\wh\cV^\omb(\ze)\big|, \quad \text{resp.}\ens
\big| \big(\ze-(n-1) \big) \be_\omb\hcVto(\ze) \big|
$$ 
which will ensure the uniform convergence of the series~\eqref{eqwhphnwhpsin}
(up to the factor $\ze-(n-1)$ for the second one) in any compact subset 
of $\gR(n-\N^*)$, resp.\ $\gR(\N)$.

We first choose positive constants $K,L,C$ such that
\begin{equation}	\label{ineqwhaeta}
\left|  \wh a_\eta(\ze) \right|  \le K L^\eta \, \ee^{C|\ze|},
\qquad \ze\in\C,\; \eta\in\Om.
\end{equation}
This is possible, since $\sum \dfrac{a_\eta(x)}{x} y^{\eta+1} =
\dfrac{A(x,y)-y}{y} \in \C\{x,y\}$ by assumption, thus one can find constants
such that
$\big| \frac{a_\eta(x)}{x} \big| \le K L^\eta$ for $|x|\le C\ii$
and use~\eqref{ineqentire}.
We can also assume, possibly at the price of increasing of~$K$, that
\begin{equation}	\label{ineqwhazero}
\left|  \wh a_0(\ze) \right|  \le K |\ze| \, \ee^{C|\ze|},
\qquad \ze\in\C,
\end{equation}
since $a_0(x) \in x^2\C\{x\}$.


\parag 
Next, we define exhaustions of $\gR(n-\N^*)$, resp.\ $\gR(\N)$, by subsets
$\gRN(n-\N^*)$, resp.\ $\gRN(\N)$, in which we shall be able to derive
appropriate bounds for our functions.
Let $\rho\in\left]0,\demi\right[$ and $N\in\N^*$.

We denote by $\bgRN(n-\N^*)$ the subset of~$\C$ obtained by removing the open discs
$D(m,\rho)$ with radius~$\rho$ and integer centres $m\le n-1$,
and removing also the points~$\ze$ such that the segment $[0,\ze]$ intersect the
open disc $D(-N,\rho)$ (\ie\ the points which are hidden by
$D(-N,\rho)$ to an observer located at the origin).

Similarly, we denote by $\bgRN(\N)$ the subset of~$\C$ obtained by removing the open discs
$D(m,\rho)$ with radius~$\rho$ and integer centres $m\ge 0$,
and removing also the points~$\ze$ such that the segment $[0,\ze]$ intersect the
open disc $D(N,\rho)$.
Thus, with the notations $\gP^- = n-\N^*$ and $\gP^+ = \N$,
\begin{multline*}
\bgRN(\gP^\pm) = \ao \ze\in\C \mid
\Dist(\ze,\gP^\pm)  \ge \rho \;\text{and}\; 
\Dist(\pm N,[0,\ze]) \ge \rho \af \\[.7ex]
= \ao \ze\in\C \mid 
\Dist\big( \ze, \gP^\pm \cup \pm\Sig(\rho,N) \big) \ge \rho \af,
\end{multline*}
with the notation~$\Sig$ introduced after the statement of
Theorem~\ref{thmResur}; see Figure~\ref{figrhoNadapt}.

Now, for $\gP=\gP^\pm$, consider the rectifiable oriented paths~$\ga$ which start at the origin and either
stay in the disc $D(0,\rho)$, or leave it and then stay in~$\bgRN(\gP)$. The
homotopy classes of such paths form a set~$\gRN(\gP)$ which we can identify with a
subset of~$\gR(\gP)$.

\begin{definition}	\label{defiadaptedpath}
If the arc-length parametrisation of a rectifiable oriented path 
$\ga \colon [0,T]  \to \C$ satisfies, for each $t\in[0,T]$,
\begin{align*}
0\le t\le \rho &\ens\Rightarrow\ens |\ga(t)|  = t, \\
t>\rho &\ens\Rightarrow\ens \ga(t) \in \bgRN(\gP),
\end{align*}
then we say that the parametrised path~$\ga$ is $(\rho,N,\gP)$-adapted.
We speak of infinite $(\rho,N,\gP)$-adapted path if $\ga$ is defined on
$\left[0,+\infty\right[$. 
\end{definition}

\begin{figure}

\begin{center}

\psfrag{n}{$\scriptstyle n$}
\psfrag{m}{$\scriptstyle n-1$}
\psfrag{0}{$\scriptstyle \mbox{}\hspace{.2em}0$}
\psfrag{N}{$\scriptstyle \mbox{}\hspace{.4em}N$}
\psfrag{M}{$\scriptstyle \mbox{}\hspace{-1em}-N$}

\psfrag{Pp}{$\gP^+ = \N$}
\psfrag{Pm}{$\gP^- = n-\N^*$}
\psfrag{S}{$\scriptstyle \Sig(N,\rho)$}
\psfrag{mS}{$\scriptstyle -\Sig(N,\rho)$}

\epsfig{file=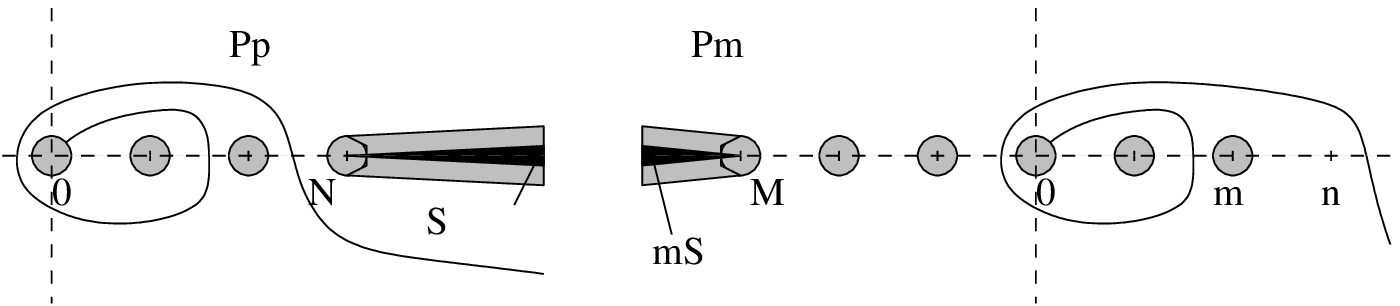,height=2.5cm,angle = 0}

\end{center}


\caption{ \label{figrhoNadapt} 
The set $\protect\bgRN(\gP^\pm)$ 
and the image of a $(\rho,N,\gP^\pm)$-adapted path.}

\end{figure}

One can characterize~$\gRN(\gP)$ as follows:
a point of $\gR(\gP)$ belongs to~$\gRN(\gP)$ iff it can be represented by a
$(\rho,N,\gP)$-adapted path.

Observe that the projection onto~$\C$ of $\gRN(\gP)$ is $\bgRN(\gP)\cup
D(0,\rho)$ (only for $\gP=-\N^*$ is $D(0,\rho)$ contained in~$\bgRN(\gP)$) and
that $\gR(\gP) = \bigcup_{\rho,N} \gRN(\gP)$.


\parag 
We now show how to control the operators~$\cS_m$ and~$\tcS_m$ uniformly
in a set~$\gRN(\gP^\pm)$:

\begin{lemma}	\label{leminiSmtSm}
Let $n\in\N$ and $\cS_m,\tcS_m$ as in~\eqref{eqdefiStSm}, and consider the
meromorphic functions $S_m = \cS_m 1$ and $\tS_m = \tcS_m 1$.

Given $\rho,N$ as above, there exist $\la>0$ which depends only on $\rho,N$ and
$\la_n>0$ which depends only on $\rho,N,n$ such that, for
$m\in\gP\setminus\{0\}$,
\begin{alignat}{3}	
\label{ineqSm}
&\text{if $\gP = n-\N^*$:}& \qquad &
|S_m(\ze)| \le \la_n & \quad & 
\text{for}\ens \ze\in\bgRN(\gP)\cup D(0,\rho) \\
\tag{\ref{ineqSm}$'$}
&\text{if $\gP = \N$:}& \qquad &
|\tS_m(\ze)| \le \la & \quad &
\text{for}\ens \ze\in\bgRN(\gP)\cup D(0,\rho)
\end{alignat}
and
\begin{alignat}{3}
\label{ineqSzeroout}
&|S_0(\ze)| \le \la_n,& \qquad &|\tS_0(\ze)|  \le \la,& \qquad 
&\text{for}\ens \ze\in \C\setminus D(0,\rho) \\
\label{ineqSzeroin}
&|S_0(\ze)| \le \frac{\rho \la_n}{|\ze|},& \qquad &|\tS_0(\ze)|  \le \frac{\rho \la}{|\ze|},& \qquad 
&\text{for}\ens \ze\in D(0,\rho).
\end{alignat}
One can take $\la = (N+1)\rho\ii$ and $\la_n = (|n|+N)\rho\ii$.
\end{lemma}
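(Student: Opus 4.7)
The plan is to bound the moduli $|S_m(\ze)| = |n-m|/|\ze-m|$ and $|\tS_m(\ze)| = (m+1)/|\ze-m|$ by dissecting the admissible region for $\ze$ and the range of admissible $m$. The central resource is the geometry of $\bgRN(\gP^\pm)\cup D(0,\rho)$, which packages together the disc-avoidance condition $|\ze - m|\ge\rho$ for every $m\in\gP^\pm$ and the shadow-avoidance condition $\Dist(\pm N,[0,\ze])\ge\rho$. The case $m=0$ is immediate and disposes of \eqref{ineqSzeroout}--\eqref{ineqSzeroin}: on $\C\setminus D(0,\rho)$ the bound $|\ze|\ge\rho$ yields $|S_0(\ze)|=|n|/|\ze|\le|n|/\rho$ and $|\tS_0(\ze)|=1/|\ze|\le1/\rho$, while on $D(0,\rho)$ the very same formulas rewritten as $\rho|n|/(\rho|\ze|)$ and $\rho/(\rho|\ze|)$ give the $\rho/|\ze|$ bounds of \eqref{ineqSzeroin}.

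For $m\in\gP^\pm\setminus\{0\}$ I would split into two regimes. In the small regime $|m|\le N$, the disc-avoidance condition alone delivers $|\ze-m|\ge\rho$ when $\ze\in\bgRN(\gP^\pm)$; when $\ze\in D(0,\rho)$, since $|m|\ge 1$ and $\rho<\demi$ one has $|\ze-m|\ge|m|-\rho\ge 1-\rho>\rho$. Thus $|\tS_m(\ze)|\le(m+1)/\rho\le(N+1)/\rho$ and $|S_m(\ze)|\le(|n-m|)/\rho\le(|n|+N)/\rho$, matching the stated constants.

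In the large regime $|m|>N$ the shadow condition becomes essential. Take $\gP^+=\N$ and $m>N$: the condition $\Dist(N,[0,\ze])\ge\rho$ translates, via the standard point-to-segment distance formula, into the angular inequality $N\,|\IM\ze|/|\ze|\ge\rho$ in the generic case where the perpendicular foot from $N$ onto the segment $[0,\ze]$ lies inside the segment. From $|\ze-m|^2=(m-\RE\ze)^2+(\IM\ze)^2\ge(\IM\ze)^2$ this gives $|\ze-m|\ge(\rho/N)|\ze|$; combining with $|\ze|\ge m-|\ze-m|$ (so either $|\ze-m|\ge m/2$ or $|\ze|\ge m/2$) forces $|\ze-m|\ge(\rho/(2N))\,m$, and hence $|\tS_m(\ze)|\le 2N(m+1)/(m\rho)=O(N/\rho)$. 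The degenerate configurations are handled directly: if $\RE\ze\le 0$ then $|\ze-m|\ge m$ trivially; if $\ze$ lies in the disc $\{u>|\ze|^2/N\}$ where the foot falls past $\ze$, then $\Dist(N,[0,\ze])=|\ze-N|\ge\rho$ and $|\ze|\le N$, so only finitely many $m$ are close. The same reasoning, reflected through $0$ and using the shadow of $-N$, handles $\gP^-=n-\N^*$; here the $n$-dependent factor $|n-m|/|m|=1+|n|/|m|$ stays bounded, so one recovers $|S_m(\ze)|=O((|n|+N)/\rho)$.

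The main obstacle is making the geometric argument of the last paragraph quantitatively sharp enough to yield the explicit constants $\la=(N+1)/\rho$ and $\la_n=(|n|+N)/\rho$ rather than merely bounds of the same order. This amounts to a careful case analysis on the sign of $\RE\ze$ and on whether the perpendicular foot of $\pm N$ onto the segment $[0,\ze]$ falls inside, before, or beyond $\ze$; each sub-case is elementary, but arranging the estimates so that the large-$|m|$ contribution is absorbed into the bound already supplied by the small-$|m|$ regime is the technical heart of the proof.
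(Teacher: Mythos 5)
Your overall strategy — peel off $m=0$ first, then split into a small regime $|m|\le N$ handled by disc--avoidance alone and a large regime $|m|>N$ handled by the shadow condition — is the same as the paper's, and the $m=0$ and small-$|m|$ cases are handled correctly. But in the large regime you attempt a perpendicular-foot computation that (as you honestly flag at the end) only produces a bound of order $N/\rho$ and forces a case analysis on the position of the foot that you do not close. The paper bypasses all of this with a one-line scaling (``Thales'') argument: for $m\ge N$ the segment $[0,\tfrac{N}{m}\ze]$ is contained in $[0,\ze]$, and the homothety centred at $0$ of ratio $m/N$ sends $N$ to $m$ and $[0,\tfrac{N}{m}\ze]$ to $[0,\ze]$; hence
$$
\rho \le \Dist\big(N,[0,\ze]\big) \le \Dist\big(N,[0,\tfrac{N}{m}\ze]\big)
= \tfrac{N}{m}\,\Dist\big(m,[0,\ze]\big) \le \tfrac{N}{m}\,|\ze-m|,
$$
so $|\ze-m|\ge\rho m/N$ with no discussion of whether the perpendicular foot lands inside, before, or beyond $\ze$. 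Combining this with the disc bound $|\ze-m|\ge\rho$ and the split $\frac{m+1}{|\ze-m|}=\frac{m}{|\ze-m|}+\frac{1}{|\ze-m|}$ immediately gives $N\rho^{-1}+\rho^{-1}=(N+1)\rho^{-1}$, \emph{uniformly} over $m\ge1$ — notice the $\frac{m}{|\ze-m|}\le N\rho^{-1}$ bound holds for $m<N$ from disc-avoidance and for $m\ge N$ from the scaling inequality, so no regime boundary issues remain. For $\gP=n-\N^*$ the same argument (scaling from $-N$ for $m\le -N$) applies, the only extra case being $N\le m\le n-1$, where $|n-m|\le n$ and disc-avoidance suffice. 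In short: the step you identify as ``the technical heart of the proof'' is exactly where a genuine idea is missing from your write-up, and the scaling observation dissolves it; your argument as it stands establishes the existence of \emph{some} $\la=O(N/\rho)$ (modulo the unfinished degenerate cases) but not the announced value $(N+1)\rho^{-1}$.
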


\begin{proof}
Let $m\in\gP\setminus\{0\}$ and $\ze\in\bgRN(\gP)\cup D(0,\rho)$,
thus $|\ze-m|\ge\rho$.

Consider first the case $\gP=\N$.
If $m\ge N$, then $|\ze-m| \ge \frac{\rho |m|}{N}$ by Thales theorem; 
thus $\frac{1}{|\ze-m|} \le \rho\ii$ and $\big|\frac{m}{\ze-m}\big| \le
N\rho\ii$ for any $m\in\N^*$.
Therefore $|\tS_m(\ze)| = \big|\frac{m+1}{\ze-m}\big| \le \la = (N+1)\rho\ii$.
Since $\la\ge\rho\ii$, $\tS_0(\ze) = 1/\ze$ also satisfies the required inequalities.

When $\gP=n-\N^*$, one argues similarly except that the case $N\le m \le n-1$
must be treated separately.
\end{proof}


\parag 
Combining the previous two lemmas, we get

\begin{lemma}	\label{lemIneqfond}
Let us fix $n,\rho,N$ as above,
$K,L,C$ as in~\eqref{ineqwhaeta}--\eqref{ineqwhazero} and $\la,\la_n$ as in
Lemma~\ref{leminiSmtSm}. 
Suppose that $\gP = n-\N^*$ or~$\N$,
$\ga \colon [0,T] \to \C$ is $(\rho,N,\gP)$-adapted and
$\wh\ph \in \wHR\gP$ satisfies
$$
\big| \wh\ph\big( \ga(t) \big) \big| \le P(t) \,\ee^{Ct}, \qquad
0  \le t \le T,
$$
with a continuous monotonic non-decreasing function~$P$ and a constant $C\ge0$.
Assume $m\in\gP$, with the restriction $m\neq0$ if $n=0$ and $\gP=-\N^*$.

Then, for any $\eta\in\Om$, 
$$
\gP= n-\N^* \;\Rightarrow\;
\cS_m \cA_\eta\wh\ph \in \wHR{n-\N^*}, \quad
\gP= \N \;\Rightarrow\;
\tcS_m \cA_\eta\wh\ph \in \wHR\N,
$$
and, in the first case,
\begin{align}
m\neq0 \ens\text{or}\ens \eta=0 &\ens\Rightarrow\ens 
\big| \cS_m \cA_\eta\wh\ph\big( \ga(t) \big) \big| \le
\la_n K L^\eta (1*P)(t) \, \ee^{C t}
%
%
%
\\
m=0 \ens\text{and}\ens \eta\neq0 &\ens\Rightarrow\ens 
\big| \cS_m \cA_\eta\wh\ph\big( \ga(t) \big) \big| \le
\la_n K L^\eta \big((\de+1)*P\big)(t) \, \ee^{C t}
%
%
%
\end{align}
for all $t\in[0,T]$, while in the second case
the function~$\tcS_m \cA_\eta\wh\ph$ satisfies the same inequalities with~$\la$ replacing~$\la_n$.
\end{lemma}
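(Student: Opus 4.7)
My plan is to combine the analytic continuation and quantitative statements of Lemma~\ref{lemAnCont} for the convolution operators $\cA_\eta$ with the pointwise bounds on $\cS_m$ and $\tcS_m$ from Lemma~\ref{leminiSmtSm}, splitting into cases according to whether $m=0$ and, in that subcase, whether $\eta=0$. First I would verify the containment $\cS_m\cA_\eta\wh\ph\in\wHR{n-\N^*}$ (resp.\ $\tcS_m\cA_\eta\wh\ph\in\wHR\N$): since $\wh a_\eta$ is entire by~\eqref{ineqwhaeta} and $\wh\ph\in\wHR\gP$, the first part of Lemma~\ref{lemAnCont} gives $\cA_\eta\wh\ph\in\wHR\gP$; the multiplicator $S_m$ (resp.\ $\tS_m$) is meromorphic with its only pole at $m\in\gP$, simple at the origin only when $m=0$, and the restriction $m\ne 0$ for $n=0,\gP=-\N^*$ precisely rules out the situation where $0\notin\gP$, so the second part of Lemma~\ref{lemAnCont} applies.

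For the quantitative estimate, Lemma~\ref{lemAnCont} with $Q\equiv KL^\eta$ yields, for every $\eta\in\Om$, the bound $|\cA_\eta\wh\ph(\ga(t))|\le KL^\eta(1*P)(t)\,\ee^{Ct}$. If $m\ne 0$, Lemma~\ref{leminiSmtSm} gives $|S_m(\ga(t))|\le\la_n$ (resp.\ $|\tS_m(\ga(t))|\le\la$) uniformly on $\bgRN(\gP)\cup D(0,\rho)$, which contains the image of~$\ga$; this yields the claimed bound with $(1*P)(t)$. If $m=0$ and $\eta\ne 0$, I would split according to $t$: for $t>\rho$ the point $\ga(t)$ lies in $\bgRN(\gP)$ so $|S_0(\ga(t))|\le\la_n$ by~\eqref{ineqSzeroout}, producing the $(1*P)(t)$ contribution; for $t\le\rho$ the point $\ga(t)$ is on the rectilinear initial segment with $|\ga(t)|=t$, so~\eqref{ineqSzeroin} gives $|S_0(\ga(t))|\le\rho\la_n/t$, and combined with the monotonicity estimate $(1*P)(t)\le tP(t)$ it yields a contribution bounded by $\la_n KL^\eta P(t)\,\ee^{Ct}$. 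Summing the two contributions produces the factor $((\de+1)*P)(t)=P(t)+(1*P)(t)$.

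The delicate case is $m=0,\eta=0$, where the vanishing of~$\wh a_0$ at the origin encoded by~\eqref{ineqwhazero} is needed to compensate the simple pole of~$S_0$. For $t>\rho$ the previous argument applies using $|\wh a_0|\le K\,\ee^{C|\cdot|}$ from~\eqref{ineqwhaeta}. For $t\le\rho$ the rectilinear start of~$\ga$ is essential: since $\ga(s)=(s/t)\ga(t)$ for $0\le s\le t$, one has $|\ga(t)-\ga(s)|=t-s$ exactly, and a direct estimate using~\eqref{ineqwhazero} gives
\begin{equation*}
\big|\cS_0\cA_0\wh\ph(\ga(t))\big|\le\frac{n}{t}\int_0^t K(t-s)\,\ee^{C(t-s)}P(s)\,\ee^{Cs}\,ds\le nK\,\ee^{Ct}\int_0^t P(s)\,ds,
\end{equation*}
which is bounded by $\la_n K(1*P)(t)\,\ee^{Ct}$ because $n\le\la_n$. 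The argument for $\tcS_m$ on $(\rho,N,\N)$-adapted paths is entirely parallel with $\la$ replacing $\la_n$. The main obstacle is precisely this last case: the pointwise cancellation between the pole of~$S_0$ and the vanishing of~$\wh a_0$ along~$\ga$ can only be realized because of the rectilinearity of the initial segment of a $(\rho,N,\gP)$-adapted path.
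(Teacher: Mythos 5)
Your proof is correct and follows essentially the same route as the paper: containment via Lemma~\ref{lemAnCont}, then the case split $m\neq0$ / $m=0,\eta\neq0$ / $m=0,\eta=0$ using \eqref{ineqSm}, \eqref{ineqSzeroout}--\eqref{ineqSzeroin} together with the monotonicity inequalities $\tfrac1t(1*P)(t)\le P(t)$ and $\tfrac1t(I*P)(t)\le(1*P)(t)$. Two cosmetic remarks: the two regimes $t\le\rho$ and $t>\rho$ are not literally summed (each bound is separately dominated by $(\de+1)*P$), and rectilinearity is only needed for the lower bound $|\ga(t)|=t$ controlling the pole of $S_0$, since $|\ga(t)-\ga(s)|\le t-s$ already holds for any arc-length parametrisation.
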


\begin{proof}
We suppose $\gP=n-\N^*$ and show the properties for $\cS_m \cA_\eta\wh\ph$ only,
the other case being similar.
Since $\cS_m \cA_\eta\wh\ph = S_m (\wh a_\eta * \wh\ph)$, this function belongs to
$\wHR\gP$ by the first part of Lemma~\ref{lemAnCont}.
In view of~\eqref{ineqwhaeta}--\eqref{ineqwhazero}, the second part of this lemma yields
\begin{align}
\label{ineqAetawhph}
\big| \cA_\eta \wh\ph\big( \ga(t) \big) \big| &\le K L^\eta (1*P)(t) \,\ee^{Ct}
\\
\label{ineqAzerowhph}
\big| \cA_0 \wh\ph\big( \ga(t) \big) \big| &\le K (I*P)(t) \,\ee^{Ct}
\end{align}
for all $t\in[0,T]$, with $I(t)\equiv t$ (notice that the first inequality holds if $\eta=0$
as well).

If $m\neq0$, then \eqref{ineqSm} yields the desired inequality for $\big| \cS_m
\cA_\eta \wh\ph\big( \ga(t) \big) \big|$.

Suppose $m=0$; thus $n\neq0$ by assumption. 
We observe that, if $t>\rho$, then $\ga(t) \in \bgRN(\gP)$ has modulus $>\rho$ and 
\eqref{ineqSzeroout} yields 
$\big| S_0\big( \ga(t) \big) \big| \le \la_n$,
whereas if $t\le\rho$, then $|\ga(t)| = |t|$ and
\eqref{ineqSzeroin} yields 
$\big| S_0\big( \ga(t) \big) \big| \le \frac{\rho \la_n}{t}$.

Thus, if $m=0$ and $\eta=0$, then \eqref{ineqAetawhph} yields the desired inequality when
$t>\rho$ and \eqref{ineqAzerowhph} yields 
$\big| \cS_0 \cA_0 \wh\ph\big( \ga(t) \big) \big| \le K\rho\la_n
\frac{I*P(t)}{t} \,\ee^{Ct}$
for $t\le\rho$, which is sufficient since 
$\frac{I*P(t)}{t} = \frac{1}{t} \int_0^t t' P(t-t') \,\dd t' \le 1*P(t)$ and $\rho<1$.

We conclude with the case where $m=0$ and $\eta\neq0$.
Using \eqref{ineqAetawhph}, we obtain the result when $t>\rho$, since $1*P\le
P+1*P$.
When $t\le\rho$, we get 
$\big| \cS_0 \cA_\eta \wh\ph\big( \ga(t) \big) \big| \le K L^\eta\rho\la_n
\frac{1*P(t)}{t} \,\ee^{Ct}$,
which is sufficient since 
$\frac{1*P(t)}{t} = \frac{1}{t} \int_0^t P(t') \,\dd t' \le P(t)$. 
\end{proof}


\parag 
\emph{End of the proof of Theorem~\ref{thmResur}:} case of~$\wh\ph_n$.

\medskip

Let $n\in\N$. According to~\eqref{eqwhphnwhpsin}, the formal series $\wh\ph_n$
can be written as the formally convergent series
$\sum_{\norm{\omb}=n-1} \be_\omb  \wh\cV^\omb$.
Let $\gP = n-\N^*$; according to Corollary~\ref{corbecVo} each
$\be_\omb\wh\cV^\omb$ converges to a function of~$\wHR\gP$,
it is thus sufficient to check the uniform convergence of the above series
\emph{as a series of holomorphic functions} in each compact subset
of~$\gR(\gP)$ and to give appropriate bounds.
Let us fix $\rho\in\left]0,\demi\right[$, $N\in\N^*$ and $K,L,C,\la,\la_n$ as
in Lemma~\ref{lemIneqfond}.


\medskip

\noindent -- 
We first show that, for any $(\rho,N,\gP)$-adapted path~$\ga$ (infinite or not)
and for any $\omb=(\om_1,\dotsc,\om_r)\in\Om^r$ with $r\ge1$ and $\norm{\omb}=n-1$,
one has for all~$t$
\begin{equation}	\label{ineqbecV}
\big|  \be_\omb \wh\cV^\omb\big( \ga(t)  \big) \big| \le
(\la_n K)^r L^{n-1} \wh P_r(t) \, \ee^{Ct},
\qquad \wh P_r = (\de+1)^{*\flo{r/2}} * 1^{*\ceil{r/2}},
\end{equation}
with the same notation as in~\eqref{eqvalcV} for $\ceil{r/2}$ and
$\flo{r/2}=r-\ceil{r/2}$.
Observe that $\wh P_r$ is a polynomial with non-negative coefficients.

If $\om_r=\wc\om_r\neq0$, then \eqref{ineqwhaeta} and \eqref{ineqSm} yield
$|  \cS_{\shtb\om_r}  \wh a_{\om_r}(\ze) | \le \la_n K L^{\om_r}\, \ee^{C|\ze|}$
for all $\ze\in\gR(\gP)$.
The same inequality holds also if $\om_r=0$ (use \eqref{ineqwhaeta}
and~\eqref{ineqSzeroout} if $|\ze|>\rho$, and  \eqref{ineqwhazero}
and~\eqref{ineqSzeroin} if $|\ze|\le\rho$). Therefore
\begin{equation}	\label{ineqiniomr}
\big|  \cS_{\shtb\om_r}  \wh a_{\om_r}\big(\ga(t)\big) \big| \le \la_n K L^{\om_r}\, \ee^{Ct},
\qquad t\ge0
\end{equation}	
(since $|\ga(t)|\le t$).
Since Lemma~\ref{lemexo} implies $\htb\om_1,\dotsc,\htb\om_{r-1} \le n-1$, we can
apply $r-1$ times Lemma~\ref{lemIneqfond} and get
\begin{equation}	\label{ineqba}
\big|  \be_\omb \wh\cV^\omb\big( \ga(t)  \big) \big| \le
(\la_n K)^r L^{n-1} \big( (\de+1)^{*(r-a)} * 1^{*a}\big)(t) \, \ee^{Ct},
\end{equation}
with $a = \card \ao i\in[1,r]  \mid \htb\om_i\neq 0 \;\text{or}\; \om_i=0  \af$.
But $a\ge\ceil{r/2}$, as was shown in Lemma~\ref{lemdefcV}, hence the polynomial
expression in~$t$ appearing in the \rhs\ of~\eqref{ineqba} can be written
$(\de+1)^{*(r-a)} * 1^{*(a-\ceil{r/2})} * 1^{*\ceil{r/2}} \le 
(\de+1)^{*(r-\ceil{r/2})} * 1^{*\ceil{r/2}}$,
which yields~\eqref{ineqbecV}.


\medskip

\noindent -- 
We have
\begin{multline*}
\card\ao \omb\in\Om^r \mid \norm{\omb}=n-1 \af = 
\card\ao k\in\N^r \mid \norm{k}=n+r-1 \af \\[1ex]
=\binom{n+2(r-1)}{r-1} \le 2^{n+2(r-1)},
\end{multline*}
hence, for each $r\ge1$, 
\begin{equation}	\label{ineqCVUdeux}
\sum_{r(\omb)=r,\norm{\omb}=n-1} 
\big|  \be_\omb \wh\cV^\omb\big( \ga(t)  \big) \big| \le
2 \la_n K (2L)^{n-1} \La_n^{r-1} \wh P_r(t) \, \ee^{Ct}
\end{equation}
with $\La_n = 4\la_n K$.
But $\wt P_r(z) = \cB\ii\wh P_r = (1+z\ii)^{\flo{r/2}} z^{-\ceil{r/2}}$ gives
rise to
$$
\wt\Phi_n(z) = \sum_{r\ge1} \La_n^{r-1}  \wt P_r(z) =
z\ii \big( 1+\La_n(1+z\ii) \big) \big( 1 - \La_n^2(z\ii+z^{-2}) \big)\ii
$$
which is convergent (with non-negative coefficients), thus
$\dst\sum_{r\ge1}  \La_n^{r-1}  \wh P_r(t) = \cB\wt\Phi_n(t)$ is convergent for all~$t$.
Therefore $\wh\ph_n$ is the sum of a series of holomorphic functions uniformly
convergent in every compact subset of~$\gR_{\rho,N}(\gP)$ satisfying
$$
\big| \wh\ph_n\big(\ga(t)\big) \big| \le 2\la_n K (2L)^{n-1} \cB\wt\Phi_n(t)\,\ee^{C t}.
$$


\medskip

\noindent -- 
We conclude by using inequalities of the form~\eqref{ineqentire} to bound
$\cB\wt\Phi_n$:
one can check that $|z|\ge 4\La_n^2$ implies 
$|z\wt\Phi_n(z)| \le 2(2+\La_n)$, hence
$$
\cB\wt\Phi_n(t) \le 2(2+\La_n) \, \ee^{4\La_n^2 t}.
$$
In view of the explicit dependence of~$\la_n$ on~$n$ indicated in
Lemma~\ref{leminiSmtSm}, we easily get inequalities of the
form~\eqref{ineqwhphn} (possibly with larger constants $K,L,C$).


\parag 
\emph{End of the proof of Theorem~\ref{thmResur}:} case of~$\wh\psi_n$.


\medskip

We only indicate the inequalities that one obtains when adapting the previous arguments to
the case of~$\wh\psi_n$.
Let $\gP=\N$ and
$$
\wh\chi_n(\ze) = \big(\ze-(n-1)\big)\wh\psi_n(\ze), \qquad
\wh\cW^\omb(\ze) = \big(\ze-(n-1)\big)\be_\omb\hcVto(\ze).
$$
The initial bound corresponding to~\eqref{ineqiniomr} is
$\big|  \cS_{\swc\om_1}  \wh a_{\om_1}\big(\ga(t)\big) \big| \le 
\la K L^{\om_1}\, \ee^{Ct}$.
This yields 
\begin{equation}	\label{ineqCVUtrois}
\big| \wh\cW^\omb\big( \ga(t)  \big) \big| \le
K (\la K)^{r-1} L^{n-1} \wh Q_r(t) \, \ee^{Ct},
\qquad \wh Q_r = (\de+1)^{*\ceil{\frac{r}{2}-1}} * 1^{*\flo{\frac{r}{2}+1}}
\end{equation}
after $r-2$ applications of Lemma~\ref{lemIneqfond}
(with an intermediary inequality analogous to~\eqref{ineqba} but involving
$b = 1 + \card \ao i\in[1,r-1]  \mid \wc\om_i\neq 0 \;\text{or}\; \om_i=0  \af
\ge \flo{\frac{r}{2}+1}$ instead of~$a$).

Therefore 
$\big| \wh\chi_n\big(\ga(t)\big) \big| \le 2 K (2L)^{n-1} \cB\wt\Psi(t)\,\ee^{C t}$,
with
$$
\wt\Psi(z) = \sum_{r\ge1} \La^{r-1}  \cB\ii\wh Q_r(z) =
z\ii (1+\La z\ii) \big( 1 - \La^2(z\ii+z^{-2}) \big)\ii,
\qquad \La = 4\la K,
$$
whence $\big| \wh\chi_n\big( \ga(t) \big) \big|  \le K_1 (2L)^n \, \ee^{C_1 t}$,
with suitable constants $K_1,C_1$ independent of~$n$.

This is the desired conclusion when $n=0$. When $n\ge1$, we can pass
from~$\wh\chi_n$ to~$\wh\psi_n$ since $|\ga(t)-(n-1)|\ge\rho$, 
with only one exception; namely, if $n=1$ and $t<\rho$, then we only have a
bound for $|\ze\wh\psi_1(\ze)|$ with $\ze=\ga(t)\in D(0,\rho)$, but in that case
the analyticity of~$\wh\chi_1$ at the origin of~$\gR(\N)$ is sufficient since we
know that its Taylor series has no constant term.


\parag 
\emph{Proof of inequalities~\eqref{inequniformphn}.}
They will follow from a lemma which has its own interest.


\begin{lemma}	\label{lemLagrpsinphn}
For every $n\in\N$, the following identity holds in~$\C[[x]]$:
\begin{equation}	\label{eqLagrpsinphn}
\ph_n = \sum_{ \substack{s\ge1,\ n_1,\dotsc,n_s\ge0 \\
n_1 + \dotsb + n_s = n+s-1}}
\frac{(-1)^s}{s}  \binom{n+s-1}{s-1} 
\psi_{n_1}  \dotsm \psi_{n_s},
\end{equation}
where the \rhs\ is a formally convergent series.
\end{lemma}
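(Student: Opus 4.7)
The plan is to derive a functional equation relating $\ph$ and $\psi$, then invoke a formal Lagrange--Bürmann-type inversion to extract the $y$-coefficients.

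\smallskip

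First, since $\Th$ and $\Th\ii$ are the substitution operators for $\th$ and $\th\ii$, and $\Th y = \ph$, $\Th\ii y = \psi$, applying $\Th$ to the relation $\Th\ii y = \psi$ and using its $\C[[x]]$-linearity and Krull-continuity yields
$$
\ph(x,y) + \sum_{m\ge 0}\psi_m(x)\,\ph(x,y)^m \,=\, y.
$$
Setting $H(x,y) := \sum_{m\ge 0}\psi_m(x)\,y^m$, which belongs to $x\C[[x]][[y]]$ since each $\psi_m\in x\C[[x]]$, this equation reads $\ph = y - H(x,\ph)$, i.e., $\ph$ is the compositional inverse in~$y$, over $\C[[x]]$, of $u\mapsto u+H(x,u)$.

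\smallskip

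Next, I would establish the formal Lagrange--Bürmann identity
$$
\ph(x,y) \,=\, y + \sum_{s\ge 1} \frac{(-1)^s}{s!}\,\pa_y^{s-1}\bigl(H(x,y)^s\bigr),
$$
understood as convergent in $\C[[x]][[y]]$ for the $x$-adic topology. To justify it, I introduce an auxiliary parameter~$t$ and let $F_t$ be the inverse of $u\mapsto u+tH(x,u)$; writing $F_t(y) = y + \sum_{s\ge 1} t^s A_s(y)$ and plugging into $F_t(y) + t\,H(x,F_t(y)) = y$, a Taylor expansion of $H$ about~$y$ and matching the coefficients of~$t^s$ yield inductively $A_s = \frac{(-1)^s}{s!}\pa_y^{s-1}H^s$. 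Evaluation at $t=1$ is legitimate because $H\in x\C[[x]][[y]]$ forces $\pa_y^{s-1}H^s$ to have $x$-valuation $\ge s$.

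\smallskip

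Finally, I extract $[y^n]$. Expanding $H(x,y)^s = \sum_{n_1,\dotsc,n_s\ge 0}\psi_{n_1}\dotsm\psi_{n_s}\,y^{n_1+\dotsb+n_s}$ and applying $\pa_y^{s-1}$ gives
$$
[y^n]\,\pa_y^{s-1}H(x,y)^s \,=\, \frac{(n+s-1)!}{n!}\!\sum_{n_1+\dotsb+n_s=n+s-1}\!\psi_{n_1}(x)\dotsm\psi_{n_s}(x).
$$
Since $[y^n](\ph-y) = \ph_n$ for every $n\ge 0$ and $\frac{(n+s-1)!}{s!\,n!} = \frac{1}{s}\binom{n+s-1}{s-1}$, formula~\eqref{eqLagrpsinphn} follows. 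The formal convergence of the \rhs\ in $\C[[x]]$ comes from $\psi_{n_i}\in x\C[[x]]$: each $s$-fold product has $x$-valuation $\ge s$, so for fixed~$n$ only finitely many $s$ contribute modulo a given power of~$x$. The main obstacle is the justification of the Lagrange--Bürmann identity in the second step, since classical statements of Lagrange inversion usually presume $G(0)=0$, whereas here $G(x,0)=\psi_0(x)\neq 0$ in general; the deformation argument via~$t$ circumvents this by reducing the identity to a purely formal induction carried out in the $x$-adic topology on $\C[[x]][[y]]$.
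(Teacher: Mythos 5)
Your proof is correct and follows essentially the same route as the paper: both reduce the identity to the functional equation $\psi\big(x,\ph(x,y)\big)=y$ and then prove a formal Lagrange--B\"urmann inversion formula by expanding in an auxiliary deformation parameter (the paper writes the transformation as $y-x\chi(t,y)$, expands the inverse in powers of~$x$ and later sets $t=x$, whereas you multiply $H$ by a parameter~$t$ and set $t=1$ at the end, justified $x$-adically), before extracting the coefficient of~$y^n$. The two deformation set-ups are interchangeable, so this is the same argument in slightly different clothing.
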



\begin{proof}
This is the consequence of the following version of Lagrange inversion formula:
If $\chi(t,y)\in\C[[t,y]]$, then the formal transformation
$$ 
(t,x,y) \mapsto \big( t, x, y-x\chi(t,y) \big)
$$
has an inverse of the form
$(t,x,y) \mapsto \big( t, x, \gY(t,x,y) \big)$
with $\gY \in \C[[t,x,y]]$ given by
\begin{equation} 	\label{eqforminvLagr}
\gY(t,x,y) = y + \sum_{s\ge1} \, \frac{x^s}{s!} \, 
\Big(\frac{\pa\,}{\pa y}\Big)^{\!s-1} \big( \chi(t,y)^s \big).
\end{equation}
(Proof: The transformation is invertible, because its $1$-jet is, and
the inverse must be of the form $\big(t,x,\gY(t,x,y)\big)$ with $\gY(t,0,y)=y$
and $\pa_y\gY(t,0,y)=1$. 
It is thus sufficient to check the formula
$$
\pa_x^s \gY(t,x,y) = 
\Big(\frac{\pa\,}{\pa y}\Big)^{\!s-1} \bigg[ \Big( \chi\big(t, \gY(t,x,y) \big) \Big)^{\!s} 
\pa_y\gY(t,x,y) \bigg], \qquad s\ge1
$$
by induction on~$s$, which is easy.)

Since $\psi_n(x) \in x\C[[x]]$, we can apply this with $\chi(t,y) = \sum_{n\ge0}
\chi_n(t) y^n$ where $\chi_n(x) = -\frac{\psi_n(x)}{x}$:
this way $y-x\chi(x,y) = y + \sum_{n\ge0}  \psi_n(x) y^n = \psi(x,y)$, and
\eqref{eqforminvLagr} yields
$$
\ph(x,y) = y + \sum_{n\ge0} \ph_n(x) y^n = 
\sum_{s\ge1} \, \frac{(-1)^s}{s!} \, 
\Big(\frac{\pa\,}{\pa y}\Big)^{\!s-1} \bigg[ 
\bigg( \sum_{n\ge0} \psi_n(x) y^n \bigg)^{\!\!s}\, \bigg]
$$
by specialization to $t=x$, whence the result follows (one gets a formally
convergent series because $\psi_n(x) \in x\C[[x]]$).
\end{proof}


As a consequence, we get
\begin{equation*}	
\wh\ph_n = \sum_{ \substack{s\ge1,\ n_1,\dotsc,n_s\ge0 \\
n_1 + \dotsb + n_s = n+s-1}}
\frac{(-1)^s}{s}  \binom{n+s-1}{s-1} 
\wh\psi_{n_1} *  \dotsm * \wh\psi_{n_s}
\end{equation*}
a priori in~$\C[[\ze]]$, but the \rhs\ is also a series of holomorphic functions
and inequalities~\eqref{ineqwhpsin} will yield uniform convergence in every
compact subset of the principal sheet of~$\gR(\Z)$.

Indeed, let $\rho\in\left]0,\demi\right[$.
The domain considered in~\eqref{inequniformphn} consists of those $\ze\in\C$
such that the segment $[0,\ze]$ does not meet the open discs $D(-1,\rho)$ and
$D(1,\rho)$. 
All the $\wh\psi_n$'s are holomorphic in this domain~$\gD_\rho$ (we had to delete the disc
around~$-1$ only because of~$\wh\psi_0$).

Since~$\gD_\rho$ is star-shaped \wrt~$0$, the analytic continuation of the
convolution product of any two functions~$\wh\ph$ and~$\wh\psi$ holomorphic
in~$\gD_\rho$ is defined by formula~\eqref{eqdefconvol} regardless of the size
of~$|\ze|$. 
If moreover one has inequalities of the form
$|\wh\ph(\ze)| \le \Phi(|\ze|)\,\ee^{C|\ze|}$ and
$|\wh\psi(\ze)| \le \Psi(|\ze|)\,\ee^{C|\ze|}$ in~$\gD_\rho$, 
then the inequality
$|\wh\ph*\wh\psi(\ze)|  \le \Phi*\Psi(|\ze|)\,\ee^{C|\ze|}$ holds in~$\gD_\rho$.
Hence
$$
|\wh\ph_n(\ze)| \le \sum_{ \substack{s\ge1,\ n_1,\dotsc,n_s\ge0 \\
n_1 + \dotsb + n_s = n+s-1}}
\frac{1}{s}  \binom{n+s-1}{s-1} 
K^s L^{n+s-1} M_s(|\ze|)  \, \ee^{C|\ze|},
\qquad \ze\in\gD_\rho,
$$
with $M_s(\ze) = 1^{*s}(\ze) = \frac{\ze^{s-1}}{(s-1)!}$.
The conclusion follows since the \rhs\ is less than $K (4L)^n \,\ee^{(C+8KL)|\ze|}$.


\section{The $\wt\cV^\omb$'s as resurgence
monomials---introduction to alien calculus}	\label{secBESN} 


\parag
Resurgence theory means much more than Borel-Laplace summation.
It incorporates a study of the role of the singularities which appear in the
Borel plane (\ie\ the plane of the complex variable~$\ze$), 
which can be performed through the so-called {\em alien calculus}.

We shall now recall \'Ecalle's definitions in a particular case which will suffice for
the saddle-node problem.
We shall give less details than in the previous section; see \eg~\cite{kokyu},
\S2.3 for more information (and {\em op.\ cit.}, \S3 for an outline of the
general case and more references).

The reader will thus find in this section the definition of a subalgebra
$\tRsimpZ$ of~$\tRZ$, which is called the algebra of {\em simple resurgent
functions over~$\Z$},  
and of a collection of operators~$\De_{m}$, $m\in\Z^*$, which are
derivations of~$\tRsimpZ$ called {\em alien derivations}.
Alien calculus consists in the proper use of these derivations.

We shall see that the formal series $\wt\cV^{\om_1,\dotsc,\om_r}$ belong
to~$\tRsimpZ$ and study the effect of the alien derivations on them.


\parag
Let $\wh\ph$ be holomorphic in an open subset~$U$ of~$\C$ and $\om\in\pa U$.
We say that~$\wh\ph$ has a {\em simple singularity} at~$\om$ if there exist
$C\in\C$ and $\wh\chi(\ze),\mathrm{reg}(\ze)\in\C\{\ze\}$ such that
\begin{equation}	\label{eqsimplesing}
\wh\ph(\ze) =
\frac{C}{2\pi\I(\ze-\om)} + \frac{1}{2\pi\I}\wh\chi(\ze-\om)\log(\ze-\om) 
+ \mathrm{reg}(\ze-\om)
\end{equation}
for $\ze$ close enough to~$\om$. The {\em residuum}~$C$ and the
{\em variation}~$\wh\chi$ are then determined by~$\wh\ph$ (independently of the
choice of the branch of the logarithm):
$$
C =  2\pi\I 
\lim_{\substack{\ze\to\om  \\  {\scriptscriptstyle \ze\in U}}}
(\ze-\om)\wh\ph(\ze), \qquad
\wh\chi(\ze) = \wh\ph(\om+\ze) - \wh\ph(\om+\ze\,\ee^{-2\pi\I}),
$$
where it is understood that considering $\om+\ze\,\ee^{-2\pi\I}$ means following
the analytic continuation of~$\wh\ph$ along the circular path 
$t\in[0,1]\mapsto \om+\ze\,\ee^{-2\pi\I t}$ 
(which is possible when starting from $\om+\ze\in U$ provided $|\ze|$ is small enough).
Let us use the notation
$$
\Sing_\om \wh\ph = C\,\de + \wh\chi \,\in\, \C\,\de \oplus \C\{\ze\}.
$$
in this situation.

We recall that $\hRZ = \C\, \de  \oplus \wHR\Z$.
\begin{definition}	
A simple resurgent function over~$\Z$ is any $c\,\de + \wh\ph \in \hRZ$ such that all
branches of the holomorphic fuction $\wh\ph\in\wh H\big(\gR(\Z)\big)$ only have
simple singularities (necessarily located at points of~$\Z$).
The space of simple resurgent functions over~$\Z$ will be denoted~$\hRsimpZ$.
\end{definition}

It turns out that \emph{$\hRsimpZ$ is stable by convolution:} it is a subalgebra
of~$\hRZ$.
This is the \emph{convolutive model of the algebra of simple resurgent functions}.
The \emph{formal model} is defined as $\tRsimpZ = \cB\ii(\hRsimpZ)$, which is a
subalgebra of~$\tRZ$.


\parag
For a simple resurgent function $c\,\de + \wh\ph$ and a path~$\ga$ which starts
from~$0$ and then avoids~$\Z$, we shall denote by $\cont_\ga\wh\ph$ the analytic
continuation of~$\wh\ph$ along~$\ga$: this function is analytic in a
neighbourhood of the endpoint of~$\ga$ and admits itself an analytic
continuation along all the paths which avoid~$\Z$.
If the endpoint of~$\ga$ is close to~$m$ (say at a distance
$<\demi$), then the singularity 
$\Sing_m(\cont_\ga\wh\ph) \in \C\,\de \oplus \C\{\ze\}$ 
is well-defined
(notice that it depends on the branch under consideration, \ie\ on~$\ga$, and
not only on~$m$ and~$\wh\ph$).
It is easy to see that $\Sing_m(\cont_\ga\wh\ph)$ is itself a simple resurgent
function; we thus have, for $\ga$ and~$m$ as above, a $\C$-linear operator
$c\,\de + \wh\ph \mapsto \Sing_m(\cont_\ga\wh\ph)$ from $\hRsimpZ$ to itself.

\begin{definition}	\label{defaliender}
Let $m\in\Z^*$. If $m\ge1$, we define an operator from $\hRsimpZ$ to
itself by using $2^{m-1}$ particular paths~$\ga$:
\begin{equation}    \label{eqdefDeomgen}
\De_m(c\,\de + \wh\ph) = \sum_{\eps\in\{+,-\}^{m-1}}
                \frac{p_\eps!q_\eps!}{m!} \Sing_m(\cont_{\ga_\eps}\wh\ph)
\end{equation}
where $p_\eps$ and $q_\eps=m-1-p_\eps$ denote the numbers of signs~`$+$' and
of signs~`$-$' in the sequence $\eps = (\eps_1,\dotsc,\eps_{m-1})$,
and the oriented path~$\ga_\eps$ connects~$0$ and~$m$ following the
segment~$\left]0,m\right[$ but circumventing the intermediary integer points~$k$
to the right if~$\eps_k=+$ and to the left if~$\eps_k=-$.

If $m\le-1$, the $\C$-linear operator $\De_m$ is defined similarly, using the
$2^{|m|-1}$ paths $\ga_\eps$ which follow~$\left]0,-|m|\right[$
but circumvent the intermediary integer points~$-k$ to the right
if~$\eps_k=+$ and to the left if~$\eps_k=-$.
\end{definition}

\begin{prop}	\label{propAlDer}
For each $m\in\Z^*$, the operator $\De_m$ is a $\C$-linear
derivation of~$\hRsimpZ$.
\end{prop}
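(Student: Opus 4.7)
The plan is to verify three properties in turn: (a) $\Delta_m$ takes values in $\hRsimpZ$; (b) $\Delta_m$ is $\C$-linear; (c) $\Delta_m$ satisfies the Leibniz rule with respect to convolution. Property (b) is immediate from~\eqref{eqdefDeomgen}. For (a), the key observation is that if $\wh\phi \in \wHR\Z$ has only simple singularities, then for each sign sequence $\eps$ the residuum $C$ and variation $\wh\chi$ extracted from $\cont_{\ga_\eps}\wh\phi$ via~\eqref{eqsimplesing} at $\om=m$ assemble into an element of $\hRsimpZ$: the variation is expressible as a difference of two determinations of $\cont_{\ga_\eps}\wh\phi$ near $m$, hence inherits both the analytic continuation along any path avoiding $\Z$ and the property that its further singularities remain simple.

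For (c), which is the heart of the matter, I would represent $\cont_{\ga_\eps}(\wh\phi * \wh\psi)$ by an integral
\[
\int_{\Gamma_\eps(\ze)} \wh\phi(\ze_1)\,\wh\psi(\ze-\ze_1)\,\dd\ze_1,
\]
where $\Gamma_\eps(\ze)$ is a contour in the $\ze_1$-plane obtained by homotopically deforming the initial segment $[0,\ze]$ so as to track the joint analytic continuation of $\wh\phi$ and $\wh\psi$ as $\ze$ travels along $\ga_\eps$: the contour must simultaneously avoid the integer points $k$ (singularities of the first factor) and the points $\ze-k$ (singularities of the second). When $\ze$ approaches a small neighbourhood of $m$, the contour gets pinched at each pair $(m_1,m-m_1)$ with $0 \le m_1 \le m$ and $m_1 \in \Z$; the corresponding contribution to $\Sing_m$ is a pairing of the singularity of $\wh\phi$ at $m_1$ with that of $\wh\psi$ at $m-m_1$. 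The weights $\tfrac{p_\eps!q_\eps!}{m!}$ are precisely tuned so that, upon summation over $\eps$, the interior pinchings ($1\le m_1\le m-1$) cancel, leaving only the two boundary contributions, which yield $(\Delta_m \wh\phi) * \wh\psi$ and $\wh\phi * (\Delta_m \wh\psi)$ respectively.

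The combinatorial core is then the identity: for each interior splitting $m=m_1+m_2$ with $m_1, m_2 \ge 1$,
\[
\sum_{\eps \in \{+,-\}^{m-1}} \frac{p_\eps!\,q_\eps!}{m!}\,\sig_{m_1}(\eps) = 0,
\]
where $\sig_{m_1}(\eps)$ records how the pinching at $(m_1,m_2)$ is signed along $\ga_\eps$ according to whether each of the $m_1-1$ integer points of $(0,m_1)$ and each of the $m_2-1$ integer points of $(m_1,m)$ is circumvented from the left or from the right. This is a beta-function-flavoured cancellation reflecting the equidistribution of left/right detours on each of the two sub-intervals. The main obstacle is precisely establishing this cancellation, together with the careful bookkeeping needed to identify the surviving boundary terms with the Leibniz rule, and to verify that each ``pinching contribution'' is of the simple-singularity type. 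The case $m \le -1$ reduces by a symmetric argument to the segment $\left]0,-|m|\right[$.
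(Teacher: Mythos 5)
The paper does not actually prove this proposition: it refers to \cite{Eca81} and \cite{kokyu} and, in the comment following Lemma~\ref{lemrelDemDepm}, indicates the intended route, namely to first prove the \emph{modified} Leibniz rule~\eqref{eqLeibDep} for the single-path operators $\De^+_m$ by deforming the contour of integration in the convolution integral, and then to deduce the derivation property of $\De_m$ purely formally from the exp/log relations~\eqref{eqrelDemDepm}. Your parts (a) and (b) are fine and agree with what the paper asserts elsewhere about $\Sing_m(\cont_\ga\wh\ph)$ being itself a simple resurgent function. Your part (c), however, attempts a direct verification for the weighted average over all $2^{m-1}$ lateral paths, and this is where there is a genuine gap.

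The asserted cancellation $\sum_\eps \frac{p_\eps!q_\eps!}{m!}\,\sig_{m_1}(\eps)=0$ is not the correct form of the statement you need. The contribution of the pinching at an interior splitting $(m_1,m_2)$ is not an $\eps$-independent germ multiplied by a sign: it pairs $\Sing_{m_1}$ of a \emph{particular branch} of $\wh\ph$ with $\Sing_{m_2}$ of a \emph{particular branch} of $\wh\psi$, both branches depending on $\eps$, so the required cancellation is between germ-valued quantities and has to be organised branch-pair by branch-pair. If one tries the naive organisation --- fixing the detours over $\left]0,m_1\right[$ and over $\left]m_1,m\right[$, leaving only $\eps_{m_1}$ free --- the two remaining paths carry weights $\frac{(p_0+1)!\,q_0!}{m!}$ and $\frac{p_0!\,(q_0+1)!}{m!}$ with $p_0+q_0=m-2$; these are both positive and unequal in general, so no pairwise cancellation of the ``pinched from above/below'' contributions can occur. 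The true mechanism involves a regrouping across all paths and all intermediate splittings simultaneously, and this is exactly what the identity $\De_m=\sum\frac{(-1)^{s-1}}{s}\De^+_{m_s}\dotsm\De^+_{m_1}$ of~\eqref{eqrelDemDepm} encodes. I recommend restructuring along the paper's lines: the contour-pinching analysis you describe is tractable for the single all-plus (or all-minus) path, where the interior terms are \emph{kept} as cross terms rather than cancelled, and yields~\eqref{eqLeibDep}; then $\sum\ee^{-mz}\De^+_m$ is an algebra automorphism by~\eqref{eqDepexpDe}, its logarithm $\sum\ee^{-mz}\De_m$ is automatically a derivation, and no combinatorial identity on the weights $p_\eps!q_\eps!/m!$ remains to be verified beyond~\eqref{eqrelDemDepm} itself.
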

\noindent
(For the proof, see \cite{Eca81} or \cite{kokyu}, \S2.3; see also
Lemma~\ref{lemrelDemDepm} and the comment on it below.)

By conjugacy by the formal Borel transform~$\cB$, we get a derivation
of~$\tRsimpZ$, still denoted~$\De_m$ since there is no risk of confusion.
The operator~$\De_m$ is called the \emph{alien derivation of index~$m$} (either in
the convolutive model $\hRsimpZ$ or in the formal model $\tRsimpZ$).

One can easily check from Definition~\ref{defaliender} that 
\begin{equation}	\label{eqcommutalien}
[\pa,\De_m] = m\De_m \ens\text{in $\tRsimpZ$},\qquad
[\wh\pa,\De_m] = m\De_m \ens\text{in $\hRsimpZ$},
\end{equation}
where $\pa$ denotes the natural derivation $\frac{\dd\,}{\dd z}$ of $\tRsimpZ$
and $\wh\pa$ is the corresponding derivation
$c\,\de + \wh\ph(\ze) \mapsto -\ze\wh\ph(\ze)$
of $\hRsimpZ$.


\parag
We shall see that the operators~$\De_m$ are independent in a strong sense
(see Theorem~\ref{thmfree} below).
This will rely on a study of the way the alien derivations act on the resurgent
functions $\wh\cV^{\om_1,\dotsc,\om_r}$. 

In this article, for the sake of simplicity, we shall not introduce the larger
commutative algebras $\hRsimp$ and $\tRsimp$ of simple resurgent functions
``over~$\C$'', \ie\ with simple singularities in the Borel plane which can be
located anywhere.
In these algebras act alien derivations indexed by any non-zero complex number.
One could easily adapt the arguments that we are about to develop to the study
of the alien derivations $\De_\om$, $\om \in \C^*$, in $\tRsimp$.

One can also define an even larger commutative algebra of resurgent functions, without any
restriction on the nature of the singularities to be encountered in the Borel
plane, on which act alien derivations $\De_\om$ indexed by points~$\om$ of the
Riemann surface of the logarithm, but there is no formal counterpart contained
in~$\C[[z\ii]]$ (see \eg~\cite{kokyu}, \S3, and the references therein).


\parag
We now check that the formal series $\wt\cV^{\om_1,\dotsc,\om_r}$ are simple
resurgent functions and slightly extend at the same time their definition.

\begin{lemma}	\label{defnewcVomb}
Let $\bA = \tRsimpZ$ and $\Om \subset \Z$.
Assume that $a = (\wh a_\eta)_{\eta\in\Om}$ is a family of entire functions;
if $0\in\Om$, we assume furthermore that $\wh a_0(0)=0$.
%
%
Let $\ti a_\eta = \cB\ii\wh a_\eta \in z\ii\C[[z\ii]]$.

Then the equations $\wt\cV_a^\est = \tcVae = 1$ and,
for $\omb \in \Om^\bul$ non-empty, 
\begin{equation}	\label{eqdefnewcV}
\big(\frac{\dd\,}{\dd z} + \norm{\omb}\big) \wt\cV_a^\omb
= \wt a_{\om_1} \wt\cV_a^{`\omb}, \qquad
\big(\frac{\dd\,}{\dd z} + \norm{\omb}\big) \tcVao
= -\wt a_{\om_r} \tcVaop
\end{equation}
(with $`\omb$ denoting $\omb$ deprived from its first letter,
$\ombp$ denoting $\omb$ deprived from its last letter and
$\norm{\omb}$ the sum of the letters of~$\omb$)
determine inductively two moulds $\wt\cV_a^\bul, \tcVab \in \hM^\bul(\Om,\bA)$, which
are symmetral and mutually inverse for mould multiplication.
\end{lemma}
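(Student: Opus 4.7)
\emph{Proof plan.} The strategy is to mimic the proofs of Lemma~\ref{lemdefcV} and Proposition~\ref{propcVsym}, the only new ingredient being to verify that the moulds take values in $\tRsimpZ$ rather than merely in $\C[[z\ii]]$.

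I would first establish well-definedness of $\wt\cV_a^\bul$ and $\tcVab$ in $\hM^\bul(\Om,\C[[z\ii]])$ by induction on word length, following Lemma~\ref{lemdefcV} verbatim: the operator $\frac{\dd\,}{\dd z}$ strictly raises the $z\ii$-order, so $\frac{\dd\,}{\dd z}+\norm{\omb}$ is invertible on $\C[[z\ii]]$ when $\norm{\omb}\neq0$, while for $\norm{\omb}=0$ one inverts $\frac{\dd\,}{\dd z}$ as a map $z^{-2}\C[[z\ii]]\to z\ii\C[[z\ii]]$. The hypothesis $\wh a_0(0)=0$ forces $\wt a_0\in z^{-2}\C[[z\ii]]$, which is just what is needed to ensure the right-hand sides of~\eqref{eqdefnewcV} land in $z^{-2}\C[[z\ii]]$ whenever $\norm{\omb}=0$. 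The induction yields $\wt\cV_a^\omb,\tcVao\in z\ii\C[[z\ii]]$ for $\omb\neq\est$.

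Next I would promote the result from $\C[[z\ii]]$ to $\tRsimpZ$. Iterating~\eqref{eqdefnewcV} on the Borel side gives, parallel to~\eqref{eqcViter}, the explicit formula
\[
\wh\cV_a^\omb = (-1)^r\,\tfrac{1}{\ze-\htb\om_1}\bigl(\wh a_{\om_1}*\bigl(\tfrac{1}{\ze-\htb\om_2}\bigl(\dotsm\bigl(\tfrac{1}{\ze-\htb\om_r}\wh a_{\om_r}\bigr)\dotsm\bigr)\bigr)\bigr),
\]
and a symmetric one for $\hcVao$. The innermost factor is meromorphic on~$\C$ with at most a simple pole at an integer (entire when $\om_r=0$, by $\wh a_0(0)=0$), hence it belongs to $\hRsimpZ$. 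Arguing inductively, each further operation---convolution with the entire function $\wh a_{\om_i}$ (covered by Lemma~\ref{lemAnCont}), then multiplication by the meromorphic factor $\frac{1}{\ze-\htb\om_i}$---preserves analytic continuability on $\gR(\Z)$ as well as the simple-singularity property. This places $\wt\cV_a^\bul$ and $\tcVab$ in $\hM^\bul(\Om,\tRsimpZ)$.

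Symmetrality of $\wt\cV_a^\bul$ then follows by the proof of Proposition~\ref{propcVsym} \emph{verbatim}: apply $\frac{\dd\,}{\dd z}+\norm{\alb}+\norm{\beb}$ to both sides of $\wt\cV_a^\alb\wt\cV_a^\beb=\sum_\gab\tsh{\alb}{\beb}{\gab}\wt\cV_a^\gab$, use~\eqref{eqdefnewcV}, split the shuffle sum according to the first letter, and invoke the induction hypothesis on $r(\alb)+r(\beb)$; the difference of the two sides lies in $z\ii\C[[z\ii]]$ and is killed by $\frac{\dd\,}{\dd z}+\norm{\alb}+\norm{\beb}$, which is injective on $z\ii\C[[z\ii]]$ for any value of $\norm{\alb}+\norm{\beb}$ (including zero), so it must vanish. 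For the mutual inversion I would observe that $(-1)^r\wt\cV_a^{\om_r,\dotsc,\om_1}$ satisfies the recursion defining $\tcVao$, hence $\tcVab=S\wt\cV_a^\bul$ with the involution~$S$ of Proposition~\ref{propinvsym}; since $\wt\cV_a^\bul$ is symmetral, Proposition~\ref{propinvsym} yields $\tcVab=(\wt\cV_a^\bul)\iim$, and Proposition~\ref{propstructaltsym} then gives symmetrality of $\tcVab$. I expect the main obstacle to be the resurgence step, where one must track the nature of the singularities on all sheets of $\gR(\Z)$ at each iterated convolution, not just mere analytic continuability.
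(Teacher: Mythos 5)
Your plan reproduces the paper's proof step for step: well-definedness in $\C[[z\ii]]$ by mimicking Lemma~\ref{lemdefcV} (with $\wh a_0(0)=0$ forcing $\wt a_0\in z^{-2}\C[[z\ii]]$, which plays exactly the role of $a_0\in x^2\C\{x\}$ there), resurgence from the iterated-convolution formula and Lemma~\ref{lemAnCont}, symmetrality by Proposition~\ref{propcVsym} \emph{verbatim}, and the identity $\tcVab=S\wt\cV_a^\bul$ combined with Propositions~\ref{propinvsym} and~\ref{propstructaltsym} for the mutual inversion.

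The one claim you assert without substantiating --- and correctly flag as the anticipated obstacle --- is that the simple-singularity property survives convolution with an entire function followed by division by $\ze-m$, on \emph{every} sheet of $\gR(\Z)$. This is not deep, but it does need an argument, and the paper supplies it as a short addendum to Lemma~\ref{lemAnCont}: the first formula in~\eqref{eqbstarwhph}, $\wh b*\wh\ph(\ze)=\int_{\ga_\ze}\wh b(\ze-\ze')\,\wh\ph(\ze')\,\dd\ze'$, shows at once that if $\wh b$ is entire and $\wh\ph\in\hRsimpZ$, then every branch of $\wh b*\wh\ph$ near an integer has only a simple singularity, and moreover one with \emph{vanishing residuum} --- the $1/(\ze'-m)$ and $\log(\ze'-m)$ pieces of $\wh\ph$ integrate against the regular kernel $\wh b(\ze-\ze')$ to produce only logarithmic and regular terms, never a pole in $\ze-m$. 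Since $\wh b*\wh\ph$ also vanishes at the origin, subsequent multiplication by a meromorphic function with poles in~$\Z$ and at most a simple pole at~$0$ keeps the result in $\hRsimpZ$. Inserting this observation closes the one gap; with it your proof coincides with the paper's.
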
 

\begin{proof}
A mere adaptation of Lemma~\ref{lemdefcV} and Proposition~\ref{propcVsym} (in
which the fact that $\Om=\cN$ played no role) shows that $\wt\cV_a^\bul$ and
$\tcVab$ are well-defined by~\eqref{eqdefnewcV} as moulds on~$\Om$ {\em with values in
$\C[[z\ii]]$}, with $\wt\cV_a^\omb,\tcVao  \in z\ii\C[[z\ii]]$ as soon as
$\omb\neq\est$, that they are related by the involution~$S$ of Proposition~\ref{propinvsym}:
\begin{equation}	\label{eqrelStcVcV}
\tcVab = S\wt\cV_a^\bul
\end{equation}
and symmetral, hence  mutually inverse.

The formal Borel transforms are given by $\wh\cV_a^\est = \hcVae = \de$ and,
for $\omb\neq\est$,
\begin{equation}	\label{eqdefhcVhtcV}
\wh\cV_a^\omb(\ze) = -\frac{1}{\ze-\norm{\omb}} \big( \wh a_{\om_1}  * \wh\cV_a^{`\omb} \big), 
\qquad \hcVao(\ze) = \frac{1}{\ze-\norm{\omb}} \big( \wh a_{\om_r}  * \hcVaop \big), 
\end{equation}
where the right-hand sides belong to $\C[[\ze]]$ even if $\norm{\omb}=0$, by the same
argument as in the proof of Lemma~\ref{lemdefcV},
and in fact to $\C\{\ze\}$, by induction on $r(\omb)$.

Since $\norm{\omb}$ always lies in~$\Z$, we can apply Lemma~\ref{lemAnCont}: we
get
$\wh\cV_a^\omb, \hcVao \in \wh H\big(\gR(\Z)\big)$ for all $\omb\neq\est$ by
induction on~$r(\omb)$, hence our moulds take their values in $\hRZ$.

We see that the singularities of $\wh\cV_a^\omb$ and $\hcVao$ are all simple
singularities, because of the following addendum to Lemma~\ref{lemAnCont}:
with the hypotheses and notations of that lemma, if moreover
$\wh\ph\in\hRsimpZ$, then $\wh b*\wh\ph$ vanishes at the origin and only has
simple singularities with vanishing residuum
(this follows from the first formula in~\eqref{eqbstarwhph}),
hence $\wh s(\wh b*\wh\ph) \in \hRsimpZ$.
\end{proof}

Notice that, by iterating~\eqref{eqdefhcVhtcV}, one gets
\begin{gather}	\label{eqnewcViter}
\wh\cV_a^\omb = (-1)^r
\frac{1}{\ze-\htb\om_1} \Big( \wh a_{\om_1}  * 
\Big( \frac{1}{\ze-\htb\om_2} \Big( \wh a_{\om_2}  * 
\Big(  \dotsb 
\Big( \frac{1}{\ze-\htb\om_r} \wh a_{\om_r}
\Big) \dotsm \Big)\Big)\Big)\Big) \\
\label{eqnewtcViter}
\hcVao = 
\frac{1}{\ze-\wc\om_r} \Big( \wh a_{\om_r}  * 
\Big( \frac{1}{\ze-\wc\om_{r-1}} \Big( \wh a_{\om_{r-1}}  * 
\Big(  \dotsb 
\Big( \frac{1}{\ze-\wc\om_1} \wh a_{\om_1}
\Big) \dotsm \Big)\Big)\Big)\Big)
\end{gather}
with the notation of~\eqref{eqnotahatcheck}.
These are iterated integrals; for instance, the second formula can be written
\begin{multline}	\label{eqiteratedinthcVAo}
\hcVao(\ze) = \frac{1}{\ze-\wc\om_r} 
\underset{0<\ze_1<\dotsb<\ze_{r-1}<\ze}{\idotsint} 
\wh a_{\om_r}(\ze-\ze_{r-1})  \cdot \\[1ex]
\cdot \frac{\wh a_{\om_{r-1}}(\ze_{r-1}-\ze_{r-2})}{\ze_{r-1}-\wc\om_{r-1}}
\dotsm\frac{\wh a_{\om_{2}}(\ze_{2}-\ze_{1})}{\ze_{2}-\wc\om_{2}}
\frac{\wh a_{\om_{1}}(\ze_{1})}{\ze_{1}-\wc\om_{1}}
\,\dd\ze_{1}\dotsm\dd\ze_{r-1}
\end{multline}
and its analytic continuation along any parametrised path~$\ga$ which starts
from~$0$ and then avoids~$\Z$ is given by the same integral, but taken over all
$(r-1)$-tuples
$(\ze_1,\dotsc,\ze_{r-1}) = \big( \ga_\eps(t_1),\dotsc,\ga_\eps(t_{r-1}) \big)$
with $t_1 < \dotsb < t_{r-1}$.


\parag
We are now ready to study the alien derivatives of the resurgent functions
$\wt\cV_a^{\om_1,\dotsc,\om_r}$ and $\tcVaor$
(in the formal model as well as in the convolutive model, the difference is
immaterial here).

\begin{prop}	\label{propVam}
Let $\Om\subset\Z$ and $a = (\wh a_\eta)_{\eta\in\Om}$ as in Lemma~\ref{defnewcVomb}.
For each $m\in\Z^*$, denote by the same symbol~$\De_m$ the alien derivation of
index~$m$ on $\bA=\tRsimpZ$ and the mould derivation it induces
on~$\hM^\bul(\Om,\bA)$ by~\eqref{eqdefDd}.
Then there exists a scalar-valued alternal mould $V_a^\bul(m)
\in \hM^\bul(\Om,\C)$ such that
\begin{equation}	\label{eqDemtcVb}
\De_m \wt\cV_a^\bul = \wt\cV_a^\bul \times V_a^\bul(m), 
\qquad \De_m \tcVab = - V_a^\bul(m) \times \tcVab.
\end{equation}
Moreover, if $\omb\in\Om^\bul$ is non-empty,
\begin{equation}	\label{eqannulVm}
\norm{\omb}\neq m 
\ens\Rightarrow\ens
V_a^\omb(m) = 0.
\end{equation}
\end{prop}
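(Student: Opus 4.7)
The plan is to define
\begin{equation*}
V_a^\bul(m) := \tcVab \times \De_m \wt\cV_a^\bul,
\end{equation*}
so that the first identity in~\eqref{eqDemtcVb} is built in via $\wt\cV_a^\bul \times \tcVab = 1^\bul$. Alternality of this mould as an element of $\hM^\bul(\Om,\bA)$ is immediate from Proposition~\ref{propDerivSym}, applied to the symmetral mould $\wt\cV_a^\bul$ and to the mould derivation induced on $\hM^\bul(\Om,\bA)$ by $\De_m$ via~\eqref{eqdefDd} (this falls under the first case of that proposition, since $\De_m$ is a derivation of $\bA = \tRsimpZ$).

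The core of the argument will be to upgrade this $\bA$-valued alternal mould to a \emph{scalar-valued} one while simultaneously establishing~\eqref{eqannulVm}. The plan is to produce a first-order linear scalar differential equation for each component $V_a^\omb(m)$. Rephrasing~\eqref{eqdefnewcV} in the spirit of Remark~\ref{remmouldVsol}, the moulds $\wt\cV_a^\bul$ and $\tcVab$ satisfy
\begin{equation*}
(D + \na)\, \wt\cV_a^\bul = J_a^\bul \times \wt\cV_a^\bul, \qquad (D + \na)\, \tcVab = -\tcVab \times J_a^\bul,
\end{equation*}
where $D$ is the mould derivation induced by $\pa = \frac{\dd\,}{\dd z}$, $\na$ is as in~\eqref{eqdefna}, and $J_a^\bul$ is the alternal mould supported on one-letter words with $J_a^{(\eta)} = \wt a_\eta$.

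Three ingredients would then be combined: each $\wt a_\eta$ is entire, so $\De_m J_a^\bul = 0$; the operator $\De_m$ commutes with $\na$ since both act componentwise on moulds; and the commutation~\eqref{eqcommutalien} lifts to $[D,\De_m] = m\De_m$ on $\hM^\bul(\Om,\bA)$. Applying $\De_m$ to the mould equation for $\wt\cV_a^\bul$ would then yield
\begin{equation*}
(D + \na - m)\, \De_m \wt\cV_a^\bul = J_a^\bul \times \De_m \wt\cV_a^\bul,
\end{equation*}
after which a short expansion of $(D + \na) V_a^\bul(m)$ via the mould-derivation product rule produces the cancellation of the two $J_a^\bul$-terms, leaving the clean identity $(D+\na) V_a^\bul(m) = m V_a^\bul(m)$, i.e.\ $\pa V_a^\omb(m) = (m - \norm{\omb})\, V_a^\omb(m)$ for every $\omb$.

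To close the proof, one solves this equation inside $\C[[z\ii]] \supset \bA$: the ODE $\pa f = \la f$ admits only $f = 0$ when $\la \neq 0$, and only constants when $\la = 0$. This forces $V_a^\omb(m) = 0$ whenever $\norm{\omb} \neq m$, and $V_a^\omb(m) \in \C$ otherwise, proving both~\eqref{eqannulVm} and the scalar-valuedness in one stroke. The second formula in~\eqref{eqDemtcVb} then follows by applying $\De_m$ to $\wt\cV_a^\bul \times \tcVab = 1^\bul$ and left-multiplying by $\tcVab$. The step most at risk is the clean derivation of the ODE: it hinges on correctly lifting $[\pa,\De_m] = m\De_m$ to the mould level and on the vanishing $\De_m J_a^\bul = 0$; once these are in hand, the elementary solution theory for $\pa f = \la f$ in $\C[[z\ii]]$ does the decisive work.
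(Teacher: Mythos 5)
Your proposal is correct and follows essentially the same route as the paper: define $V_a^\bul(m)=\tcVab\times\De_m\wt\cV_a^\bul$, obtain alternality from Proposition~\ref{propDerivSym}, derive $(\pa+\na-m)V_a^\bul(m)=0$ from $[\pa,\De_m]=m\De_m$, $[\na,\De_m]=0$ and $\De_m J_a^\bul=0$, and read off scalar-valuedness together with~\eqref{eqannulVm} from that equation. The only cosmetic differences are that you solve the resulting componentwise ODE directly in $\C[[z\ii]]$ instead of passing to the convolutive model, and you get the second identity of~\eqref{eqDemtcVb} by differentiating $\wt\cV_a^\bul\times\tcVab=1^\bul$ rather than via the involution~$S$; both variants are valid.
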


\begin{proof}
Since $\wt\cV_a^\bul$ and $S \wt\cV_a^\bul = \tcVab$ are mutually inverse, we
get
$$
\De_m \wt\cV_a^\bul = \wt\cV_a^\bul \times \wt V_a^\bul(m), 
\qquad \De_m \tcVab = \tVa \times \tcVab
$$
by {\em defining} the moulds $\wt V_a^\bul(m)$ and $\tVa$ as
$$
\wt V_a^\bul(m) = \tcVab \times \De_m \wt\cV_a^\bul,
\qquad \tVa = \De_m \tcVab \times \wt\cV_a^\bul,
$$
but a priori all these moulds take their values in~$\bA$.
The operators~$\De_m$ and~$S$ clearly commute, thus $\tVa = S \wt V_a^\bul(m)$.
Proposition~\ref{propDerivSym} shows that $\wt V_a^\bul(m)$ and $\tVa$ are alternal;
Proposition~\ref{propinvsym} then shows that they are opposite of one another:
$\tVa = - \wt V_a^\bul(m)$.

It only remains to be checked that $\wt V_a^\bul(m)$ is scalar-valued and
satisfies~\eqref{eqannulVm}. 
This will follow from the equation
\begin{equation}	\label{eqpanamVa}
(\pa+\na-m) \wt V_a^\bul(m) = 0,
\end{equation}
where $\pa$ denotes the differential $\frac{\dd\,}{\dd z}$ as well as the mould
derivation it induces by~\eqref{eqdefDd} and $\na$ is the mould derivation~\eqref{eqdefna}.

Here is the proof of~\eqref{eqpanamVa}:
$\wt\cV_a^\bul$ is defined on non-empty words by the first equation
in~\eqref{eqdefnewcV}, which can be written
\begin{equation}	\label{eqdefnewcVmould}
(\pa+\na)\wt\cV_a^\bul = \wt J_a^\bul \times \wt\cV_a^\bul,
\end{equation}
with $\wt J_a^\bul \in \hM^\bul(\Om,\bA)$ defined exactly as in~\eqref{eqdefJa}.
Let us apply the derivation~$\De_m$ to both sides of equation~\eqref{eqdefnewcVmould}, using 
$\De_m(\pa+\na) = (\pa+\na-m)\De_m$ (consequence of~\eqref{eqcommutalien} and of
$[\na,\De_m]=0$) and $\De_m \wt J_a^\bul = 0$ (consequence of the vanishing
of~$\De_m$ on entire functions):
$$
(\pa+\na-m)\De_m\wt\cV_a^\bul = \wt J_a^\bul \times \De_m \wt\cV_a^\bul.
$$
Writing $\De_m \wt\cV_a^\bul$ as $\wt\cV_a^\bul \times \wt V_a^\bul(m)$ and using
the fact that $\pa+\na$ is a derivation, we get
$$
\big( (\pa+\na)\wt\cV_a^\bul \big) \times \wt V_a^\bul(m) 
+ \wt\cV_a^\bul \times (\pa+\na-m)\wt V_a^\bul(m)
= \wt J_a^\bul \times \wt\cV_a^\bul \times \wt V_a^\bul(m),
$$
whence $\wt\cV_a^\bul \times (\pa+\na-m)\wt V_a^\bul(m)=0$ by a further use
of~\eqref{eqdefnewcVmould}. Since $\wt\cV_a^\est=1$ and $\hM^\bul(\Om,\bA)$ is
an integral domain, this yields~\eqref{eqpanamVa}.

We conclude the proof by interpreting this relation in the convolutive model: we
already knew that $\wt V_a^\est(m) = 0$; now, for any non-empty~$\omb$, we have 
$$ 
\cB\wt V_a^\omb(m) = V_a^\omb(m)\de + \wh V_a^\omb(m)(\ze) 
$$
with $V_a^\omb(m) \in \C$ and $\wh V_a^\omb(m) \in \wh H\big(\gR(\Z)\big)$ satisfying
$$
(\norm{\omb}-m)V_a^\omb(m) = 0,
\qquad (-\ze+\norm{\omb}-m)\wh V_a^\omb(m) = 0,
$$
whence $V_a^\omb(m)=0$ for $\norm{\omb}\neq m$ and
$\wh V_a^\omb(m) = 0$ for all~$\omb$ (since both $\C$ and $\wh
H\big(\gR(\Z)\big)\subset\C\{\ze\}$ are integral domains).
\end{proof}


\parag
Formulas~\eqref{eqDemtcVb}, when evaluated in the convolutive model on $\omb\in\Om^\bul$, read 
$$
\De_m \wh\cV_a^\est = \De_m \hcVae = 0
$$ 
for $r(\omb)=0$, which is obvious since $\wh\cV_a^\est = \hcVae = \de$.
For $r(\omb)=1$, we get
\begin{equation*}	
\De_m \wh\cV_a^{\om_1} = - \De_m \hcVaun = V_a^{\om_1}(m)\,\de
\end{equation*}
and the explicit value of the coefficient is
\begin{equation}	\label{eqVmrun}
\om_1=m \ens\Rightarrow\ens
 V_a^{\om_1}(m) = -2  \pi\I\, \wh a_m(m),
\qquad \om_1\neq m \ens\Rightarrow\ens
V_a^{\om_1}(m) = 0.
\end{equation}
This is a simple residuum computation for the meromorphic functions
$\wh\cV_a^{\om_1}(\ze) = -\hcVaun(\ze) = -\frac{\wh a_{\om_1}(\ze)}{\ze-\om_1}$
(observe that the value of~$\wh a_m$ at~$m$ and thus of $V_a^{m}(m)$ depend
transcendentally on the Taylor coefficients of~$\wh a_m$ at the origin).

For $r=r(\omb)\ge2$, we get
\begin{multline}
\De_m \wh\cV_a^\omb = V_a^\omb(m)\,\de  + \sum_{i=1}^{r-1}
V_a^{\om_{i+1},\dotsc,\om_r}(m) \wh\cV_a^{\om_1,\dotsc,\om_i}, \\
\label{eqVaoresidu}
\De_m \hcVao = - V_a^\omb(m)\,\de  - \sum_{i=1}^{r-1}
V_a^{\om_1,\dotsc,\om_i}(m) \hcVai.
\end{multline}
The number $V_a^\omb(m)$ thus appears as the {\em residuum} of a certain simple
singularity, which is a combination of the singularities at~$m$ of certain
branches of $\wh\cV_a^\omb$ or $\hcVao$; on the other hand, the fact that the
{\em variation} of this singularity can be expressed as a linear combination of the
functions $\wh\cV_a^{\om_1,\dotsc,\om_i}$ or $\hcVai$ is related to the very
origin of the name ``resurgent functions'':
the functions $\wh\cV_a^\omb(\ze)$ or $\hcVao(\ze)$, which were initially
defined for $\ze$ close to the origin by \eqref{eqnewcViter}--\eqref{eqnewtcViter},
``resurrect'' in the variation of the singularities of their analytic
continuations.

An even more striking instance of this ``resurgence phenomenon'' is the Bridge
Equation, to be discussed in the case of the saddle-node problem in
Section~\ref{secBE} below.


\parag
The computation of the number~$V_a^\omb(m)$ is not as easy when $r(\omb)\ge2$ as in the
case $r=1$.

First observe that the vanishing of~$V_a^\omb(m)$ when $\norm{\omb}
= \htb\om_1 = \wc\om_r \neq m$ could be obtained as a
consequence of the analytic continuation of formulas
\eqref{eqnewcViter}--\eqref{eqnewtcViter}
(for instance, the singularities of the analytic continuation of~$\hcVao$
can only be located at $\wc\om_1,\dots,\wc\om_r$ and, among them, only the one
at~$\wc\om_r$ can have a non-zero residuum---\cf the argument at the end of the
proof of Lemma~\ref{defnewcVomb}).

For $\norm{\omb}=m$, using the notations of Definition~\ref{defaliender}, one
can write $V_a^\omb(m)$ as a combination of iterated integrals:
\eqref{eqiteratedinthcVAo} and~\eqref{eqVaoresidu} yield
\begin{multline}	\label{eqformGaeps}
V_a^\omb(m) = -2\pi\I 
\sum_{\eps\in\{+,-\}^{|m|-1}} \frac{p_\eps!q_\eps!}{|m|!}
\int_{\Ga_\eps}
\wh a_{\om_r}(m-\ze_{r-1})  \cdot \\[1ex]
\cdot \frac{\wh a_{\om_{r-1}}(\ze_{r-1}-\ze_{r-2})}{\ze_{r-1}-\wc\om_{r-1}}
\dotsm\frac{\wh a_{\om_{2}}(\ze_{2}-\ze_{1})}{\ze_{2}-\wc\om_{2}}
\frac{\wh a_{\om_{1}}(\ze_{1})}{\ze_{1}-\wc\om_{1}}
\,\dd\ze_{1}\dotsm\dd\ze_{r-1},
\end{multline}
where $\Ga_\eps$ consists of all $(r-1)$-tuples
$(\ze_1,\dotsc,\ze_{r-1}) = \big( \ga_\eps(t_1),\dotsc,\ga_\eps(t_{r-1}) \big)$
with $t_1 < \dotsb < t_{r-1}$, for any parametrisation of the oriented
path~$\ga_\eps$ (which connects~$0$ and $m=\wc\om_r$).
In fact, one can restrict oneself to the paths which follow the segment
$\left]0,m\right[$ circumventing the points of
$\{\wc\om_1,\dotsc,\wc\om_{r-1}\}\cap\left]0,m\right[ =
\{k_1,\dots,k_s\}$ to the right or to the
left, labelled by sequences $\eps\in\{+,-\}^s$, with weights $p_\eps!q_\eps!/(s+1)!$.

The formula gets simpler when $\Om\subset\Z^*$ and $\wt a_\eta \equiv
z\ii$ for each $\eta\in\Om$, since each $\wh a_\eta$ is then the
constant function with value~$1$:
\begin{equation*}
\norm{\omb}=m \ens\Rightarrow\ens
V_a^\omb(m) = -2\pi\I 
\sum_{\eps\in\{+,-\}^{|m|-1}} \frac{p_\eps!q_\eps!}{|m|!}
\int_{\Ga_\eps}
\frac{\dd\ze_{1}\dotsm\dd\ze_{r-1}}{%
(\ze_{1}-\wc\om_{1})\dotsm(\ze_{r-1}-\wc\om_{r-1})}.
\end{equation*}
In this last case, the numbers~$V_a^\omb(m)$ are connected with multiple
logarithms. They are studied under the name ``canonical hyperlogarithmic
mould'' in \cite{Eca81}, chap.~7, without the restriction $\Om\subset\Z$ (which
we imposed here only to avoid having to define the larger algebra $\hRsimp$;
also the condition $0\notin\Om$ was imposed here only to simplify the discussion).

Observe that $V_a^\bul(m)$ is always a primitive element of the graded
cocommutative Hopf algebra $\gH^\bul(\Om,\C)$ defined in Section~\ref{secAltSym}
(this is just a rephrasing of the shuffle relations encoded by the alternality
of this scalar mould).


\parag
Formulas~\eqref{eqDemtcVb} can be iterated so as to express all the successive
alien derivatives of our resurgent functions~$\wt\cV_a^\omb$ or~$\tcVao$:
\begin{multline}	\label{eqiterAlDercVa}
\De_{m_s}\dotsm \De_{m_1} \wt\cV_a^\bul = \wt\cV_a^\bul \times 
V_a^\bul(m_s) \times \dotsm \times V_a^\bul(m_1), \\
\De_{m_s}\dotsm \De_{m_1} \tcVab = 
(-1)^s V_a^\bul(m_1) \times \dotsm \times V_a^\bul(m_s) \times \tcVab,
\end{multline}
for $s\ge1$ and $m_1,\dotsc,m_s\in\Z^*$.

We can consider the collection of resurgent functions
$(\wt\cV_a^\omb)_{\omb\in\Om^\bul}$ (or $(\tcVao)_{\omb\in\Om^\bul}$) as closed
under alien derivation (\ie\ all their alien derivatives can be expressed through
relations involving themselves and scalars);
it was already closed under multiplication (by symmetrality),
and even under ordinary differentiation, in view of~\eqref{eqdefnewcV}, if we
admit relations with coefficients in~$\C\{z\ii\}$ (but, after all, convergent
series can be considered as ``resurgent constants'': all alien derivations act
trivially on them).

This is why the $\wt\cV_a^\omb$'s are called ``resurgent monomials'': they
behave nicely under elementary operations such as multiplication and alien derivations. 
In fact, in Section~\ref{secResurMonom} below, we shall deduce from them another
family of resurgence monomials which behave even better under the action of
alien derivations (but the price to pay is that their ordinary derivatives are
not as simple as~\eqref{eqdefnewcV}).

Notice that the operator $\De_{m_s}\dotsm \De_{m_1}$ measures a combination of
singularities located at $m_1+\dotsb+m_s$.
For instance, the fact that $V_a^\bul(m_s) \times \dotsm \times V_a^\bul(m_1)$
vanishes on any word~$\omb$ such that $\norm{\omb}\neq m_1+\dotsb+m_s$ (easy
consequence of~\eqref{eqannulVm}) is consistent with the vanishing of the
residuum at any point $\neq\htb\om_1$ of any branch of~$\wh\cV_a^\omb$
(consequence of the analytic continuation of~\eqref{eqnewcViter}).


\parag
Let $\Om\subset\Z$ and $a = (\wh a_\eta)_{\eta\in\Om}$ be a family of entire
functions as in Lemma~\ref{defnewcVomb}, thus with $\wh a_0(0)=0$ if $0\in\Om$.
We end this section by illustrating mould calculus to derive quadratic shuffle
relations for the numbers
\begin{equation}	\label{eqdefLao}
L_a^\omb = 2\pi\I \int_{\Ga^+}
\wh a_{\om_r}(\norm{\omb}-\ze_{r-1})
\tfrac{\wh a_{\om_{r-1}}(\ze_{r-1}-\ze_{r-2})}{\ze_{r-1}-\wc\om_{r-1}}
\dotsm\tfrac{\wh a_{\om_{2}}(\ze_{2}-\ze_{1})}{\ze_{2}-\wc\om_{2}}
\tfrac{\wh a_{\om_{1}}(\ze_{1})}{\ze_{1}-\wc\om_{1}}
\,\dd\ze_{1}\dotsm\dd\ze_{r-1},
\end{equation}
for $\omb\in\Om^\bul$ non-empty,
where $\Ga^+ = \Ga_\eps$ with $\eps = (+,\dotsc,+)\in\{+,-\}^{|m|-1}$ for
$m=\norm{\omb}$ (notation of~\eqref{eqformGaeps}; 
if $r=1$, then $L_a^{\om_1} = 2\pi\I \wh a_{\om_1}(\om_1)$).
This includes the case of the multiple logarithms
$$
L^\omb = 2\pi\I 
\int_{\Ga^+}
\frac{\dd\ze_{1}\dotsm\dd\ze_{r-1}}{%
(\ze_{1}-\wc\om_{1})\dotsm(\ze_{r-1}-\wc\om_{r-1})},
$$
with $\om_1,\dotsc,\om_r\in\Om\subset\Z^*$
(obtained when $\wh a_\eta(\ze)\equiv 1$).\footnote{%
We recall that $\wc\om_1=\om_1, \wc\om_2=\om_1+\om_2, \dotsc,
\wc\om_{r-1} = \om_1+\dotsb+\om_{r-1}$
(thus $L^\omb$ depends on~$\om_r$ only through~$\Ga^+$
which connects the origin and~$\wc\om_r$).
}

It is convenient to use here the auxiliary operators~$\De^+_m$ of
$\hRsimpZ$ defined by the formulas $\De^+_0=\ID$ and, for $m\in\Z^*$,
\begin{equation}	\label{eqdefDemplus}
\De^+_m(c\,\de + \wh\ph) = \Sing_m(\cont_{\ga^+}\wh\ph),
\end{equation}
where $\ga^+ = \ga_\eps$ with $\eps = (+,\dotsc,+)\in\{+,-\}^{|m|-1}$.
Thus 
\begin{equation}	\label{eqlienLaDep}
L_a^\omb = \text{coefficient of~$\de$ in $\De^+_{\norm{\omb}}\hcVao$.}
\end{equation}
We shall consider $L_a^\omb$ as the value at~$\omb$ of a scalar
mould~$L_a^\bul$; we set $L_a^\est=1$, so that~\eqref{eqlienLaDep} still holds
when $\omb=\est$.

\begin{prop}	\label{propLabsym}
The numbers $L_a^\omb$ satisfy the shuffle relations
$$
\sum_{\omb\in\Om^\bul}  \sh{\omb^1}{\omb^2}{\omb} L_a^\omb \;=\; \left|
\begin{aligned}
& L_a^{\omb^1} L_a^{\omb^2} \quad \text{if $\norm{\omb^1}\cdot\norm{\omb^2}\ge0$}\\[1ex]
& \quad 0 \qquad\quad \text{if not}
\end{aligned} \right.
$$
for any non-empty $\omb^1,\omb^2\in\Om^\bul$.
Equivalently, the scalar moulds $\pmL{\bul}$ defined by
\begin{equation}	\label{eqdefpmL}
\pL{\omb} = 1_{\{\norm{\omb}\ge0\}} L_a^\omb, \qquad
\mL{\omb} = 1_{\{\norm{\omb}\le0\}} L_a^\omb
\end{equation}
(for any $\omb\in\Om^\bul$, with the convention $\norm{\est}=0$)
are symmetral.
\end{prop}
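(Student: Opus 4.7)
My plan is to combine the symmetrality of $\tcVab$ established in Lemma~\ref{defnewcVomb} with two properties of the lateral alien operators $\De^+_m$ and extract the shuffle identity for $L_a^\bul$ by reading off the coefficient of~$\de$ at a single, well-chosen point. For brevity I write $f_\omb := \hcVao$. The first ingredient is a ``modified Leibniz rule'' for $\De^+_m$ on convolution products: for every $m\in\Z$, with the convention $\De^+_0 = \ID$,
$$
\De^+_m(\wh\ph * \wh\psi) = \sum_{\substack{m_1+m_2=m \\ m_1,\, m_2 \text{ of the same sign as } m}} \De^+_{m_1}\wh\ph * \De^+_{m_2}\wh\psi.
$$
This expresses the fact that the generating series $\sum_{m\ge 0} \De^+_m T^m$ (resp.\ $\sum_{m\le 0} \De^+_m T^{|m|}$) is an algebra morphism of the convolution algebra $\hRsimpZ$ into itself with coefficients in a formal parameter. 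It follows from tracking how the convolution integral $\int_0^\ze \wh\ph(\ze_1)\wh\psi(\ze-\ze_1)\,\dd\ze_1$ deforms as $\ze$ is continued along~$\ga^+$ past the simple singularities of $\wh\ph$ and $\wh\psi$ at intermediate integer points: each crossing splits the integration contour and produces the advertised decomposition.

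The second ingredient is a vanishing statement: writing $\tilde L_a^\omb(k)$ for the coefficient of~$\de$ in $\De^+_k f_\omb$, one has $\tilde L_a^\omb(k) = 0$ for every $k\neq\norm{\omb}$. I would prove this by induction on $r(\omb)$ using the recursion $f_\omb = (\ze-\norm{\omb})^{-1}(\wh a_{\om_r} * f_{\ombp})$: convolution with the entire function $\wh a_{\om_r}$ turns any simple pole of $f_{\ombp}$ into a $\log$-type singularity whose residuum vanishes, and the subsequent division by $(\ze-\norm{\omb})$ introduces exactly one further simple pole, located at~$\norm{\omb}$; moreover, each monodromy picked up when $\ga^+$ crosses an intermediate cut only adds $\log$ terms (no new simple poles), so the residuum at any $k\neq\norm{\omb}$ remains zero on every branch. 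The cleanest formulation is a parallel induction propagating both this vanishing and the structural decomposition $f_\omb(\ze) = R_\omb(\ze)/(\ze-\norm{\omb}) + \tilde h_\omb(\ze)$, with $R_\omb$ holomorphic at $\norm{\omb}$ and $\tilde h_\omb$ carrying only $\log$-type singularities on every branch reached along~$\ga^+$.

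With these in hand, apply $\De^+_m$ with $m=\norm{\omb^1}+\norm{\omb^2}$ to the symmetrality identity $\sum_\omb \tsh{\omb^1}{\omb^2}{\omb}\, f_\omb = f_{\omb^1} * f_{\omb^2}$ and take the coefficient of~$\de$ on both sides. All words~$\omb$ in the shuffle satisfy $\norm{\omb}=m$, so the left-hand side becomes $\sum_\omb \tsh{\omb^1}{\omb^2}{\omb}\, L_a^\omb$. On the right-hand side, the $\de$-coefficient of a convolution $(c_1\de+g_1)*(c_2\de+g_2)$ is $c_1 c_2$; since $f_{\omb^i}$ has no $\de$-part for non-empty $\omb^i$, the terms with $m_1=0$ or $m_2=0$ vanish, and what remains is $\sum_{m_1+m_2=m} \tilde L_a^{\omb^1}(m_1)\,\tilde L_a^{\omb^2}(m_2)$ restricted to the sign range of the modified Leibniz rule. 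The vanishing property collapses this to the single pair $(m_1,m_2)=(\norm{\omb^1},\norm{\omb^2})$, which lies in the allowed range precisely when both $\norm{\omb^i}$ share the sign of~$m$---equivalently, $\norm{\omb^1}\norm{\omb^2}\ge 0$---yielding $L_a^{\omb^1}L_a^{\omb^2}$; otherwise the right-hand side is~$0$. The degenerate cases ($\omb^1=\est$ or $\omb^2=\est$, and $\norm{\omb^i}=0$ with $\omb^i\neq\est$, for which $L_a^{\omb^i}=0$ directly from the definition) are handled separately, and the symmetrality of~$\pmL{\bul}$ is then immediate from~\eqref{eqdefpmL}.

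The main obstacle will be the vanishing statement for $\tilde L_a^\omb(k)$: one must control all branches of~$f_\omb$ reached along~$\ga^+$ and verify that the monodromies generated by intermediate crossings produce only $\log$-type singularities, never simple poles away from~$\norm{\omb}$. The structural decomposition outlined above is what enables pushing the induction through the crossings; the remainder of the argument is essentially formal manipulation.
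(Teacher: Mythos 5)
Your proof is correct and its skeleton is the same as the paper's: apply the modified Leibniz rule for $\De^+_m$ (the paper's~\eqref{eqLeibDep}, Lemma~\ref{lemrelDemDepm}) to the symmetrality identity for~$\hcVab$, extract the coefficient of~$\de$, and collapse the sum over $(m_1,m_2)$ using the fact that the $\de$-coefficient of $\De^+_k\hcVao$ vanishes unless $k=\norm{\omb}$. The one place you genuinely diverge is in how that vanishing is obtained. The paper packages the $\de$-coefficients into the mould $L_a^\bul(m)=\sum\frac{(-1)^s}{s!}V_a^\bul(m_1)\times\dotsm\times V_a^\bul(m_s)$ via $\De^+_m\hcVab=L_a^\bul(m)\times\hcVab$, so that the support property $m\neq\norm{\omb}\Rightarrow L_a^\omb(m)=0$ falls out of~\eqref{eqannulVm}, which was already proved algebraically in Proposition~\ref{propVam} from the mould equation $(\pa+\na-m)\wt V_a^\bul(m)=0$; your route instead re-derives the vanishing analytically, by an induction showing that convolution with the entire function $\wh a_{\om_r}$ produces only zero-residuum (log-type) singularities on every branch, after which the division by $\ze-\norm{\omb}$ creates the unique simple pole. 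That analytic argument is sound --- it is exactly the addendum to Lemma~\ref{lemAnCont} used at the end of the proof of Lemma~\ref{defnewcVomb}, and the paper explicitly notes it as an alternative derivation of~\eqref{eqannulVm} --- but it costs you the careful monodromy bookkeeping you flag as the ``main obstacle,'' whereas the algebraic route gets the same fact in two lines once Proposition~\ref{propVam} is available. Your direct extraction of the $\de$-coefficient, rather than the paper's detour through $\tau$ and dimoulds, is a purely cosmetic difference. The handling of the degenerate cases ($m_i=0$, empty words) is correct as you state it.
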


This can be rephrased by saying that $\pL\bul$ and $\mL\bul$ are
group-like elements of the graded cocommutative Hopf algebra $\gH^\bul(\Om,\C)$
defined in Section~\ref{secAltSym}.

The rest of this section is devoted to the proof of Proposition~\ref{propLabsym}.
We begin by a few facts about the operators~$\De^+_m$; these are not derivations, as the alien derivations $\De_m$,
but they are related to them and satisfy modified Leibniz rules analogous
to~\eqref{eqmodifLeib}:
%
%
\begin{lemma}	\label{lemrelDemDepm}
The operators~$\De^+_m$ defined in~\eqref{eqdefDemplus} are related to the alien
derivations~\eqref{eqdefDeomgen} by the following relations:
for any $m\in\Z^*$,
\begin{equation}	\label{eqrelDemDepm}
\De^+_m = \sum \tfrac{1}{s!} \De_{m_s}\dotsm\De_{m_1}, \qquad
\De_m = \sum \tfrac{(-1)^{s-1}}{s} \De^+_{m_s}\dotsm\De^+_{m_1},
\end{equation}
with both sums taken over all $s\ge1$ and $m_1,\dotsc,m_s\in\Z^*$ of the same sign
as~$m$ such that $m_1+\dotsb+m_s=m$ (these are thus finite sums).
Moreover, for any $\wh\chi_1,\wh\chi_2\in\hRsimpZ$ and $m\in\Z$,
\begin{equation}	\label{eqLeibDep}
\De^+_m(\wh\chi_1*\wh\chi_2) = \sum \De^+_{m_1}\wh\chi_1 * \De^+_{m_2}\wh\chi_2
\end{equation}
with summation over all $m_1,m_2\in\Z$ of the same sign as~$m$ (but possibly
vanishing) such that $m_1+m_2=m$.
\end{lemma}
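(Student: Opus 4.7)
The plan is to prove the two halves of the lemma by leveraging the first one. Specifically, I claim that once the first identity in~\eqref{eqrelDemDepm} is established, everything else follows by purely formal manipulation. First, the second identity in~\eqref{eqrelDemDepm} is the non-commutative logarithmic inverse of the first: grouping operators by total weight (which makes sense because $[\pa,\De_m] = m\De_m$ by~\eqref{eqcommutalien} and an analogous relation $[\pa,\De_m^+] = m\De_m^+$ follows directly from~\eqref{eqdefDemplus}), the two identities amount to the assertion that the formal series $\Phi_\pm = 1 + \sum_{\pm m>0} t^m\,\De_m^+$ and $\Psi_\pm = \sum_{\pm m>0} t^m\,\De_m$ satisfy $\Phi_\pm = \exp(\Psi_\pm)$ and $\Psi_\pm = \log(\Phi_\pm)$. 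Second, the modified Leibniz rule~\eqref{eqLeibDep} is a consequence of the first identity combined with Proposition~\ref{propAlDer}: each $\De_m$ is a derivation of the convolution algebra, so $\Psi_\pm$ is a derivation of $\hRsimpZ[[t]]$, its exponential $\Phi_\pm$ is an algebra homomorphism for convolution, and extracting the coefficient of $t^m$ from $\Phi_\pm(\wh\chi_1*\wh\chi_2) = \Phi_\pm(\wh\chi_1) * \Phi_\pm(\wh\chi_2)$ gives exactly~\eqref{eqLeibDep} (with the convention $\De_0^+ = \ID$ corresponding to the constant term of $\Phi_\pm$).

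Thus the heart of the argument is the first identity. Fix $m > 0$ (the case $m < 0$ is symmetric) and a simple resurgent function $\wh\ph$. The idea is to express every $\cont_{\ga_\eps}\wh\ph$ appearing in~\eqref{eqdefDeomgen} in terms of $\cont_{\ga^+}$ applied to successive alien-type modifications of $\wh\ph$. Each sign $\eps_k = -$ in the sequence $\eps \in \{+,-\}^{m-1}$ corresponds to a left-detour around the intermediate integer $k$; by the Cauchy theorem, the difference between the left and right detours around a simple singularity of type~\eqref{eqsimplesing} is a combination of a residuum contribution and a logarithmic monodromy, which together identify with the action of~$\De_k^+\wh\ph$. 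Expanding these corrections multiplicatively over all $k \in \{1,\dotsc,m-1\}$ with $\eps_k = -$, then applying $\Sing_m$ after analytic continuation along the modified path, converts the weighted sum defining $\De_m\wh\ph$ into a sum over subsets $S \subset \{1,\dotsc,m-1\}$ of detour positions, each weighted by $\sum_{\eps \supset S} p_\eps!q_\eps!/m!$ times an iterated singularity $\De_{m_s}^+ \dotsm \De_{m_1}^+\wh\ph$ read off from the composition $(m_1,\dotsc,m_s)$ of $m$ determined by $S$. Inverting this relation (again formal algebra) yields the first identity in~\eqref{eqrelDemDepm}.

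The main obstacle will be the combinatorial identity for the Ecalle weights, namely showing that $\sum_{\eps \supset S} p_\eps!q_\eps!/m! = 1/s!$ where $s = |S| + 1$ and the sum ranges over all sign sequences compatible with imposing left detours exactly on $S$. This is where the specific normalisation in Definition~\ref{defaliender} enters decisively; intuitively it reflects that the Ecalle weights are the unique normalisation distributing lateral continuations so that the exponential relation with the $\De_m^+$ holds, and it must be verified either by a direct computation on shuffle-type generating functions or by induction on $m$. Once this combinatorial identity is established, the rest of the lemma is essentially bookkeeping and formal-series inversion, as explained above.
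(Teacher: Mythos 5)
The paper does not actually prove this lemma (it defers to \cite{kokyu}, Lemmas~4 and~5), so I am comparing your plan with the sketch given in the surrounding remark. Your overall architecture is the standard one and is sound in outline: reduce everything to one of the two identities in~\eqref{eqrelDemDepm} plus formal $\exp/\log$ inversion in the graded algebra $\sum_m t^m(\cdot)$, and obtain the other identity and the modified Leibniz rule as corollaries. But there are two concrete problems. First, the combinatorial identity you isolate as ``the main obstacle'' is both left unproved and misstated. Your path decomposition expresses the weighted average $\De_m=\sum_\eps \frac{p_\eps!q_\eps!}{m!}\Sing_m\circ\cont_{\ga_\eps}$ as a sum over subsets $S$ of left-detour positions, i.e.\ it produces the \emph{second} identity of~\eqref{eqrelDemDepm} directly; matching the logarithm coefficients $\frac{(-1)^{s-1}}{s}$ therefore requires $\sum_{\eps\supset S}\frac{p_\eps!q_\eps!}{m!}=\frac{1}{s}$ with $s=\card S+1$, not $\frac{1}{s!}$ as you wrote (check $m=3$, $S=\{1,2\}$: the sum is $0!\,2!/3!=1/3$, not $1/6$). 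The correct identity has a short proof via $\frac{p!q!}{(p+q+1)!}=\int_0^1 t^p(1-t)^q\,\dd t$ and summation over the unconstrained positions, but as it stands the crux of your argument is neither verified nor correctly stated. Relatedly, the identification ``difference of left and right detours $=$ action of $\De_k^+$'' is too quick: the difference of two lateral continuations past a simple singularity picks up only the \emph{variation} $\wh\chi$ of~\eqref{eqsimplesing} (the polar term is single-valued), and the singularity must be re-based at~$k$ before continuing; the residuum only enters at the terminal $\Sing_m$. This is exactly why iterated $\De^+$'s, which annihilate the $\de$-components at intermediate steps, appear — but the bookkeeping has to be done, not asserted.

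Second, your derivation of~\eqref{eqLeibDep} from Proposition~\ref{propAlDer} risks circularity. The paper's remark states that, given~\eqref{eqrelDemDepm}, the Leibniz rule~\eqref{eqLeibDep} and Proposition~\ref{propAlDer} are \emph{equivalent}, and that the intended (and easier) order is the reverse of yours: one first proves~\eqref{eqLeibDep} directly by deforming the integration contour in $\int_0^\ze\wh\chi_1(\ze_1)\wh\chi_2(\ze-\ze_1)\,\dd\ze_1$ along the single path $\ga^+$ — the contributions split according to which of $\ze_1$ and $\ze-\ze_1$ crosses which singular points, giving exactly the sum over $m_1+m_2=m$ — and only then deduces that each $\De_m$ is a derivation. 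If the only available proof of Proposition~\ref{propAlDer} goes through~\eqref{eqLeibDep}, your argument is circular; to make it non-circular you must either invoke an independent proof of Proposition~\ref{propAlDer} (e.g.\ via symmetrically contractile paths) or, better, prove~\eqref{eqLeibDep} first by the contour-deformation argument, which is the least technical component of the whole lemma.
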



Let us denote by the same symbols the operators of~$\tRsimpZ$ obtained
from the~$\De^+_m$'s by conjugacy by the formal Borel transform~$\cB$, as we did for
the~$\De_m$'s.
If we consider the algebras $\tRsimpZ[[\ee^{-z}]]$ and $\tRsimpZ[[\ee^{z}]]$,
formula~\eqref{eqrelDemDepm} can be written
\begin{equation}	\label{eqDepexpDe}
\sum_{m\ge0} \ee^{-mz} \De^+_m = \exp\bigg( \sum_{m>0} \ee^{-mz} \De_m \bigg),
\quad
\sum_{m\le0} \ee^{-mz} \De^+_m = \exp\bigg( \sum_{m<0} \ee^{-mz} \De_m \bigg).
\end{equation}
We do not give the proof of this lemma here; see \eg\ \cite{kokyu}, Lemmas~4 and~5 
(the coefficients $p_\eps!q_\eps!/|m|!$ in Definition~\ref{defaliender} were
chosen exactly so that~\eqref{eqrelDemDepm} hold; the standard properties of the
logarithm and exponential series then show that~\eqref{eqLeibDep} and
Proposition~\ref{propAlDer} are equivalent;
it is in fact easy to check first~\eqref{eqLeibDep} by deforming the contour of integration in
the integral giving~$\wh\chi_1*\wh\chi_2$, and then to deduce Proposition~\ref{propAlDer}).

\begin{lemma}	\label{lemW}
For any $m\in\Z^*$, define a scalar mould $L_a^\bul(m)$ by the formula
$$
L_a^\bul(m) = \sum \tfrac{(-1)^{s}}{s!} V_a^\bul(m_1)\times\dotsb\times V_a^\bul(m_s),
$$
with summation over all $s\ge1$ and $m_1,\dotsc,m_s\in\Z^*$ of the same sign
as~$m$ such that $m_1+\dotsb+m_s=m$.
Define also $L_a^\bul(0)=1^\bul$. Then, for every $m\in\Z$,
\begin{enumerate}[(i)]
\item 	\label{itemfirstpty}
$\De^+_m  \hcVab = L_a^\bul(m) \times \hcVab$,
\item  	\label{itemsecpty}
$\tau\big( L_a^\bul(m) \big) = \sum L_a^\bul(m_1)\otimes L_a^\bul(m_2)$, with
summation over all $m_1,m_2\in\Z$ of the same sign as~$m$ such that $m_1+m_2=m$,
\item  	\label{itemthirdpty}
$m=\norm{\omb} \;\Rightarrow\; L_a^\omb(m) = L_a^\omb$,  \quad
$m\neq\norm{\omb} \;\Rightarrow\; L_a^\omb(m) = 0$
\ens (for any $\omb\in\Om^\bul$, with the convention $\norm{\est}=0$).
\end{enumerate}
\end{lemma}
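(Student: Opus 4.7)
The plan is to establish the three items in the order (i), (iii), (ii), with the bulk of the work concentrated in (ii) via a generating-series trick in an auxiliary formal variable. For (i), I will substitute the expansion $\De^+_m = \sum_{s \ge 1} \tfrac{1}{s!} \De_{m_s} \dotsm \De_{m_1}$ from \eqref{eqrelDemDepm} into $\De^+_m \hcVab$ and apply the iterated alien-derivative identity \eqref{eqiterAlDercVa} term by term; this immediately produces the defining sum of $L_a^\bul(m)$, giving $\De^+_m \hcVab = L_a^\bul(m) \times \hcVab$. The case $m = 0$ reduces to $\De^+_0 = \ID$ and $L_a^\bul(0) = 1^\bul$. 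Passage between the formal and convolutive models is harmless because the Borel transform is componentwise and preserves mould multiplication.

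For (iii), the vanishing part follows directly from the definition of $L_a^\bul(m)$: evaluation at~$\omb$ is a sum of products $V_a^{\omb^1}(m_1) \dotsm V_a^{\omb^s}(m_s)$ over decompositions $\omb = \omb^1 \conc \dotsb \conc \omb^s$, and \eqref{eqannulVm} forces $\norm{\omb^i} = m_i$ whenever a term is nonzero, hence $m = \sum m_i = \norm{\omb}$. To identify $L_a^\omb(\norm{\omb}) = L_a^\omb$, I will evaluate the identity of (i) at~$\omb$:
\begin{equation*}
(L_a^\bul(m) \times \hcVab)^\omb = \sum_{\omb = \alb \conc \beb} L_a^\alb(m)\,\hcVabb.
\end{equation*}
Since $\hcVae = \de$ while $\hcVabb \in \wh H(\gR(\Z)) \subset \C\{\ze\}$ has no $\de$-component when $\beb \neq \est$, extracting the $\de$-coefficient isolates the term $\beb = \est$ and yields $L_a^\omb(m)$; matching this with \eqref{eqlienLaDep} gives the claimed equality.

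For (ii), I will introduce a formal variable $e$ and work with moulds whose coefficient ring is $\C[[e]]$. Setting $U^\bul_+ = \sum_{m > 0} e^m V_a^\bul(m)$ yields an alternal mould in $\gL^\bul(\Om, \C[[e]])$ (as a $\C[[e]]$-linear combination of alternal moulds, with $U^\est_+ = 0$). By Proposition \ref{propstructaltsym} applied over $\C[[e]]$, the exponential
\begin{equation*}
M^\bul_+ \,=\, \exp(-U^\bul_+) \,=\, \sum_{s \ge 0} \tfrac{(-1)^s}{s!} (U^\bul_+)^{\times s}
\end{equation*}
is symmetral, and expanding the right-hand side and collecting powers of~$e$ identifies $M^\bul_+ = \sum_{m \ge 0} e^m L_a^\bul(m)$. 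The symmetrality of $M^\bul_+$ then reads, by Lemma \ref{lemtaualtsym}, as $\tau(M^\bul_+) = M^\bul_+ \otimes M^\bul_+$; extracting the coefficient of $e^m$ on both sides yields (ii) for $m \ge 0$. The case $m < 0$ follows identically with $U^\bul_- = \sum_{m > 0} e^m V_a^\bul(-m)$.

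The conceptual point I expect to be the only real subtlety is the bookkeeping in the generating-series step: one must confirm that the symmetrality bijection of Proposition \ref{propstructaltsym} applies verbatim over $\C[[e]]$ in place of~$\C$, and that the sums defining $U^\bul_\pm$ and $\exp(-U^\bul_\pm)$ evaluate to well-defined elements of $\C[[e]]$ when tested against a fixed word~$\omb$. Both are straightforward: Proposition \ref{propstructaltsym} is stated for an arbitrary commutative $\C$-algebra, and at a fixed $\omb$ the inner mould products $(U^\bul_+)^{\times s}$ vanish for $s > r(\omb)$ because $U^\est_+ = 0$, so each power of~$e$ receives only finitely many contributions. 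Beyond this, no genuine analytic input is required; the entire lemma is a purely algebraic consequence of the alternality of the moulds $V_a^\bul(m)$.
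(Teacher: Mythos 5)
Your proof is correct. Items (i) and (iii) are handled exactly as in the paper: (i) by combining \eqref{eqrelDemDepm} with \eqref{eqiterAlDercVa}, and (iii) by using the support property \eqref{eqannulVm} for the vanishing part and, for the identification $L_a^\omb(\norm{\omb})=L_a^\omb$, by extracting the coefficient of~$\de$ from identity~(i) evaluated at~$\omb$ and comparing with \eqref{eqlienLaDep} --- which is precisely the paper's argument.

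For (ii) you take a genuinely different route. The paper applies $\tau$ to identity~(i), computes $\tau(\De^+_m\hcVab)$ by combining the symmetrality of $\hcVab$ with the modified Leibniz rule \eqref{eqLeibDep} for~$\De^+_m$, and then cancels the factor $\tau(\hcVab)$ by multiplying with $\tau(\wh\cV_a^\bul)$. You instead argue entirely on the mould side: over the coefficient ring $\C[[e]]$ the mould $U^\bul_+=\sum_{m>0}e^m V_a^\bul(m)$ is alternal, its exponential $M^\bul_+=\exp(-U^\bul_+)=\sum_{m\ge0}e^m L_a^\bul(m)$ is symmetral by Proposition~\ref{propstructaltsym}, and extracting the coefficient of~$e^m$ in $\tau(M^\bul_+)=M^\bul_+\otimes M^\bul_+$ gives~(ii); the negative case is symmetric. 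This is in effect a graded refinement of the paper's own remark after \eqref{eqrelLaV}, which observes that the symmetrality of $\pmL{\bul}$ could be deduced from Proposition~\ref{propstructaltsym}: your grading variable~$e$ is exactly what is needed to recover the individual coproduct identities~(ii) rather than only their sum over~$m$. The paper's route needs no auxiliary ring and exhibits~(ii) as a direct shadow of the analytic relation \eqref{eqLeibDep}; yours shows that, once the alternality of the $V_a^\bul(m)$ is granted (Proposition~\ref{propVam}), item~(ii) requires no further appeal to the operators~$\De^+_m$. Your well-definedness checks (finiteness of each coefficient of~$e^m$ at a fixed word because $U_+^\est=0$, and validity of Proposition~\ref{propstructaltsym} over an arbitrary commutative $\C$-algebra) are the right ones and both go through. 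The only quibble is your closing claim that the whole lemma is purely algebraic: item~(i) still rests on the analytic content of \eqref{eqrelDemDepm}, i.e.\ on Lemma~\ref{lemrelDemDepm}.
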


\begin{proof}
The first property follows from~\eqref{eqiterAlDercVa} and~\eqref{eqrelDemDepm}.
For the second, we write the symmetrality of~$\hcVab$ and~$\wh\cV_a^\bul$ as
identities in $\hM^\bbul(\Om,\hRsimpZ)$:
$$
\tau(\hcVab) = \hcVab \otimes \hcVab, \qquad
\tau(\wh\cV_a^\bul) = \wh\cV_a^\bul \otimes \wh\cV_a^\bul,
$$
the operator~$\De^+_m$ induces operators acting on moulds and dimoulds which clearly satisfy
$\De^+_m\circ\tau(\hcVab) = \tau(\De^+_m\hcVab)$ and relation~\eqref{eqLeibDep}
implies
$$
\tau(\De^+_m\hcVab) = \sum_{\substack{m=m_1+m_2 \\ m_i m\ge0}}
\De^+_{m_1}\hcVab \otimes \De^+_{m_2}\hcVab,
$$
whence the result follows since
$\tau\big(L_a^\bul(m)\big) = \tau(\De^+_m\hcVab) \times \tau(\wh\cV_a^\bul)$
by the homomorphism property of~$\tau$ applied to~(\ref{itemfirstpty}).

The second part of the third property is obvious when $m=0$ and follows
from~\eqref{eqannulVm} when $m\neq0$,
because $\norm{\omb}\neq m_1+\dotsb+m_s$ implies that
$V_a^\bul(m_1)\times\dotsb\times V_a^\bul(m_s)$ vanishes on~$\omb$ 
(even if $\omb=\est$).
The first part of the third property follows from~\eqref{eqlienLaDep}, since
property~(\ref{itemfirstpty}) yields
$$
\De^+_m\hcVao = L_a^\omb(m) \,\de +
\sum_{i=1}^{r-1}
L_a^{\om_1,\dotsc,\om_i}(m) \hcVai
$$
if $r=r(\omb)\ge1$ and $\De^+_m\hcVae = L_a^\est(m)\de$ if $\omb=\est$.
\end{proof}

\medskip

\noindent
\emph{Proof of Proposition~\ref{propLabsym}.}
We have $L_a^\est=1$.
Let $\omb^1,\omb^2\in\Om^\bul$ be non-empty. Property~(\ref{itemsecpty}) with
$m=\norm{\omb^1}+\norm{\omb^2}$ yields
$$
\sum_{\omb\in\Om^\bul} \sh{\omb^1}{\omb^2}{\omb} L_a^\omb(m) =
\sum_{\substack{m=m_1+m_2 \\ m_i m\ge0}} L_a^{\omb^1}(m_1) L_a^{\omb^2}(m_2).
$$
According to Property~(\ref{itemthirdpty}), the \lhs\ is
$\tau(L_a^\bul)^{\omb^1,\omb^2}$ (because any nonzero term in it has $\norm{\omb}=m$).
Among the $|m|+1$ terms of the \rhs, at most one may be nonzero:
if $\norm{\omb^1}$ and $\norm{\omb^2}$ have the same sign, then the term
corresponding to $m_1=\norm{\omb^1}$ is $L_a^{\omb^1} L_a^{\omb^2}$ while all
the others vanish; but in the opposite case, this term does not belong to the
summation and one gets~$0$ as \rhs.
This is the desired shuffle relation; we leave it to the reader to interpret it
in terms of symmetrality for the moulds~$\pmL{\bul}$ by distinguishing the four
possible cases: $\norm{\omb^1}\cdot\norm{\omb^2} \ge0$ or~$<0$, and
$\norm{\omb^1}+\norm{\omb^2} \ge0$ or~$<0$.
\qed

\medskip

In fact, we can write
\begin{gather}	\label{eqrelLaV}
\pL{\bul} = \sum_{m\ge0} L_a^\bul(m) = \exp\big(-\pV{\bul}\big),
\qquad \pV{\bul} = \sum_{m>0} V_a^\bul(m) \\
\mL{\bul} = \sum_{m\le0} L_a^\bul(m) = \exp\big( -\mV{\bul} \big),
\qquad \mV{\bul} = \sum_{m<0} V_a^\bul(m),
\end{gather}
with well-defined alternal moulds~$\pmV{\bul}$
(and using $\exp$ as a short-hand for $E_1$---see~\eqref{eqdefEt}),
since Lemma~\ref{lemW}~(\ref{itemthirdpty}) and property~\eqref{eqannulVm}
imply that, when evaluated on a given word~$\omb$, these formulas involve only
finitely many terms; one could thus have invoked
Proposition~\ref{propstructaltsym} to deduce the symmetrality of~$\pmL\bul$.


\section{The Bridge Equation for the saddle-node}	\label{secBE} 


In this section, returning to the saddle-node problem, we shall explain why the
formal series $\wt\ph_n(z) = \ph_n(-1/z)$ and $\wt\psi_n(z) = \psi_n(-1/z)$ of
Theorem~\ref{thmResur}, which were proved to belong to~$\tRZ$, are in fact
simple resurgent functions.
Moreover, we shall express their alien derivatives in terms of
themselves and of the numbers $V_a^\bul(m)$ of Proposition~\ref{propVam}.


\parag
We recall the hypotheses and the notations for the saddle-node:
$$ X = x^2 \frac{\pa\,}{\pa x} + A(x,y) \frac{\pa\,}{\pa y} $$
with $A(x,y) = y + \sum_{\eta\in\Om} a_\eta(x)y^{\eta+1} \in \C\{x,y\}$,
where $\Om = \ao \eta\in\Z \mid \eta\ge-1\af$,
$\wt a_\eta(z) = a_\eta(-1/z) \in z\ii\C\{z\ii\}$ and
$\wt a_0(z) \in z^{-2}\C\{z\ii\}$.

We also recall that $\eta\in\Om \mapsto B_\eta = y^{\eta+1} \frac{\pa\,}{\pa y}$
gives rise to a comould~$\bB_\bul$ such that
$\bB_\omb y = \be_\omb y^{\norm{\omb}+1}$, where the numbers $\be_\omb$,
$\omb\in\Om^\bul$, satisfy Lemma~\ref{lemexo} (we define $\be_\est=1$ and
$\norm{\est}=0$). 
We set $a = (\cB\wt a_\eta)_{\eta\in\Om}$, so as to be able to make use of the
constants $V_a^\omb(m)$, $(m,\omb)\in\Z^*\times\Om^\bul$ defined in
Proposition~\ref{propVam} and more explicitly by formulas~\eqref{eqVmrun}
and~\eqref{eqformGaeps}.
Later in this section we shall prove
%
%
\begin{prop}	\label{propCm}
The family of complex numbers 
$\big( \be_\omb V_a^\omb(m) \big)_{\omb\in\Om^\bul,\,\norm{\omb}=m}$
is summable for each $m\in\Z^*$. Let
\begin{equation}	\label{eqdefCm}
C_m =  \sum_{\omb\in\Om^\bul,\,\norm{\omb}=m} \be_\omb V_a^\omb(m),
\qquad m\in\Z^*.
\end{equation}
Then $C_m=0$ for $m\le-2$.
\end{prop}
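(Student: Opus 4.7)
The vanishing assertion in part~(b) is immediate from Lemma~\ref{lemexo}: if $\norm{\omb}=m\le -2$, then $\norm{\omb}<-1$, hence $\be_\omb=0$, and every individual term of~\eqref{eqdefCm} is zero; thus $C_m=0$.

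For part~(a), the plan is to bound $|\be_\omb V_a^\omb(m)|$ uniformly in $\omb$ by packaging together the residual interpretation of $V_a^\omb(m)$ with the quantitative estimates developed in Section~\ref{secResurSN}. By Proposition~\ref{propVam}, formula~\eqref{eqVaoresidu} and Definition~\ref{defaliender}, the scalar $V_a^\omb(m)$ is a combination with weights $p_\eps!q_\eps!/|m|!$ of $2^{|m|-1}$ residua of suitable branches of $\wh\cV^\omb$ at $\ze=m$; each such residuum can be realised as a contour integral on a small circle $|\ze-m|=\rho_0$, reached from the origin by an extension $\ga'_\eps$ of one of the paths $\ga_\eps$ of Definition~\ref{defaliender}. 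Choosing $\rho_1<\rho_0<\rho<\frac12$ and $N\ge|m|+1$, these extensions can be taken $(\rho_1,N,(m+1)-\N^*)$-adapted after their initial segment, so that inequality~\eqref{ineqbecV} with $n=m+1$ applies along them and yields a bound of the form
\[
|\be_\omb V_a^\omb(m)| \;\le\; K_m\,(\la_n K)^r L^m\,\wh P_r(T_m)\,\ee^{CT_m},
\]
with $r=r(\omb)$ and $K_m,T_m$ depending only on $m,\rho_0,\rho_1,N$. Summing over $\omb$ of length $r$ with $\norm{\omb}=m$ by means of the combinatorial estimate used to derive~\eqref{ineqCVUdeux}, and then over $r\ge1$ using the convergence of $\sum_{r\ge1}\La_n^{r-1}\wh P_r(T_m)=\cB\wt\Phi_n(T_m)$ established at the end of the proof of Theorem~\ref{thmResur}, I obtain the absolute summability of the family $\big(\be_\omb V_a^\omb(m)\big)_{\norm{\omb}=m}$ together with an explicit bound on $|C_m|$.

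The only delicate point I expect is the geometric verification that each $\ga_\eps$ admits an extension $\ga'_\eps$ which is $(\rho_1,N,(m+1)-\N^*)$-adapted after its initial segment of length $\rho_1$; this is arranged in a routine way once the parameters are ordered as above, since $N\ge|m|+1$ pushes the forbidden sector $-\Sigma(\rho_1,N)$ safely away from the region of interest, and the circle of radius $\rho_0>\rho_1$ around $m$ lies in $\bgRN\big((m+1)-\N^*\big)$. No genuinely new analytic input is required: the argument is essentially a book-keeping of the estimates already proven in Section~\ref{secResurSN}.
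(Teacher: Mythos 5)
Your proof is correct and follows essentially the same route as the paper: the vanishing for $m\le-2$ comes from Lemma~\ref{lemexo}, and the summability from combining the interpretation of $\be_\omb V_a^\omb(m)$ as a weighted combination of residua of branches of $\be_\omb\wh\cV^\omb$ with the estimates \eqref{ineqbecV}--\eqref{ineqCVUdeux} established in the proof of Theorem~\ref{thmResur}. The paper packages the second step more abstractly --- it invokes the summability of $(\be_\omb\hcVto)_{\norm{\omb}=m}$ in the Fr\'echet space $\wh\bA$ together with the continuity of $\De_m$ and of constant-term extraction, rather than carrying the bounds explicitly through the residuum computation --- but the analytic content is the same, and your version yields an explicit bound on $|C_m|$ as a by-product.
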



We call \emph{\'Ecalle's invariants} of~$X$  the complex numbers
$C_{-1},C_1,C_2,\dotsc,C_m,\dotsc$ because of their role in the Bridge Equation
(Theorem~\ref{thmBE} below) and in the classification problem (Theorem~\ref{thmCmInv} and
Section~\ref{secMR} below).

The formal transformations $\th(x,y) = \big(x,\ph(x,y)\big)$ and
$\th\ii(x,y) = \big(x,\psi(x,y)\big)$ which conjugate~$X$ to its normal form
$X_0 = x^2 \frac{\pa\,}{\pa x} + y \frac{\pa\,}{\pa y}$ were constructed in the
first part of this article through mould-comould expansions for the
corresponding substitution operators~$\Th$ and~$\Th\ii$.
Passing to the resurgence variable $z=-1/x$, we set
$$
\wt\ph(z,y) = \ph(-1/z,y) =  y + \sum_{n\ge0} \wt\ph_n(z)y^n, \quad
\wt\psi(z,y) = \psi(-1/z,y) = y + \sum_{n\ge0} \wt\psi_n(z)y^n,
$$
where the coefficients $\wt\ph_n(z)$ and~$\wt\psi_n(z)$ are known to belong to
the algebra~$\tRZ$ of resurgent functions, by Theorem~\ref{thmResur}.
We also introduce the substitution operator
\begin{equation}	\label{defThti}
\wt\Th \colon
\ti f(z,y) \mapsto \ti f\big(z,\wt\ph(z,y)\big)
\end{equation}	
(a priori defined in $\C[[z\ii,y]]$).
Later in this section, we shall prove
\begin{thm}	\label{thmBE} 
The formal series $\wt\ph_n(z)$ and~$\wt\psi_n(z)$ are simple resurgent
functions, thus
$\wt\ph(z,y)$ and~$\wt\psi(z,y)$ belong in fact to $\tRsimpZ[[y]]$.

Moreover, for any $m\in\Z^*$, the formal series of~$\tRsimpZ[[y]]$
$$
\De_m \wt\ph := \sum_{n\ge0} (\De_m\wt\ph_n) y^n, \qquad
\De_m \wt\psi := \sum_{n\ge0} (\De_m\wt\psi_n) y^n
$$
are given by the formulas
\begin{equation}	\label{eqBEtigA}
\De_m \wt\ph = C_m y^{m+1} \frac{\pa\wt\ph}{\pa y},
\qquad
\De_m \wt\psi = - C_m \wt\psi^{m+1}, 
\qquad m\in\Z^*.
\end{equation}
\end{thm}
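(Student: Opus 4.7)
My plan is to prove Proposition~\ref{propCm} first and then deduce the Bridge Equation~\eqref{eqBEtigA} by applying $\De_m$ termwise to the mould--comould expansion $\wt\Th = \sum \wt\cV^\bul \bB_\bul$, exploiting Proposition~\ref{propVam} together with the multiplication property of Proposition~\ref{propmultiplimould}. For Proposition~\ref{propCm}, the vanishing of $C_m$ for $m\le-2$ is immediate since $\be_\omb\ne0$ forces $\norm\omb\ge-1$ by Lemma~\ref{lemexo}; for the summability of $\big(\be_\omb V_a^\omb(m)\big)_{\norm\omb=m}$, I would use the integral representation~\eqref{eqformGaeps} of $V_a^\omb(m)$ together with the exponential bounds~\eqref{ineqwhaeta}--\eqref{ineqwhazero} on the $\wh a_\eta$'s and the estimate $|\be_\omb|\le\prod_{i=1}^{r-1}(|\wc\om_i|+1)$, following the same combinatorial scheme as in the proof of Theorem~\ref{thmResur} (Lemma~\ref{lemIneqfond} and the subsequent geometric summation in~$r$) to produce a bound on $|\be_\omb V_a^\omb(m)|$ which is summable over $\{\omb:\norm\omb=m\}$.

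Consider then the derivation $\gX_m := C_m y^{m+1}\pa/\pa y$ of $\gA=\C[[x,y]]$. I claim the family $\big(V_a^\omb(m)\bB_\omb\big)_{\omb\in\Om^\bul}$ is formally summable in $\gF_{\gA,\bA}$ with sum equal to $\gX_m$. Formal summability follows from $V_a^\omb(m)=0$ unless $\norm\omb=m$ (formula~\eqref{eqannulVm}), combined with the homogeneity $\bB_\omb y^{n_0}\in\bA\cdot y^{n_0+m}$ and the bounds just obtained. By alternality of $V_a^\bul(m)$ (Proposition~\ref{propVam}) and cosymmetrality of~$\bB_\bul$ (Lemma~\ref{lemcosym} and Corollary~\ref{corEC}), Proposition~\ref{propaltsym} shows that this contraction is a derivation of~$\gA$ lying in~$\gF_{\gA,\bA}$; since its value on~$y$ equals $C_m y^{m+1}$ by definition of~$C_m$ and \eqref{eqdefbeomb}, and it commutes with~$x$, it must coincide with~$\gX_m$. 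Applying $\De_m$ termwise to $\wt\Th=\sum\wt\cV^\bul\bB_\bul$, Proposition~\ref{propVam} yields
\[
\De_m\wt\Th=\sum\bigl(\wt\cV^\bul\times V_a^\bul(m)\bigr)\bB_\bul,
\]
and Proposition~\ref{propmultiplimould} rewrites the right-hand side as $\gX_m\wt\Th$. Evaluating on~$y$ and using that $\gX_m$ commutes with substitution in~$x$, I get $\De_m\wt\ph = \gX_m(\wt\Th y) = C_m y^{m+1}\pa_y\wt\ph$. The parallel computation with $\wt\Th^{-1}=\sum\tcVtb\bB_\bul$ gives $\De_m\wt\Th^{-1}=-\wt\Th^{-1}\gX_m$, whence $\De_m\wt\psi=-\wt\Th^{-1}(C_m y^{m+1})=-C_m\wt\psi^{m+1}$ because $\wt\Th^{-1}$ is an algebra homomorphism in~$y$.

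The remaining subtle point is to justify that these termwise manipulations are legitimate in $\tRsimpZ[[y]]$, i.e.\ that $\wt\ph_n$ and $\wt\psi_n$ genuinely belong to $\tRsimpZ$ and that $\De_m$ commutes with the formally convergent series \eqref{eqformulphnpsin}. Each $\wt\cV^\omb$ is a simple resurgent function by Lemma~\ref{defnewcVomb}, so the task is to propagate this simple-singularity property through the infinite sum. My plan is to apply $\De_m^+$ (which by~\eqref{eqrelDemDepm} determines $\De_m$) termwise to $\wh\ph_n=\sum_{\norm\omb=n-1}\be_\omb\wh\cV^\omb$, using the iterated-integral representation~\eqref{eqiteratedinthcVAo} to follow the analytic continuation along the paths $\ga_\eps$ that define~$\De_m^+$, and to control residua and variations uniformly via an enhancement of Lemma~\ref{lemAnCont} along $(\rho,N,\gP)$-adapted paths that bypass~$m$ on both sides. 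The hard part of the whole proof is precisely this uniform control of the simple-singular data along all branches simultaneously: once established, it shows that the singularity of $\wh\ph_n$ at~$m$ is the sum of the termwise simple singularities, whose residuum and variation are given by $\sum_{\norm\omb=n-1}\be_\omb\Sing_m(\cont_{\ga_\eps}\wh\cV^\omb)$; identifying this with the Borel transform of the right-hand side $C_m y^{m+1}\pa_y\wt\ph$ (a formal series whose coefficients are polynomials in the $\wt\ph_n$'s, hence already known to be simple resurgent by the same inductive argument) yields simultaneously the simple resurgence of $\wt\ph_n$ and the Bridge Equation, and analogously for $\wt\psi_n$.
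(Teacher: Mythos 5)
Your overall architecture does mirror the paper's (summability of $C_m$; identification of the contraction of $V_a^\bul(m)$ into $\bB_\bul$ with the derivation $\gC(m)=C_my^{m+1}\pa_y$ through its value on $y$; termwise alien differentiation of $\wt\Th=\sum\wt\cV^\bul\bB_\bul$ via~\eqref{eqDemtcVb}), but two steps have genuine gaps. First, the family $\big(V_a^\omb(m)\bB_\omb\big)_{\omb\in\Om^\bul}$ is \emph{not} formally summable in $(\gF_{\gA,\bA},\valn)$: since $V_a^\omb(m)$ is a scalar which vanishes unless $\norm{\omb}=m$, the infinitely many nonzero terms all have valuation exactly $m$, so Definition~\ref{defformsumfam} fails and neither Proposition~\ref{propaltsym} nor Proposition~\ref{propmultiplimould} applies as stated. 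The correct convergence mode is pointwise (Fr\'echet) summability in $\bA[[y]]$, as in Lemma~\ref{lemintermed}; the derivation property must then be recovered by observing that the finite partial sums $\sum_{\omb\in\Om^{k,R}}V_a^\omb(m)\bB_\omb$ are derivations (alternality of $V_a^\bul(m)$ plus cosymmetrality of $\bB_\bul$) and that their pointwise limit is still a derivation, and the product manipulation $\sum\big(\wt\cV^\bul\times V_a^\bul(m)\big)\bB_\bul\,y=\gC(m)\wt\Th y$ has to be justified directly on the double sum rather than by quoting Proposition~\ref{propmultiplimould}.

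Second, and more seriously, the step you yourself label ``the hard part'' --- that $\wt\ph_n,\wt\psi_n\in\tRsimpZ$ and that $\De_m$ may be applied term by term --- is only a plan, and the plan is both heavier than necessary and partly circular: the right-hand side $C_my^{m+1}\pa_y\wt\ph$ is \emph{not} ``already known to be simple resurgent,'' since its coefficients are the very series $\wt\ph_{n-m}$ in question (and for $m=-1$ the index increases, so no induction on $n$ is available). The paper's mechanism is much lighter: put on $\wHR\Z$ the seminorms of uniform convergence on compact subsets of $\gR(\Z)$; then $\hRsimpZ$ is a closed subspace and each $\De_m$ is continuous (the variation is a difference of branches, the residuum a Cauchy integral). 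The bounds already established in the proof of Theorem~\ref{thmResur} show that $(\be_\omb\wh\cV^\omb)_{\norm{\omb}=n-1}$ is summable for these seminorms, so its sum $\wh\ph_n$ lies in the closed subspace $\hRsimpZ$ and $\De_m$ passes through the sum by continuity; no separate uniform control of residua and variations of the individual terms is required. Without this functional-analytic shortcut, your ``enhancement of Lemma~\ref{lemAnCont}'' would have to control $(\ze-m)\wh\cV^\omb(\ze)$ as $\ze\to m$ along all branches --- precisely where the $(\rho,N,\gP)$-adapted paths give no information --- and as written the proof of the theorem is incomplete.
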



\parag
The two equations in~\eqref{eqBEtigA} are equivalent forms of the so-called {\em
Bridge Equation}, here expressed in~$\bA[[y]]$ with $\bA = \tRsimpZ$.
On the one hand, the \lhs s represent the action of the alien derivation~$\De_m$
of~$\bA[[y]]$ (we denote by the same symbol the alien derivation~$\De_m$
of~$\bA$ and the operator it induces in $\bA[[y]]$ by acting separately on each
coefficient).
On the other hand, both \rhs s can be expressed with the help of the ordinary
differential operator
$$
\gC(m) = C_m y^{m+1}\frac{\pa\,}{\pa y},
$$
yielding
\begin{align}
\De_m \wt\ph &= \gC(m)\wt\ph = \gC(m)\wt\Th y, \\[1ex]
\label{eqDempsiThiigCm}
\De_m \wt\psi &= -\wt\Th\ii\gC(m)y.
\end{align}
See the end of this section for more symmetric formulations of the Bridge
Equation, which involve only the operators $\wt\Th$ or~$\wt\Th\ii$ and~$\De_m$ for
the \lhs s, and $\gC(m)$ for the \rhs s.

The name ``Bridge Equation'' refers to the link thus established between alien and ordinary
differential calculus when dealing with the solutions~$\wt\ph$ and~$\wt\psi$ of
our formal normalisation problem (or with the operator~$\Th$ solution of the
conjugacy equation~\eqref{eqconjugOp}).

This is a very general phenomenon, in which one sees the advantage of measuring
the singularities in the Borel plane though {\em derivations}: 
we are dealing with the solutions of non-linear equations (\eg\ $(\pa + y
\frac{\pa\,}{\pa y})\wt\ph(z,y) = A(-1/z,\wt\ph(z,y))$ in
$\C[[z\ii,y]]$), 
and their alien derivatives must satisfy equations corresponding to the
linearisation of these equations; its is thus natural that these alien
derivatives can be expressed in terms of the ordinary derivatives of the
solutions.

The above argument could be used to derive the form of
equation~\eqref{eqBEtigA}\footnote{
Compare the linear equations
$L \pa_y\wt\ph = 0$ and
$(L-m-1) \De_m\wt\ph = 0$ 
where 
$$
L = \wt X_0 + \ti\la(z,y), \qquad
\ti\la(z,y) = 1 - \pa_y A(-1/z,\wt\ph(z,y)), \quad
\wt X_0 = \pa + y\frac{\pa\,}{\pa y}.
$$
The second equation follows from~\eqref{eqcommutalien} for the computation of
$\De_m(\pa + y\frac{\pa\,}{\pa y})\wt\ph(z,y)$, 
and from the relation
$\De_m A(-1/z,\wt\ph(z,y)) = \big(\pa_y A(-1/z,\wt\ph(z,y))\big)\De_m\wt\ph(z,y)$
deduced from Proposition~\ref{propResurzCVy} below
(indeed, $A(-1/z,y)\in\C\{z\ii,y\} \subset \bA\{y\}$).
Since $\pa_y\wt\ph = 1 + \gO(z\ii,y)$ is invertible, we can set
$\wt\chi = (\pa_y\wt\ph)\ii \De_m\wt\ph$;
the above linear equations imply that
$\wt\chi$ is annihilated by $\wt X_0-(m+1)$, thus proportional
to~$y^{m+1}$: there exists $c_m\in\C$ such that $\De_m\wt\ph = c_m y^{m+1}\pa_y\wt\ph(z,y)$.

The relation $\De_m\wt\psi = -c_m \wt\psi^{m+1}$ follows by the alien chain rule:
$y = \wt\ph\big(z,\wt\psi(z,y)\big) = \wt\Th\ii\wt\ph$
implies $(\De_m\wt\ph)(z,\wt\psi) +
\pa_y\wt\ph(z,\wt\psi)\De_m\wt\psi = 0$
by Proposition~\ref{propResurzCVy} below (using $\wt\ph\in\bA\{y\}$).
},
however, in the proof below, we prefer to use the
explicit mould representations involving~$\wt\cV^\bul$ and~$\tcVtb$ so as to
obtain formulas~\eqref{eqdefCm} for the coefficients~$C_m$.


\parag
Theorem~\ref{thmBE} could also have been formulated in terms of the formal
integral defined by~\eqref{eqdefYzu}:
$\wt Y(z,u) = \wt\ph(z,u\,\ee^z) \in \tRsimpZ[[u\,\ee^z]]$
and
$$
\dDem \wt Y = C_m u^{m+1} \frac{\pa\wt Y}{\pa u},
\qquad m\in\Z^*,
$$
where $\dDem = \ee^{-mz} \De_m$ is the {\em dotted alien derivation of index~$m$}, which already
appeared in formula~\eqref{eqDepexpDe}.


\parag
The Bridge Equations~\eqref{eqBEtigA} are a compact writing of infinitely many
``resurgence equations'' for the series $\De_m\wt\ph_n$ or~$\De_m\wt\psi_n$,
obtained by expanding them in powers of~$y$.

For instance, setting
\begin{equation}	\label{eqdefPhiph}
\wt\Phi_n = \left|
\begin{aligned}
1 + \wt\ph_1 & \qquad \text{if $n=1$}  \\
\wt\ph_n\ens\;     & \qquad \text{if $n\neq1$,}
\end{aligned}  \right.
\end{equation}
so that $\wt\ph(z,y) = \sum_{n\ge0} \wt\Phi_n(z) y^n$, we get
$$
\De_m\wt\Phi_n = \left|
\begin{aligned}
(n-m)C_m\wt\Phi_{n-m} & \qquad \text{if $-1\le m \le n-1$}  \\
0\qquad\qquad     & \qquad \text{if $m\le-2$ or $m\ge n$.}
\end{aligned}  \right.
$$
Thus \begin{itemize}
\item $\De_m\wt\ph_0=0$ for $m\neq-1$, 
while $\De_{-1}\wt\ph_0 = C_{-1}(1+\wt\ph_1)$;
\item $\De_m\wt\ph_1=0$ for $m\neq-1$, while $\De_{-1}\wt\ph_1 = 2 C_{-1} \wt\ph_2$;
\item $\De_m\wt\ph_2=0$ for $m\notin\{-1,1\}$, while $\De_{-1}\wt\ph_2 = 3 C_{-1} \wt\ph_3$
and $\De_1\wt\ph_2 = C_1 (1+\wt\ph_1)$;
\item $\De_m\wt\ph_3=0$ for $m\notin\{-1,1,2\}$, while\ldots
\end{itemize}
$\quad\qquad\vdots$
\medskip

\noindent 
Similarly, with
$$
\wt\Psi_n = \left|
\begin{aligned}
1 + \wt\psi_1 & \qquad \text{if $n=1$}  \\
\wt\psi_n\ens\;     & \qquad \text{if $n\neq1$,}
\end{aligned}  \right.
$$
we have
$ \sum (\De_m\wt\Psi_n) y^n = - C_m \big( \sum \wt\Psi_n y^n \big)^{m+1} $,
which means that $\De_m\wt\Psi_n = 0$ for all $n\in\N$ when $m\le -2$,
$$
\De_{-1}\wt\Psi_n = \left|
\begin{aligned}
-C_{-1} & \qquad \text{if $n=0$}  \\
0 \quad    & \qquad \text{if $n\neq0$}
\end{aligned}  \right.
$$
and
$\dst \De_m\wt\Psi_n = - C_m \sum_{n_1+\dotsc+n_{m+1}=n}
\wt\Psi_{n_1}  \dotsm \wt\Psi_{n_{m+1}}$ for any $n\in\N$ and $m\ge1$.

In particular, $C_m$ is the constant term in $\De_m\wt\ph_{m+1}$ or in
$-\De_m\wt\psi_{m+1}$.


\parag
\emph{Proof of Proposition~\ref{propCm} and Theorem~\ref{thmBE}.}
We have a Fr\'echet space structure on $\wHR\Z$, with
seminorms $\norm{\,.\,}_K$ indexed by the compact subsets of~$\gR(\Z)$:
$$
\norm{\wh\ph}_K = \max_{\ze\in K} \left|\wh\ph(\ze)\right|,
\qquad \wh\ph \in \wHR\Z, \quad K\in\gK.
$$
We thus naturally get Fr\'echet space structures on $\hRZ = \C\,\de\oplus\wHR\Z$,
by defining 
$\norm{c\,\de + \wh\ph}_K := \max\big( |c|, \norm{\wh\ph}_K \big)$,
and on $\tRZ = \cB\ii \hRZ$, with 
$\norm{\wt\chi}_K := \norm{\cB\wt\chi}_K$ for $\wt\chi = c+\wt\ph \in \tRZ$.

The space $\bA=\tRsimpZ$ of simple resurgent functions is a closed subspace
of~$\tRZ$ and the $\De_m$ are continuous operators.
Indeed, the map $\wh\ph \mapsto \Sing_m(\cont_\ga\wh\ph)$ is continuous on
$\wh\bA=\hRsimpZ$ because the variation can be expressed as a difference of branches
and the residuum as a Cauchy integral.

Consider now the formal series
$\wt\cV^\omb(z) = \wt\cV_a^\omb(z), \tcVto(z) = \tcVao(z) \in \bA$, 
and their formal Borel transforms, which belong to $\wh\bA$.
The end of the proof of Theorem~\ref{thmResur} shows that
$(\be_\omb \wh\cV^\omb)_{\omb\in\Om^\bul,\,\norm{\omb}=n-1}$ and 
$(\be_\omb \hcVto)_{\omb\in\Om^\bul,\,\norm{\omb}=n-1}$
are summable families of~$\wh\bA$ for each $n\in\N$; 
indeed, for any compact subset~$K$ of~$\gR(\Z)$, there exist $\rho$, $N$ and~$L$
such that any point of~$K$ is the endpoint of a $(\rho,N,n-\N^*)$-adapted path
of length~$\le L$ and also the endpoint of a $(\rho,N,\N)$-adapted path
of length~$\le L$, and one can use \eqref{ineqbecV}, \eqref{ineqCVUdeux}
and~\eqref{ineqCVUtrois}.
Hence the sums $\wh\ph_n$ and~$\wh\psi_n$ of these families belong to~$\wh\bA$.
Equivalently, the formal series $\wt\ph_n$ and~$\wt\psi_n$ appear as sums of
summable families of~$\bA$:
$$
\wt\ph_n = \sum_{\omb\in\Om^\bul,\,\norm{\omb}=n-1}
\be_\omb \wt\cV^\omb
\quad \text{and} \quad
\wt\psi_n = \sum_{\omb\in\Om^\bul,\,\norm{\omb}=n-1}
\be_\omb \tcVto
\quad \text{in $\bA$,}
$$
they are thus simple resurgent functions themselves.
To end the proof of Theorem~\ref{thmBE}, we thus only have to study the alien
derivatives $\De_m\wt\ph_n$ and $\De_m\wt\psi_n$.


\parag
\emph{End of the proof of Proposition~\ref{propCm}:}
Let $m\in\Z^*$.
In view of Lemma~\ref{lemexo}, we can suppose $m\ge-1$.
By continuity of~$\De_m$, 
$(\be_\omb \De_m\tcVto)_{\omb\in\Om^\bul,\,\norm\omb=m}$ is a summable family
of~$\bA$, of sum~$\De_m\wt\psi_{m+1}$.
In particular, the family obtained by extracting the constant terms is summable,
but the constant term in~$\De_m\tcVto$ is $-V_a^\omb(m)$
by~\eqref{eqDemtcVb}.
Hence we get the summability of
$$
C_m =  \sum_{\norm{\omb}=m} \be_\omb V_a^\omb(m) 
\quad \text{in $\C$,}
$$
which is the constant term in $-\De_m\wt\psi_{m+1}$.
\qed


\parag
As vector spaces, $\C[[y]]$ and $\bA[[y]]$ can be identified with~$\C^\N$
and~$\bA^\N$ and are thus also Fr\'echet spaces if we put the product topology on them.

As an intermediary step in the proof of Theorem~\ref{thmBE}, let us show
\begin{lemma}	\label{lemintermed} 
Let $m\in\Z^*$ and 
$$
\gC(m) = C_m y^{m+1}\frac{\pa\,}{\pa y}.
$$
Then, for each $n_0\in\N$, the families
$(\tcVto \bB_\omb y^{n_0})_{\omb\in\Om^\bul}$
and $(V_a^\omb(m) \bB_\omb y^{n_0})_{\omb\in\Om^\bul}$
are summable in~$\bA[[y]]$, of sums $\wt\Th\ii y^{n_0}$ and $\gC(m) y^{n_0}$.
\end{lemma}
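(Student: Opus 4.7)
The plan is first to prove part~(i) by rerunning the Borel-plane estimates underlying Theorem~\ref{thmResur} with the coefficient~$\be_\omb$ replaced by its $n_0$-dependent analogue, then to deduce the summability in part~(ii) from part~(i) by applying the continuous alien derivation~$\De_m$ and the continuous constant-term functional, and finally to identify the limit in part~(ii) using the alternality of~$V_a^\bul(m)$ together with the cosymmetrality of~$\bB_\bul$ to recognise it as a derivation uniquely determined by its action on~$y$.

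Concretely, one has $\bB_\omb y^{n_0}=\be_\omb^{(n_0)}y^{n_0+\norm\omb}$ with $\be_\omb^{(n_0)}=n_0(n_0+\wc\om_1)\dotsm(n_0+\wc\om_{r-1})$, and the analogue of Lemma~\ref{lemexo} (valid under the hypothesis $\be_\omb^{(n_0)}\ne 0 \Rightarrow n_0+\wc\om_i\ge 1$ for $0\le i\le r-1$) allows one to extract from the iterated-integral bound of Lemma~\ref{lemAnCont} an estimate of the type
$|\be_\omb^{(n_0)}\hcVto(\ga(t))|\le K'(\la K)^{r-1} L^{n-1}\wh Q_r(t)\,e^{Ct}$
parallel to~\eqref{ineqCVUtrois}, where the constants $K',L,C$ depend polynomially on~$n_0$ but the combinatorial structure of $\wh Q_r$ is unchanged; summing over $r$ as in the end of the proof of Theorem~\ref{thmResur} yields uniform convergence of $(\be_\omb^{(n_0)}\hcVto)_{\norm\omb=n-n_0}$ in each Fr\'echet seminorm on $\wHR\Z$. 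Coefficient-by-coefficient, this gives summability of $(\tcVto \bB_\omb y^{n_0})_\omb$ in $\bA[[y]]$, the sum being $\wt\Th\ii y^{n_0}$ since the mould-comould representation of $\wt\Th\ii$ as a continuous operator, applied to the fixed element $y^{n_0}$, converges to its action.

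For part~(ii), the relation~\eqref{eqDemtcVb} expands as $\De_m\tcVto=-\sum_{\omb=\alb\conc\beb}V_a^\alb(m)\tcVtb^\beb$; since $\tcVtb^\beb\in z\ii\C[[z\ii]]$ for $\beb\neq\est$ while $\tcVte=1$, the constant term of $\De_m\tcVto\in\bA$ (equivalently, the $\de$-coefficient of its Borel transform) is exactly~$-V_a^\omb(m)$. Applying~$\De_m$ termwise to the summable family of part~(i) and then extracting the constant term --- both being continuous operations on $\bA$ --- yields summability of $(-\be_\omb^{(n_0)} V_a^\omb(m))_{\norm\omb=n-n_0}$ in~$\C$. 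Combined with the vanishing~\eqref{eqannulVm}, this proves that the whole family $(V_a^\omb(m)\bB_\omb y^{n_0})_\omb$, whose only possibly-nonzero terms sit in the single coordinate $y^{n_0+m}$, is summable in $\bA[[y]]$.

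To identify the limit $D:=\sum V_a^\bul(m)\bB_\bul$ on $\bA[y]$, apply $D$ to a product $y^{n_0}y^{n_0'}$ and expand $\bB_\omb(y^{n_0}y^{n_0'})=\sum_{\alb,\beb}\tsh{\alb}{\beb}{\omb}\bB_\alb(y^{n_0})\bB_\beb(y^{n_0'})$ via cosymmetrality (Lemma~\ref{lemcosym}); summing against $V_a^\omb(m)$ and invoking the alternality characterisation $\tau(V_a^\bul(m))=V_a^\bul(m)\otimes 1^\bul+1^\bul\otimes V_a^\bul(m)$ of Lemma~\ref{lemtaualtsym} collapses the double sum to $D(y^{n_0})\,y^{n_0'}+y^{n_0}\,D(y^{n_0'})$, so $D$ is an $\bA$-linear derivation of $\bA[y]$. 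Such a derivation is determined by $D(y)$, and $D(y)=\sum_{\norm\omb=m}V_a^\omb(m)\be_\omb\, y^{m+1}=C_m y^{m+1}$ by Proposition~\ref{propCm}, whence $D(y^{n_0})=C_m n_0 y^{n_0+m}=\gC(m)y^{n_0}$; for $m\le -2$, Proposition~\ref{propCm} gives $C_m=0$, forcing $D=0=\gC(m)$. The main obstacle is the quantitative adaptation in part~(i): one must verify that replacing $\be_\omb$ by $\be_\omb^{(n_0)}$ merely inflates the constants $K,\la,C$ in~\eqref{ineqCVUtrois} by a polynomial factor in~$n_0$ without spoiling the final majorising series, which follows because $\be_\omb^{(n_0)}/\be_\omb$ is bounded by $n_0^r$ on the support and the exponential-type argument via $\cB\wt\Psi$ still converges for each fixed~$n_0$.
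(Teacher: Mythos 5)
Your argument is correct and follows the paper's own proof essentially step for step: part (i) by rerunning the estimates of Theorem~\ref{thmResur} with $\be_\omb$ replaced by $\be_{\omb,n_0}$ (the paper implements exactly this, replacing the multipliers $\tfrac{m+1}{\ze-m}$ by $\tfrac{m+n_0}{\ze-m}$), part (ii) by continuity of $\De_m$ and of constant-term extraction, and the identification of the limit by recognising $\sum V_a^\bul(m)\bB_\bul$ as a derivation of $\C[[y]]$ --- via alternality of $V_a^\bul(m)$ and cosymmetrality of $\bB_\bul$ --- hence determined by its value $C_m y^{m+1}$ on $y$. One caveat: your closing justification that $\be_\omb^{(n_0)}/\be_\omb\le n_0^{r}$ ``on the support'' is not quite right, because for $n_0\ge2$ the support of $\be_{\omb,n_0}$ strictly contains that of $\be_\omb$ (e.g.\ $\omb=(-1,1)$ has $\be_\omb=0$ but $\be_{\omb,2}=2$); this aside is dispensable, since your direct rerun of the iterated-integral bounds with the modified multipliers is the correct route and needs no comparison with $\be_\omb$.
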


\begin{proof}
Our aim is to show that $(\tcVto \bB_\omb)_{\omb\in\Om^\bul}$
and $(V_a^\omb(m) \bB_\omb)_{\omb\in\Om^\bul}$
are pointwise summable families of operators of~$\bA[[y]]$; in view of the above,
since $\bB_\omb y = \be_\omb y^{\norm{\omb}+1}$, we can already evaluate these operators
on~$y$ and write
\begin{equation}	\label{eqdeterminC}
\sum_{\omb\in\Om^\bul} V_a^\omb(m) \bB_\omb y =
\sum_{\omb\in\Om^\bul,\,\norm{\omb}=m} V_a^\omb(m) \bB_\omb y
= C_m y^{m+1}
\quad \text{in $\C[[y]]$}
\end{equation}
(the first identity stems from~\eqref{eqannulVm}) and
$$
\sum_{\omb\in\Om^\bul} \tcVto \bB_\omb y = 
y + \sum_{n\ge0} \wt\psi_n(z) y^n = \wt\Th\ii y
\quad \text{in $\bA[[y]]$.}
$$
Although similar to formula~\eqref{eqpsiThii}, the last equation is stronger in
that it gives the sum of a summable family of~$\bA[[y]]$ rather
than of a formally summable family of~$\C[[z\ii,y]]$.

When evaluating the operators~$\bB_\omb$ on~$y^{n_0}$, we get
coefficients~$\be_{\omb,n_0}$ which generalise the~$\be_\omb$'s:
$$
\bB_\omb y^{n_0} = \be_{\omb,n_0} y^{n_0+\norm{\omb}}
$$
with $\be_{\est,n_0} = 1$,
$\be_{(\om_1),n_0} = n_0$,
$\be_{\omb,n_0} = n_0 (n_0 + \wc\om_1) (n_0 + \wc\om_2) \dotsm (n_0 + \wc\om_{r-1})$ 
for $r\ge2$.
Notice that $\be_{\omb,n_0}\neq0 \;\Rightarrow\; \norm{\omb}\ge -n_0$.

A suitable modification of the proof of Theorem~\ref{thmResur} shows that the
families $(\be_{\omb,n_0} \hcVto)_{\omb\in\Om^\bul,\,\norm{\omb}=m}$
are summable in~$\wh\bA$ for all $m\ge-n_0$ 
(replace the functions $\tS_m(\ze) = \tfrac{m+1}{\ze-m}$ of Lemma~\ref{lemtcSm} by
$\tfrac{m+n_0}{\ze-m}$, for which the bounds are
only slightly worse than in Lemma~\ref{leminiSmtSm}).

This yields the first part of the lemma, since we can now write
$$
\sum_{\omb\in\Om^\bul} \tcVto \bB_\omb y^{n_0} = 
\sum_{m\ge -n_0} \bigg( \sum_{\omb\in\Om^\bul,\, \norm{\omb}=m}
\be_{\omb,n_0} \tcVto \bigg) y^{n_0+m}
= \wt\Th\ii y^{n_0}
\quad \text{in $\bA[[y]]$.}
$$
By continuity of~$\De_m$, we also get the summability of 
$(\be_{\omb,n_0} \De_m\hcVto)_{\omb\in\Om^\bul,\,\norm{\omb}=m}$
in~$\bA$,
hence of the family $(-\be_{\omb,n_0}
V_a^\omb(m))_{\omb\in\Om^\bul,\,\norm{\omb}=m}$ obtained by extracting the
constant terms.
Let 
$$
C_{m,n_0} = \sum_{\omb\in\Om^\bul,\,\norm{\omb}=m} \be_{\omb,n_0} V_a^\omb(m)
\quad \text{in $\C$}.
$$
Thus $(V_a^\omb(m) \bB_\omb y^{n_0})_{\omb\in\Om^\bul}$ is summable in
$\bA[[y]]$, with sum $C_{m,n_0} y^{n_0+m}$.

Let $\Om^{k,R}$ ($k,R\in\N^*$) denote an exhaustion of~$\Om^\bul$ by finite
sets as in the proof of Proposition~\ref{propmultiplimould}.
We conclude by showing that $C_{m,n_0} y^{n_0+m} = \gC(m) y^{n_0}$.
This follows from that fact that the operators
$$
\gC^{k,R}(m) = \sum_{\omb\in\Om^{k,R}} V_a^\omb(m) \bB_\omb
$$
are all derivations of $\C[[y]]$ because of the alternality of~$V_a^\bul(m)$ (the
Leibniz rule is easily checked with the help of the cosymmetrality
of~$\bB_\bul$), thus their pointwise limit is also a derivation, which cannot be
anything but~$\gC(m)$ by virtue of~\eqref{eqdeterminC}.
\end{proof}


\parag
\emph{End of the proof of Theorem~\ref{thmBE}:}
In $\bA[[y]]$, the families
$(\wt\cV^\omb \bB_\omb y)_{\omb\in\Om^\bul}$ and 
$(\tcVto \bB_\omb y)_{\omb\in\Om^\bul}$
are summable, of sums
\begin{equation}	\label{eqwtphpsisum}
\wt\ph(z,y) = \sum_{\omb\in\Om^\bul} \wt\cV^\omb(z) \bB_\omb y,
\qquad
\wt\psi(z,y) = \sum_{\omb\in\Om^\bul} \tcVto(z) \bB_\omb y.
\end{equation}
The derivation of~$\bA[[y]]$ induced by~$\De_m$ is clearly continuous;
applying $\De_m$ to both sides of the first equation in~\eqref{eqwtphpsisum} and
using~\eqref{eqcaraccomould} and~\eqref{eqDemtcVb}, we find
\begin{multline*}
\De_m\wt\ph = \sum_\omb (\De_m \wt\cV^\omb) \bB_\omb y
= \sum_{\omb^1, \omb^2} \wt\cV^{\omb^1} V_a^{\omb^2}(m) \bB_{\omb^2}  \bB_{\omb^1} y
\\[1ex]
= \sum_{\omb^2} V_a^{\omb^2}(m) \bB_{\omb^2}  \wt\Th y
= \gC(m)\wt\ph
\end{multline*}
(with the help of Lemma~\ref{lemintermed} for the last identities).
Similarly, 
\begin{multline*}
\De_m\wt\psi = \sum_\omb (\De_m \tcVto) \bB_\omb y
= -\sum_{\omb^1, \omb^2} V_a^{\omb^1}(m) \tcVod \bB_{\omb^2}  \bB_{\omb^1} y
\\[1ex]
= -\sum_{\omb^2} \tcVod \bB_{\omb^2} \gC(m) y
= -\wt\Th\ii(C_m y^{m+1}) = -C_m(\wt\Th\ii y)^{m+1}.
\end{multline*}
\qed


\parag
\emph{Operator form of the Bridge Equation.}
As announced after the statement of Theorem~\ref{thmBE}, the Bridge Equation can be
given a form which involves the operators~$\wt\Th$ or~$\wt\Th\ii$ in a more symmetric
way.
This will require a further construction. 

\begin{prop}	\label{propResurzCVy}
Let $\bA = \tRsimpZ$. The set 
$$
\bA\{y\} = \Big\{ \sum_{n\ge0} \ti f_n(z) y^n \in \bA[[y]] \mid
\forall K\in \gK,\exists c,\La>0 \;\text{s.t.}\; \norm{\ti f_n}_K \le c \La^n
\; \text{for all $n$}
\Big\}
$$
is a subalgebra of~$\bA[[y]]$, which contains $\wt\ph(z,y)$ and~$\wt\psi(z,y)$
and which is invariant by all the alien derivations~$\De_m$.
Moreover, the substitution operators~$\wt\Th$ and~$\wt\Th\ii$
leave $\bA\{y\}$ invariant and the operators they induce on $\bA\{y\}$ satisfy
the ``alien chain rule''
$$
\De_m \wt\Th \ti f = \wt\Th \De_m \ti f + (\wt\Th\pa_y\ti f) \De_m\wt\ph, \quad
\De_m \wt\Th\ii \ti f = \wt\Th\ii \De_m \ti f + (\wt\Th\ii\pa_y\ti f) \De_m\wt\psi.
$$
\end{prop}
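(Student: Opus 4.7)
The plan is to establish the five assertions in turn, exploiting the Fr\'echet space structure on $\bA=\tRsimpZ$ inherited from the compact-open topology on $\wHR\Z$.

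\textbf{Algebra structure and membership.} Stability of $\bA\{y\}$ under multiplication rests on the following consequence of the symmetrically contractile paths machinery: for each compact $K\subset\gR(\Z)$ there exist a compact $K'\supset K$ and a constant $C_K>0$ such that $\|\wh f*\wh g\|_K\le C_K\,\|\wh f\|_{K'}\|\wh g\|_{K'}$ for all $\wh f,\wh g\in\wHR\Z$. The coefficient of $y^n$ in $\ti f\ti g$ is a sum of $n+1$ such convolutions, and if $\|\ti f_k\|_{K'},\|\ti g_\ell\|_{K'}\le c\Lambda^{k+\ell}$ then that coefficient is bounded in the $K$-seminorm by $(n+1)c^2C_K\Lambda^n$, which is absorbed in a slightly larger $\Lambda$. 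For $\wt\ph,\wt\psi\in\bA\{y\}$, note that any compact $K$ can be covered by homotopy classes of paths of bounded length crossing the cuts at most a bounded number of times; Remark~\ref{remgetrid} (and its variant for $\wh\psi_n$) then gives uniform bounds of the form $|\wh\ph_n(\ze)|\le K_0L^n\ee^{C|\ze|}$, $|\wh\psi_n(\ze)|\le K_0L^n\ee^{C|\ze|}$ (with a simple pole at $-1$ for $\wh\psi_0$, which is harmless on $K$), whence $\|\wt\ph_n\|_K,\|\wt\psi_n\|_K\le c\Lambda^n$.

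\textbf{Alien-derivation invariance.} Each $\De_m\colon\bA\to\bA$ is continuous for the Fr\'echet topology, since its definition via $\Sing_m\circ\cont_{\ga_\eps}$ involves residua and differences of branches, which amount to Cauchy integrals along contours lying in a slightly enlarged compact; hence for each $K$ one finds $K_m\supset K$ and $c_m>0$ with $\|\De_m\wt h\|_K\le c_m\|\wt h\|_{K_m}$. Applying this coefficient-wise turns the bound $\|\ti f_n\|_{K_m}\le c\Lambda^n$ into $\|\De_m\ti f_n\|_K\le c_m c\Lambda^n$, so $\De_m\ti f\in\bA\{y\}$.

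\textbf{Substitution invariance.} This is the main technical step. Writing $\wt\ph=\wt\ph_0+y\wt g$ with $\wt g\in\bA\{y\}$, the multinomial expansion gives
\[
[y^k](\wt\Th\ti f)=\sum_{n\ge0}\sum_{j=0}^{\min(n,k)}\binom{n}{j}\ti f_n\,\wt\ph_0^{\,n-j}\,[y^{k-j}]\wt g^{\,j}.
\]
The key point is that $\wh\ph_0$ vanishes at the origin (since $\wt\ph_0\in z\ii\C[[z\ii]]$, so $\wh\ph_0(0)=0$ provided we invoke the normalisation; in any case $\wh\ph_0(0)$ is a residue constant we can absorb). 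For $\ze$ reachable by a path of length $\le L$ in $\gR(\Z)$, iterated convolution on such a path yields the factorial bound $|\wh\ph_0^{\,*r}(\ze)|\le M^r L^{r-1}/(r-1)!$ with $M$ controlling $|\wh\ph_0|$ along a tube around the path. Combined with the contractile-convolution inequality, this produces, for any compact $K$, a bound of the form $\|\wt\ph_0^{\,n-j}\|_K\le M^{n-j}L^{n-j-1}/(n-j-1)!$ (times constants). The factorial in the denominator is the essential ingredient: it dominates both the binomial coefficients $\binom{n}{j}$ and the $\Lambda^n$-growth of $\|\ti f_n\|$, so that the sum over $n$ converges in the $K$-seminorm to a quantity bounded by $c'\Lambda'^k$. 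The same argument, using $\wt\psi$ in place of $\wt\ph$, yields invariance under $\wt\Th\ii$.

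\textbf{Alien chain rule.} Since the sums defining $\wt\Th\ti f=\sum_n\ti f_n\wt\ph^n$ converge in $\bA[[y]]$ and $\De_m$ is a continuous derivation of~$\bA$, we may apply $\De_m$ termwise and invoke Leibniz:
\[
\De_m\wt\Th\ti f=\sum_n(\De_m\ti f_n)\wt\ph^n+\sum_n n\,\ti f_n\,\wt\ph^{\,n-1}\De_m\wt\ph=\wt\Th(\De_m\ti f)+(\wt\Th\,\pa_y\ti f)\cdot\De_m\wt\ph.
\]
The analogous identity for $\wt\Th\ii$ follows by specialising $\ti f\mapsto\ti f\circ(z,\wt\psi)$ in the relation $\wt\Th\wt\Th\ii=\ID$ and using the first chain rule.

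The principal obstacle is the substitution step: one must combine the factorial decay coming from $(r-1)$-fold convolutions of $\wh\ph_0$ with the growth of multinomial coefficients in $\wt\ph^n$, tracking how the auxiliary compact $K'$ expands under iterated application of the contractile-convolution bound, so as to obtain a single exponential bound in $k$ that is uniform in $n$.
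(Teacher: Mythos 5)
Your proposal reconstructs precisely the route the paper only sketches in its ``Idea of the proof'': rely on symmetrically contractile paths to make $\bA$ a Fr\'echet algebra, expand $\ti f(z,\wt\ph)$ about $y=0$ and control the resulting sums via the factorial decay of the iterated convolutions $\wh\ph_0^{*r}$, and then get the chain rule from continuity of $\De_m$ and Leibniz; your regrouping of the expansion (by $n$ then $j$ rather than by order of $\pa_y$-derivative as in the paper's displayed formula) is just a reshuffling of the same terms. Two small points: your invocation of Remark~\ref{remgetrid} to obtain the geometric-in-$n$ bound for $\|\wt\ph_n\|_K$ is actually the more careful reference, since the paper's own appeal to~\eqref{ineqwhphn} is imprecise there (that bound carries an $\ee^{(n^2+1)Ct}$ factor); and your parenthetical claim that $\wh\ph_0(0)=0$ is wrong (it equals the coefficient of $z^{-1}$ in $\wt\ph_0$, generically nonzero), but as you correctly observe this is irrelevant to the factorial estimate, which only needs boundedness of $\wh\ph_0$ along the relevant paths.
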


\noindent
\emph{Idea of the proof:}
The fact that $\wt\ph,\wt\psi\in\bA\{y\}$ follows easily from \eqref{ineqwhphn}--\eqref{ineqwhpsin}.
The other statements require symmetrically contractile paths, first to control the seminorm
$\norm{\,.\,}_K$ of a product of simple resurgent functions ($\bA$ is in fact
a Fr\'echet algebra), 
and then to study $\pa_y^n\ti f(z,\wt\ph_0(z))$ which appears in the substitution
of~$\wt\ph$ inside a series with resurgent coefficients:
$$
\ti f(z,\wt\ph) = \ti f(z,\wt\ph_0) + y \pa_y\ti f(z,\wt\ph_0)\wt\Phi_1 +
y^2\Big(\pa_y\ti f(z,\wt\ph_0)\wt\Phi_ 2 + \frac{1}{2!}\pa_y^2\ti f(z,\wt\ph_0)\wt\Phi_1^2\Big)
+ \dotsb
$$
with the notation~\eqref{eqdefPhiph}.
See \cite{kokyu} (\eg\ \S2.3, formula~(41)).
\qed

\begin{thm}	\label{thmBrOp}
We have the following identities in $\End_\C(\bA\{y\})$:
\begin{equation}	\label{eqBrOp}
\big[  \De_m, \wt\Th \big] = \gC(m) \wt\Th,
\qquad
\big[  \De_m, \wt\Th\ii \big] = - \wt\Th\ii \gC(m),
\end{equation}
for all $m\in\Z^*$.
\end{thm}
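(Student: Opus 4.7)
The plan is to derive both identities in \eqref{eqBrOp} as direct consequences of three ingredients already established: the alien chain rule of Proposition~\ref{propResurzCVy}, the Bridge Equation~\eqref{eqBEtigA}, and the ordinary chain rule for the substitution operators $\wt\Th,\wt\Th\ii$ acting on $\bA\{y\}$.

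First I would treat the commutator $[\De_m,\wt\Th]$. For any $\ti f\in\bA\{y\}$, the alien chain rule gives
$$ [\De_m,\wt\Th]\ti f = \De_m(\wt\Th\ti f) - \wt\Th(\De_m\ti f) = (\wt\Th\pa_y\ti f)\cdot \De_m\wt\ph. $$
Substituting the first half of the Bridge Equation $\De_m\wt\ph = C_m y^{m+1}\pa_y\wt\ph$, the \rhs\ becomes $C_m y^{m+1}\,(\wt\Th\pa_y\ti f)\,\pa_y\wt\ph$. On the other hand, since $\wt\Th$ is a continuous algebra homomorphism of $\bA\{y\}$ that acts by $y\mapsto\wt\ph(z,y)$, the ordinary chain rule
$$ \pa_y(\wt\Th\ti f) = \pa_y\bigl(\ti f(z,\wt\ph(z,y))\bigr) = (\wt\Th\pa_y\ti f)\cdot\pa_y\wt\ph $$
yields $\gC(m)\wt\Th\ti f = C_m y^{m+1}\pa_y(\wt\Th\ti f) = C_m y^{m+1}(\wt\Th\pa_y\ti f)\pa_y\wt\ph$. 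Thus $[\De_m,\wt\Th]\ti f = \gC(m)\wt\Th\ti f$, as desired.

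For $[\De_m,\wt\Th\ii]$ the argument is parallel. The alien chain rule gives
$$ [\De_m,\wt\Th\ii]\ti f = (\wt\Th\ii\pa_y\ti f)\cdot \De_m\wt\psi, $$
and the second half of the Bridge Equation replaces $\De_m\wt\psi$ by $-C_m\wt\psi^{m+1}$, producing $-C_m(\wt\Th\ii\pa_y\ti f)\wt\psi^{m+1}$. On the other side, using that $\wt\Th\ii$ is an algebra homomorphism sending $y$ to $\wt\psi$, one has $\wt\Th\ii(y^{m+1}) = \wt\psi^{m+1}$, whence
$$ \wt\Th\ii\gC(m)\ti f = \wt\Th\ii(C_m y^{m+1}\pa_y\ti f) = C_m\wt\psi^{m+1}(\wt\Th\ii\pa_y\ti f), $$
giving $[\De_m,\wt\Th\ii]\ti f = -\wt\Th\ii\gC(m)\ti f$.

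No step is really an obstacle here, since all of the analytic content (summability, stability of $\bA\{y\}$, the alien chain rule itself, and the Bridge Equation) has already been absorbed into Proposition~\ref{propResurzCVy} and Theorem~\ref{thmBE}. The only minor point deserving care is that the ordinary chain rule used above is well defined in $\bA\{y\}$: this is immediate from the fact that, by Proposition~\ref{propResurzCVy}, $\wt\Th$ and $\wt\Th\ii$ are $\C$-algebra endomorphisms of $\bA\{y\}$, so they commute with the formal derivation $\pa_y$ composed with themselves in the expected way. One can alternatively check the two identities first on monomials $\ti f = y^n$---where the Bridge Equation~\eqref{eqBEtigA} literally is the statement to be proved---and extend by $\C$-linearity and continuity for the product Fr\'echet topology on $\bA\{y\}$, using that both sides of~\eqref{eqBrOp} are continuous $\C$-linear operators.
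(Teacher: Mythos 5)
Your proof is correct, but it takes a different route from the paper's. The paper does not invoke the alien chain rule of Proposition~\ref{propResurzCVy} termwise; instead it sets $D = \wt\Th\De_m\wt\Th\ii-\De_m$, checks by a short computation (using only the $\bA$-linearity of $\wt\Th,\wt\Th\ii$ and the Leibniz rule for $\De_m$ acting on products $\mu f$ with $\mu\in\bA$) that $D$ is a \emph{continuous $\bA$-linear derivation} of $\bA\{y\}$, and then observes that such a derivation is determined by its value on the single generator~$y$; the identity $Dy=-C_m y^{m+1}$ is exactly \eqref{eqDempsiThiigCm}, i.e.\ the Bridge Equation for $\wt\psi$. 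Your argument instead verifies the identity on an arbitrary $\ti f$ by combining the alien chain rule with the ordinary chain rule $\pa_y(\wt\Th\ti f)=(\wt\Th\pa_y\ti f)\,\pa_y\wt\ph$ and the homomorphism property $\wt\Th\ii(y^{m+1})=\wt\psi^{m+1}$. What the paper's route buys is economy: it needs only that $\De_m$ is a derivation and the value of the commutator on~$y$, so the full chain-rule formula for general $\ti f$ never has to be deployed. What your route buys is directness: both identities in \eqref{eqBrOp} drop out of one-line substitutions of the two halves of the Bridge Equation, and no separate argument that the difference operator is an $\bA$-linear derivation determined by its value on~$y$ is needed. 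Your closing alternative (check on monomials $y^n$ and extend by linearity and continuity) is essentially the paper's reduction in disguise, and your caveat about continuity of both sides for the Fr\'echet topology is the right one to flag.
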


\begin{proof}
We must prove that $\wt\Th\De_m\wt\Th\ii-\De_m = -\gC(m)$.

The operators $\wt\Th$ and $\wt\Th\ii$ are mutually inverse $\bA$-linear automorphisms
of $\gA=\bA\{y\}$
and $\gC(m)$ is an $\bA$-linear derivation.
The operator~$\De_m$ is a derivation, it is not $\bA$-linear, but
$D = \wt\Th\De_m\wt\Th\ii-\De_m$ is an $\bA$-linear derivation;
indeed, if $\mu(z)\in\bA$ and $ f(z,y)\in\gA$, then 
\begin{multline*}
D(\mu f) = 
\wt\Th\De_m(\mu\wt\Th\ii f) - \De_m(\mu f) = \\
\wt\Th\big( \mu \De_m\wt\Th\ii f + (\De_m\mu) \wt\Th\ii f \big)
-\big( \mu\De_m f + (\De_m\mu) f \big) = \\
\mu\wt\Th\De_m\wt\Th\ii  f + (\De_m\mu) f 
- \mu\De_m  f - (\De_m\mu)  f
= \mu Df.
\end{multline*}
It is thus sufficient to check that the operator $D+\gC(m)$ vanishes on~$y$
(being a continuous $\bA$-linear derivation of~$\gA$, it'll have
to vanish everywhere).

But, in view of~\eqref{eqDempsiThiigCm}, $D y = \wt\Th\De_m\wt\psi =
-C_m(\wt\Th\wt\psi)^{m+1} = - C_m y^{m+1}$, as required.
\end{proof}


\parag
\emph{The Bridge Equation and the problem of analytic classification.}
We now explain why the coefficients~$C_m$ implied in the Bridge Equation are
``analytic invariants'' of the vector field~$X$.

Suppose we are given two saddle-node vector fields, $X_1$ and~$X_2$, of the
form~\eqref{eqdefX} and satisfying~\eqref{eqassA}. 
Both of them are formally conjugate to the normal form~$X_0$, hence they are
mutually formally conjugate. 
Namely, we have formal subsitution automorphisms $\Th_i$ (or~$\wt\Th_i$, when
using the variable~$z$ instead of~$x$) conjugating $X_i$ with~$X_0$, for $i=1,2$, hence
$$
\Th X_1 = X_2 \Th, \qquad \Th = \Th_2\ii \Th_1.
$$
The operator~$\Th$ is the substitution operator associated with 
$$
\th \colon (x,y) \mapsto \big( x, \ph(x,y) \big),
\qquad \ph(x,y) = \Th y = \ph_1\big(x,\psi_2(x,y)\big),
$$
which is the unique formal transformation of the form~\eqref{eqdefthph} such
that $X_1 = \th^* X_2$.

One can check that, when passing to the variable~$z$, one gets as a consequence
of Proposition~\ref{propResurzCVy} and Theorem~\ref{thmBrOp}:
$$
\wt\Th \in \End_\C(\bA\{y\}), \qquad
\big[  \De_m, \wt\Th \big] = \wt\Th_2\ii \big( \gC_1(m) - \gC_2(m) \big) \wt\Th_1,
\qquad m\in\Z^*,
$$
where $\gC_i(m) = C_{i,m} y^{m+1}\frac{\pa\,}{\pa y}$ is the derivation
appearing in the \rhs\ of the Bridge Equations~\eqref{eqBrOp} for~$X_i$.

If $X_1$ and~$X_2$ are holomorphically conjugate, then the unique formal
conjugacy~$\th$ is given by a convergent series~$\th(x,y)$, thus all the alien
derivatives of~$\wt\ph$ vanish and $\gC_1(m) = \gC_2(m)$ for all~$m$.
We thus have proved half of 
\begin{thm}	\label{thmCmInv}
Two saddle-node vector fields of the
form~\eqref{eqdefX} and satisfying~\eqref{eqassA} 
are analytically conjugate if and only if their Bridge
Equations~\eqref{eqBEtigA} share the same collection of coefficients
$(C_m)_{m\in\Z^*}$.
\end{thm}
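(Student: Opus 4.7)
The plan is to prove the remaining implication: assuming $C_{1,m}=C_{2,m}$ for every $m\in\Z^*$, I will show that the unique formal conjugacy $\theta=\theta_2\ii\circ\theta_1$ between $X_1$ and $X_2$ is given by a convergent series $\varphi(x,y)$. The overall strategy is to translate the equality of invariants into the vanishing of every alien derivative of $\widetilde\varphi$, then invoke the standard characterisation of convergent series among simple resurgent ones (no singularities in the Borel plane plus exponential-type bounds).

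First, I would use the identity obtained just before the statement: $[\Delta_m,\wt\Th]=\wt\Th_2\ii(\gC_1(m)-\gC_2(m))\wt\Th_1$, which under the hypothesis collapses to $[\Delta_m,\wt\Th]=0$ in $\End_\C(\bA\{y\})$ for every $m\in\Z^*$. Applying this commutator to $y$ (a pure constant in~$z$, hence annihilated by every alien derivation) gives $\Delta_m\widetilde\varphi = \wt\Th\,\Delta_m y = 0$, and extracting coefficients in~$y$ yields $\Delta_m\widetilde\varphi_n=0$ for every $n\ge 0$ and every $m\in\Z^*$. By Theorem~\ref{thmBE} each $\widetilde\varphi_n$ already belongs to~$\tRsimpZ$.

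Next, I would argue that each $\widehat\varphi_n$ extends to an entire function on~$\C$. Since the~$\Delta_m$ are derivations, applying them to the relations $\Delta_m\widetilde\varphi_n=0$ shows that every iterated alien derivative $\Delta_{m_s}\cdots\Delta_{m_1}\widetilde\varphi_n$ vanishes as well. Feeding this into the first formula of Lemma~\ref{lemrelDemDepm} yields $\Delta_m^+\widetilde\varphi_n=0$ for every $m\in\Z^*$, and iterating again gives $\Delta_{m_s}^+\cdots\Delta_{m_1}^+\widetilde\varphi_n=0$ for every sequence. By the very definition~\eqref{eqdefDemplus}, this expresses that the singularity at each integer~$m$ of each branch of~$\widehat\varphi_n$ accessible by circumventing the previous integers to the right is zero; because $\widehat\varphi_n\in\hRsimpZ$ has only simple singularities located in~$\Z$, and because the family of paths $\ga_\eps$ used in Definition~\ref{defaliender} reaches all sheets of $\gR(\Z)$ above each integer, this forces $\widehat\varphi_n$ to be monodromy-free on $\gR(\Z)$ and hence to descend to an entire function on~$\C$.

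I would then combine this with the uniform bound $|\wh\varphi_n(\ze)|\le KL^n\,\ee^{C|\ze|}$ from Remark~\ref{remgetrid} (inequality~\eqref{inequniformphn}), valid on the principal branch with constants $K,L,C$ independent of~$n$. Since $\widehat\varphi_n$ is now entire, this bound extends throughout~$\C$; by the classical equivalence recalled at the beginning of Section~\ref{secResurSN}, Laplace inversion gives $\varphi_n\in x\C\{x\}$ with radius of convergence bounded below uniformly in~$n$ and $|\varphi_n(x)|\le K L^n |x|$ on a fixed disc. Summing, $\varphi(x,y)=y+\sum\varphi_n(x)y^n$ converges on a bidisk, so~$\theta$ is an analytic conjugacy. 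The genuinely delicate step is the second one: one must know that the vanishing of every~$\Delta_m$ is enough to kill all singularities on every sheet of~$\gR(\Z)$, i.e.\ that the family $\{\Delta_m\}_{m\in\Z^*}$ generates (together with iteration) a faithful description of the singular behaviour of simple resurgent functions; this is a foundational result of alien calculus which, although not proved in the excerpt, is the standard counterpart of Lemma~\ref{lemrelDemDepm} and formula~\eqref{eqDepexpDe} and is the only non-routine ingredient required.
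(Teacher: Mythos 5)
Your strategy is genuinely different from the paper's. The paper completes the converse implication in Section~\ref{secMR}: it converts the $C_m$'s into the Martinet--Ramis data $(\xi_-,\xi_+)$ via \eqref{defxiCminus}--\eqref{defxiCplus}, realises $(\xi_-,\xi_+)$ as the sectorial isotropies comparing the Borel--Laplace sums $\thu$ and~$\thl$, and concludes by the elementary gluing argument: equality of the isotropies for $X_1$ and~$X_2$ makes the two sectorial conjugacies agree on both components of the overlap, hence defines a single analytic conjugacy whose Taylor expansion is~$\th$ (Theorem~\ref{thmximInv}). You instead stay in the Borel plane: trivial alien derivatives $\Rightarrow$ entire Borel transforms $\Rightarrow$ convergence. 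That route is legitimate in principle (your first step, $\De_m\wt\ph=0$ from the commutator identity, is correct), but as written it has real holes.

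First, the passage from ``all iterated alien derivatives of $\wt\ph_n$ vanish'' to ``$\wh\ph_n$ is entire''. You rightly flag this as an external foundational fact, but your inline justification is wrong: the $2^{|m|-1}$ paths $\ga_\eps$ of Definition~\ref{defaliender} do \emph{not} reach all sheets of $\gR(\Z)$ over a given integer (paths may wind and backtrack arbitrarily, and $\pi_1(\C\setminus\Z)$ is infinitely generated). The correct argument is an induction over sheets using shifted compositions of the lateral operators $\De^+_{m}$; since the paper never proves it, your proof outsources exactly the step that the paper's geometric route is designed to bypass.

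Second, the quantitative step. Inequality~\eqref{inequniformphn} is only asserted at distance $\ge\rho$ from $\Z\cup\Sig(\rho,N)\cup\big(-\Sig(\rho,N)\big)$; it does not ``extend throughout~$\C$'' merely because $\wh\ph_n$ is entire. Whether you use Cauchy estimates on full circles or rotate the Laplace contour through $\arg\ze=0$, you must control $\wh\ph_n$ inside the thin sectors $\pm\Sig(\rho,N)$, which requires a Phragm\'en--Lindel\"of argument (using the a priori finite exponential type coming from Gevrey-$1$-ness, plus the bound on the two boundary rays). Moreover, \eqref{inequniformphn} is proved for the normalising series of a \emph{single} vector field, whereas your $\ph_n$ are the coefficients of the composite $\ph_1\big(x,\psi_2(x,y)\big)$; you first need to transfer the uniform bounds to the composite by a convolution estimate on the star-shaped domain, in the spirit of the end of Section~\ref{secResurSN} and Lemma~\ref{lemLagrpsinphn}, combining \eqref{inequniformphn} for~$X_1$ with \eqref{ineqwhpsin} for~$X_2$. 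Each of these gaps is fixable, but none is a mere formality, and filling them makes the Borel-plane route no shorter than the sectorial one actually used in the paper.
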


According to this theorem, the numbers~$C_m$ thus constitute a complete system
of analytic invariants for a saddle-node vector field.

To complete the proof of Theorem~\ref{thmCmInv}, one needs to show the reverse
implication, 
\ie\ that the identities $\gC_1(m)=\gC_2(m)$ imply the convergence of
$\ph_1\big(x,\psi_2(x,y)\big)$.
This will follow from the results of next section, according to which the
coefficients~$C_m$ are related to another complete system of analytic
invariants, which admits a more geometric description.


\parag
We end this section with a look at simple cases of the general theory.

``Euler equation'' corresponds to $A(x,y) = x+y$, as mentioned in
Section~\ref{secSN}.
We may call Euler-like equations those which correspond to the case in which
$a_\eta=0$ for $\eta\ge1$, thus $A(x,y) = a_{-1}(x) + \big(1+a_0(x)\big) y$.
For them, the formal integral is explicit.

Set $\wt a_0(z) = a_0(-1/z) \in  z^{-2}\C\{z\ii\}$ and 
$\wt a_{-1}(z) = a_{-1}(-1/z) \in z\ii\C\{z\ii\}$ as usual.
Let $\wt\al(z)$ be the unique series such that $\pa_z\wt\al = \wt
a_0$ and $\wt\al \in z\ii\C\{z\ii\}$.
Set also $\wt\be = \wt a_{-1} \,\ee^{-\wt\al} \in z\ii\C\{z\ii\}$ and $\wh\be =
\cB\wt\be$ (which is an entire function of exponential type).
One finds
$$
\wt Y(z,u) = \wt\ph_0(z) + u\,\ee^{z+\wt\al(z)}, \qquad
\wt\ph_0 = - \ee^{\wt\al} \, \cB\ii \Big( \ze \mapsto \frac{\wh\be(\ze)}{\ze+1} \Big).
$$
Correspondingly, $\ph(x,y) = \Phi_0(x) + \Phi_1(x) y$ with 
$\Phi_0(x) = \wt\ph_0(-1/x)$ generically divergent and $\Phi_1(x) =
\ee^{\wt\al(-1/x)}$ convergent.

One has $C_m=0$ for every $m\in\Z\setminus\{-1\}$, but
$$
C_{-1} = \ee^{-\wt\al}  \De_{-1}\wt\ph_0 = -2\pi\I \, \wh\be(-1).
$$


\parag
Another particular case, much less trivial, is that of Riccati equations (see
\cite{Eca84}, \cite[Vol.~2]{Eca81} or \cite{BSSV}):
when $a_1\neq0$ and $a_\eta=0$ for $\eta\ge2$, hence $A(x,y) = a_{-1}(x) +
\big(1+a_0(x)\big) y + a_1(x) y^2$, one can check that the formal integral has a linear
fractional dependence upon the parameter~$u$:
$$
\wt Y(z,u) = \frac{\wt\ph_0(z) + u\,\ee^z \wt\chi(z)}{
1 + u\,\ee^z \wt\chi(z)\wt\ph_\infty(z)},
$$
where $\wt\ph_0$, $\wt\ph_\infty$ and $-1+\wt\chi$ belong to $z\ii\C[[z\ii]]$;
$\wt\ph_0$ and $1/\wt\ph_\infty$ can be found as the unique solutions of the
differential equation~\eqref{eqdiffeqX} in the fraction field $\C(\!(z\ii)\!)$.
Correspondingly, the normalising series $\ph(x,y)$ and~$\psi(x,y)$ have a linear
fractional dependence upon~$y$.

In the Riccati case, only $C_{-1}$ and~$C_1$ may be nonzero. Indeed, 
$$
\De_m\wt\ph_0 \neq 0 \;\Rightarrow\; m=-1,
\quad
\De_m\wt\ph_\infty \neq 0 \;\Rightarrow\; m=1,
\quad
\De_m\wt\chi \neq 0 \;\Rightarrow\; m=\pm1.
$$


\parag
We may call ``canonical Riccati equations'' the equations corresponding to a
function~$A$ of the form
$A(x,y) = y + \frac{1}{2\pi\I}B_- x + \frac{1}{2\pi\I}B_+ x y^2$, with $B_-,B_+\in\C$.
Thus, for them, the differential equation~\eqref{eqdiffeqX} reads
$$
\pa_z \wt Y = \wt Y - \frac{1}{2\pi\I z}(B_- + B_+ \wt Y^2).
$$
A direct mould computation based on~\eqref{eqdefCm}
is given in \cite{Eca81}, Vol.~2, pp.~476--480, yielding
$$
C_{-1} = B_- \sig(B_- B_+), \quad
C_{1} = - B_+ \sig(B_- B_+),
$$
with $\sig(b) = \frac{2}{b^{1/2}} \sin \frac{b^{1/2}}{2}$
(see \cite{BSSV} for a computation by another method).


\section{Relation with Martinet-Ramis's invariants}	\label{secMR} 


In this section, we continue to investigate the consequences of the resurgence
of the solution of the conjugacy equation for a saddle-node~$X$.
We shall now connect the ``alien computations'' of the previous section with
Martinet-Ramis's solution of the problem of analytic classification \cite{MR},
completing at the same time the proof of Theorem~\ref{thmCmInv}.

This will be done by comparing sectorial solutions of the conjugacy problem
obtained by Borel-Laplace summation on the one hand, and by deriving geometric
consequences of the Bridge Equation through exponentiation and summation on
the other hand
(this amounts to a resurgent description of the ``Stokes phenomenon'' for the
differential equation~\eqref{eqdiffeqX}).


\parag
Let us call \emph{Martinet-Ramis's invariants} of~$X$ the numbers
$\xi_{-1},\xi_1,\xi_2,\dotsc$ defined in terms of \'Ecalle's
invariants by the formulas
\begin{align}
\label{defxiCminus}
\xi_{-1} &= - C_{-1}, \\
\label{defxiCplus}
\xi_m &= \sum_{r\ge1}
\sum_{\substack{m_1,\dotsc,m_r\ge1  \\  m_1+\dotsb+m_r = m}}
\frac{(-1)^r}{r!} \be_{m_1,\ldots,m_r} C_{m_1}\dots C_{m_r},
\qquad m\ge1,
\end{align}
where, as usual, $\be_{m_1} = 1$ and
$\be_{m_1,\ldots,m_r} = (m_1+1)(m_1+m_2+1)\dotsm(m_1+\dotsb+m_{r-1})$
for $r\ge2$.

Observe that they are obtained by integrating backwards the vector fields
$$
\gC_- = \gC(-1) = C_{-1} \frac{\pa\,}{\pa u}, \qquad
\gC_+ = \sum_{m>0} \gC(m) = \sum_{m>0} C_m u^{m+1} \frac{\pa\,}{\pa u}.
$$
Indeed, the time-$(-1)$ maps of~$\gC_-$ and~$\gC_+$ are
\begin{equation}	\label{eqdefxipm}
u \mapsto \xi_-(u) = u + \xi_{-1}, \qquad
u \mapsto \xi_+(u) = u + \sum_{m>0}  \xi_m u^{m+1}
\end{equation}
(as can be checked by viewing $-\gC_+$ as an elementary mould-comould expansion
on the alphabet $\N^*$; the reason for changing the variable~$y$ into~$u$ will
appear later).\footnote{
Thus one always has $\xi_-(u) = u - C_{-1}$, and in the Riccati case as at the
end of the previous section
$\xi_+(u) = \frac{u}{1 - C_{1} u}$.
}

These numbers can also be defined directly from the iterated
integrals~$L_a^\omb$ of~\eqref{eqdefLao}:
\begin{prop}
The family
$\big( \be_\omb L_a^\omb \big)_{\omb\in\Om^\bul,\,\norm{\omb}=m}$
is summable in~$\C$ for each $m\in\Z^*$ and
$$
\xi_m = \sum_{\omb\in\Om^\bul,\, \norm{\omb} = m} \be_\omb L_a^\omb,
$$
with the convention $\xi_m = 0$ for $m\le-2$.
\end{prop}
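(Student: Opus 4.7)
The plan is to reduce the identity to an equality between two substitution operators obtained by contracting appropriate symmetral scalar moulds into the cosymmetral comould~$\bB_\bul$ generated by $B_\eta = y^{\eta+1}\pa_y$ on $\gA = \C[[y]]$ (or on $\C[[u]]$, with $u$ in place of~$y$, which is the variable in which $\xi_\pm$ is naturally expressed). The starting point is the pair of identities $\pmL\bul = \exp(-\pmV\bul)$ proved at the end of Section~\ref{secBESN}, where $\pmV\bul$ are the alternal scalar moulds built from $V_a^\bul(m)$. After contraction into~$\bB_\bul$, Proposition~\ref{propaltsym} and the ``exponentiation'' formalism recalled in Section~\ref{secContrAltSym} yield algebra automorphisms which are exponentials of the derivations $Z_\pm := \sum \pmV{\bul}\bB_\bul$ of~$\gA$.

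The next step is to identify $Z_\pm$ explicitly. Using $\bB_\omb y = \be_\omb y^{\norm{\omb}+1}$ and Proposition~\ref{propVam}'s vanishing property $V_a^\omb(m)=0$ for $\norm{\omb}\neq m$, one computes
\[
Z_\pm(y) \;=\; \sum_{\omb}\pmV{\omb}\be_\omb y^{\norm{\omb}+1}
\;=\; \sum_{\pm m>0}\Big(\sum_{\norm{\omb}=m}\be_\omb V_a^\omb(m)\Big)y^{m+1}
\;=\; \sum_{\pm m>0} C_m\,y^{m+1},
\]
so that $Z_+ = \gC_+$ and $Z_- = \gC_-$ (the $m\le -2$ terms in $Z_-$ drop out by Lemma~\ref{lemexo}, which also ensures the summability of the families defining $C_m$ already established in Proposition~\ref{propCm}). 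Consequently $\sum \pmL\bul \bB_\bul = \exp(-\gC_\pm)$, which is precisely the substitution operator associated with the time-$(-1)$ map $\xi_\pm$.

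Evaluating both contractions on~$y$ (writing $u$ for~$y$ at the end) gives
\[
\xi_\pm(u) \;=\; \sum_{\omb\in\Om^\bul} \pmL{\omb}\,\be_\omb\, u^{\norm{\omb}+1}
\;=\; u \;+\; \sum_{\pm m>0}\Big(\sum_{\norm{\omb}=m}\be_\omb L_a^\omb\Big)u^{m+1},
\]
where the rewriting uses $\pmL{\omb} = 1_{\{\pm\norm{\omb}\ge 0\}} L_a^\omb$ and the fact that $L_a^\omb$ vanishes for $\omb\neq\est$, $\norm{\omb}=0$ (by property~(\ref{itemthirdpty}) of Lemma~\ref{lemW}, since $L_a^\bul(0)=1^\bul$), which makes the $y$-coefficient equal to~$1$. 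Comparing with the Taylor expansions~\eqref{eqdefxipm} of $\xi_\pm$ identifies $\xi_m$ with $\sum_{\norm{\omb}=m}\be_\omb L_a^\omb$ for $m=-1$ and for $m\ge 1$; the vanishing convention $\xi_m=0$ for $m\le -2$ matches the fact that $\be_\omb=0$ whenever $\norm{\omb}\le -2$ (Lemma~\ref{lemexo}).

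The main obstacle is establishing the summability assertion rigorously: one needs to know that $(\be_\omb L_a^\omb)_{\norm{\omb}=m}$ is actually summable in~$\C$, not merely that its sum makes sense formally. I would obtain this from the summability in $\hRsimpZ$ of $(\be_\omb\hcVto)_{\norm{\omb}=m}$ (shown in the proof of Theorem~\ref{thmBE}): continuity of the operator~$\De^+_{\norm{\omb}}$ on $\hRsimpZ$, together with formula~\eqref{eqlienLaDep} expressing $L_a^\omb$ as the coefficient of~$\de$ in $\De^+_{\norm{\omb}}\hcVao$, transfers summability from the resurgent-valued family to the scalar family obtained by extracting this $\de$-component, exactly as was done at the end of the proof of Proposition~\ref{propCm} for $V_a^\omb(m)$. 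Once this summability is in hand, the contraction manipulations above are legitimate and the identification of coefficients is immediate.
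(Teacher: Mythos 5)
Your proposal is correct and follows essentially the same route as the paper's (only sketched) proof: the identity $\xi_\pm(u)=\sum\pmL{\omb}\bB_\omb u$ deduced from $\pmL{\bul}=\exp(-\pmV{\bul})$ together with the identification $\sum\pmV{\omb}\bB_\omb=\gC_\pm$ of Lemma~\ref{lemintermed}, and summability transferred from the families $(\be_\omb\hcVto)$ by continuity of~$\De^+_m$ and extraction of the $\de$-coefficient, exactly as in Proposition~\ref{propCm}. Your write-up in fact supplies more detail than the paper's ``idea of the proof'', and the details (in particular the vanishing of $L_a^\omb$ for $\omb\neq\est$ with $\norm{\omb}=0$, needed to get the coefficient of~$u$ equal to~$1$) are accurate.
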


\noindent
\emph{Idea of the proof.} The relations $\xi_\pm(u) = \sum_{\omb\in\Om^\bul} \pmL\omb
\bB_\omb u$ (where $\pmL\bul$ is defined by~\eqref{eqdefpmL}) formally follow
from the formula $\pmL{\bul} = \exp\big(-\pmV{\bul}\big)$ and
Lemma~\ref{lemintermed}, according to which 
$(\pmV{\omb} \bB_\omb)_{\omb\in\Om^\bul}$
is a pointwise summable family of operators of~$\bA[[u]]$ with sum~$\gC_\pm$.
The summability can be justified by the same kind of arguments as in the proof
of Proposition~\ref{propCm} and Theorem~\ref{thmBE}.
\qed


\parag
The formulas~\eqref{defxiCminus}--\eqref{defxiCplus} can be inverted so as to
express the $C_m$'s in terms of the $\xi_m$'s.
Theorem~\ref{thmCmInv} is thus equivalent to the fact that the $\xi_m$'s
constitute themselves a complete system of analytic invariants for the
saddle-node classification problem.
We shall now prove this fact directly.

In fact, we shall obtain more: the pair $(\xi_-,\xi_+)$ is a complete system of
analytic invariants and \emph{$\xi_+$ is necessarily convergent}.
Thus, not all collections of numbers $(C_m)_{m\in\{-1\}\cup\N^*}$ 
can appear as analytic invariants,
only those for which the corresponding $\xi_m$'s admit geometric bounds
$|\xi_m| \le K^m$ for $m\ge1$
(hence they have to satisfy Gevrey bounds themselves: $|C_m| \le K_1^m m!$ for
$m\ge1$).

This information will follow from the geometric interpretation of~$\xi_\pm$.
Martinet and Ramis have also showed that any collection 
$(\xi_m)_{m\in\{-1\}\cup\N^*}$ subject to the
previous growth constraint can be obtained as a system of analytic invariants
for some saddle-node vector field, but we shall not consider this question here.


\parag
Let us consider the saddle-node vector field~$X$ and its normal form~$X_0$ in
the variable $z=-1/x$ instead of~$x$:
$$
\wt X = \frac{\pa\,}{\pa z} + A(-1/z,y) \frac{\pa\,}{\pa y}, \qquad
\wt X_0 = \frac{\pa\,}{\pa z} + y \frac{\pa\,}{\pa y}.
$$
For $\eps\in \left]0,\pi/2\right[$ and $R>0$, we set 
\begin{align*}
\zDu &= \ao z \in\C \mid 
-\tfrac{\pi}{2}+\eps \le \arg z \le \tfrac{3\pi}{2}-\eps, \;
|z|  \ge R \af, \\
\zDl &= \ao z \in\C \mid 
-\tfrac{3\pi}{2}+\eps \le \arg z \le \tfrac{\pi}{2}-\eps, \;
|z|  \ge R \af,
\end{align*}
which are ``sectorial neighbourhoods of infinity'' in the $z$-plane
(corresponding to certain sectorial neighbourhoods of the origin in the $x$-plane).
Their intersection has two connected components:
\begin{align*}
\zDm &= 
\ao z \in\C \mid 
\tfrac{\pi}{2}+\eps \le \arg z \le \tfrac{3\pi}{2}-\eps, \;
|z|  \ge R \af
\subset \ao\RE z<0\af, \\
\zDp &= 
\ao z \in\C \mid 
-\tfrac{\pi}{2}+\eps \le \arg z \le \tfrac{\pi}{2}-\eps, \;
|z|  \ge R \af
\subset \ao\RE z>0\af. \\
\end{align*}


\begin{thm}	\label{thmximInv} 
Let $\eps\in \left]0,\pi/2\right[$. Then there exist $R,\rho>0$ such that:
\begin{enumerate}[(i)]
\item
By Borel-Laplace summation, the formal series~$\wt\ph_n(z)$ give
rise to functions $\wt\ph_n^{up}(z)$, resp.\ $\wt\ph_n^{low}(z)$, which are analytic
in $\zDu$, resp.\ $\zDl$, such that the formulas
$$
\wt\ph^{up}(z,y) = \sum_{n\ge0} \wt\ph_n^{up}(z) y^n, \quad
\wt\ph^{low}(z,y) = \sum_{n\ge0} \wt\ph_n^{low}(z) y^n
$$
define two functions $\wt\ph^{up}$ and~$\wt\ph^{low}$ analytic in 
$\zDu\times \ao |y|\le \rho\af$,
resp.\ $\zDl\times \ao |y|\le \rho\af$,
and each of the transformations 
$$
\thu(z,y) = \big(z,\wt\ph^{up}(z,y)\big), \quad
\thl(z,y) = \big(z,\wt\ph^{low}(z,y)\big)
$$
is injective in its domain and establishes there a conjugacy between the
normal form~$\wt X_0$ and the saddle-node vector field~$\wt X$.
\item
The series~$\xi_+$ of~\eqref{eqdefxipm} has positive radius of convergence and
the upper and lower normalisations are connected by the formulas
\begin{alignat*}{3}
&\thu(z,y) &=& \thl\big(z,\xi_-(y\,\ee^{-z})\,\ee^z\big)
&=& \thl(z,y + \xi_{-1}\ee^z)
\\
\intertext{for $z\in\zDm$ and $|y|\le \rho$, whereas}
&\thl(z,y) &=& \thu\big(z,\xi_+(y\,\ee^{-z})\,\ee^z\big)
&=& \thu\big(z, y + \xi_1 y^2\ee^{-z} + \xi_2 y^3\ee^{-2z} + \dotsb)\\
\intertext{for $z\in\zDp$ and $|y|\le \rho$.}
\end{alignat*}
\item
The pair $(\xi_-,\xi_+)$ is a complete system of
analytic invariants for~$X$.
\end{enumerate}
\end{thm}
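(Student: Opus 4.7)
The plan is to proceed in three steps, one per assertion, with the key input being the Bridge Equation proved in Section~\ref{secBE}.

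For (i), I would apply the uniform Gevrey bounds~\eqref{inequniformphn}: the principal branch of each $\wh\ph_n$ satisfies $|\wh\ph_n(\ze)|\le KL^n\ee^{C|\ze|}$ in the plane cut along $\pm[1,+\infty[$, with constants independent of~$n$. Laplace transform along any ray $\arg\ze=\th$ avoiding these cuts produces holomorphic functions $\wt\ph_n^{up}$, $\wt\ph_n^{low}$ on~$\zDu$, $\zDl$, with uniform estimates $|\wt\ph_n^{up/low}(z)|\le K'L^n/(|z|-C)$ on those sectors. Choosing~$R$ so that $|z|-C>2L$, the series $\sum\wt\ph_n^{up/low}(z)y^n$ converges on the indicated polydisc for any $\rho<1/(2L)$, defining~$\wt\ph^{up/low}$. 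Because Borel-Laplace summation respects sums, products and the differential~$\pa_z$, each $\wt\ph_n^{up/low}$ satisfies the same ODE~\eqref{eqdiffeqn} as the formal~$\wt\ph_n$, so $\wt\ph^{up/low}$ satisfies $\pa_z\wt\ph = A(-1/z,\wt\ph)$, which is the conjugacy relation~\eqref{eqconjugOp} in the variable~$z$; the injectivity of $\thu,\thl$ follows from $\pa_y\wt\ph^{up/low}(z,0) = 1 + O(z\ii)$.

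For (ii), I would read off the two Stokes formulas from the Bridge Equation~\eqref{eqBEtigA}. The key input is the classical resurgence formula expressing the difference of two lateral Borel-Laplace sums via the operators~$\De_m^+$ of~\eqref{eqdefDemplus}; on~$\zDp$ it reads, for any $\wt\chi\in\tRsimpZ$ with appropriate growth, $\wt\chi^{low}(z) = \sum_{m\ge 0}\ee^{-mz}(\De_m^+\wt\chi)^{up}(z)$, with an analogous identity on~$\zDm$ involving the~$\De_{-m}^+$. Applied componentwise in~$y$ to~$\wt\ph$ and combined with~\eqref{eqDepexpDe} and the iterated Bridge Equation $\De_{m_s}\dotsm\De_{m_1}\wt\ph = \gC(m_1)\dotsm\gC(m_s)\wt\ph$ (the derivations $\gC(m) = C_m y^{m+1}\pa_y$ commute pairwise on~$\bA\{y\}$ since the~$C_m$ are constants in~$\bA$), the Stokes identity becomes an exponential action $\wt\ph^{low}(z,y) = \wt\ph^{up}\big(z,\Psi(z,y)\big)$ on~$\zDp$, where $\Psi(z,\cdot)$ is the flow in~$y$ at appropriate time of the Stokes vector field $\sum_{m>0}\ee^{-mz}\gC(m)$. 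After the autonomizing change of variable $u=y\ee^{-z}$, one has $\ee^{-mz}\gC(m) = C_m u^{m+1}\pa_u$, so the vector field becomes the autonomous~$\gC_+$; by the defining formulas~\eqref{defxiCminus}--\eqref{defxiCplus} one recovers $\Psi(z,y) = \xi_+(y\ee^{-z})\ee^z$, yielding the first identity of~(ii). The formula on~$\zDm$ follows in the same way: only the $m=-1$ term survives, by Proposition~\ref{propCm}, and $\gC(-1)=C_{-1}\pa_y$ integrates to the translation $y\mapsto y+\xi_{-1}\ee^z$. The positive radius of convergence of~$\xi_+$ is then forced by the fact that $(z,y)\mapsto\xi_+(y\ee^{-z})\ee^z$ connects two functions holomorphic on $\zDp\times\{|y|\le\rho\}$ as $|\ee^{-z}|$ ranges over a small disc. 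The hard part will be this Stokes computation, which requires marshalling carefully all four ingredients: the Borel-plane lateral-sum formula, the exponentiation identity~\eqref{eqDepexpDe}, the iterated Bridge Equation, and the autonomizing change of variable.

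For (iii), necessity was already established at the end of Section~\ref{secBE}. For sufficiency, suppose that two saddle-nodes $X_1$ and $X_2$ satisfying~\eqref{eqdefX}--\eqref{eqassA} share the same invariants $(\xi_-,\xi_+)$, and let $\th_i^{up}$, $\th_i^{low}$ be their sectorial normalisations from~(i). The maps $\Phi^{up} := \th_2^{up}\circ(\th_1^{up})\ii$ and $\Phi^{low} := \th_2^{low}\circ(\th_1^{low})\ii$ are sectorial analytic conjugacies from~$\wt X_1$ to~$\wt X_2$. On each overlap component, applying the formulas of~(ii) to~$\th_1$ and~$\th_2$ separately and using the equality of the invariants, one gets $\Phi^{up}=\Phi^{low}$ on~$\zDp$ and on~$\zDm$. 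Gluing yields a single-valued holomorphic conjugacy on $\{|z|>R\}\times\{|y|\le\rho\}$, which is bounded by the uniform estimates of~(i) and therefore extends holomorphically across $z=\infty$ by Riemann's theorem, giving the desired analytic conjugacy between~$X_1$ and~$X_2$ near $x=0$.
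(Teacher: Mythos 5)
Your architecture for (i) and (iii) matches the paper's: lateral Borel--Laplace sums glued over the admissible directions for (i), and the cocycle comparison $\Phi^{up}=\Phi^{low}$ followed by gluing and Riemann's removable singularity theorem for (iii). The genuine gap is in (ii), in how you obtain the convergence of $\xi_+$ and the \emph{exact} (rather than asymptotic) transition formulas. The relation between the two lateral sums furnished by the operators $\De^+_m$ is only an asymptotic one: for each $M$ it holds modulo $O(|\ee^{-(M+\sig)z}|)$, and the paper is careful to write it with $\sim$. Consequently the Stokes/Bridge computation by itself only shows that the transition map $\big(\thu\big)\ii\circ\thl$ admits $y+\xi_1 y^2\ee^{-z}+\xi_2 y^3\ee^{-2z}+\dotsb$ as a transasymptotic expansion; it does not show that this map equals $\big(z,\xi_+(y\,\ee^{-z})\ee^z\big)$ exactly, nor that $\xi_+$ --- the time-$(-1)$ flow of the \emph{formal} vector field $\gC_+$, whose coefficients $C_m$ may a priori grow like $m!$ --- converges. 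Your closing sentence, that convergence is ``forced'' because $\xi_+(y\,\ee^{-z})\ee^z$ connects two holomorphic functions, presupposes the exact identity you have not yet established.

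The missing ingredient is the paper's elementary geometric lemma on sectorial isotropies: since $\thu$ and $\thl$ both conjugate $\wt X_0$ to $\wt X$ (by your part (i)), the transition map $(z,y)\mapsto\big(z,\chi(z,y)\big)$ conjugates $\wt X_0$ to itself, so $\chi(z,y)/y$ is a first integral of $\wt X_0$ and hence $\chi(z,y)=\xi(y\,\ee^{-z})\ee^z$ with $\xi(u)\in\C\{u\}$ \emph{convergent}; the fact that the $\chi_n(z)$ are asymptotic to~$0$ then kills $\al_n$ for $n\neq0$ on $\zDm$ and $\al_0,\al_1$ on $\zDp$. Only after this does the Stokes/Bridge computation serve its purpose, namely to identify the already-convergent germ $\xi$ with $\xi_+$ coefficient by coefficient. (A minor point: the derivations $\gC(m)=C_m y^{m+1}\pa_y$ do \emph{not} commute pairwise, since $[u^{m+1}\pa_u,u^{m'+1}\pa_u]=(m'-m)u^{m+m'+1}\pa_u$; but your iterated Bridge Equation and the passage to $\exp\big(\sum_{m>0}\ee^{-mz}\gC(m)\big)$ do not actually need this, as only the exponential of the single derivation $\sum\ee^{-mz}\gC(m)$ is involved.)
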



As already mentioned, Theorem~\ref{thmximInv} contains Theorem~\ref{thmCmInv}.
The rest of this section is devoted to the proof of Theorem~\ref{thmximInv}.


\parag
In view of inequalities~\eqref{ineqwhpsin}--\eqref{inequniformphn}, the
principal branches of the Borel transforms~$\wh\ph_n(\ze)$ and~$\wh\psi_n(\ze)$
admit exponential bounds of the form $K L^n \,  \ee^{C |\ze|}$ in the sectors
$\ao \ze\in\C \mid \tfrac{\eps}{2} \le \arg\ze \le \pi-\tfrac{\eps}{2} \af$
and $\ao \ze\in\C \mid \pi+\tfrac{\eps}{2} \le \arg\ze \le 2\pi-\tfrac{\eps}{2} \af$.
Using the directions of the first sector for instance, we can
define analytic functions by gluing the Laplace transforms
corresponding to various directions
$$
\wt\ph_n^{low}(z) = \int_0^{\ee^{\I\th}\infty} \wh\ph_n(\ze)\,\ee^{-z\ze}  \, \dd\ze,
\qquad
\wt\psi_n^{low}(z) = \int_0^{\ee^{\I\th}\infty} \wh\psi_n(\ze)\,\ee^{-z\ze}  \, \dd\ze,
$$
with $\th \in [\tfrac{\eps}{2},\pi-\tfrac{\eps}{2}]$.
If we take $R$ large enough, then the union of the
half-planes $\ao
\RE(z\,\ee^{\I\th}) > C \af$ contains $\zDl$ and the functions
$$
\wt\ph^{low}(z,y) = \sum_{n\ge0} \wt\ph_n^{low}(z) y^n,
\qquad
\wt\psi^{low}(z,y) = \sum_{n\ge0} \wt\psi_n^{low}(z) y^n
$$
are analytic for $z\in\zDl$ and $|y|\le\rho$ as soon as $\rho<1/L$.

The standard properties of Borel-Laplace summation ensure that the relations
$y = \wt\ph\big( z, \wt\psi(z,y) \big) = 
\wt\psi\big( z, \wt\ph(z,y) \big)$
and
$\wt X_0 \wt\ph(z,y) = A\big( -1/z, \wt\ph(z,y) \big)$
yield similar relations for $\wt\ph^{low}$ and $\wt\psi^{low}$, possibly in smaller
domains (because $\wt\ph^{low}(z,y)-y$ and $\wt\psi^{low}(z,y)-y$ can be made
uniformly small by increasing~$R$ and diminishing~$\rho$).
Hence the transformations
$$
(z,y) \mapsto \big(z,\wt\ph^{low}(z,y)\big), \qquad
(z,y) \mapsto \big(z,\wt\psi^{low}(z,y)\big)
$$
(or rather the sectorial germs they represent)
are mutually inverse and establish a conjugacy between~$\wt X_0$ and~$\wt X$.

We define similarly $\wt\ph^{up}(z,y)$ and $\wt\psi^{up}(z,y)$ with the desired
properties, by means of Laplace transforms in directions belonging to
$[\pi+\tfrac{\eps}{2},2\pi-\tfrac{\eps}{2}]$.
This yields the first statement in Theorem~\ref{thmximInv}.


\parag
We now have at our disposal two sectorial normalisations
$$
\thl \colon (z,y) \mapsto \big(z,\wt\ph^{low}(z,y)\big), \qquad
\thu \colon (z,y) \mapsto \big(z,\wt\ph^{up}(z,y)\big),
$$
which are defined in different but overlapping domains, and which admit the same
asymptotic expansion \wrt~$z$ (when one first expands in powers of~$y$).
If we consider $\big(\thu\big)\ii\circ\,\thl$ or $\big(\thl\big)\ii\circ\,\thu$ in
one of the two components of $\zDl\cap\zDu$, we thus get a transformation of the
form
\begin{equation}	\label{eqdefchizy}
(z,y) \mapsto \big( z, \chi(z,y) \big)
\end{equation}
which conjugates the normal form~$\wt X_0$ with itself, to which one can apply the following:

\begin{lemma}
Let $\cD$ be a domain in~$\C$.
Suppose that the transformation $(z,y) \mapsto \big( z, \chi(z,y) \big)$ is
analytic and injective for
$z \in \cD$ and $|y| \le \rho$,
and that it conjugates~$\wt X_0$ with itself.
Then there exists $\xi(u)\in\C\{u\}$ such that
\begin{equation}	\label{eqchixi}
\chi(z,y) = \xi(y\,\ee^{-z}) \ee^z.
\end{equation}
\end{lemma}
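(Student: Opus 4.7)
The conjugacy $\Psi^*\wt X_0 = \wt X_0$, where $\Psi(z,y) = (z,\chi(z,y))$, means $\wt X_0(f\circ\Psi) = (\wt X_0 f)\circ\Psi$ for every analytic germ~$f$. Applying this to $f(z,y) = y$ immediately gives the linear first-order PDE
\begin{equation*}
\frac{\pa \chi}{\pa z} + y\,\frac{\pa \chi}{\pa y} = \chi.
\end{equation*}
The flow of~$\wt X_0$ is $(z,y) \mapsto (z+t, y\,\ee^t)$, so $u = y\,\ee^{-z}$ is a first integral of~$\wt X_0$, while $\chi\,\ee^{-z}$ is also a first integral by the PDE. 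Since the level sets of~$u$ are the orbits of the flow, one expects $\chi\,\ee^{-z}$ to be a function of~$u$ alone, which is exactly the claimed formula~\eqref{eqchixi}.

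The plan is to make this rigorous in three steps. First, fix a base point $z_0 \in \cD$ and define
\begin{equation*}
\xi(u) \,:=\, \ee^{-z_0}\,\chi(z_0,\, u\,\ee^{z_0}), \qquad |u| < \rho\,|\ee^{-z_0}|,
\end{equation*}
which is a well-defined element of~$\C\{u\}$ by analyticity of~$\chi$. Second, for any fixed~$u$ in this disc, consider the auxiliary function
\begin{equation*}
F(z) = \ee^{-z}\,\chi(z,\, u\,\ee^{z}),
\end{equation*}
defined on the open set of $z \in \cD$ for which $|u\,\ee^z| \le \rho$. A direct computation using the PDE gives
\begin{equation*}
F'(z) = \ee^{-z}\Big( -\chi + \pa_z\chi + u\,\ee^z\,\pa_y\chi \Big)\Big|_{y = u\,\ee^z}
= \ee^{-z}\Big( -\chi + \pa_z\chi + y\,\pa_y\chi \Big)\Big|_{y = u\,\ee^z} = 0,
\end{equation*}
so $F$ is locally constant on its domain; connecting $z_0$ to any other~$z$ by a path along which $u\,\ee^t$ stays in the disc $|y|\le\rho$ yields $F(z) = F(z_0) = \xi(u)$. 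Third, given any $(z,y)$ in the domain of~$\chi$ such that $u = y\,\ee^{-z}$ lies in the disc of definition of~$\xi$, set $u = y\,\ee^{-z}$ and apply the previous identity: $\ee^{-z}\chi(z,y) = F(z) = \xi(u)$, which is exactly~\eqref{eqchixi}.

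The only delicate point is the matching of domains: the formula $\chi(z,y) = \xi(y\,\ee^{-z})\,\ee^z$ must hold throughout $\cD \times \{|y|\le\rho\}$, but $\xi$ is a priori only a germ. The main obstacle is therefore to check that the set of $u = y\,\ee^{-z}$ encountered can be covered by choosing~$z_0$ appropriately and by analytic continuation of~$\xi$ along arcs inside the image, using the connectedness of~$\cD$ and of the fibres $\{u = \text{const}\}$ intersected with $\cD\times\{|y|\le\rho\}$. In the application to Theorem~\ref{thmximInv}, this is harmless: on the component $\zDp$ one has $\RE z > 0$, so $|y\,\ee^{-z}|$ is uniformly small and $\xi_+$ lives in a fixed disc, while on~$\zDm$ one has $\RE z < 0$ and one chooses $z_0$ with large $|z_0|$ in~$\cD$ so that $\xi_-$ is initially defined on a disc of large radius and eventually extends to a function on all of~$\C$ (which will turn out to be affine).
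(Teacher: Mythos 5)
Your proposal is correct and follows essentially the same route as the paper: derive the transport equation $\wt X_0\chi=\chi$, observe that $\chi\,\ee^{-z}$ (equivalently $\chi/y$, since $y\,\ee^{-z}$ is a first integral) is constant along the flow of~$\wt X_0$, and read off $\xi$. The paper states this in three lines without the explicit computation of $F'(z)$ or the discussion of domains, but the substance is identical.
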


Such transformations are called \emph{sectorial isotropies} of the normal form.

\begin{proof}
By assumption $\chi = \wt X_0\chi$.
Since $y = \wt X_0 y$, this implies that $\frac{1}{y}\chi(z,y)$ is a first
integral of~$\wt X_0$.
Thus $\frac{1}{u\,\ee^z}\chi(z,u\,\ee^z)$ is independent of~$z$ and can be
written $\frac{\xi(u)}{u}$, where obviously $\xi(u)\in\C\{u\}$.
\end{proof}

When $\chi(z,y)$ comes from
$\big(\thu\big)\ii\circ\,\thl$ or $\big(\thl\big)\ii\circ\,\thu$, we have a
further piece of information:
in the Taylor expansion $\chi(z,y)- y = \sum_{n\ge0} \chi_n(z) y^n$,
each component $\chi_n(z)$ admits the null series as asymptotic expansion in
$\zDpm$
(the transformation~\eqref{eqdefchizy} is asymptotic to the identity because
$\thu$ and~$\thl$ share the same asymptotic expansion).
This has different implications according to whether the domain $\cD$ is $\zDm$
or $\zDp$.

Indeed, if we expand $\xi(u)-u = \sum_{n\ge0} \al_n u^n$, we get
$$
\chi_0(z) = \al_0\, \ee^{z}, \quad
\chi_1(z) = \al_1, \quad
\chi_2(z) = \al_2\, \ee^{-z}, \quad
\chi_3(z) = \al_3\, \ee^{-2z}, \, 
\dotsc
$$
hence
\begin{align*}
\cD = \zDm \subset \ao \RE z <0 \af &\ens\Rightarrow\ens
\al_n = 0 \ens\text{for $n\neq0$},\\
\cD = \zDp \subset \ao \RE z >0 \af &\ens\Rightarrow\ens
\al_0 = \al_1 = 0.
\end{align*}
The upshot is that there exist $\al_0\in\C$ and
$\xi(u) = u + \al_2 u^2 + \al_3 u^3 \in \C\{u\}$
such that
\begin{alignat*}{3}
&\big(\thl\big)\ii\circ\,\thu(z,y) &=& (z,y + \al_0\ee^z),
&\qquad& z\in\zDm,
\\
&\big(\thu\big)\ii\circ\thl(z,y) &=& 
(z, y + \al_2 y^2\ee^{-z} + \al_3 y^3\ee^{-2z} + \dotsb),
&\qquad& z\in\zDp.
\end{alignat*}
%


\parag
It is elementary to check that the pair of sectorial isotropies
$\Big( 
\big(\thl\big)\ii \circ\, \thu_{\hspace{1.2em}|\zDm}, 
\big(\thu\big)\ii \circ\, \thl_{\hspace{.9em}|\zDp}
\Big)$
is a complete system of analytic invariants for~$X$:
suppose indeed that two saddle-node vector fields~$X_1$ and~$X_2$ are given and
that we wish to know whether the unique formal transformation~$\th$ of the
form~\eqref{eqdefthph} which conjugate them is convergent,
then 
$\thu_2 \circ \big(\thu_1\big)\ii$ and
$\thl_2 \circ \big(\thl_1\big)\ii$ are two sectorial conjugacies between~$X_1$
and~$X_2$ defined in different but overlapping domains and admitting~$\th$ as
asymptotic expansion (up to the change $x=-1/z$);
they coincide and define an analytic conjugacy iff
$\big(\thl_2\big)\ii  \circ \thu_2 = 
\big(\thl_1\big)\ii  \circ \thu_1$
in both components of the intersection of the domains.


\parag
Therefore, it only remains to be checked that
$\al_0=\xi_1$ and $\xi=\xi_+$.
This will follow from the interpretation of the operators~$\De_m^+$ as
components of the ``Stokes automorphism''.
For this part, the reader may consult the end of \S2.4 in \cite{kokyu}.

Suppose that a simple resurgent functions $c\,\de+\wh\ph\in\hRsimpZ$ has the
following property:
the functions~$\wh\chi_m$ defined by $\De_m^+(c\,\de+\wh\ph) = \ga_m\,\de +
\wh\chi_m$ and~$\wh\ph$ itself have at most exponential growth in each
non-horizontal directions, so that one can consider the Laplace transforms
$\cL^\th\wh\ph(z) = \int_0^{\ee^{\I\th}\infty} \wh\ph(\ze)\,\ee^{-z\ze}  \, \dd\ze$
or $\cL^\th\wh\chi_m(z)$ for $\th\in\left]\eps,\pi-\eps\right[$ or
$\th\in\left]\pi+\eps,2\pi-\eps\right[$, which are analytic in
sectorial neighbourhoods of infinity of the form $\zDl$ or $\zDu$.
Let $\th<0<\th'$, with $\th$ and~$\th'$ both close to~$0$;
by deforming a contour of integration, 
one deduces from the definition~\eqref{eqdefDemplus} that, 
for any $M\in\N^*$ and $\sig\in\left]0,1\right[$,
$$
c + \cL^\th\wh\ph(z) = c + \cL^{\th'}\wh\ph(z) + \sum_{m=1}^M \ee^{-mz}
\big( \ga_m + \cL^{\th'}\wh\chi_m(z) \big) + O(|\ee^{-(M+\sig)z}|)
$$
in the sectorial neighbourhood of infinity obtained by imposing that both
$\RE(z\,\ee^{\I\th})$ and $\RE(z\,\ee^{\I\th'})$ be large enough, which is
contained in the right half-plane $\ao \RE z > 0 \af$. 

Let us denote this by: 
$\cL^\th(c\,\de+\wh\ph) \sim 
\sum_{m\ge0}  \ee^{-mz}\cL^{\th'}\De_m^+(c\,\de+\wh\ph)$ in $\ao \RE z > 0 \af$. 
Similarly, if $\th<\pi<\th'$ with $\th$ and~$\th'$ both close to~$\pi$, one gets
$\cL^\th(c\,\de+\wh\ph) \sim 
\sum_{m\le0}  \ee^{-mz}\cL^{\th'}\De_m^+(c\,\de+\wh\ph)$ in the left half-plane 
$\ao \RE z < 0 \af$.

We can even write $\cL^\th \sim \cL^{\th'}\circ \sum_{m \ge 0} \dDep$ in $\ao \RE z > 0 \af$
and $\cL^\th \sim \cL^{\th'}\circ \sum_{m \le 0} \dDep$ in $\ao \RE z < 0 \af$, if
we define properly $\dDep$ in the convolutive model. See \cite{kokyu}: 
$\dDep = \tau_m \circ \De_m^+$, with a shift operator $\tau_m \colon \hRsimpZ \to
\tau_m(\hRsimpZ)$, the target space being the set of simple resurgent
functions ``based at~$m$'' (instead of being based at the origin).
On the other hand, we can rephrase~\eqref{eqDepexpDe} as
$\sum_{m \ge 0} \dDep = \exp\Big( \sum_{m>0} \dDem \Big)$,
$\sum_{m\le0} \dDep = \exp\Big( \sum_{m<0} \dDem \Big)$.

Apply this to $\wt Y(z,u)$ (or, rather, to each of its components):
when $\th$ and~$\th'$ are close to~$0$, we have $\cL^\th\wh Y = \wt Y^{up}$ and
$\cL^{\th'}\wh Y = \wt Y^{low}$ in~$\zDp$, hence, in view of the Bridge Equation,
$\wt Y^{up} \sim (\cL^{\th'}\circ \exp\gC_+) \wh Y$,
which yields $\wt Y^{up}(z,u) \sim \wt Y^{low}\big(z,(\xi_+)\ii(u)\big)$ in~$\zDp$.
Similarly, $\wt Y^{low}(z,u) \sim \wt Y^{up}\big(z,(\xi_-)\ii(u)\big)$ in the domain~$\zDm$.
When interpreting these relations componentwise with respect to~$u$ and modulo
$O(|\ee^{\pm(M+\sig)z}|)$ in $\zDpm$ with arbitrarily large~$M$, we get the desired
relations between $\wt\ph^{up}(z,y) = \wt Y^{up}(z,y  \ee^{-z})$ and
$\wt\ph^{low}(z,y) = \wt Y^{low}(z,y  \ee^{-z})$.


\section{The resurgence monomials $\wt\cU_a^\omb$'s and the freeness of alien
derivations}	\label{secResurMonom} 



\parag
The first goal of this section is to construct families of simple resurgent functions
which form closed systems for multiplication and alien derivations in the
following sense:

\begin{definition}	
We call $\De$-friendly monomials the members of any family of simple resurgent functions
$(\wt\cU^{\om_1,\dotsc,\om_r})_{r\ge0,\,\om_1,\dotsc,\om_r\in\Z^*}$, such that on
the one hand
\begin{equation}	\label{eqsystDefriend}
\De_m \wt\cU^{\om_1,\dotsc,\om_r} = \left| \begin{alignedat}{2} 
&\wt\cU^{\om_2,\dots,\om_r} &\quad &\text{if $r\ge1$ and $\om_1=m$,} \\
& \ens\quad 0 &\quad &\text{if not,}
\end{alignedat}  \right.
\end{equation}
for every $m\in\Z^*$, and on the other hand
$\wt\cU^\est=1$ and
$$
\wt\cU^\alb \wt\cU^\beb =
\sum_{\omb\in\Om^\bul}  \sh{\alb}{\beb}{\omb} \wt\cU^\omb,
\qquad \alb,\beb\in (\Z^*)^\bul,
$$
\ie, when viewed as a mould, $\wt\cU^\bul \in \hM^\bul(\Z^*,\tRsimpZ)$ is symmetral.
\end{definition}

J.~\'Ecalle calls $\De$-friendly such resurgent functions by contrast with the functions
$\wt\cV_a^{\om_1,\dotsc,\om_r}$, which can be termed ``$\pa$-friendly monomials'' because
of~\eqref{eqdefnewcV} (using $\pa$ as short-hand for $\frac{\dd\,}{\dd z}$).

As a matter of fact, $\De$-friendly monomials will be defined with the help of
the moulds $\wt\cV_a^\bul$, $V_a^\bul$ of Section~\ref{secBESN} and mould composition,
but we first need to enlarge slightly the definition of mould composition.


\parag
We thus begin with a kind of addendum to Sections~\ref{secAlgMoulds} and~\ref{secAltSym}.
Assume that~$\bA$ is a commutative $\C$-algebra, the unit of which is
denoted~$1$, and~$\Om$ is a commutative semigroup, the operation of which is denoted
additively. 
We still use the notations $\norm{\omb} = \om_1 + \dotsb + \om_r$ and $\norm{\est}=0$.

Let us call {\em restricted moulds} the elements of $\hMst$, where
$\Om^* = \Om \setminus \{0\}$.
The example we have in mind is $\Om = \Z$ and $\bA = \C$ or~$\tRsimpZ$.

\begin{definition}
We call {\em licit mould} any restricted mould~$U^\bul$ such that
$$
\norm{\omb} = 0 \ens\Rightarrow\ens 
U^\omb = 0
$$
for any $\omb \in (\Om^*)^\bul$.
The set of licit moulds will be denoted $\hMlic$.
\end{definition}

The set $\hMlic$ is clearly an $\bA$-submodule of~$\hMst$, but not an
$\bA$-subalgebra.
Notice that $U^\bul\in\hMlic$ implies $U^\est=0$.


We now define the {\em composition of a restricted mould and a licit mould} as follows: 
$$
(M^\bul,U^\bul) \in \hMst\times\hMlic \mapsto
C^\bul = M^\bul \circ U^\bul \in \hMst,
$$
with $C^\est = M^\est$ and, for $\omb\neq\est$,
$$
C^\omb = \sum_{ \substack{s\ge1,\,\omb = \omb^1 \concsm \omb^s \\
\norm{\omb^1},\dotsc,\norm{\omb^s}\neq0} }
M^{(\norm{\omb^1},\dotsc,\norm{\omb^s})} 
U^{\omb^1}  \dotsm U^{\omb^s}.
$$
The map $M^\bul \mapsto M^\bul \circ U^\bul$ is clearly $\bA$-linear; 
we leave it to the reader\footnote{
The verification of most of the properties indicated in this paragraph can be
simplified by observing that the canonical restriction map
$\rho \colon \hM^\bul(\Om,\bA) \to \hMst$ 
is an $\bA$-algebra homomorphism which satisfies
$\rho(M^\bul\circ U^\bul) = \rho(M^\bul) \circ \rho(U^\bul)$
for any two moulds~$M^\bul$ and~$U^\bul$ such that $\rho(U^\bul)$ is licit
and which preserves alternality and symmetrality.
}
to check that it is an $\bA$-algebra homomorphism, that
$$
U^\bul, V^\bul \in \hMlic \ens\Rightarrow\ens U^\bul \circ V^\bul \in \hMlic,
$$
and that
$$
M^\bul\in \hMst \;\text{and}\; U^\bul, V^\bul \in \hMlic 
\ens\Rightarrow\ens
(M^\bul \circ U^\bul) \circ V^\bul = M^\bul \circ (U^\bul \circ V^\bul).
$$
The {\em restricted identity mould} is 
$$
I_*^\bul \colon \omb\in(\Om^*)^\bul \mapsto I_*^\omb = 
\left| \begin{aligned} 
1 \quad &\text{if $r(\omb) = 1$,}\\
0 \quad &\text{if $r(\omb) \neq 1$.}
\end{aligned}  \right.
$$
It is a licit mould, which satisfies
$M^\bul \circ I_*^\bul = M^\bul$ for any restricted mould~$M^\bul$
and  $I_*^\bul \circ U^\bul = U^\bul $ for any licit mould~$U^\bul$.
One can check that a licit mould~$U^\bul$ admits an inverse for composition iff
$U^\omb$ is invertible in~$\bA$ whenever $r(\omb)=1$.

A proposition analogous to Proposition~\ref{propcomposalt} holds. 
In particular, \emph{alternal invertible licit moulds form a subgroup of the
composition group of invertible licit moulds}.\footnote{
This can be checked by means of the restriction homomorphism of the previous
footnote: 
if $U^\bul$ is licit and $U^\omb$ is invertible whenever $r(\omb)=1$, then any
$U_0^\bul\in\hM^\bul(\Om,\bA)$ such that $\rho(U_0^\bul) = U^\bul$ and
$U_0^{(0)}=1$ is an invertible mould, the composition inverse of which has a
restriction~$V^\bul$ which satisfies $U^\bul\circ V^\bul = V^\bul\circ U^\bul =
I_*^\bul$; if moreover $U^\bul$ is alternal, then one can choose $U_0^\bul$
alternal (take $U_0^\omb = 0$ whenever $r(\omb)\ge2$ and one of the letters
of~$\omb$ is~$0$), thus its inverse and the restriction of its inverse are alternal.
}
The property that inversion of licit moulds preserves alternality will be used
in the next paragraph.


\parag
We now take $\Om = \Z$ and $\bA = \tRsimpZ$.
Assume that $a = (\wh a_\eta)_{\eta\in\Z^*}$ is any family of entire functions
such that $\wh a_\eta(\eta)\neq0$ for each $\eta\in\Z^*$.
We still use the notations
$\wt a_\eta = \cB\ii\wh a_\eta \in z\ii\C[[z\ii]]$ and
$J_a^\omb = \wt a_\eta$ if $\omb=(\eta)$, $0$ if not.
We recall that, according to Section~\ref{secBESN}, the equation
\begin{equation}	\label{eqdeftV}
(\pa+\na)\tcVab = -  \tcVab \times J_a^\bul
\end{equation}
defines a symmetral mould $\tcVab \in \hMst$, and that, for each $m\in\Z^*$, we
have an alternal scalar mould $\Vt^\bul(m) = -V_a^\bul(m) \in \hMstC$ which satisfies
\begin{gather}	\label{eqaldertcV}
\De_m\tcVab = \Vt^\bul(m) \times  \tcVab,\\
\label{eqnultvVm}
\Vt^\omb(m) \neq 0 \ens\Rightarrow\ens \norm{\omb}=m.
\end{gather}
Moreover $\Vt^{(\eta)}(\eta) = 2\pi\I\, \wh a_\eta(\eta)$.

\begin{thm}
The formula $\Vt^\bul = \sum_{m\in\Z^*}  \Vt^\bul(m)$ defines an alternal scalar
licit mould, which admits a composition inverse~$U_a^\bul$.
The formula 
\begin{equation}	\label{eqdefcUa}
\wt\cU_a^\bul = \tcVab \circ U_a^\bul \,\in\, \hMstR
\end{equation}
defines a family of $\De$-friendly monomials~$\wt\cU_a^\omb$.
\end{thm}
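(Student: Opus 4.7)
The plan is to verify the three claims in succession, using the composition formalism for licit moulds developed just above. First I would check that $\Vt^\bul$ is a well-defined alternal licit mould: by~\eqref{eqnultvVm}, only $m=\norm{\omb}$ can contribute to $\sum_m\Vt^\omb(m)$ on any given non-empty word, so the sum makes sense term by term; licitness follows because $\norm{\omb}=0$ together with $m\in\Z^*$ forces every $\Vt^\omb(m)$ to vanish, and alternality of the sum follows from alternality of each summand since~\eqref{eqdefaltal} is tested word by word. Composition-invertibility is then immediate from the hypothesis $\wh a_\eta(\eta)\neq 0$: the one-letter values $\Vt^{(\eta)}=2\pi\I\,\wh a_\eta(\eta)$ are all units of $\C$, so $U_a^\bul$ exists and is itself alternal by the licit-mould analogue of Proposition~\ref{propcomposalt} recalled just before the theorem.

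Next I would dispatch the easy parts of $\De$-friendliness: $\wt\cU_a^\est=\tcVae=1$ is trivial, and symmetrality of $\wt\cU_a^\bul$ follows from symmetrality of $\tcVab$, alternality of $U_a^\bul$, and the same licit-mould analogue of Proposition~\ref{propcomposalt}. It is also clear from the composition formula that each $\wt\cU_a^\omb$ is a finite sum of products of elements of $\tRsimpZ$, hence lies in $\tRsimpZ$.

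The heart of the argument is the alien-derivation rule~\eqref{eqsystDefriend}. Since $U_a^\bul$ takes values in $\C$, the alien derivation $\De_m$ commutes with the operation $\cdot\circ U_a^\bul$; combining this with~\eqref{eqaldertcV} and the fact that mould composition is an $\bA$-algebra homomorphism yields
\[
\De_m\wt\cU_a^\bul \;=\; \bigl(\Vt^\bul(m)\circ U_a^\bul\bigr)\times\wt\cU_a^\bul.
\]
I would then compute $\Vt^\bul(m)\circ U_a^\bul$ by a pure support argument: each term in the defining sum carries a factor $\Vt^{(\norm{\omb^1},\dotsc,\norm{\omb^s})}(m)$ which, by~\eqref{eqnultvVm}, vanishes unless $\norm{\omb^1}+\dotsb+\norm{\omb^s}=\norm{\omb}$ equals $m$. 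Summing over $m\in\Z^*$ recovers $\Vt^\bul\circ U_a^\bul=I_*^\bul$, and since at most one value of $m$ contributes to each word I conclude that $\Vt^\bul(m)\circ U_a^\bul$ is the mould taking the value $1$ on the single word $(m)$ and $0$ elsewhere. Substituted into the displayed identity, this reads off exactly as~\eqref{eqsystDefriend}. I expect the only slightly delicate point to be the commutation of $\De_m$ with $\cdot\circ U_a^\bul$, which really needs $U_a^\bul$ to be scalar-valued: if the inverse lived in $\tRsimpZ$ instead of $\C$, an alien Leibniz term would intrude and the clean formula would collapse.
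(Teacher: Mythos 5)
Your proposal is correct and follows essentially the same route as the paper: alternality and licitness of $\Vt^\bul$ from \eqref{eqnultvVm} and the one-letter values $2\pi\I\,\wh a_\eta(\eta)\neq0$, symmetrality of $\wt\cU_a^\bul$ from the licit-composition analogue of Proposition~\ref{propcomposalt}, and the alien-derivation rule from the scalarity of $U_a^\bul$ together with the homomorphism property of post-composition. Your final support argument identifying $\Vt^\bul(m)\circ U_a^\bul$ with the mould equal to $1$ on $(m)$ and $0$ elsewhere is exactly the paper's computation $I_m^\bul=\rho_m(\Vt^\bul)\circ U_a^\bul=\rho_m(I_*^\bul)$, merely phrased without naming the projection $\rho_m$ onto words of sum~$m$.
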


\begin{proof}
In view of~\eqref{eqnultvVm}, the definition of~$\Vt^\bul$ makes sense and its
alternality follows from the alternality of each $\Vt^\bul(m)$.
This mould is clearly licit, and 
$\Vt^{(\eta)} = 2\pi\I\, \wh a_\eta(\eta)  \neq0$, hence its invertibility.

The general properties of the composition of a restricted mould and a licit
mould ensure that \eqref{eqdefcUa} defines a symmetral mould.
Its alien derivatives are easily computed since $U_a^\bul$ is a scalar mould:
$$
\De_m \wt\cU_a^\bul = (\De_m\tcVab)  \circ U_a^\bul = 
(\Vt^\bul(m)\times\tcVab) \circ U_a^\bul = I_m^\bul \times \wt\cU_a^\bul,
$$
with $I_m^\bul = \Vt^\bul(m)\circ U_a^\bul$ (the last identity follows from the
$\bA$-algebra homomorphism property of post-composition with~$U_a^\bul$).
The conclusion follows from the fact that 
\begin{equation}	\label{eqdefIm}
I_m^\omb = 1 \ens\text{if $\omb=(m)$,} \quad 0 \ens\text{if not.}
\end{equation}
This formula can be checked by introducing the map
$\rho_m \colon M^\bul  \in  \hMst \mapsto M_m^\bul  \in  \hMst$ defined by $M_m^\omb
= M^\omb$ if $\norm{\omb}=m$, $0$ if not, and observing that
$\rho_m(M^\bul\circ U^\bul) = \rho_m(M^\bul) \circ U^\bul$ for any licit
mould~$U^\bul$;
thus $I_m^\bul = \rho_m(\Vt^\bul) \circ U_a^\bul = \rho_m(I_*^\bul)$.
\end{proof}

\begin{remark}
An analogous computation yields
$$
(\pa+\na) \wt\cU_a^\bul = -\wt\cU_a^\bul \times \wt K^\bul
$$
with a licit alternal mould $\wt K^\bul \in \hM^\bul(\Z^*,\C[[z\ii]])$ defined by
$\wt K^\omb = U_a^\omb \, \wt a_{\norm{\omb}}$ if $\norm{\omb}\neq0$.
\end{remark}


\parag
As an application of the existence of $\De$-friendly monomials, we now show

\begin{thm}	\label{thmfree}
Let $\bA = \RsimpZ$.
The subalgebra of $\End_\C \bA$ generated by the
operators~$\De_m$, $m\in\Z^*$, is isomorphic to the free associative algebra on~$\Z^*$.
\end{thm}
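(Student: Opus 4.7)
The plan is to use the $\De$-friendly monomials $\wt\cU_a^\bul$ just constructed to test non-trivial relations among products of alien derivations. Fix once and for all a family $a=(\wh a_\eta)_{\eta\in\Z^*}$ of entire functions with $\wh a_\eta(\eta)\neq0$, so that the symmetral mould $\wt\cU_a^\bul\in\hMstR$ is defined and satisfies the fundamental property~\eqref{eqsystDefriend}. Let $\C\langle\Z^*\rangle$ denote the free associative unital $\C$-algebra on the set $\Z^*$: a linear basis is furnished by the monomials $T_\omb := T_{\al_s}\cdots T_{\al_1}$ indexed by all words $\omb=(\al_1,\dotsc,\al_s)\in(\Z^*)^\bul$ (with $T_\est = 1$). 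The canonical surjective $\C$-algebra homomorphism $\pi\colon\C\langle\Z^*\rangle \to \End_\C(\bA)$, sending $T_\omb$ to $\De_\omb := \De_{\al_s}\circ\dotsm\circ\De_{\al_1}$ (with $\De_\est := \ID$), has image equal by definition to the subalgebra generated by the $\De_m$'s, so Theorem~\ref{thmfree} amounts to the injectivity of~$\pi$.

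The key tool is a ``prefix-extraction'' formula, proved by immediate induction on~$s$ from~\eqref{eqsystDefriend}: for any two words $\omb=(\al_1,\dotsc,\al_s)$ and $\beb=(\be_1,\dotsc,\be_r)$ in $(\Z^*)^\bul$,
\begin{equation*}
\De_\omb\, \wt\cU_a^\beb \;=\;
\left|\begin{alignedat}{2}
&\wt\cU_a^{(\be_{s+1},\dotsc,\be_r)} &\quad& \text{if $s\le r$ and $\al_i=\be_i$ for each $i\in\{1,\dotsc,s\}$,}\\
&\quad 0 &\quad& \text{otherwise.}
\end{alignedat}\right.
\end{equation*}
In particular $\De_\omb\,\wt\cU_a^\omb = \wt\cU_a^\est = 1$, and $\De_\omb\,\wt\cU_a^\beb = 0$ whenever $r(\beb)<r(\omb)$, since a prefix of $\beb$ has length at most $r(\beb)$.

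To prove the injectivity of~$\pi$, I would argue by contradiction: suppose some finite non-trivial linear combination $\sum_{i=1}^N \la_i\,\De_{\omb^i}$ vanishes in $\End_\C(\bA)$, with the $\omb^i$ pairwise distinct and all $\la_i\in\C^*$. Pick $i_0$ such that $r(\omb^{i_0})$ is minimal among $r(\omb^1),\dotsc,r(\omb^N)$ and evaluate the vanishing operator on the simple resurgent function $\wt\cU_a^{\omb^{i_0}}\in\bA$. By the prefix-extraction formula, each term with $i\ne i_0$ contributes zero: either $r(\omb^i)>r(\omb^{i_0})$, in which case $\omb^i$ cannot be a prefix of the shorter word~$\omb^{i_0}$; or $r(\omb^i)=r(\omb^{i_0})$ with $\omb^i\ne\omb^{i_0}$, in which case some letter differs and again $\omb^i$ fails to be a prefix of $\omb^{i_0}$. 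Only the term $i=i_0$ survives and contributes $\la_{i_0}\cdot 1$, giving $\la_{i_0}=0$, a contradiction. The hard work has already been done in constructing the $\De$-friendly monomials via mould composition; once they are available, freeness reduces to this essentially combinatorial minimal-length argument.
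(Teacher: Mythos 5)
Your proposal is correct and follows essentially the same route as the paper: both rest on the iterated-action (prefix-extraction) formula for $\De$-friendly monomials and the minimal-length evaluation trick, testing the alleged relation on $\wt\cU_a^{\omb^{i_0}}$ for a word $\omb^{i_0}$ of minimal length. The only (inessential) difference is that the paper states and proves the slightly stronger fact that no non-trivial relation holds even with coefficients in $\tRsimpZ$ rather than in~$\C$, using the identical argument.
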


\noindent 
In fact, we shall prove a stronger statement:
for any non-commutative polynomial {\em with coefficients in~$\bA$},
$$
P = \sum_{(m_1,\dots,m_r)\in\gF} \wt\ph^{m_1,\dots,m_r}
\De_{m_r}\dotsm\De_{m_1},
\qquad \text{$\gF$ finite subset of~$(\Z^*)^\bul$,}
$$
there exists $\wt\psi\in\bA$ such that $P\wt\psi\neq0$, unless all the
coefficients $\wt\ph^{m_1,\dots,m_r}$ are zero.
Thus there is no non-trivial polynomial relation between the alien derivations~$\De_m$.

\begin{proof}
Assume that not all the coefficients are zero.
We may suppose $\gF\neq\est$ and $\wt\ph^\omb\neq0$ for each $\omb\in\gF$.
%
%
Choose $\mb=(m_1,\dotsc,m_r)\in\gF$ with minimal length; then, for any
family of $\De$-friendly monomials $\wt\cU^\bul$, we find
$P\wt\cU^\mb = \wt\ph^{m_1,\dots,m_r} \neq 0$
as a consequence of 
\begin{equation}	\label{eqiterDeDefr}
\De_{m_s}\dotsm\De_{m_1} \wt\cU^\omb = \left| \begin{alignedat}{2} 
&\wt\cU^{\nb} &\quad &\text{if $\omb = (m_1,\dotsc,m_s) \conc \nb$ with $\nb\in(\Z^*)^\bul$,} \\
& \; 0 &\quad &\text{if not.}
\end{alignedat}  \right.
\end{equation}
\end{proof}


\parag
Let us call \emph{resurgence constant} any $\wt\ph\in\tRsimpZ$ such that
$\De_m\wt\ph = 0$ for any $m\in\Z^*$.
This is equivalent to saying that $\cB\wt\ph = c\,\de + \wh\ph(\ze)$ with
$c\in\C$ and $\wh\ph$ entire
(in particular every convergent series $\wt\ph(z)\in\C\{z\ii\}$ is a resurgence
constant, but the converse is not true since we did not require the Borel
transform to be of exponential type: the entire function~$\wh\ph$ might have
order~$>1$).

Resurgence constants form a subalgebra~$\wt\gP_0$ of~$\tRsimpZ$.

\begin{prop}
Let $\wt\cU_1^\bul$ and $\wt\cU_2^\bul$ be two moulds in $\hMstR$ and suppose
that $\wt\cU_1^\bul$ is a family of $\De$-friendly monomials.
Then $\wt\cU_2^\bul$ is a family of $\De$-friendly monomials iff if
there exists a symmetral mould $\wt M^\bul \in \hM^\bul(\Z^*,\wt\gP_0)$ such that
\begin{equation}	\label{eqrelcUcU}
\wt\cU_2^\bul = \wt\cU_1^\bul \times \wt M^\bul.
\end{equation}
\end{prop}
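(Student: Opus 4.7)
The plan is first to reformulate the $\De$-friendly condition as a single mould equation, and then to derive the claim by a short Leibniz-rule computation, relying on the fact that $\wt\cU_1^\bul$ is multiplicatively invertible.

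The key observation is that, for a symmetral mould $\wt\cU^\bul\in\hMstR$, the condition \eqref{eqsystDefriend} is equivalent to
\[
\De_m \wt\cU^\bul = I_m^\bul \times \wt\cU^\bul, \qquad m\in\Z^*,
\]
where $I_m^\bul$ is the licit mould defined in~\eqref{eqdefIm}. Indeed, $(I_m^\bul \times \wt\cU^\bul)^\omb = \sum_{\omb=\omb^1\conc\omb^2} I_m^{\omb^1} \wt\cU^{\omb^2}$ vanishes unless $\omb^1=(m)$, in which case it equals $\wt\cU^{\omb^2}$ with $\omb = (m)\conc\omb^2$, which is precisely the \rhs\ of~\eqref{eqsystDefriend}. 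Note also that, since $\wt\cU_1^\est=1$, the mould $\wt\cU_1^\bul$ admits a multiplicative inverse in $\hMstR$ which, by Proposition~\ref{propstructaltsym}, is again symmetral.

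For the easy direction, suppose $\wt\cU_2^\bul = \wt\cU_1^\bul \times \wt M^\bul$ with $\wt M^\bul$ symmetral and $\wt M^\omb\in\wt\gP_0$ for every~$\omb$. Symmetrality of~$\wt\cU_2^\bul$ follows from Proposition~\ref{propstructaltsym}. Since each $\De_m$ is induced on $\hMstR$ by a derivation of~$\bA$, it acts as a derivation of the mould algebra (see Proposition~\ref{propDerivSym} and the discussion around~\eqref{eqdefDd}), so
\[
\De_m\wt\cU_2^\bul = (\De_m \wt\cU_1^\bul)\times\wt M^\bul + \wt\cU_1^\bul\times(\De_m\wt M^\bul) = I_m^\bul\times \wt\cU_1^\bul\times \wt M^\bul = I_m^\bul \times \wt\cU_2^\bul,
\]
where the middle equality uses $\De_m\wt M^\bul=0$ (definition of $\wt\gP_0$) and the $\De$-friendly equation for $\wt\cU_1^\bul$. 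Hence $\wt\cU_2^\bul$ is $\De$-friendly.

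For the converse, assume $\wt\cU_1^\bul$ and $\wt\cU_2^\bul$ are both $\De$-friendly and set $\wt M^\bul = (\wt\cU_1^\bul)\iim\times \wt\cU_2^\bul$, so that $\wt\cU_2^\bul = \wt\cU_1^\bul\times\wt M^\bul$ by construction. Symmetrality of $\wt M^\bul$ follows from Proposition~\ref{propstructaltsym} (the group of symmetral moulds is closed under product and inversion). To see that $\wt M^\omb \in \wt\gP_0$ for every $\omb$, it suffices to check $\De_m \wt M^\bul = 0$ as a mould for each $m\in\Z^*$. Differentiating the identity $\wt\cU_1^\bul \times (\wt\cU_1^\bul)\iim = 1^\bul$ yields
\[
\De_m\bigl((\wt\cU_1^\bul)\iim\bigr) = -(\wt\cU_1^\bul)\iim \times (\De_m\wt\cU_1^\bul) \times (\wt\cU_1^\bul)\iim = -(\wt\cU_1^\bul)\iim \times I_m^\bul,
\]
and applying the Leibniz rule to $\wt M^\bul$ gives
\[
\De_m\wt M^\bul = -(\wt\cU_1^\bul)\iim \times I_m^\bul \times \wt\cU_2^\bul + (\wt\cU_1^\bul)\iim \times I_m^\bul \times \wt\cU_2^\bul = 0,
\]
which is the desired conclusion.

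There is no substantial obstacle: everything reduces to the reformulation of \eqref{eqsystDefriend} as $\De_m\wt\cU^\bul = I_m^\bul\times \wt\cU^\bul$ and to the Leibniz rule, the only delicate point being to make sure that one works with the correct notion of invertibility in~$\hMstR$ and that the induced derivation~$\De_m$ on moulds satisfies the usual Leibniz identity, both of which are already available from Sections~\ref{secAlgMoulds}--\ref{secAltSym}.
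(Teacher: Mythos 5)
Your proof is correct and follows essentially the same route as the paper: define $\wt M^\bul = (\wt\cU_1^\bul)\iim\times\wt\cU_2^\bul$, reformulate \eqref{eqsystDefriend} as $\De_m\wt\cU^\bul = I_m^\bul\times\wt\cU^\bul$, and apply the Leibniz rule for the mould derivation induced by~$\De_m$. The paper merely treats both directions at once, reading off from $\De_m\wt\cU_2^\bul = I_m^\bul\times\wt\cU_2^\bul + \wt\cU_1^\bul\times\De_m\wt M^\bul$ that the $\De$-friendly equation for $\wt\cU_2^\bul$ is equivalent to $\De_m\wt M^\bul=0$ by invertibility of $\wt\cU_1^\bul$, whereas you spell out the converse via the formula for $\De_m$ of a multiplicative inverse; the two computations are the same.
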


\noindent
Thus all the families of $\De$-friendly monomials can be deduced from one of them.

\begin{proof}
Let $\wt M^\bul = (\wt\cU_1^\bul)\ii \times \wt\cU_2^\bul \in \hMstR$. This mould is symmetral iff
$\wt\cU_2^\bul$ is symmetral. 
Let $m\in\Z^*$.
We have $\De_m\wt\cU_1^\bul = I_m^\bul \times  \wt\cU_1^\bul$, with the
mould~$I_m^\bul$ defined by~\eqref{eqdefIm}.
The Leibniz rule applied to~\eqref{eqrelcUcU} yields
$\De_m\wt\cU_2^\bul = I_m^\bul \times  \wt\cU_2^\bul + 
\wt\cU_1^\bul \times  \De_m \wt M^\bul$.
Thus $\wt\cU_2^\bul$ satisfies \eqref{eqsystDefriend} for all~$m$ iff 
$\wt\cU_1^\bul \times  \De_m \wt M^\bul=0$ for all~$m$, which is equivalent to
$\wt M^\omb\in\wt\gP_0$ since $\wt\cU_1^\bul$ admits a multiplicative inverse.
\end{proof}


\parag
Define $\bDe_\est=\ID$ and
$\bDe_\omb = \De_{\om_r}\dotsm\De_{\om_1}$ for $\omb=(\om_1,\dotsc,\om_r)\in(\Z^*)^\bul$.
We call \emph{resurgence polynomial} any $\wt\ph\in\tRsimpZ$ such that
$\bDe_{\omb}\wt\ph = 0$ for all but finitely many $\omb \in(\Z^*)^\bul$.
Resurgence polynomials form a subalgebra~$\wt\gP$ of~$\tRsimpZ$ (which contains~$\wt\gP_0$).

\begin{prop}
Let $\wt\cU^\bul$ be any family of $\De$-friendly monomials and $\wt\ph$ be any
simple resurgent function.
Then $\wt\ph$ is a resurgence polynomial iff $\wt\ph$ can be written as
\begin{equation}	\label{eqreprespol}
\wt\ph = \sum_{\omb\in\gF} \wt\cU^\omb \wt\ph_\omb, \qquad
\text{$\gF$ finite subset of $(\Z^*)^\bul$,}
\end{equation}
with $\wt\ph_\omb\in\wt\gP_0$ for every $\omb\in\gF$.
Moreover, such a representation of a resurgence polynomial is unique and the formula
$\gE = \sum S \wt\cU^\bul \bDe_\bul$
(with $S$ defined by~\eqref{eqdefinvol}, thus $S\wt\cU^\bul$ is the
multiplicative inverse of~$\wt\cU^\bul$)
defines an algebra homomorphism $\gE\colon\wt\gP \to \wt\gP_0$ such that
$$
\wt\ph_\omb = \gE \bDe_\omb\wt\ph, \qquad \omb \in (\Z^*)^\bul.
$$
\end{prop}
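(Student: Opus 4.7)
My plan is to first collect the basic consequences of~\eqref{eqsystDefriend}, then establish the existence and uniqueness of the representation via the operator~$\gE$, which I will prove to be an algebra homomorphism $\wt\gP\to\wt\gP_0$. The composition law $\bDe_\etab\bDe_\omb=\bDe_{\omb\conc\etab}$ together with~\eqref{eqiterDeDefr} yields, on the $\De$-friendly monomials, $\bDe_\etab\wt\cU^\omb=\wt\cU^\nb$ if $\omb=\etab\conc\nb$ and $0$ otherwise. Since each~$\De_m$ is a derivation and every $\wt\ph_\omb\in\wt\gP_0$ is annihilated by every~$\De_m$, Leibniz gives $\bDe_\etab(\wt\cU^\omb\wt\ph_\omb)=(\bDe_\etab\wt\cU^\omb)\wt\ph_\omb$, which vanishes as soon as $r(\etab)>\max_{\omb\in\gF}r(\omb)$; this yields the ``if'' direction. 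For uniqueness, I would apply $\bDe_\etab$ to a relation $\sum_{\omb\in\gF}\wt\cU^\omb\wt\ph_\omb=0$ with $\etab\in\gF$ of maximal length, reducing everything to $\wt\ph_\etab=0$, and then induct on the size of~$\gF$.

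To construct~$\gE$, I note that $\wt\ph\in\wt\gP$ means $\bDe_\omb\wt\ph=0$ outside a finite set $\gG\subset(\Z^*)^\bul$, so the sum $\gE\wt\ph=\sum_\etab(S\wt\cU)^\etab\bDe_\etab\wt\ph$ is a finite sum in~$\tRsimpZ$. The homomorphism property $\gE(fg)=\gE(f)\gE(g)$ follows from the iterated Leibniz formula
\[
\bDe_\omb(fg)=\sum_{\omb^1,\omb^2}\tsh{\omb^1}{\omb^2}{\omb}\,(\bDe_{\omb^1}f)(\bDe_{\omb^2}g),
\]
combined with the identity $\tau(S\wt\cU^\bul)=S\wt\cU^\bul\otimes S\wt\cU^\bul$ provided by Lemma~\ref{lemtaualtsym}, since $S\wt\cU^\bul$ is symmetral as the multiplicative inverse of the symmetral mould $\wt\cU^\bul$ (Proposition~\ref{propstructaltsym}). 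To prove $\gE\wt\ph\in\wt\gP_0$ I apply $\De_m$ to the definition: one summand is $\sum_\etab (\De_m(S\wt\cU)^\etab)\,\bDe_\etab\wt\ph$, the other is $\sum_\etab(S\wt\cU)^\etab\,\bDe_{\etab\conc(m)}\wt\ph$, using $\De_m\bDe_\etab=\bDe_{\etab\conc(m)}$. Using $(S\wt\cU)^\etab=(-1)^{r(\etab)}\wt\cU^{\wt\etab}$ (with $\wt\etab$ the reversal of~$\etab$) and the $\De$-friendly property, a direct check shows $\De_m(S\wt\cU)^\etab=-(S\wt\cU)^\beb$ whenever $\etab=\beb\conc(m)$, and zero otherwise; reindexing then makes the two sums cancel term by term.

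Finally, setting $\wt\ph_\omb:=\gE\bDe_\omb\wt\ph$, the identity $\bDe_\etab\bDe_\omb=\bDe_{\omb\conc\etab}$ shows $\wt\ph_\omb=0$ unless~$\omb$ is a prefix of some element of~$\gG$, so the family $(\wt\ph_\omb)$ has finite support in~$\wt\gP_0$. To check $\wt\ph=\sum_\omb\wt\cU^\omb\wt\ph_\omb$, I would show both sides have the same image under every~$\gE\bDe_\nb$: applying $\gE\bDe_\nb$ to the right-hand side and using that $\gE$ fixes $\wt\gP_0$ together with $\gE\wt\cU^\beb=(S\wt\cU^\bul\times\wt\cU^\bul)^\beb=1^\beb$ gives $\wt\ph_\nb$; the left-hand side gives $\wt\ph_\nb$ by definition. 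Thus $\wt\psi:=\wt\ph-\sum_\omb\wt\cU^\omb\wt\ph_\omb$ lies in the intersection of all kernels $\ker(\gE\bDe_\nb)$, and taking~$\nb$ of maximal length with $\bDe_\nb\wt\psi\neq0$ makes the sum $\gE\bDe_\nb\wt\psi$ collapse to its $\etab=\est$ term, forcing $\bDe_\nb\wt\psi=0$ and hence, with $\nb=\est$, $\wt\psi=0$. The main obstacle in this plan is the cancellation step in the second paragraph, where the bookkeeping on word-reversal and last-letter extraction must match the re-indexation coming from $\De_m\bDe_\etab=\bDe_{\etab\conc(m)}$; once this is in place, everything else reduces to the mould-multiplicative identities and the symmetry properties already established in the paper.
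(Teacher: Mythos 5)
Your proposal is correct and follows essentially the same route as the paper: the ``if'' direction via Leibniz and~\eqref{eqiterDeDefr}, the homomorphism property of~$\gE$ from the symmetrality of $S\wt\cU^\bul$ contracted into the cosymmetral comould~$\bDe_\bul$, the cancellation $\De_m(S\wt\cU)^{\beb\conc(m)}=-(S\wt\cU)^\beb$ against $\De_m\bDe_\etab=\bDe_{\etab\conc(m)}$ to get $\gE(\wt\gP)\subset\wt\gP_0$, and the coefficients $\wt\ph_\omb=\gE\bDe_\omb\wt\ph$. The only (harmless) divergence is your final separation-plus-maximality argument for $\wt\ph=\sum\wt\cU^\omb\wt\ph_\omb$, which the paper replaces by the one-line telescoping $\sum_{\alb}\wt\cU^\alb\wt\ph_\alb=\sum_\omb(\wt\cU^\bul\times S\wt\cU^\bul)^\omb\bDe_\omb\wt\ph=\bDe_\est\wt\ph=\wt\ph$.
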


\begin{proof}
In view of~\eqref{eqiterDeDefr}, formula~\eqref{eqreprespol} defines a
resurgence polynomial whenever the $\wt\ph_\omb$'s are resurgence constants.

The formula $\gE = \sum S \wt\cU^\bul \bDe_\bul$ makes sense as an operator
$\wt\gP\to\tRsimpZ$ since the sum is locally finite;
an easy adaptation of the arguments of Section~\ref{secContrAltSym} shows that
$\gE$ is an algebra homomorphism because $\wt\cU^\bul$ is symmetral and
$\bDe_\bul$ can be viewed as a cosymmetral comould (the $\De_m$'s which generate
it are derivations of~$\wt\gP$).

Let us check that $\gE\Big(\wt\gP\Big) \subset \wt\gP_0$.
Let $\wt\ph\in\wt\gP$ and $m\in\Z^*$; we can write $\De_m = \sum
I_m^\bul\bDe_\bul$ with the notation~\eqref{eqdefIm}.
A computation analogous to the proof of Proposition~\ref{propmultiplimould}, but
taking into account the fact that~$\De_m$ does not commute with the
multiplication by~$(S\wt\cU)^\omb$, shows that
$$
\De_m\gE\wt\ph = \sum \big( (S\wt\cU \times I_m^\bul) + \De_m S\wt\cU^\bul \big)\bDe_\bul\wt\ph.
$$
Since $\De_m\wt\cU^\bul = I_m^\bul\times\wt\cU^\bul$ and $S$ is an
anti-homomorphism such that $S I_m^\bul = -I_m^\bul$ and $S\De_m=\De_mS$, we have
$\De_m S\wt\cU^\bul = -S\wt\cU \times I_m^\bul$, hence $\De_m\gE\wt\ph=0$.

We conclude by considering $\wt\ph\in\wt\gP$ and setting $\wt\ph_\alb =
\gE\bDe_\alb\wt\ph$ for every word $\alb  \in (\Z^*)^\bul$ (but only finitely many 
words may yield a nonzero result).
We have
$\wt\ph_\alb = \sum_{\beb} (S\wt\cU^\bul)^\beb \bDe_{\alb\conc\beb}\wt\ph$, 
thus
$\sum_{\alb} \wt\cU^\alb \wt\ph_\alb = \sum_{(\alb,\beb)} 
\wt\cU^\alb (S\wt\cU^\bul)^\beb \bDe_{\alb\conc\beb}\wt\ph$, 
and the identity $\wt\cU^\bul \times S\wt\cU^\bul = 1^\bul$ implies
$\sum_{\alb} \wt\cU^\alb \wt\ph_\alb = \wt\ph$.
\end{proof}


\section{Other applications of mould calculus}	\label{secOtherAppli}





\parag
In this last section, we wish to indicate how mould calculus can be applied to
another classical normal form problem: the linearisation of a vector field with
non-resonant spectrum.

Let $\gA = \C[[y_1,\dotsc,y_n]]$ with $n\in\N^*$, and consider a vector field
with diagonal linear part:
$$
X = \sum_{i=1}^n a_i(y) \frac{\pa\;}{\pa y_i}, \qquad
a_i(y) = \la_i y_i + \sum_{k\in\N^n,\, |k|\ge2} a_{i,k} y^k
$$
(with standard notations for the multi-indices: 
$y^k = y_1^{k_1}\dotsm y_n^{k_n}$ and $|k| = k_1+\dotsb+k_n$
if $k = (k_1,\dotsc,k_n)$).

The first problem consists in finding a formal transformation which
conjugates~$X$ and its linear part
$$
\Xl = \sum_{i=1}^n \la_i y_i \frac{\pa\;}{\pa y_i}.
$$
This linear part is thus considered as a natural candidate to be a normal form;
it is determined by the spectrum 
$\la = (\la_1,\dotsc,\la_n)$.
In fact $\Xl = \gX_\la$ with the notation~\eqref{eqdefgXla}.

It is not always possible to find a formal conjugacy between~$X$ and~$\Xl$,
because elementary calculations let appear rational functions of the spectrum,
the denominators of which are of the form
\begin{equation}	\label{eqdefdiv}
\lan m,\la \ran = m_1 \la_1 + \dots + m_n \la_n
\end{equation}
with certain multi-indices $m\in\Z^n$.
Let us make the following {\em strong non-resonance assumption}:
\begin{equation}	\label{eqStNRass}
\lan m,\la \ran \neq 0 \quad
\text{for every $m\in\Z^n\setminus\{0\}$.}
\end{equation}
We shall now indicate how to construct a formal conjugacy via mould-comould
expansions under this assumption.


\parag
We are in the framework of Section~\ref{secGenMcM} with $\bA=\C$.
Let us use the standard monomial valuation on~$\gA$, defined by $\nu(y^k)=|k|$.
We shall manipulate operators of~$\gA$ having a valuation \wrt~$\nu$; they form
a subspace~$\gF$ of $\End_\C\gA$ which was denoted $\gF_{\gA,\bA}$
in~\eqref{eqdefgFexemp}. 

We first decompose~$X$ as a sum of homogeneous components, in the sense of
Definition~\ref{defhomog}:
$\Xl$ is homogeneous of degree~$0$ and we can write
$$
X - \Xl = \sum_{i=1}^n \sum_{k\in\Z^n} a_{i,k} y^k \frac{\pa\;}{\pa y_i},
$$
thus extending the definition of the $a_{i,k}$'s:
$$
a_{i,k}  \neq0 \ens\Rightarrow\ens k\in\N^n \ens\text{and}\ens |k|\ge2.
$$
Using the canonical basis $(e_1,\dotsc,e_n)$ of~$\Z^n$, we can write
\begin{equation}	\label{eqdefBnei}
X-\Xl = \sum_{m\in\Z^n} B_m, \qquad
B_m = \sum_{i=1}^n a_{i,m+e_i} y^m \cdot y_i\frac{\pa\;}{\pa y_i}.
\end{equation}
Observe that each $B_m$ is homogeneous of degree $m\in\Z^n$ and that
\begin{multline}
\notag
\quad B_m\neq0 \ens\Rightarrow\ens m\in\cN, \\
\label{eqdefcNalph}
\cN = \ao m\in\Z^n \mid \exists i \;\text{such that}\; m+e_i\in\N^m
\;\text{and}\; |m|\ge1 \af.\quad 
\end{multline}

We thus view~$\cN$ as an alphabet and consider
$$
\bB_\est = \ID, \qquad
\bB_{m_1,\dots,m_r} = B_{m_r}  \dotsm B_{m_1}
$$
as a comould on~$\cN$ with values in~$\gF$.
For instance, $X-\Xl = \sum I^\bul \bB_\bul$.
The inequalities
$$
\vln{\bB_{m_1,\dots,m_r}} \ge | m_1 + \dotsb + m_r |
$$
show that, for any scalar mould $M^\bul \in \hM^\bul(\cN,\C)$, the family
$(M^\mb \bB_\mb)_{\mb\in\cN^\bul}$ is formally summable in~$\gF$
(indeed, for any $\de\in\Z$, 
$\vln{M^{m_1,\dots,m_r}\bB_{m_1,\dots,m_r}} \le \de$ implies 
$r\le |m_1|+\dotsb+|m_r|\le\de$ and there are only finitely many $\eta\in\cN$
such that $|\eta|\le\de$).


\parag
According to the general strategy of mould-comould expansions, we now look for a
formal conjugacy~$\th$ bewteen $X$ and~$\Xl$ through its substitution
automorphism~$\Th$, which should satisfy $X = \Th\ii\Xl\Th$.
This conjugacy equation can be rewritten 
$$
\left[ \Xl, \Th \right] = \Th \left( X-\Xl \right).
$$
Propositions~\ref{propmultiplimould} and~\ref{propcommutXla} show that, given
any $M^\bul\in\hM^\bul(\cN,\C)$,
$\Th = \sum M^\bul \bB_\bul$ is solution as soon as 
\begin{equation}	\label{eqConjugM}
D_\ph M^\bul = I^\bul \times M^\bul,
\end{equation}
with $D_\ph M^\mb = \lan \norm{\mb},\la \ran M^\mb$ for $\mb\in\cN^\bul$.

Assumption~\eqref{eqStNRass} allows us to find a unique solution of
equation~\eqref{eqConjugM} such that $M^\est = 1$; it is inductively determined
by
$$
M^{m_1,\dotsc,m_r} = \frac{1}{\lan \norm{\mb},\la \ran} M^{m_2,\dotsc,m_r},
$$
hence
\begin{equation}	\label{eqdefsollin}
M^{\mb} = \frac{1}{\lan {m_1+\dotsb+m_r},\la \ran}
\frac{1}{\lan {m_2+\dotsb+m_r},\la \ran}
\dotsm \frac{1}{\lan {m_r},\la \ran}
\end{equation}
The symmetrality of this solution can be obtained by mimicking the proof of
Proposition~\ref{propcVsym}.

Since $\bB_\bul$ is cosymmetral, we thus have an automorphism $\Th = \sum
M^\bul  \bB_\bul$; 
since $\Th$ is continuous for the Krull topology, $\th = (\th_1,\dots,\th_n)$
with $\th_i=\Th y_i$ yields a formal tangent-to-identity transformation which
conjugates~$X$ and~$\Xl$.


\parag
As was alluded to at the end of Section~\ref{secContrAltSym}, the formalism of
moulds can be equally applied to the normalisation of discrete dynamical
systems. 
A problem parallel to the previous one is the linearisation of a formal
transformation with multiplicatively non-resonant spectrum.

Suppose indeed that $f = (f_1,\dotsc,f_n)$ is a $n$-tuple of formal series
of~$\gA$ without constant terms, with diagonal linear part 
$\fl \colon (y_1,\dotsc,y_n) \mapsto (\ell_1 y_1, \dotsc, \ell_n y_n)$.
Conjugating~$f$ and~$\fl$ is equivalent to finding a continuous
automorphism~$\Th$ which conjugates the corresponding susbtitution automorphisms:
$F = \Th\ii \Fl \Th$.

This is possible under the following {\em strong multiplicative non-resonance
assumption} on the spectrum $\ell = (\ell_1,\dotsc,\ell_n)$:
\begin{equation}	\label{eqStNRassMult}
\ell^m -1 \neq 0 \quad
\text{for every $m\in\Z^n\setminus\{0\}$.}
\end{equation}
An explicit solution is obtained by expanding $F (\Fl)\ii$ in homogeneous
components
$$
F = \Big( \ID + \sum_{m\in\cN} B_m \Big) \Fl,
$$
where the homogeneous operators~$B_m$ are no longer derivations; instead, they
satisfy the modified Leibniz rule~\eqref{eqmodifLeib} and generate a {\em
cosymmetrel} comould~$\bB_\bul$.
Correspondingly, the scalar mould
\begin{equation}	\label{eqdefsollinMult}
M^\est = 1, \qquad
M^{\mb} = \frac{1}{(\ell^{m_1+\dotsb+m_r}-1)
(\ell^{m_2+\dotsb+m_r}-1)
\dotsm (\ell^{m_r}-1)}
\end{equation}
is {\em symmetrel} and $\Th = \sum M^\bul\bB_\bul$ is the desired automorphism
(see \cite{Eca93}),
whence a formal tangent-to-identity transformation~$\th$ which conjugates $f$ and~$\fl$.


\parag
In both previous problems, it is a classical result that a formal linearising
transformation~$\th$ exists under a weaker non-resonance assumption:
namely, it is sufficient that~\eqref{eqStNRass} or~\eqref{eqStNRassMult} 
hold with~$\Z^n\setminus\{0\}$ replaced by~$\cN\setminus\{0\}$.
Unfortunately, this is not clear on the mould-comould expansion, since under
this weaker assumption the formula~\eqref{eqdefsollin}
or~\eqref{eqdefsollinMult} may involve a zero divisor, thus the mould~$M^\bul$
is not well-defined.

J.~\'Ecalle has invented a technique called {\em arborification} which solves
this problem and which goes far beyond: arborification also allows to recover
the Bruno-R\"ussmann theorem, according to which the formal linearisation~$\th$
is convergent whenever the vector field~$X$ or the transformation~$f$ is
convergent and the spectrum~$\la$ or~$\ell$ satisfies the so-called {\em Bruno
condition} (a Diophantine condition which states that the divisors $\lan m,\la
\ran$ or $\ell^m - 1$ do not approach zero ``abnormally well'').

The point is that, even when $X$ or~$f$ is convergent and the spectrum is
Diophantine, it is hard to check that~$\th_i(y)$ is convergent because it is
represented as the sum of a formally summable family $(M^\mb \bB_\mb
y_i)_{\mb\in\cN^\bul}$ in~$\C[[y_1,\dotsc,y_n]]$, but
the family $\big(|M^\mb \bB_\mb y_i|\big)_{\mb\in\cN^\bul}$ may fail to be
summable in~$\C$ for any $y\in\C^n\setminus\{0\}$.
However, arborification provides a systematic way of reorganizing the terms of
the sum: $\th_i(y)$ then appears as the sum of a summable family indexed by
``arborescent sequences'' rather than words.
The reader is referred to \cite{EcaNonAb}, \cite{dulac}, \cite{EV}, and also to the
recent article \cite{Eca05}.


\parag
There is another context, totally different, in which J.~\'Ecalle has used mould
calculus with great efficiency.
The multizeta values 
$$
\ze(s_1,s_2,\dotsc,s_r) = \sum_{n_1>n_2>\dotsb>n_r>0}
\frac{1}{n_1^{s_1}n_2^{s_2}\dotsm n_r^{s_r}}
$$
naturally present themselves as a scalar mould on~$\N^*$; in fact, 
\begin{multline*}
\ze(s_1,\dotsc,s_r) = \ZE^{\left(\begin{smallmatrix}
0,&\dotsc\,,&0\\s_1,&\dotsc\,,&s_r\end{smallmatrix}\right)},\\
\text{with}\quad
\ZE^{\left(\begin{smallmatrix}
\eps_1,&\dotsc\,,&\eps_r\\s_1,&\dotsc\,,&s_r\end{smallmatrix}\right)}
= \sum_{n_1>\dotsb>n_r>0}
\frac{\ee^{2\pi\I(n_1\eps_1+\dotsb+n_r\eps_r)}}{n_1^{s_1}\dotsm n_r^{s_r}}
\end{multline*}
for $s_1,\dotsc,s_r\in\N^*$, $\eps_1,\dotsc,\eps_r\in\Q/\Z$
(with a suitable convention to handle possible divergences).
The mould~$\ZE^\bul$ is the central object; 
it turns out that it is {\em symmetrel}.
It is called a {\em bimould} because the letters of the alphabet are naturally given
as members of a product space, here $\N^*\times(\Q/\Z)$; this makes it possible
to define new operations and structures.
This is the starting point of a whole theory, aimed at describing the algebraic
structures underlying the relations between multizeta values.
See \cite{EcaBil} or \cite{EcaARI}.


\vspace{.7cm}

\subsubsection*{Acknowledgements}
It is a pleasure to thank F.~Fauvet, F.~Menous, F.~Patras and B.~Teissier for
their help.
I am also indebted to F.~Fauvet and J.-P.~Ramis, for having organized the conference
``Renormalization and Galois theories'' and given me a great opportunity of
lecturing on Resurgence theory.
This work was done in the framework of the project {\em Ph\'enom\`ene de Stokes,
renormalisation, th\'eories de Galois} from the {\em agence nationale pour la recherche}.

\vspace{.3cm}


\frenchspacing

\end{document}